\documentclass[11pt]{amsart}
\usepackage{lmodern}
\usepackage[T1]{fontenc}
\usepackage{microtype}
\usepackage{amssymb}
\usepackage{amsthm}
\usepackage{amscd}
\usepackage{cite}
\usepackage{refcount}
\usepackage{xpatch}
\usepackage[framemethod=tikz]{mdframed}
\usepackage{xcolor}



\newtheorem{theorem}{Theorem}[section]
\newtheorem{lemma}[theorem]{Lemma}

\newtheorem{cor}[theorem]{Corollary}

\theoremstyle{definition}
\newtheorem{definition}[theorem]{Definition}

\newtheorem{claim}{Claim}

 \newenvironment{claimproof}{\begin{proof}[Proof of claim]}{\end{proof}}

\newcounter{Htheorem}
\makeatletter
\g@addto@macro\lemma{\stepcounter{Htheorem}} 


\makeatother


\makeatletter

\def\dotminussym#1#2{%
  \setbox0=\hbox{$\m@th#1-$}%
  \kern.5\wd0%
  \hbox to 0pt{\hss\hbox{$\m@th#1-$}\hss}%
  \raise.6\ht0\hbox to 0pt{\hss$\m@th#1.$\hss}%
  \kern.5\wd0}

\newcommand{\fn}[1]{\widehat{\mathbf{#1}}}

\mathchardef\mhyphen="2D

\makeatletter
\xpatchcmd{\mdf@preenvsetting}
 {\mdf@envdepth >\tw@}
 {\mdf@envdepth >20}
 {}
 {}
\makeatother

\mdtheorem[
hidealllines=true,
leftline=true,
innertopmargin=0pt,
innerbottommargin=0pt,
linewidth=1pt,
linecolor=black,
innerrightmargin=0pt,
skipabove=0pt,
innertopmargin=-22pt,
]{quotationb}{}


\allowdisplaybreaks[2]

\begin{document}

\title[Towards Effective Theory of Measures]{Towards an Effective Theory of Absolutely Continuous Measures}
\author{Henry Towsner}
\date{\today}


\maketitle

\section{Introduction}

There has been a great deal of work extracting quantitative results from non-constructive theorems in analysis (see \cite{kohlenbach:MR2445721}, and for some recent examples,\cite{MR2943215,MR3210080,MR3175627,MR3278188}), often from fairly new results involving sophisticated techniques.  However even very basic results can turn out to be deeply non-constructive, and a library of quantitative versions of such results is a needed resource for extracting bounds from theorems which depend on them.

In this paper we consider the following innocuous looking theorem:
\begin{theorem}\label{thm:original}
  Let $(f_n)_n$ and $(g_p)_p$ be sequences of $L^1$ functions such that
  \begin{itemize}
  \item the sequences $(f_n)_n$ and $(g_p)_p$ converge weakly,
  \item all the functions $f_ng_p$ are $L^1$,
  \item for each fixed $n$, the sequence $(f_ng_p)_p$ converges weakly, and
  \item for each fixed $p$, the sequence $(f_ng_p)_n$ converges weakly.
  \end{itemize}
Then
\[\lim_n\lim_p\int f_ng_p\, d\mu=\lim_p\lim_n\int f_ng_p\, d\mu.\]
\end{theorem}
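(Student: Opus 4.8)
The plan is to reduce both iterated limits to integrals of weak limits and then identify those limits. For fixed $n$, let $h_n\in L^1$ denote the weak limit of $(f_ng_p)_p$; pairing the weak convergence against the bounded function $1$ gives $\lim_p\int f_ng_p\,d\mu=\int h_n\,d\mu$. Symmetrically $\lim_n\int f_ng_p\,d\mu=\int k_p\,d\mu$, where $k_p$ is the weak limit of $(f_ng_p)_n$. So the assertion becomes
\[
  \lim_n\int h_n\,d\mu=\lim_p\int k_p\,d\mu ,
\]
both sides to be understood in $[-\infty,\infty]$.

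Next I would identify $h_n=f_ng$ almost everywhere, where $g$ is the weak limit of $(g_p)_p$ (so that, in particular, $f_ng\in L^1$). Put $f_n^{(M)}=f_n\chi_{\{|f_n|\le M\}}\in L^\infty$. For $\phi\in L^\infty$,
\[
  \int f_ng_p\,\phi\,d\mu=\int \bigl(f_n^{(M)}\phi\bigr)g_p\,d\mu+\int \bigl((f_n-f_n^{(M)})\phi\bigr)g_p\,d\mu .
\]
As $p\to\infty$ the first term tends to $\int\bigl(f_n^{(M)}\phi\bigr)g\,d\mu$, since $f_n^{(M)}\phi\in L^\infty$ and $g_p\rightharpoonup g$, while the second is at most $\|\phi\|_\infty\int_{\{|f_n|>M\}}|f_ng_p|\,d\mu$ in absolute value, and this tends to $0$ as $M\to\infty$ uniformly in $p$: for $\mu(\{|f_n|>M\})\to 0$, and the weakly convergent sequence $(f_ng_p)_p$ is uniformly integrable by the Dunford--Pettis theorem. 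Letting $M\to\infty$ --- a monotone convergence argument first gives $f_ng\in L^1$, then dominated convergence identifies the limit --- yields $\int h_n\phi\,d\mu=\int f_ng\,\phi\,d\mu$ for every $\phi\in L^\infty$, hence $h_n=f_ng$; likewise $k_p=fg_p$. The theorem thus reduces to $\lim_n\int f_ng\,d\mu=\lim_p\int fg_p\,d\mu$.

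Morally both of these should equal $\int fg\,d\mu$, suitably interpreted. Splitting $g=g^{(M)}+(g-g^{(M)})$ with $g^{(M)}=g\chi_{\{|g|\le M\}}\in L^\infty$, one has $\int f_ng^{(M)}\,d\mu\to\int fg^{(M)}\,d\mu$ as $n\to\infty$, and $\int fg^{(M)}\,d\mu\to\int fg\,d\mu$ as $M\to\infty$, so everything hinges on controlling the tail $\int f_n(g-g^{(M)})\,d\mu=\int_{\{|g|>M\}}f_ng\,d\mu$ uniformly in $n$, and symmetrically for $\int fg_p\,d\mu$. I expect this to be the main obstacle and the technical heart of the matter: the Dunford--Pettis theorem supplies uniform integrability of $(f_ng_p)_p$ for each fixed $n$ and of $(f_ng_p)_n$ for each fixed $p$, but these separate estimates do not automatically amalgamate into a joint one --- indeed $\{f_ng_p\}_{n,p}$ need not be bounded in $L^1$, as the admissible instance $f_n=x^{-1/2+1/n}$, $g_p=x^{-1/2+1/p}$ on $[0,1]$ shows (there both iterated limits equal $+\infty$) --- and multiplication $L^1\times L^1\to L^1$ is genuinely unbounded, so no duality or Moore--Osgood shortcut is available. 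My plan would be to upgrade the separate uniform-integrability estimates to a uniform one by a careful exhaustion argument that uses all four weak-convergence hypotheses together with the standing hypothesis $f_ng_p\in L^1$ --- plausibly via the Vitali--Hahn--Saks/Nikodym convergence theorems applied to the set functions $E\mapsto\int_E f_ng_p\,d\mu$ together with a Baire-category argument --- while keeping separate track of the case $\int fg\,d\mu=\pm\infty$, in which both iterated limits diverge to the same infinity.
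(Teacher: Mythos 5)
Your outline makes genuine progress up to and including the identification of the inner weak limits: given the weak convergence of $(f_ng_p)_p$ for fixed $n$, your truncation argument does correctly show that $h_n=f_ng$ (and in particular that $f_ng\in L^1$), and symmetrically $k_p=fg_p$. But that reduction leaves the difficulty intact: since $\int h_n\,d\mu=\lim_p\int f_ng_p\,d\mu$ and $\int k_p\,d\mu=\lim_n\int f_ng_p\,d\mu$, the reduced claim $\lim_n\int f_ng\,d\mu=\lim_p\int fg_p\,d\mu$ is a restatement of the original iterated-limit exchange, not a new fact. Your plan to show both equal $\int fg\,d\mu$ by truncating $g$ (resp.\ $f$) then requires the uniform-in-$n$ bound on $\int_{\{|g|>M\}}f_ng\,d\mu$ that you rightly say you cannot produce from the separate Dunford--Pettis estimates; and the sequence $(f_ng)_n$ is not among those hypothesized to be weakly convergent, so neither Dunford--Pettis nor Vitali--Hahn--Saks applies to it directly. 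This is a genuine gap, and it sits precisely at the step the paper singles out as the ``surprisingly non-trivial'' one.

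For comparison: the paper does not supply a direct proof of Theorem~\ref{thm:original}---it attributes it to the standard development in Dunford--Schwartz---and devotes itself instead to a metastable analogue (Theorem~\ref{thm:control_interval_E2}), but its architecture tells you what a complete classical argument needs. Your $g^{(M)}$-truncation corresponds to the level-set/regularity machinery of Section~\ref{sec:regularity}, and your instinct to invoke Vitali--Hahn--Saks corresponds to Theorem~\ref{thm:q_vhs}; what is missing is the passage from the separate row-wise and column-wise uniform continuities of the doubly indexed family $\sigma\mapsto\int_\sigma f_ng_p\,d\mu$ to a joint one, which is the content of the paper's double-sequence lemmas (Lemma~\ref{thm:meta_bnd_1} and Lemma~\ref{thm:meta_bnd_3}). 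That amalgamation is the nontrivial ingredient and it must be carried out on the doubly indexed family itself, not on the (unavailable) single sequence $(f_ng)_n$; your Baire-category suggestion points in the right direction, but until it is actually executed the proof is incomplete.
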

Replacing $\mu$ with the measure concentrating on $\sigma$, this immediately implies that for all sets $\sigma$,
\[\lim_n\lim_p\int_\sigma f_ng_p\, d\mu=\lim_p\lim_n\int_\sigma f_ng_p\, d\mu.\]

This is part of (or at least follows from) the standard development of $L^1$ functions, as considered in \cite{MR737004} for instance.  The proof, however, is surprisingly non-trival---a crucial step passes through the Radon-Nikodym derivative.  Our interest in this example is motivated by the fact that this turns out to be the crucial step in a theorem about Banach spaces; the application of the results in this paper to producing a constructive version of that theorem is given in \cite{towsner:banach}.  In this paper, our goal is to begin the project of creating a library of constructive versions of the basic theory of the $L^1$ spaces.

The main technique used to obtain the results in this paper is the functional (or ``Dialectica'') translation \cite{avigad:MR1640329}; in particular the variant known as the monotone functional interpretation \cite{MR1428007}.  We do not describe the process of using the functional interpretation to obtain these results here, but see \cite{towsner_worked} for more about the general method.  Section \ref{sec:regularity} is devoted to an $L^1$, one-dimensional analog of Szemer\'edi's regularity lemma which is particularly likely to be a useful tool in other applications involving $L^1$ spaces.  This regularity lemma is the constructive analog of the statement that an $L^1$ functions can be approximated by its level sets; the appearance of a regularity like statement is a reflection of the general connection between infinitary $\Pi_3$ statements and finitary regularity-like statements \cite{goldbring:_approx_logic_measure,tao08Norm}.

In this case, we are interested in how long it takes for the convergence to occur---that is, how big do $n$ and $p$ have to be for the two sides to be close to each other.  More precisely, since the actual rate of convergence may be both non-computable and non-uniform, we are interested in the \emph{metastable convergence} of these limits.

Metastable convergence was introduced in the context of ergodic theory in \cite{avigad:MR2550151,tao:MR2408398}.  Suppose $(r_n)_n$ is a sequence of real numbers with the property that $\lim_n r_n$ exists (for some fixed $\sigma$); that is, for each $E$, there is an $n$ so that for every $m\geq n$, $|r_n-r_m|<1/E$.  It is well known that the function mapping $E$ to the corresponding bound $n$ may be uncomputable, and (worse for our purposes) may be highly non-uniform.

Metastable convergence is a seemingly weaker property which addresses this: in its simplest form, we say the sequence $(r_n)_n$ is metastably convergent if for each $E$ and each function $\widehat{\mathbf{m}}:\mathbb{N}\rightarrow\mathbb{N}$ there exists an $n$ so that
\[|r_n-r_{\widehat{\mathbf{m}}(n)}|<1/E.\]
(In fact, in this example metastable convergence implies ordinary convergence, but we will not need this fact, and it will not hold for more complicated limits.)  However when a sequence is convergent, we can typically show metaconvergence with $n$ depending computably on the values of $E$ and $\widehat{\mathbf{m}}$.  Further, in a precise formal sense, metastable convergence captures all the computable content of the original result: any computation which could be proven to halt using the original convergence result can also be shown to halt using metastable convergence.  This is because metastable convergence is an instance of the functional interpretation \cite{kohlenbach:MR2445721,avigad:MR1640329}.

Abstract meta-theorems of the sort in \cite{MR2373327,MR2054493,MR1195271,MR1428007} say that, even though the proof of Theorem \ref{thm:original} goes through the highly non-constructive Radon-Nikodym theorem, it should be possible to extract from the proof explicit, computable, bounds on the metastable convergence, uniformly in computable bounds on the premises---that from bounds on the $L^1$ norms of the functions in question and the rates of metastable convergence of the sequences $(f_n)_n, (g_p)_p, (f_ng_p)_n$, and $(f_ng_p)_p$.  Because the resulting argument would be unreasonably complicated, we settle for a slightly weaker result where we make some additional uniformity assumptions.

In this case, because we are dealing with a double limit, the right notion of metastable convergence is more complicated.  Our main result, Theorem \ref{thm:control_interval_E2}, will have the form:
\begin{quote}
Suppose $(f_n)_n$ and $(g_p)_p$ are sequences of $L^1$ functions such that the functions $f_ng_p$ satisfy a convergence condition discussed below.  Then for every $\epsilon>0$, every $p$ and $n$, and all functions $\fn{k}$ and $\fn{r}$, there exist:
\begin{itemize}
\item Values $m\geq n$ and $q\geq p$, and
\item Functions $\fn{l}$ and $\fn{s}$,
\end{itemize}
such that, setting $k=\fn{k}(m,q,\fn{l},\fn{s})$ and $r=\fn{r}(m,q,\fn{l},\fn{s})$, we have $\fn{l}(k,r)\geq k$, $\fn{s}(k,r)\geq r$, and
\[\left|\int f_m g_{\fn{s}(k,r)} d\mu-\int f_{\fn{l}(k,r)}g_q d\mu\right|<\epsilon.\]
\end{quote}

In Section \ref{sec:bounds}, we illustrate the resulting bounds by calculating them explicitly in the simplest interesting case, where $p=n=0$, $\fn{l}(m,q,\fn{l},\fn{s})=q+1$, and $\fn{s}(m,q,\fn{l},\fn{s})=m+1$, which gives the statement:
\begin{quote}
Suppose $(f_n)_n$ and $(g_p)_p$ are sequences of $L^1$ functions satisfying a convergence condition discussed below and that each $f_ng_p$ is an $L^1$ function.  Then for every $\epsilon>0$ there exist $s> m$ and $l>q$ so that
\[\left|\int f_m g_s d\mu-\int f_lg_q d\mu\right|<\epsilon.\]
\end{quote}

\section{Absolutely Continuous Measures}

Rather than work with $L^1$ functions, it turns out to be more natural to work with the corresponding absolutely continuous measures.

\subsection{Measures}

We fix a Boolean algebra $\Sigma$ containing a largest element $\Omega$ and a smallest element $\emptyset$.  Because we are thinking of $\Sigma$ as an algebra of sets, we write $\cup$ and $\cap$ for the the lattice operations on $\Sigma$, and write $\sigma\subseteq\tau$ as an abbreviation for ``$\sigma\cup\tau=\tau$''.

\begin{definition}
  If $\nu:\Sigma\rightarrow\mathbb{R}$, we say $\nu$ is \emph{additive} if $\nu(\emptyset)=0$ and whenever $\sigma,\tau\in\Sigma$, $\nu(\sigma\cup \tau)=\nu(\sigma)+\nu(\tau)-\nu(\sigma\cap \tau)$.

We write $|\nu|$ for the function $|\nu|(\sigma)=|\nu(\sigma)|$.  Note that for a general additive $\nu$, $|\nu|$ need not be additive.

A \emph{partition} in $\Sigma$ is a finite set $\mathcal{A}\subseteq\Sigma$ such that the elements of $\mathcal{A}$ are pairwise disjoint  (We do not assume that $\bigcup\mathcal{A}=\Omega$.)   We define $\nu(\mathcal{A})=\sum_{\sigma\in\mathcal{A}}\nu(\sigma)$.  By abuse of notation we will write $\sigma$ for the partition $\{\sigma\}$.  

We write $\mathcal{A}\preceq\mathcal{B}$ ($\mathcal{B}$ \emph{refines} $\mathcal{A}$) if $\bigcup\mathcal{B}=\bigcup\mathcal{A}$ and for every $\sigma\in\mathcal{B}$ there is a $\sigma_{\mathcal{A}}\in\mathcal{A}$ with $\sigma\subseteq\sigma_{\mathcal{A}}$.  When $\mathcal{A}\preceq\mathcal{B}$ and $\sigma\in\mathcal{A}$, we write $\mathcal{B}_\sigma=\{\sigma'\in\mathcal{B}\mid \sigma'\subseteq\sigma\}$. 
Clearly $\sigma\preceq\mathcal{A}_\sigma$.  For any $\tau\in\mathcal{B}$ we write $\tau_{\mathcal{A}}$ for the unique $\sigma\in\mathcal{A}$ so that $\tau\subseteq\sigma$.  

We write $[\mathcal{A},\mathcal{B}]$ for the set of all partitions $\mathcal{C}$ with $\mathcal{A}\preceq\mathcal{C}\preceq\mathcal{B}$.
\end{definition}
To help keep the notation straight, note that $\sigma_{\mathcal{A}}$ is itself a set---the same type as $\sigma$---namely the element of $\mathcal{A}$ containing $\sigma$, while $\mathcal{B}_\tau$ is a partition---the same type as $\mathcal{B}$---namely a partition refining $\tau$.

Throughout this paper we work with a fixed additive function $\mu:\Sigma\rightarrow[0,1]$ such that $\mu(\Omega)=1$.

\begin{definition}
We write $\delta_\nu(\mathcal{A})$, the \emph{density} of $\nu$ on $\mathcal{A}$, for $\frac{\nu(\mathcal{A})}{\mu(\mathcal{A})}$.

We say $\nu:\Sigma\rightarrow\mathbb{R}$ is \emph{absolutely continuous} if for every $E$ there is a $D$ so that whenever $\mathcal{A}$ is a partition with $\mu(\mathcal{A})<1/D$, $|\nu|(\mathcal{A})<1/E$.  A \emph{modulus of continuity} for $\nu$ is a function $\omega_\nu:\mathbb{N}\rightarrow\mathbb{N}$ such that for every $E$ and every $\mathcal{A}$ with $\mu(\mathcal{A})<1/\omega_\nu(E)$, $|\nu|(\mathcal{A})<1/E$.
\end{definition}

Here, and throughout the paper, we will prefer to work with bounds given by natural numbers.  Thus, we write $1/E$ in place of $\epsilon$ and $1/D$ in place of $\delta$. 

In general, if $\nu$ is absolutely continuous, we write $\omega_\nu$ for some canonical modulus of continuity (if there is one).

We will use the letters $\rho$, $\lambda$, $\nu$, and $\mu$ exclusively to refer to additive functions.  

\begin{lemma}
  If $\mathcal{A}\preceq\mathcal{B}$ then $\delta_{|\nu|}(\mathcal{A})\leq\delta_{|\nu|}(\mathcal{B})$.
\end{lemma}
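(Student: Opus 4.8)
The plan is to reduce the claimed inequality between densities to an inequality between numerators, by first observing that the \emph{denominators coincide}: $\mu(\mathcal{A})=\mu(\mathcal{B})$ whenever $\mathcal{A}\preceq\mathcal{B}$. Once that is in hand, monotonicity of $\delta_{|\nu|}$ is just monotonicity of $|\nu|$ along refinements, which in turn is a single application of the triangle inequality on each block.

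First I would unpack the refinement relation. Since $\mathcal{A}\preceq\mathcal{B}$, each $\tau\in\mathcal{B}$ lies in a unique $\tau_{\mathcal{A}}\in\mathcal{A}$, so $\mathcal{B}$ is the disjoint union of the subpartitions $\mathcal{B}_\sigma$ as $\sigma$ ranges over $\mathcal{A}$. The one genuinely set-theoretic step is to check that each $\mathcal{B}_\sigma$ is actually a partition of $\sigma$, i.e. $\bigcup\mathcal{B}_\sigma=\sigma$: one has $\bigcup\mathcal{B}_\sigma\subseteq\sigma$ by definition, and if $\sigma\setminus\bigcup\mathcal{B}_\sigma$ were nonempty it would, using $\bigcup\mathcal{B}=\bigcup\mathcal{A}$, have to meet some $\tau\in\mathcal{B}$, hence meet $\tau_{\mathcal{A}}\in\mathcal{A}$, while also meeting $\sigma\in\mathcal{A}$ and being disjoint from every element of $\mathcal{B}_\sigma$; pairwise disjointness of $\mathcal{A}$ forces $\tau_{\mathcal{A}}=\sigma$, hence $\tau\in\mathcal{B}_\sigma$, a contradiction. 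I expect this bookkeeping to be the only real obstacle; everything after is arithmetic.

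Granting that $\mathcal{B}_\sigma$ partitions $\sigma$, finite additivity of $\mu$ (an immediate induction from the additivity axiom, valid because the elements of $\mathcal{B}_\sigma$ are pairwise disjoint) gives $\mu(\mathcal{B}_\sigma)=\mu(\sigma)$; summing over $\sigma\in\mathcal{A}$ yields $\mu(\mathcal{B})=\mu(\mathcal{A})$. The same additivity applied to $\nu$ gives $\nu(\mathcal{B}_\sigma)=\nu(\sigma)$, and then the key inequality is
\[|\nu(\sigma)| = |\nu(\mathcal{B}_\sigma)| = \Bigl|\sum_{\sigma'\in\mathcal{B}_\sigma}\nu(\sigma')\Bigr| \leq \sum_{\sigma'\in\mathcal{B}_\sigma}|\nu(\sigma')| = |\nu|(\mathcal{B}_\sigma);\]
note that we do not need $|\nu|$ to be additive, only this one triangle inequality per block. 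Summing over $\sigma\in\mathcal{A}$ gives $|\nu|(\mathcal{A})\leq|\nu|(\mathcal{B})$.

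Finally, combining $\mu(\mathcal{A})=\mu(\mathcal{B})$ with $|\nu|(\mathcal{A})\leq|\nu|(\mathcal{B})$ yields $\delta_{|\nu|}(\mathcal{A})=|\nu|(\mathcal{A})/\mu(\mathcal{A})\leq|\nu|(\mathcal{B})/\mu(\mathcal{B})=\delta_{|\nu|}(\mathcal{B})$. The only mild point worth flagging is the implicit hypothesis $\mu(\mathcal{A})>0$ needed for the densities to be defined; this is harmless, since densities are only used on partitions of positive measure, and in that case $\mu(\mathcal{B})=\mu(\mathcal{A})>0$ as well.
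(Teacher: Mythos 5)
Your proof is correct and takes essentially the same route as the paper's: both decompose $\mathcal{B}$ into the subpartitions $\mathcal{B}_\sigma$ for $\sigma\in\mathcal{A}$ and apply the triangle inequality $|\nu(\sigma)|\le\sum_{\sigma'\in\mathcal{B}_\sigma}|\nu(\sigma')|$ on each block. The paper phrases the reduction as writing each density as a $\mu$-weighted average over blocks, whereas you split it into ``denominators agree'' plus ``numerators are monotone,'' but these are the same computation presented slightly differently.
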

\begin{proof}
  Since
\[\delta_{|\nu|}(\mathcal{A})=\frac{1}{\mu(\mathcal{A})}\sum_{\sigma\in\mathcal{A}}\mu(\sigma)\delta_{|\nu|}(\sigma),\]
and
\[\delta_{|\nu|}(\mathcal{B})=\frac{1}{\mu(\mathcal{A})}\sum_{\sigma\in\mathcal{A}}\mu(\sigma)\delta_{|\nu|}(\mathcal{B}_\sigma),\]
it suffices to show that $\delta_{|\nu|}(\sigma)\leq\delta_{|\nu|}(\mathcal{B}_\sigma)$.
\[
  \delta_{|\nu|}(\sigma)
=\frac{|\nu(\sigma)|}{\mu(\sigma)}\\
=\frac{1}{\mu(\sigma)}|\sum_{\sigma'\in\mathcal{B}_\sigma}\nu(\sigma')|\\
\leq\frac{1}{\mu(\sigma)}\sum_{\sigma'\in\mathcal{B}_\sigma}|\nu(\sigma')|\\
=\delta_{|\nu|}(\mathcal{B}_\sigma).
\]
\end{proof}

\begin{definition}
  The $L^1$ norm, $||\nu||_{L^1}$, is $\sup_{\mathcal{A}}|\nu|(\mathcal{A})$.
\end{definition}

\begin{lemma}
If $\nu$ is absolutely continuous, $||\nu||_{L^1}$ is finite.
\end{lemma}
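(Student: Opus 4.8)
My plan is to reduce the lemma to the statement that $\nu$ has bounded range, i.e.\ that $M:=\sup_{\sigma\in\Sigma}|\nu(\sigma)|<\infty$, and then to prove boundedness of the range by a bisection argument run against absolute continuity.

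For the reduction, fix any partition $\mathcal{A}$ and set $P=\bigcup\{\sigma\in\mathcal{A}:\nu(\sigma)\ge 0\}$ and $N=\bigcup\{\sigma\in\mathcal{A}:\nu(\sigma)<0\}$; these belong to $\Sigma$, and additivity over finite disjoint unions gives $|\nu(P)|=\sum_{\sigma\in\mathcal{A},\,\nu(\sigma)\ge 0}|\nu(\sigma)|$ and $|\nu(N)|=\sum_{\sigma\in\mathcal{A},\,\nu(\sigma)<0}|\nu(\sigma)|$. Hence $|\nu|(\mathcal{A})=|\nu(P)|+|\nu(N)|\le 2M$, so $||\nu||_{L^1}\le 2M$ and it suffices to bound $M$.

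To bound $M$, I would assume for contradiction that $\sup_\sigma|\nu(\sigma)|=\infty$ and construct recursively a decreasing sequence $\Omega=A_0\supseteq A_1\supseteq\cdots$ in $\Sigma$ together with pairwise disjoint $B_1,B_2,\dots$ such that $B_{n+1}\subseteq A_n\setminus A_{n+1}$, $|\nu(B_n)|\ge 1$, and the invariant $\sup_{\sigma\subseteq A_n}|\nu(\sigma)|=\infty$ holds for all $n$. Given $A_n$, choose $\sigma_0\subseteq A_n$ with $|\nu(\sigma_0)|\ge 1+|\nu(A_n)|$ and put $\sigma_1=A_n\setminus\sigma_0$; additivity yields $|\nu(\sigma_1)|\ge|\nu(\sigma_0)|-|\nu(A_n)|\ge 1$, so both halves have $|\nu|\ge 1$. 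Since $|\nu(\sigma)|\le|\nu(\sigma\cap\sigma_0)|+|\nu(\sigma\cap\sigma_1)|$ for every $\sigma\subseteq A_n$, the invariant forces $\sup_{\sigma\subseteq\sigma_i}|\nu(\sigma)|=\infty$ for at least one $i$; take $A_{n+1}$ to be that $\sigma_i$ and $B_{n+1}$ the other. The $B_n$ are pairwise disjoint because $B_j\subseteq A_i$ while $B_i\cap A_i=\emptyset$ for $j>i$.

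Finally, since the $B_n$ are pairwise disjoint, $\sum_{n\le k}\mu(B_n)=\mu(B_1\cup\cdots\cup B_k)\le\mu(\Omega)=1$ for every $k$, so $\mu(B_n)\to 0$; applying absolute continuity with $E=1$ to the one-block partitions $\{B_n\}$, eventually $\mu(B_n)<1/D$ forces $|\nu(B_n)|<1$, contradicting $|\nu(B_n)|\ge 1$. Thus $M<\infty$ and $||\nu||_{L^1}\le 2M<\infty$; note that only the instance $E=1$ of absolute continuity was used. The heart of the matter is the recursive bisection: it is the step that is genuinely non-effective — it performs infinitely many unbounded searches — and it is precisely what handles partition blocks of large $\mu$-measure, which in general cannot be broken into blocks of small measure and so are not controlled by absolute continuity directly.
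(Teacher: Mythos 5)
Your proof is correct, and it takes a genuinely different route from the paper's. You reduce the $L^1$ bound to boundedness of the range of $\nu$ via the positive/negative split (so that $|\nu|(\mathcal{A})\leq 2\sup_\sigma|\nu(\sigma)|$), and then argue by contradiction with a recursive bisection/exhaustion: from $\sup_\sigma|\nu(\sigma)|=\infty$ you extract pairwise disjoint $B_n$ with $|\nu(B_n)|\geq 1$ and $\sum_n\mu(B_n)\leq\mu(\Omega)=1$, so $\mu(B_n)\to 0$ and absolute continuity at $E=1$ is violated. The paper's proof is instead direct and finitary: it takes $D$ from absolute continuity at $E=1$, greedily groups an arbitrary partition $\mathcal{B}$ into at most $2D$ sub-collections $\mathcal{B}_i$ each of total measure $<1/D$, and concludes $|\nu|(\mathcal{B})<2D$, hence $||\nu||_{L^1}<2D$. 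The paper's route is constructive and yields an explicit bound $2D$, in keeping with its quantitative agenda, whereas yours is an infinitary proof by contradiction. But your closing observation about blocks of large measure is not just commentary; it identifies exactly the case that the greedy grouping does not handle: if some $\sigma\in\mathcal{B}$ has $\mu(\sigma)\geq 1/D$, it can never be placed into any $\mathcal{B}_i$, so $|\nu(\sigma)|$ escapes the bound, and the estimate $||\nu||_{L^1}<2D$ can fail outright (for $\Sigma=\{\emptyset,\Omega\}$, every additive $\nu$ is absolutely continuous with modulus $\omega_\nu(E)=1$ for all $E$, yet $||\nu||_{L^1}=|\nu(\Omega)|$ is arbitrary). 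Your bisection is immune to this because the pairwise disjoint $B_n$ automatically satisfy $\mu(B_n)\to 0$, no matter how large individual blocks are. In short: the paper's argument gives an explicit modulus wherever partitions can be kept fine (for instance under a nonatomicity hypothesis), while yours proves the lemma in the stated generality at the cost of effectivity; for a quantitative development an explicit $L^1$ bound must in any case be taken as a hypothesis, as the paper eventually does in $(\ast)_3$.
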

\begin{proof}
Apply absolute continuity with $E=1$.  Then there is a $D$ so that whenever $\mu(\mathcal{A})<1/D$, $|\nu|(\mathcal{A})<1$.  We claim that for any $\mathcal{B}$, $|\nu|(\mathcal{B})< 2D$.  Take any $\mathcal{B}$ and choose $\mathcal{B}_0\subseteq\mathcal{B}$ so that $\mu(\mathcal{B}_0)<1/D$ and $\mu(\mathcal{B}_0)$ is maximal among subsets of $\mathcal{B}$ with measure $<1/D$.  (Such a $\mathcal{B}_0$ exists because there are only finitely many subsets of $\mathcal{B}$.)  Choose $\mathcal{B}_1\subseteq\mathcal{B}\setminus\mathcal{B}_0$ similarly, and repeat until we have $\mathcal{B}_0,\ldots,\mathcal{B}_k$.  For $i<k$, we must have $1/2D\leq\mu(\mathcal{B}_i)<1/D$.  In particular, $k\leq 2D$.  Since $\mu(\mathcal{B}_i)<1/D$ for all $i$, $|\nu|(\mathcal{B}_i)<1$ for all $i$.  Since $|\nu|(\mathcal{B})=\sum_i|\nu|(\mathcal{B}_i)$, $|\nu|(\mathcal{B})< 2D$.

This holds for any $\mathcal{B}$, so $||\nu||_{L^1}< 2D$.
\end{proof}

This gives us an easily expressed bound on densities of large partitions:
\begin{lemma}\label{thm:L1_density_bound}
  If $\mu(\mathcal{A})\geq 1/D$ then $\delta_{|\nu|}(\mathcal{A})\leq D||\nu||_{L^1}$.
\end{lemma}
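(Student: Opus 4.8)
The plan is to unwind the definitions and combine two trivial bounds. By definition, $\delta_{|\nu|}(\mathcal{A}) = |\nu|(\mathcal{A})/\mu(\mathcal{A})$, so there are only two quantities to control: the numerator $|\nu|(\mathcal{A}) = \sum_{\sigma\in\mathcal{A}}|\nu(\sigma)|$ from above, and the denominator $\mu(\mathcal{A})$ from below.

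First I would bound the numerator: by the definition of the $L^1$ norm as $\sup_{\mathcal{B}}|\nu|(\mathcal{B})$, the partition $\mathcal{A}$ is one of the competitors in that supremum, so $|\nu|(\mathcal{A})\leq \|\nu\|_{L^1}$. (Implicitly this uses that $\|\nu\|_{L^1}$ is finite, which is the content of the previous lemma, but the inequality itself holds regardless.) Second, the hypothesis gives $\mu(\mathcal{A})\geq 1/D$ directly. Dividing, $\delta_{|\nu|}(\mathcal{A}) = |\nu|(\mathcal{A})/\mu(\mathcal{A}) \leq \|\nu\|_{L^1}\cdot D$, which is the claim.

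There is no real obstacle here; the only point requiring a moment's care is making sure the direction of each inequality is the one that survives the division (larger numerator bound, smaller denominator bound), but since $\mu(\mathcal{A})>0$ whenever $\mu(\mathcal{A})\geq 1/D$ this is immediate. The lemma is essentially a repackaging of the definitions into a form convenient for later use.
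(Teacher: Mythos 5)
Your proof is correct and is essentially identical to the paper's: bound $|\nu|(\mathcal{A})\leq\|\nu\|_{L^1}$ from the definition of the $L^1$ norm as a supremum, use $\mu(\mathcal{A})\geq 1/D$ in the denominator, and divide.
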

\begin{proof}
  For any $\mathcal{A}$ we have $|\nu|(\mathcal{A})\leq||\nu||_{L^1}$, and therefore $\delta_{|\nu|}(\mathcal{A})\leq D||\nu||_{L^1}$.
\end{proof}



\subsection{Products}

When $\rho$ and $\lambda$ are induced by integrals---that is, $\rho(\sigma)=\int_\sigma f\,d\mu$ and $\lambda(\sigma)=\int_\sigma g\,d\mu$---we can consider a product $(\rho\lambda)(\sigma)=\int_\sigma fg\, d\mu$.  Of course, since $f$ and $g$ need only be $L^1$ functions, the product may be infinite on some sets.  As a result, the relationship between the separate measures $\rho$ and $\lambda$ and the product $\rho\lambda$ is not trivial to compute.

We can define a local version of the product:
\begin{definition}
  If $\rho,\lambda$ are functions from $\Sigma$ to $\mathbb{R}$, we define $\rho\ast\lambda$ to be the function
\[(\rho\ast\lambda)(\sigma)=\frac{\rho(\sigma)\lambda(\sigma)}{\mu(\sigma)}.\]
\end{definition}
Note that $\rho\ast\lambda$ need not be additive or absolutely continuous.

Then
\[(\rho\lambda)(\sigma)=\lim_{\mathcal{A}\succeq\sigma}(\rho\ast\lambda)(\mathcal{A}).\]
Much of the complexity of the proof will come from our need to approximate $\rho\lambda$ using $\rho\ast\lambda$.

\section{Notation}

We will ultimately need a series of techical computational lemmas, which will involve a large number of interrelated numeric bounds.  In order to keep the values somewhat organized, we adopt the following notation.  Most of our theorems and definitions will have the general form 
\begin{quote}
For all data $E$, $n$, etc., there exist values $D$, $m$, etc., such that something happens.
\end{quote}
We adopt the convention that the given data in a statement will always use use subscript $\flat$, while the values shown to exist will always have subscript $\sharp$.  Thus the statement above would be written:
\begin{quote}
  For all data $E_\flat$, $n_\flat$, etc., there exist values $D_\sharp$, $m_\sharp$, etc., such that something happens.
\end{quote}
We also need to avoid notation conflicts when applying theorems.  We adopt the rule that all the data corresponding to a single application of a theorem or definition will share a subscript, which will take the place of the $\flat$ or $\sharp$ which was used in the original statement.  Thus, if some later theorem makes use of the statement above, it would say:
\begin{quote}
  We apply the statement to the case $E_0=\cdots$ and $n_0=\cdots$, and the statement guarantees the existence of values $D_0$ and $m_0$ such that...
\end{quote}

We also adopt the rule that functions are always written in bold with a hat, so a function whose output is $m_\flat$ would be written $\fn{m}_\flat$.  Functions whose output is itself a function have the same name with a capital letter, so $\fn{M}_\flat(\cdots)=\fn{m}_\flat$ and $\fn{m}_\flat(\cdots)=m_\flat$.

Because most of our lemmas involve a sequence of numeric values, we use the letters $n,m,k,l$ for the indices of such a sequence, with the convention that typically $n\leq m\leq k\leq l$ (these letters will typically have subscripts as well).  When we have two distinct sets of indices, we use $p\leq q\leq r\leq s$ for the other indices.  When a theorem is stated involving the values $n,m,k,l$, we will sometimes apply to values of the form $p,q,r,s$; when we do so, we will be consistent---$m$ in the original theorem will correspond to $q$ in the application, and so on.

We assume throughout that all functions are monotone \cite{MR1428007}---that is, if $n\leq m$ then $\fn{m}(n)\leq\fn{m}(m)$---and that $\fn{m}(m)\geq m$.  This assumption is harmless, since we could always specify our theorems to replace $\fn{m}$ with $\fn{m}'(m)=\max_{n\leq m}\fn{m}(n)$.

\section{Sequences}

\subsection{Convergence}

The metastable analog of weak convergence is:
\begin{definition}
We say $(\nu_n)_n$ is \emph{metastably weakly convergent} if for every $E_\flat,\widehat{\mathbf{m}}_\flat,n_\flat$, there is an $M_\sharp\geq n_\flat$ so that for every $\sigma$, there is an $m_\sharp\leq M_\sharp$ such that whenever $m,m'\in[m_\sharp,\widehat{\mathbf{m}}_\flat(m_\sharp)]$,  $|\nu_{m}(\sigma)-\nu_{m'}(\sigma)|<1/E_\flat$.
\end{definition}
This is slightly more complicated than the notion for sequences of real numbers because of the uniformity.  (We are also following our general notation for the complicated functions produced by the functional interpretation, which creates an excessive number of subscripts on a simple statement like this.)  Note that the precise amount of uniformity is important; if we replaced $\sigma$ in the definition with an arbitrary partition $\mathcal{A}$ we would actually have the appropriate analog of $L^1$-convergence instead.

If we want to consider partitions, we have the following statement, which is \emph{not} uniform in the size of the partition:
 \begin{lemma}\label{thm:quant_convergence_partition_weak}
   If $(\nu_n)$ is metastably weakly convergent then for every $E_\flat$, $\mathcal{B}_\flat$, $\widehat{\mathbf{m}}_\flat$, $n_\flat$ there is an $m_\sharp\geq n_\flat$ such that whenever $m,m'\in[m_\sharp,\widehat{\mathbf{m}}_\flat(m_\sharp)]$, for each $\sigma_\flat\in\mathcal{B}_\flat$, $|\nu_{m}-\nu_{m'}|(\sigma_\flat)<1/E_\flat$.
 \end{lemma}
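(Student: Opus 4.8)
The plan is to argue by induction on $|\mathcal{B}_\flat|$, taking as induction hypothesis the full statement of the lemma, quantified over all $E$, $\widehat{\mathbf{m}}$, $n$. It is convenient to fix $E_\flat$ and call an interval $[a,b]$ \emph{good for} $\sigma$ if $|\nu_m-\nu_{m'}|(\sigma)<1/E_\flat$ for all $m,m'\in[a,b]$; recall that $|\nu_m-\nu_{m'}|(\sigma)=|\nu_m(\sigma)-\nu_{m'}(\sigma)|$ when $\sigma$ is read as the partition $\{\sigma\}$, so ``good for $\sigma$'' is exactly what metastable weak convergence controls. The case $\mathcal{B}_\flat=\emptyset$ is vacuous, and for a singleton $\mathcal{B}_\flat=\{\sigma\}$ the conclusion is a direct unpacking of the definition of metastable weak convergence applied to that one $\sigma$. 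So all the work is in the inductive step, where one must fold the convergence on one more set into the simultaneous convergence already obtained on the others, without the interval produced for the new set spoiling the old ones.

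For the inductive step I would fix $E_\flat,\widehat{\mathbf{m}}_\flat,n_\flat$, pick some $\sigma^*\in\mathcal{B}_\flat$, and set $\mathcal{B}'=\mathcal{B}_\flat\setminus\{\sigma^*\}$. The naive attempt — apply metastable weak convergence once, obtain a single $M_\sharp$ below which each $\sigma\in\mathcal{B}_\flat$ has a good interval of the form $[m,\widehat{\mathbf{m}}_\flat(m)]$, then intersect these intervals — does not work, because those intervals can be pairwise disjoint. Instead I would first apply metastable weak convergence to $\sigma^*$ with the starting point left as a parameter $x$: this yields a bound $M_0(x)\ge x$ (monotone in $x$, without loss of generality) such that $\sigma^*$ has a good interval $[m,\widehat{\mathbf{m}}_\flat(m)]$ with $x\le m\le M_0(x)$. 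From this I build the \emph{inflated} mesh function $\widehat{\mathbf{m}}_1(x):=\widehat{\mathbf{m}}_\flat(M_0(x))$, which is monotone and satisfies $\widehat{\mathbf{m}}_1(x)\ge x$, hence is a legitimate mesh function. The crucial feature is that $\widehat{\mathbf{m}}_1$ is assembled only from $\widehat{\mathbf{m}}_\flat$ and the convergence data for $\sigma^*$; it does not mention $\mathcal{B}'$, nor the mesh we are about to feed the induction hypothesis.

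Next I would apply the induction hypothesis to $\mathcal{B}'$ with mesh $\widehat{\mathbf{m}}_1$ and starting point $n_\flat$, obtaining $m_1\ge n_\flat$ with $[m_1,\widehat{\mathbf{m}}_1(m_1)]$ good for every element of $\mathcal{B}'$; and then apply the $\sigma^*$-data at $x=m_1$, obtaining $m_2$ with $m_1\le m_2\le M_0(m_1)$ and $[m_2,\widehat{\mathbf{m}}_\flat(m_2)]$ good for $\sigma^*$. I claim $m_\sharp:=m_2$ works. By monotonicity of $\widehat{\mathbf{m}}_\flat$ together with $m_2\le M_0(m_1)$ we get $\widehat{\mathbf{m}}_\flat(m_2)\le\widehat{\mathbf{m}}_\flat(M_0(m_1))=\widehat{\mathbf{m}}_1(m_1)$, so $[m_2,\widehat{\mathbf{m}}_\flat(m_2)]\subseteq[m_1,\widehat{\mathbf{m}}_1(m_1)]$; hence any $m,m'\in[m_\sharp,\widehat{\mathbf{m}}_\flat(m_\sharp)]$ also lie in the $\mathcal{B}'$-good interval, so $|\nu_m-\nu_{m'}|(\sigma)<1/E_\flat$ for every $\sigma\in\mathcal{B}'$, and, by the choice of $m_2$, also for $\sigma^*$. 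Thus $[m_\sharp,\widehat{\mathbf{m}}_\flat(m_\sharp)]$ is good for every element of $\mathcal{B}_\flat$. (I am reading the definition so that the witness always satisfies $n_\flat\le m_\sharp$, not merely $m_\sharp\le M_\sharp$, which is harmless in the metastable setting.)

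The step I expect to be the real obstacle is the circularity lurking in the inductive step: one wants an interval of the shape $[m_\sharp,\widehat{\mathbf{m}}_\flat(m_\sharp)]$ simultaneously inside a $\sigma^*$-good interval and inside a $\mathcal{B}'$-good interval, which looks as though the mesh fed to the induction hypothesis must already anticipate how large the $\sigma^*$-witness can grow, while the size of that witness depends on the starting point the induction hypothesis returns. The resolution is precisely the decoupling isolated above: the bound $M_0(\cdot)$ from metastable weak convergence for $\sigma^*$ depends only on $\widehat{\mathbf{m}}_\flat$ and on the start, so it can be computed \emph{before} the induction hypothesis is invoked and absorbed into $\widehat{\mathbf{m}}_1$. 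This is also why the resulting bound on $m_\sharp$ degrades with $|\mathcal{B}_\flat|$ — each element of the partition contributes one more composition with an $M_0$-type function — which is exactly the non-uniformity in the size of the partition flagged before the lemma.
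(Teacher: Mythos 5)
Your proof is correct and follows essentially the same route as the paper's: induct on $|\mathcal{B}_\flat|$, peel off one element, compose the mesh with the metastable-convergence data for that element to produce an inflated mesh fed to the induction hypothesis, and observe that the witness interval for the peeled element sits inside the good interval the induction hypothesis returns. The reading you flag at the end (that the witness satisfies $n_\flat \le m_\sharp$ and not merely $m_\sharp \le M_\sharp$) is also what the paper's own proof tacitly uses when it asserts $m_{m_0} \ge m_0$, so you are in harmony with the intended definition.
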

 \begin{proof}
   By induction on $|\mathcal{B}_\flat|$.  When $|\mathcal{B}_\flat|=1$, this follows immediately from metastable weak convergence applied to $E_\flat, \widehat{\mathbf{m}}_\flat, n_\flat$.

   Suppose the claim holds for $\mathcal{B}_\flat$ and we have some $\sigma_0\not\in\mathcal{B}_\flat$.  Given any $m_0$, by metastable weak convergence applied to $E_\flat, \fn{m}_\flat, n_\flat$, there is some $m_{m_0}\geq m_0$ so that for all $m,m'\in[m_{m_0},\widehat{\mathbf{m}}_\flat(m_{m_0})]$, $|\nu_{m}-\nu_{m'}|(\sigma_0)<1/E_\flat$.  Define $\widehat{\mathbf{m}}_0(m_0)=\widehat{\mathbf{m}}_\flat(m_{m_0})$ and apply the inductive hypothesis to $E_\flat,\mathcal{B}_\flat,\widehat{\mathbf{m}}_0,n_\flat$.  We obtain $m_0\geq n_\flat$ so that for all $m,m'\in[m_0,\widehat{\mathbf{m}}_0(m_0)]$ and all $\sigma_\flat\in\mathcal{B}_\flat$, $|\nu_{m}-\nu_{m'}|(\sigma_\flat)<1/E$.

We set $m_\sharp=m_{m_0}\geq m_0$.  Then $[m_{m_0},\widehat{\mathbf{m}}_\flat(m_{m_0})]\subseteq[m_0,\widehat{\mathbf{m}}_0(m_0)]$, so $m_\sharp$ satisfies the claim.
 \end{proof}

There is a natural strengthening of metastable weak convergence:
\begin{definition}
  $(\nu_n)$ has \emph{bounded fluctuations} if for every $E_\flat$ there is a $V_\sharp$ so that for every $\fn{m}_\flat,n_\flat,\sigma$ there is an $m_\sharp\in[n_\flat, \fn{m}^{V_\sharp}_\flat(n_\flat)]$ such that whenever $m,m'\in[m_\sharp,\fn{m}_\flat(m_\sharp)]$, $|\nu_{m}(\sigma)-\nu_{m'}(\sigma)|<1/E_\flat$.
\end{definition}
Metastable weak convergence corresponds to the statement that a certain tree is well-founded (see \cite{gaspar}); having bounded fluctuations implies that the height of this tree is bounded by $\omega$.  

It will be convenient to be able to assume that $m_\sharp=\fn{m}^v_\flat(n_\flat)$ exactly for some $v$:
\begin{lemma}
  Suppose $(\nu_n)$ has bounded fluctuations.  Then for every $E_\flat$ there is a $V_\sharp$ so that for every $\fn{m}_\flat,n_\flat,\sigma$ there is a $v_\sharp\leq V_\sharp$ such that whenever $m,m'\in[\fn{m}_\flat^{v_\sharp}(n_\flat),\fn{m}_\flat^{v_\sharp+1}(n_\flat)]$, $|\nu_{m}(\sigma)-\nu_{m'}(\sigma)|<1/E_\flat$.
\end{lemma}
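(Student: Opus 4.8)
The plan is to reduce everything to the hypothesis of bounded fluctuations, but applied to the \emph{square} $\fn{m}_\flat\circ\fn{m}_\flat$ rather than to $\fn{m}_\flat$ itself: the stability interval produced by bounded fluctuations has an essentially arbitrary left endpoint, so to convert it into an interval of the form $[\fn{m}_\flat^{v}(n_\flat),\fn{m}_\flat^{v+1}(n_\flat)]$ we need some slack on the left, and composing $\fn{m}_\flat$ with itself supplies exactly that slack.

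First I would apply bounded fluctuations to $E_\flat$, obtaining a bound which I will call $V_0$, and set $V_\sharp:=2V_0$. Given $\fn{m}_\flat,n_\flat,\sigma$, observe that $\fn{g}:=\fn{m}_\flat\circ\fn{m}_\flat$ is monotone and satisfies $\fn{g}(m)\geq m$, so it is a legitimate function to feed to bounded fluctuations. Applying bounded fluctuations to $\fn{g},n_\flat,\sigma$ produces $m_0\in[n_\flat,\fn{g}^{V_0}(n_\flat)]=[n_\flat,\fn{m}_\flat^{2V_0}(n_\flat)]$ such that $|\nu_m(\sigma)-\nu_{m'}(\sigma)|<1/E_\flat$ for all $m,m'\in[m_0,\fn{m}_\flat^{2}(m_0)]$.

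Next I would take $v_\sharp$ to be the least $v$ with $\fn{m}_\flat^{v}(n_\flat)\geq m_0$; since $\fn{m}_\flat^{2V_0}(n_\flat)\geq m_0$ this is well defined and $v_\sharp\leq 2V_0=V_\sharp$. It then suffices to check $[\fn{m}_\flat^{v_\sharp}(n_\flat),\fn{m}_\flat^{v_\sharp+1}(n_\flat)]\subseteq[m_0,\fn{m}_\flat^{2}(m_0)]$, since the stability statement for $m_0$ is then exactly what is claimed. The left endpoint is $\geq m_0$ by the choice of $v_\sharp$. For the right endpoint, if $v_\sharp=0$ then $m_0=n_\flat$ (as $m_0\geq n_\flat$ automatically), so $\fn{m}_\flat^{v_\sharp+1}(n_\flat)=\fn{m}_\flat(m_0)\leq\fn{m}_\flat^{2}(m_0)$; if $v_\sharp\geq 1$ then minimality gives $\fn{m}_\flat^{v_\sharp-1}(n_\flat)<m_0$, whence monotonicity of $\fn{m}_\flat$ gives $\fn{m}_\flat^{v_\sharp}(n_\flat)\leq\fn{m}_\flat(m_0)$ and therefore $\fn{m}_\flat^{v_\sharp+1}(n_\flat)\leq\fn{m}_\flat^{2}(m_0)$.

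The only step that is not pure bookkeeping — and hence the main obstacle — is recognizing that one must pass to $\fn{m}_\flat\circ\fn{m}_\flat$: bounded fluctuations applied directly to $\fn{m}_\flat$ gives a stability interval $[m_\sharp,\fn{m}_\flat(m_\sharp)]$ that is in general too short to contain any interval $[\fn{m}_\flat^{v}(n_\flat),\fn{m}_\flat^{v+1}(n_\flat)]$ sitting inside it, and the extra application of $\fn{m}_\flat$ is precisely what closes that gap. Once this is seen, the bound $V_\sharp=2V_0$ falls out immediately and the remaining verifications are routine inequalities between iterates of a monotone function.
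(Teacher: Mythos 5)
Your proof is correct and takes essentially the same route as the paper's: apply bounded fluctuations to the composition $\fn{m}_\flat\circ\fn{m}_\flat$, obtain $m_0\in[n_\flat,\fn{m}_\flat^{2V_0}(n_\flat)]$ with stability on $[m_0,\fn{m}_\flat^2(m_0)]$, and then pick out the iterate $v_\sharp$ so that $[\fn{m}_\flat^{v_\sharp}(n_\flat),\fn{m}_\flat^{v_\sharp+1}(n_\flat)]$ nests inside that interval. The only (cosmetic) difference is that the paper selects the \emph{greatest} $v_0$ with $\fn{m}_\flat^{v_0}(n_\flat)<m_\sharp$ and uses $v_0+1$, whereas you select the \emph{least} $v_\sharp$ with $\fn{m}_\flat^{v_\sharp}(n_\flat)\geq m_0$; these are the same exponent, but your formulation also cleanly handles the degenerate case $m_0=n_\flat$ (where the paper's "greatest $v_0$" would not exist).
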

\begin{proof}
  Let $V_0$ be the bound for the bounded fluctuation of $(\nu_n)$; applying this to the function $\fn{m}_\flat^2$, for any $\sigma, n_\flat$ there is an $m_\sharp\in[n_\flat,\fn{m}^{2V_0}(n_\flat)]$ such that whenever $m,m'\in[m_\sharp,\fn{m}^2_\flat(m_\sharp)]$, $|\nu_{m}(\sigma)-\nu_{m'}(\sigma)|<1/E_\flat$.  Let $v_0< 2V_0$ be greatest such that $\fn{m}_\flat^{v_0}(n_\flat)< m_\sharp$.  Then $\fn{m}_\flat^{v_0+1}(n_\flat)\geq m_\sharp$, so $m_\sharp\leq\fn{m}_\flat^{v_0+1}(n_\flat)\leq\fn{m}_\flat(m_\sharp)$ and $\fn{m}_\flat^{v_0+2}(n_\flat)\leq\fn{m}^2_\flat(m_\sharp)$, so for any $m,m'\in[\fn{m}_\flat^{v_0+1}(n_\flat),\fn{m}_\flat^{v_0+2}(n_\flat)]\subseteq[m_\sharp,\fn{m}_\flat^2(m_\sharp)]$ we have $|\nu_{m_\sharp}(\sigma)-\nu_m(\sigma)|<1/E_\flat$ as desired.
\end{proof}

In this case we can get also get some uniform bounds on partitions if we are willing to accept a set of defective $\sigma$ of small measure:
\begin{lemma}\label{thm:quant_convergence_partition}
  If $(\nu_n)$ has bounded fluctuations then for every $E_\flat,D_\flat,\widehat{\mathbf{m}}_\flat,n_\flat,\mathcal{B}_\flat$ there is an $m_\sharp\geq n_\flat$ such that, taking 
\[\mathcal{B}=\{\sigma\in\mathcal{B}_\flat\mid \text{for every }m,m'\in[m_\sharp,\widehat{\mathbf{m}}_\flat(m_\sharp)]\text{, }|\nu_m-\nu_{m'}|(\sigma)< 1/E_\flat\},\]
we have $\mu(\mathcal{B})\geq (1-/D_\flat) \mu(\mathcal{B}_\flat) $.
\end{lemma}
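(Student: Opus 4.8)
The plan is to isolate a combinatorial statement about a single $\sigma$ and then average over the finitely many $\sigma\in\mathcal{B}_\flat$. Fix $E_\flat$ and let $V$ be a bound witnessing that $(\nu_n)$ has bounded fluctuations for $E_\flat$. For $v\in\mathbb{N}$ write $W_v=[\fn{m}_\flat^{v}(n_\flat),\fn{m}_\flat^{v+1}(n_\flat)]$, and say $W_v$ is \emph{bad for $\sigma$} if there are $m,m'\in W_v$ with $|\nu_m(\sigma)-\nu_{m'}(\sigma)|\ge 1/E_\flat$. The heart of the argument is the claim that \emph{every $\sigma$ has at most $V$ bad blocks $W_v$}.

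To prove this, suppose toward a contradiction that $W_{v_0},\dots,W_{v_V}$ are bad for $\sigma$ with $v_0<\dots<v_V$. I will build a monotone function $\fn{g}$ with $\fn{g}(x)\ge x$ whose iterates from $n_\flat$ step through these bad blocks. For $x\le\fn{m}_\flat^{v_V}(n_\flat)$ let $j(x)$ be the least $j\le V$ with $\fn{m}_\flat^{v_j}(n_\flat)\ge x$ and put $\fn{g}(x)=\fn{m}_\flat^{v_{j(x)}+1}(n_\flat)$ (extend $\fn{g}$ monotonically with $\fn{g}(x)\ge x$ elsewhere). Since $x\le\fn{m}_\flat^{v_{j(x)}}(n_\flat)$ we get $[x,\fn{g}(x)]\supseteq W_{v_{j(x)}}$, so $[x,\fn{g}(x)]$ is bad for $\sigma$; and since a bad block is not a single point, the iteration makes strict progress, and one checks by induction that $\fn{g}^{\,i+1}(n_\flat)=\fn{m}_\flat^{v_i+1}(n_\flat)$, hence $\fn{g}^{\,V}(n_\flat)=\fn{m}_\flat^{v_{V-1}+1}(n_\flat)\le\fn{m}_\flat^{v_V}(n_\flat)$. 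Consequently $j(x)$ is defined for every $x\in[n_\flat,\fn{g}^{\,V}(n_\flat)]$, so \emph{no} such $x$ has $[x,\fn{g}(x)]$ good for $\sigma$ --- contradicting bounded fluctuations applied to $E_\flat,\fn{g},n_\flat,\sigma$.

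Granting the claim, I take $m_\sharp$ to be the best of the $VD_\flat+1$ candidates $\fn{m}_\flat^{v}(n_\flat)$ for $v=0,\dots,VD_\flat$ (all $\ge n_\flat$). Writing $\mathcal{C}_v=\{\sigma\in\mathcal{B}_\flat\mid W_v\text{ is bad for }\sigma\}\subseteq\mathcal{B}_\flat$, the set $\mathcal{B}$ of the statement for the choice $m_\sharp=\fn{m}_\flat^{v}(n_\flat)$ is exactly $\mathcal{B}_\flat\setminus\mathcal{C}_v$ (note $[m_\sharp,\fn{m}_\flat(m_\sharp)]=W_v$). Exchanging the order of summation and using the claim together with $\mathcal{B}_\flat$ being a partition,
\[\sum_{v=0}^{VD_\flat}\mu(\mathcal{C}_v)=\sum_{\sigma\in\mathcal{B}_\flat}\mu(\sigma)\cdot\#\{v\le VD_\flat\mid W_v\text{ bad for }\sigma\}\le V\,\mu(\mathcal{B}_\flat),\]
so some $v^\ast\le VD_\flat$ has $\mu(\mathcal{C}_{v^\ast})\le\mu(\mathcal{B}_\flat)/D_\flat$; taking $m_\sharp=\fn{m}_\flat^{v^\ast}(n_\flat)$ then gives $\mu(\mathcal{B})=\mu(\mathcal{B}_\flat)-\mu(\mathcal{C}_{v^\ast})\ge(1-1/D_\flat)\mu(\mathcal{B}_\flat)$.

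The only step with real content is the claim, and the delicate part there is designing $\fn{g}$ so that \emph{every} interval $[x,\fn{g}(x)]$ over the relevant range engulfs an entire bad block --- which is why $\fn{g}(x)$ is built from ``the first bad block lying at or after $x$'' rather than by naively merging consecutive blocks (an adversarial $x$ could fall into a gap), and why one needs the harmless observation that a bad block cannot be a single point. This is also precisely where bounded fluctuations is used in an essential way: from metastable weak convergence alone one only gets that the bad blocks have upper density $\le V/(V+1)$ among $W_0,W_1,\dots$, which is too weak to overcome an arbitrary $D_\flat$ in the averaging step.
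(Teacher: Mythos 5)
Your proof is correct, and it takes a genuinely different and in fact tighter route than the paper's.

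The paper's argument is an iterated pigeonhole: from the ``exact-iterate'' reformulation of bounded fluctuations it gets, for each $\sigma$, \emph{one} good block among the first $V_\sharp$, so by pigeonhole a single block $v$ is good for a $1/V_\sharp$-fraction of $\mathcal{B}_\flat$; it then recurses on the bad fraction with a composed function, paying a multiplicative factor $V_\sharp$ in the index at each step, and iterates $k\approx V_\sharp\ln D_\flat$ times to shrink the bad measure to $(1-1/V_\sharp)^k<1/D_\flat$. The witness index it produces is bounded by $V_\sharp^k$, which (as the paper records for $\mathfrak{a}_0$ in Section~\ref{sec:bounds}) is exponential in $B,E_\flat$.

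You instead prove a strictly stronger combinatorial fact about a single $\sigma$ --- that there are at most $V$ bad blocks $W_v$ \emph{in total} --- by diagonalizing: if $W_{v_0},\dots,W_{v_V}$ were all bad, the ``first bad block at or after $x$'' function $\fn{g}$ (monotone, with $\fn{g}(x)\geq x$, and strictly progressing because a bad block is never a singleton) has every interval $[x,\fn{g}(x)]$ bad for $x\in[n_\flat,\fn{g}^{V}(n_\flat)]$, contradicting bounded fluctuations applied to $\fn{g}$. With this in hand a single averaging over $v\leq VD_\flat$ finishes, giving $m_\sharp=\fn{m}_\flat^{v^\ast}(n_\flat)$ with $v^\ast\leq VD_\flat$. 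So the iteration is eliminated entirely. What this buys quantitatively: your witness index is $O(VD_\flat)$, polynomial in all of $B,D_\flat,E_\flat$, versus the paper's $V_\sharp^{O(V_\sharp\ln D_\flat)}$; if one carried your argument through Section~\ref{sec:bounds}, the function $\mathfrak{a}_0$ (and with it $\mathfrak{a}$) would drop from exponential to polynomial, which would propagate nontrivial improvements downstream. Conceptually, the paper uses only the weak consequence ``at least one good block among the first $V_\sharp+1$''; you observed that the quantifier order in the definition of bounded fluctuations (the function $\fn{m}$ may be chosen after $\sigma$) actually delivers a hard global cap on the number of bad blocks, and that this stronger form is exactly what the averaging step wants.

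One purely presentational remark: the paper's own proof takes $V_\sharp$ to be the fluctuation bound ``when $2E_\flat$'', a doubling you don't need; your direct use of the bound for $E_\flat$ is fine because both the fluctuation definition and the conclusion use strict inequality at the same threshold.
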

\begin{proof}
Let $V_\sharp$ be the bound on the number of fluctuations when $2E_\flat$.  Given $n_\flat,\fn{m}_\flat,v$, let
\[\mathcal{E}(v,n_\flat,\fn{m}_\flat)=\{\sigma\in\mathcal{B}_\flat\mid \text{for some }m,m'\in[\fn{m}_\flat^v(n_\flat),\fn{m}_\flat^{v+1}(n_\flat)]\text{, }|\nu_m-\nu_{m'}|(\sigma)\geq 1/E_\flat\},\]
the ``exceptional'' $\sigma$.  We will show that $\mu(\mathcal{E}(v,n_\flat,\fn{m}_\flat))<1-\mu(\mathcal{B}_\flat)/D_\flat$ for some $v$.

By induction on $k$ we will show that, for any $\fn{m}_\flat$, there is a $v\leq V_\sharp^k$ so that $\mathcal{E}(v,n_\flat,\fn{m}_\flat)<(1-1/V_\sharp)^k\mu(\mathcal{B}_\flat)$.

When $k=1$, since $(\nu_n)$ has bounded fluctuations, for each $\sigma\in\mathcal{B}_\flat$ there is a $v_\sigma\leq V_\sharp$ so that for each $m,m'\in[\fn{m}_\flat^{v_\sigma}(n_\flat),\fn{m}_\flat^{v_\sigma+1}(n_\flat)]$, $|\nu_m-\nu_{m'}|(\sigma)< 1/E_\flat$---that is, $\sigma\not\in\mathcal{E}(v_\sigma,n_\flat,\fn{m}_\flat)$.  In particular, there must be some $v\leq V_\sharp$ such that the set of $\sigma$ with $v_\sigma=v$ has measure $\geq  \mu(\mathcal{B}_\flat)/V_\sharp$, so $\mathcal{E}(v,n_\flat,\fn{m}_\flat)<(1-1/V_\sharp)\mu(\mathcal{B}_\flat)$.

Suppose the claim holds for $k$.  We apply the inductive hypothesis to the function $\fn{m}_\flat^{V_\sharp}$, so there is some $v\leq V_\sharp^k$ so that $\mu(\mathcal{E}(v,n_\flat,\fn{m}_\flat^{V_\sharp}))<(1-1/V_\sharp)^k\mu(\mathcal{B}_\flat)$.  Then applying the $k=1$ case to $\fn{m}_\flat^{v\cdot V_\sharp}(n_\flat),\fn{m}_\flat$, $\mathcal{E}(v,n_\flat,\fn{m}_\flat^{V_\sharp})$, we obtain a $v'$ so that
\[\mu(\mathcal{E}(v',\fn{m}_\flat^{v\cdot V_\sharp}(n_\flat),\fn{m}_\flat))<(1-1/V_\sharp) (1-1/V_\sharp)^k\mu(\mathcal{B}_\flat).\]
Therefore
\[\mu(\mathcal{E}(v\cdot V_\sharp+v',n_\flat,\fn{m}_\flat))<(1-1/V_\sharp)^{k+1}\mu(\mathcal{B}_\flat).\]
Therefore $v\cdot V_\sharp+v'$ $v\cdot V_\sharp+v'$ satisfies the claim.

The lemma follows by taking $k=\lceil\frac{\ln (1/D_\flat)}{\ln (1-1/V_\sharp)}\rceil$.
\end{proof}

\subsection{Uniform Continuity}

The Vitali-Hahn-Saks Theorem says roughly that a weakly convergent sequence of additive functions $\nu_m$ is actually uniformly continuous---that is, for each $\epsilon>0$ there is a $\delta>0$ so that when $\mu(\sigma)<\delta$, $|\nu_m(\sigma)|<\epsilon$ for all $m$ simultaneously.  The metastable analog of uniform continuity is:
\begin{definition}
  We say a sequence of functions $(\nu_n)_n$ is \emph{metastably uniformly continuous} if for every $E_\flat,\widehat{\mathbf{m}}_\flat,n_\flat$ there exist $m_\sharp\geq n_\flat$ and $D_\sharp$ such that whenever $\mu(\sigma)<1/D_\sharp$ and $m\in[m_\sharp,\widehat{\mathbf{m}}_\flat(D_\sharp,m_\sharp)]$, $|\nu_m(\sigma)|<1/E_\flat$.
\end{definition}
Note that this immediately implies the same statement with uniformity over partitions:
\begin{lemma}
Let $(\nu_n)_n$ be metastably uniformly continuous.  Then for any $E_\flat, \widehat{\mathbf{m}}_\flat,n_\flat$ there are $m_\sharp\geq n_\flat$ and $D_\sharp$ such that whenever $\mu(\mathcal{A})<1/D_\sharp$ and $m\in[m_\sharp,\widehat{\mathbf{m}}_\flat(D_\sharp,m_\sharp)]$, $|\nu_m|(\mathcal{A})<1/E_\flat.$
\end{lemma}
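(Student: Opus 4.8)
The plan is to reduce the statement for partitions to the single-set statement in the definition of metastable uniform continuity, by splitting a partition according to the sign of $\nu_m$.

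First I would invoke metastable uniform continuity on the data $E_0 = 2E_\flat$, $\widehat{\mathbf{m}}_0 = \widehat{\mathbf{m}}_\flat$, and $n_0 = n_\flat$. This yields $m_0 \geq n_\flat$ and $D_0$ such that $|\nu_m(\sigma)| < 1/(2E_\flat)$ whenever $\mu(\sigma) < 1/D_0$ and $m \in [m_0, \widehat{\mathbf{m}}_\flat(D_0, m_0)]$. I then take $m_\sharp = m_0$ and $D_\sharp = D_0$.

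Next, fix an arbitrary partition $\mathcal{A}$ with $\mu(\mathcal{A}) < 1/D_\sharp$ and an arbitrary $m \in [m_\sharp, \widehat{\mathbf{m}}_\flat(D_\sharp, m_\sharp)]$. Write $\mathcal{A}^+ = \{\sigma \in \mathcal{A} \mid \nu_m(\sigma) \geq 0\}$ and $\mathcal{A}^- = \mathcal{A} \setminus \mathcal{A}^+$, and set $\tau^+ = \bigcup \mathcal{A}^+$ and $\tau^- = \bigcup \mathcal{A}^-$; these lie in $\Sigma$ since it is a Boolean algebra, and $\mu(\tau^{\pm}) \leq \mu(\mathcal{A}) < 1/D_\sharp$ because $\mu$ is additive and nonnegative. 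Additivity of $\nu_m$ over the disjoint pieces of the partition gives $\nu_m(\tau^+) = \sum_{\sigma \in \mathcal{A}^+} \nu_m(\sigma) = \sum_{\sigma \in \mathcal{A}^+} |\nu_m(\sigma)|$ and $-\nu_m(\tau^-) = \sum_{\sigma \in \mathcal{A}^-} |\nu_m(\sigma)|$, so
\[ |\nu_m|(\mathcal{A}) = \sum_{\sigma \in \mathcal{A}} |\nu_m(\sigma)| = |\nu_m(\tau^+)| + |\nu_m(\tau^-)| < \frac{1}{2E_\flat} + \frac{1}{2E_\flat} = \frac{1}{E_\flat}, \]
using the single-set bound on $\tau^+$ and on $\tau^-$.

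I do not expect a genuine obstacle here; the only thing to keep in mind is that the decomposition $\mathcal{A}^{\pm}$ depends on $m$, which is harmless because $m_\sharp$ and $D_\sharp$ are fixed before $\mathcal{A}$ and $m$ are chosen and the single-set bound holds uniformly over the whole interval $[m_\sharp,\widehat{\mathbf{m}}_\flat(D_\sharp,m_\sharp)]$. The factor of $2$ absorbed into $E_0$ is exactly what lets the two half-bounds recombine into $1/E_\flat$.
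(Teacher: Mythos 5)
Your proposal is correct and matches the paper's proof essentially exactly: apply metastable uniform continuity at $2E_\flat$, split $\mathcal{A}$ into $\mathcal{A}^+$ and $\mathcal{A}^-$ by the sign of $\nu_m$, and apply the single-set bound to $\bigcup\mathcal{A}^+$ and $\bigcup\mathcal{A}^-$. The only difference is that you spell out a few intermediate steps (additivity over the union, the measure bound on $\tau^\pm$) that the paper leaves implicit.
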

\begin{proof}
  Given $E_\flat, \widehat{\mathbf{m}}_\flat,n_\flat$, apply metastable uniform continuity to $2E_\flat, \widehat{\mathbf{m}}_\flat, n_\flat$ to obtain $m_\sharp\geq n_\flat$ and $D_\sharp$.  Then for any $m\in[m_0,\widehat{\mathbf{m}}(D,m_0)]$ and any $\mathcal{A}$, we may decompose $\mathcal{A}=\mathcal{A}_+\cup\mathcal{A}_-$ where
\[\mathcal{A}_+=\{\sigma\in\mathcal{A}\mid\nu_m(\sigma)\geq 0\},\ \mathcal{A}_-=\{\sigma\in\mathcal{A}\mid\nu_m(\sigma)<0\}.\]
Then
\[|\nu_m|(\mathcal{A})=\nu_m(\mathcal{A}_+)-\nu_m(\mathcal{A}_-)=\nu_m(\bigcup\mathcal{A}_+)-\nu_m(\bigcup\mathcal{A}_-)<2/2E_\flat.\]
\end{proof}

We now give a quantitative version of Vitali-Hahn-Saks.

\begin{theorem}\label{thm:q_vhs}
Let $E_\flat, \widehat{\mathbf{m}}_\flat,n_\flat$ be given and let $(\nu_n)_n$ be a metastably weakly convergent sequence of additive functions with moduli of absolute continuity $\omega_{\nu_n}$.  Then there are $m_\sharp\geq n_\flat$ and $D_\sharp$ so that, for each $m\in[m_\sharp,\widehat{\mathbf{m}}_\flat(D_\sharp,m_\sharp)]$, whenever $\mu(\sigma)<1/D_\sharp$, $|\nu_m(\sigma)|<1/E_\flat$.
\end{theorem}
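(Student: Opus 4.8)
\section*{Proof proposal}

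The plan is to prove a metastable, quantitative form of the Vitali--Hahn--Saks theorem by a quantitative gliding--hump argument: metastable weak convergence together with the individual moduli of absolute continuity should force metastable uniform continuity. We are given $E_\flat$, $\widehat{\mathbf{m}}_\flat$, $n_\flat$ and must produce $m_\sharp\geq n_\flat$ and $D_\sharp$ such that $|\nu_m(\sigma)|<1/E_\flat$ whenever $\mu(\sigma)<1/D_\sharp$ and $m\in[m_\sharp,\widehat{\mathbf{m}}_\flat(D_\sharp,m_\sharp)]$.

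First I would set up a family of candidate windows that depends only on the given data. For an index $a$ write $\Omega(a):=\max_{j\leq a}\omega_{\nu_j}(2E_\flat)$; this is computed from the given moduli and has the property that $|\nu_j(\sigma)|<1/(2E_\flat)$ for all $j\leq a$ as soon as $\mu(\sigma)<1/\Omega(a)$. Define recursively $a_0:=n_\flat$, $D^{(i)}:=\Omega(a_i)$, and $a_{i+1}:=\widehat{\mathbf{m}}_\flat(D^{(i)},a_i)+1$, so the windows $W_i:=[a_i,\widehat{\mathbf{m}}_\flat(D^{(i)},a_i)]$ are pairwise disjoint, increasing, and entirely determined by $n_\flat$, $\widehat{\mathbf{m}}_\flat$, and the moduli. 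I want to output the first $W_i$ that ``works'', i.e.\ such that $\mu(\sigma)<1/D^{(i)}$ and $m\in W_i$ already force $|\nu_m(\sigma)|<1/E_\flat$. The key local observation is that a \emph{failure} of $W_i$, witnessed by a pair $(m,\sigma)$ with $m\in W_i$, $\mu(\sigma)<1/D^{(i)}$, and $|\nu_m(\sigma)|\geq1/E_\flat$, produces an oscillation: since $D^{(i)}=\Omega(a_i)\geq\omega_{\nu_{a_i}}(2E_\flat)$ we get $|\nu_{a_i}(\sigma)|<1/(2E_\flat)$, hence $|\nu_{a_i}(\sigma)-\nu_m(\sigma)|\geq 1/(2E_\flat)$ with $a_i\leq m$, both inside $W_i$.

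Next I would turn these oscillations into a bound on the number of stages via metastable weak convergence. The delicacy is the usual circularity: a single failure at stage $i$ only shows that the $1/(2E_\flat)$-stability window that metastable weak convergence assigns to $\sigma$ must \emph{start inside} $W_i$, not before it, so to exclude infinitely many failures one must hand metastable weak convergence a function $\widehat{\mathbf{m}}_0$ that reserves room for every later stage --- and $\widehat{\mathbf{m}}_0$ must be fixed before the bound $M$ limiting the number of stages is known. I would untangle this exactly as in the proof of Lemma~\ref{thm:quant_convergence_partition_weak}: define $\widehat{\mathbf{m}}_0$ by a recursion that, on input $x$, runs the (completely effective) window recursion above far enough past $x$ and composes in a stability slot for each of the $\leq x$ many exceptional sets that could appear by then, then apply metastable weak convergence with parameters $2E_\flat$, $\widehat{\mathbf{m}}_0$, $n_\flat$ to obtain $M_\sharp$. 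Once $a_i>M_\sharp$, a failure of $W_i$ is impossible: the oscillation of $\nu(\sigma)$ between $a_i$ and $m$ would have to lie inside $[m_*,\widehat{\mathbf{m}}_0(m_*)]$ for the $m_*\leq M_\sharp$ supplied by metastable weak convergence, yet by construction that interval is a $1/(2E_\flat)$-stability window for $\sigma$. Hence the first $i$ with $a_i>M_\sharp$ yields the desired $m_\sharp:=a_i$ and $D_\sharp:=D^{(i)}$.

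The main obstacle, as in the classical theorem, is precisely this bookkeeping. One must organize the oscillation witnesses so that a number of them controlled by $M_\sharp$ exhausts every possible stability window, which forces a careful a priori construction of $\widehat{\mathbf{m}}_0$ (essentially Skolemizing the whole finite iteration) together with the reservation technique of Lemma~\ref{thm:quant_convergence_partition_weak} to absorb the growing family of exceptional sets. If exceptional sets from different stages interact, so that a later oscillation is masked on an earlier set, one additionally needs a gliding-hump disjointification --- carving each exceptional set down by the later ones, which are individually $\nu$-negligible because the moduli are part of the data --- to keep the oscillations independent; making all of this uniform and computable in the given parameters is the technical heart of the argument.
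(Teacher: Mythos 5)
Your overall strategy---gliding hump driven by the moduli $\omega_{\nu_n}$, converting a failure of uniform continuity into a large oscillation of $\nu$ on a small set, and then invoking metastable weak convergence to bound the number of stages---is the right one, and the local observation (that $\mu(\sigma)<1/\Omega(a_i)$ forces $|\nu_{a_i}(\sigma)|<1/2E_\flat$ and hence a jump $|\nu_{a_i}(\sigma)-\nu_m(\sigma)|\geq 1/2E_\flat$) is correct.

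However, there is a genuine gap at exactly the point you flag as ``the delicacy.'' As you set it up, each failed window $W_i$ produces its \emph{own} witness $\sigma^{(i)}$, and you then want to apply metastable weak convergence once to bound the number of failures. But the defining property of metastable weak convergence gives, for a single $\sigma$, some $m_*\leq M_\sharp$ with stability in $[m_*,\widehat{\mathbf{m}}_0(m_*)]$---and $m_*$ depends on $\sigma$. To rule out a failure at stage $i$, you would need $[m_*,\widehat{\mathbf{m}}_0(m_*)]$ to swallow the oscillation at stage $i$ on $\sigma^{(i)}$; since $m_*\leq M_\sharp<a_i$, this forces $\widehat{\mathbf{m}}_0(m_*)\geq\widehat{\mathbf{m}}_\flat(D^{(i)},a_i)$, where $i$ is not determined by $m_*$ alone but by where $M_\sharp$ lands. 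Your proposed fix, ``untangle this exactly as in the proof of Lemma~\ref{thm:quant_convergence_partition_weak},'' does not close the circle: that lemma's recursion is over a \emph{fixed} finite partition handed to you in advance, so the composition of stability slots terminates; here the exceptional sets $\sigma^{(i)}$ are generated dynamically and their number is precisely the quantity you are trying to bound. The ``gliding-hump disjointification'' you mention in the last paragraph is the right instinct, but it is the heart of the matter, not an optional refinement, and as written it is not an argument.

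The paper's proof avoids all of this by building in the nesting from the start. Instead of asking for $\mu(\sigma)<1/D_i$, it tracks a reference set $\sigma_i$ and takes a failure to be a pair $(m,\sigma)$ with $\mu(\sigma_i\bigtriangleup\sigma)<1/D_i$ and $|\nu_{m_i}(\sigma)-\nu_m(\sigma)|\geq 1/4E_\flat$; it also normalizes the moduli to be geometrically increasing and chooses $D_{i+1}$ in terms of $\omega_{\nu_{m_{i+1}}}$, i.e.\ in terms of the \emph{failing} index, not the base point $a_i$. Then the $\sigma_i$ form a Cauchy-in-measure sequence, $\mu(\sigma_j\bigtriangleup\sigma_i)<2/D_j$, so metastable weak convergence need only be applied to a \emph{single} set $\sigma_{M'}$: the function $\widehat{\mathbf{m}}'$ sends $m'$ to the endpoint $m_{i+1}$ of the current stage, and the stability of $\nu_\cdot(\sigma_{M'})$ on $[m',\widehat{\mathbf{m}}'(m')]$ transfers back to $\sigma_{i+1}$ because $\nu_{m_i}$ and $\nu_{m_{i+1}}$ are both tiny on $\sigma_{i+1}\bigtriangleup\sigma_{M'}$, using the moduli at the two \emph{large} indices. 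Note that your $\Omega(a_i)=\max_{j\leq a_i}\omega_{\nu_j}(2E_\flat)$ only controls $\nu_j$ for $j\leq a_i$, so even if you attempted such a transfer it would not control $\nu_m$ for $m$ in the interior of $W_i$; the paper's choice of $D$ at the failing index is essential. So the missing idea is: do not treat the exceptional sets as independent; make each one a small perturbation of the last so that one application of metastable weak convergence, to the last set, contradicts all earlier failures at once.
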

\begin{proof}
We assume that the moduli of absolute continuity are rapidly growing, specifically that $\omega_{\nu_{m+1}}(E)\geq 2\omega_{\nu_m}(E)$.  (This is without loss of generality, since we can always replace $\omega_{\nu}$ with a larger function.)

We define a function
\[\fn{m}(m_0)=\fn{m}_\flat(2\omega_{\nu_{m_0}}(16E_\flat),m_0).\]

We now define a sequence of values $m_i, \sigma_i$.  We always have $D_i=2\omega_{\nu_{m_i}}(16E_\flat)$.   We will always have $m_i<m_{i+1}$, and therefore for any $j<i$ we have $m_i\geq m_{j}+(j-i)$, and so $D_i\geq 2^{i-j}D_j$.

We set $m_0=n_\flat$ and $\sigma_0=\emptyset$.  Suppose $m_i, \sigma_i$ are given.  We suppose that there is some $m\in[m_i,\fn{m}(m_i)]$ and a $\sigma$ with $\mu(\sigma_i\bigtriangleup\sigma)<1/D_i$ so that $|\nu_{m_i}(\sigma)-\nu_m(\sigma)|\geq 1/4E_\flat$.  (If not, the process stops and we will be able to prove the theorem as described below.)  We define $m_{i+1}$ to be this value of $m$ and $\sigma_{i+1}=\sigma$.

Note that for any $j<i$,
\[\mu(\sigma_j\bigtriangleup\sigma_i)\leq\sum_{j'\in[j,i)}\mu(\sigma_{j'}\bigtriangleup\sigma_{j'+1})\leq\sum_{j'\in[j,i)}1/D_{j'}\leq\sum_{j'\in[j,i)}2^{j-j'}/D_j<2/D_j.\]

Suppose we construct $m_i$ for all $i$.  Now define a function $\fn{m}'(m')$ to be $m_{i+1}$ where $i$ is least so $m_i\geq m'$.  Let $M'$ be given by metastable weak convergence applied to $8E_\flat, \fn{m}', n_\flat$ and let $m'\leq M'$ be such that whenever $k,k'\in[m',\fn{m}'(m')]$, $|\nu_{k}(\sigma_{M'})-\nu_{k'}(\sigma_{M'})|<1/8E_\flat$.  In particular, since $m_i,m_{i+1}\in[m',\fn{m}'(m')]$, $|\nu_{m_i}(\sigma_{M'})-\nu_{m_{i+1}}(\sigma_{M'})|<1/8E_\flat$.

As noted above, we have $\mu(\sigma^{i+1}\bigtriangleup\sigma^{M'})<2/D_{i+1}\leq 2/D_i$.  This means
\begin{align*}
|\nu_{m_i}(\sigma_{i+1})-\nu_{m_{i+1}}(\sigma_{i+1})|
&\leq|\nu_{m_i}(\sigma_{M'})-\nu_{m_{i+1}}(\sigma_{M'})|\\
&\ \ \ \ +
|\nu_{m_i}(\sigma_{i+1}\bigtriangleup\sigma_{M'})|+
|\nu_{m_{i+1}}(\sigma_{i+1}\bigtriangleup\sigma_{M'})|\\
&<1/8E_\flat+1/16E_\flat+1/16E_\flat\\
&=1/4E_\flat.
\end{align*}
But this contradicts the choice of $\sigma_{i+1}$.

So the process must eventually stop, and we find some $m_i,\sigma_i$ so that for every $m\in[m_i,\fn{m}(m_i)]$ and $\sigma$ with $\mu(\sigma_i\bigtriangleup\sigma)<1/D_i$ we have $|\nu_{m_i}(\sigma)-\nu_m(\sigma)|<1/4E_\flat$.  We take $D_\sharp=D_i\geq D_0$ and $m_\sharp=m_i$.  Then for any $m\in[m_\sharp,\fn{m}_\flat(D_\sharp,m_\sharp)]=[m_i,\fn{m}(m_i)]$ and any $\sigma$ with $\mu(\sigma)<1/D_\sharp$, 
  \begin{align*}
    |\nu_m(\sigma)|
&= |\nu_m(\sigma_i\cup\sigma)-\nu_m(\sigma_i\setminus\sigma)|\\
&<|\nu_{m_\sharp}(\sigma_i\cup\sigma)-\nu_{m_\sharp}(\sigma_i\setminus\sigma)|+1/2E_\flat\\
&\leq|\nu_{m_\sharp}(\sigma_i)-\nu_{m_\sharp}(\sigma_i)|+1/2E_\flat\\
&\ \ \ \ +|\nu_{m_\sharp}(\sigma_i)-\nu_{m_\sharp}(\sigma_i\cup\sigma)|\\
&\ \ \ \ +|\nu_{m_\sharp}(\sigma_i)-\nu_{m_\sharp}(\sigma_i\setminus\sigma)|\\
&< 1/E_\flat.
  \end{align*}
\end{proof}

\subsection{Double Sequences}

We need a similar notion for doubly indexed sequences---that is, given a collection of measures $(\rho_n\lambda_p)_{n,p}$, we need to be able to express uniform continuity.

\begin{definition}
  We say $(\rho_n\lambda_p)_{n,p}$ is $n_{/p}$-metastably uniformly continuous (``$n$ over $p$ metastably uniformly continuous'') if for every $E_\flat, \widehat{\mathbf{m}}_\flat$, and $\widehat{\mathbf{q}}_\flat$ there are $D_\sharp, m_\sharp, p_\sharp, \widehat{\mathbf{r}}_\sharp$ so that for if $\widehat{\mathbf{m}}_\flat(D_\sharp, m_\sharp, p_\sharp, \widehat{\mathbf{r}}_\sharp)\geq m_\sharp$ and $\widehat{\mathbf{q}}_\flat(D_\sharp, m_\sharp, p_\sharp, \widehat{\mathbf{r}}_\sharp)\geq p_\sharp$ then $\widehat{\mathbf{r}}_\sharp(\fn{m}_\flat(D_\sharp,m_\sharp,p_\sharp,\fn{r}_\sharp),\widehat{\mathbf{q}}_\flat(D_\sharp, m_\sharp, p_\sharp, \widehat{\mathbf{r}}_\sharp))\geq\widehat{\mathbf{q}}_\flat(D_\sharp, m_\sharp, p_\sharp, \widehat{\mathbf{r}}_\sharp)$ and for any $\sigma$ with $\mu(\sigma)<1/D_\sharp$,
\[|(\rho_{\fn{m}_\flat(D_\sharp,m_\sharp,p_\sharp,\fn{r}_\sharp),}\lambda_{\widehat{\mathbf{r}}_\sharp(\fn{m}_\flat(D_\sharp,m_\sharp,p_\sharp,\fn{r}_\sharp),\widehat{\mathbf{q}}_\flat(D_\sharp, m_\sharp, p_\sharp, \widehat{\mathbf{r}}_\sharp))})(\sigma)|<1/E_\flat.\]
\end{definition}
Of course there is also a dual version, $p_{/n}$-metastable uniform continuity, with the indices flipped.

Note that this is the metastable statement corresponding to the double limit $\lim_n\lim_p (\rho_n\lambda_p)(\sigma)$; the additional complexity is due to the higher quantifier complexity of a double limit.

In general we could prove that that ``$n_{/p}$-metastable weak convergence'' (which could be defined analogously) implies $n_{/p}$-metastable uniform continuity.  For our purpose we only need a special case which lets us avoid this notion.  The following lemma is the main step, which includes a stronger inductive hypothesis we need to complete the proof.

\begin{lemma}\label{thm:meta_bnd_1}
Suppose that
\begin{itemize}
\item $(\rho_1\lambda_p)_p$ has bounded fluctuations, and
\item for each $m$, $(\rho_m\lambda_r)_r$ is metastably uniformly continuous.
\end{itemize}

Then for any $E_\flat$, $\widehat{\mathbf{m}}_\flat$, $\widehat{\mathbf{q}}_\flat$, $n_\flat$, $p_\flat$, there are $D_\sharp, m_\sharp\geq n_\flat, q_\sharp\geq p_\flat, \widehat{\mathbf{r}}_\sharp$ so that setting
\begin{itemize}
\item $m_\flat=\fn{m}_\flat(D_\sharp,m_\sharp,q_\sharp,\widehat{\mathbf{r}}_\sharp)$,
\item $q_\flat=\fn{q}_\flat(D_\sharp,m_\sharp,q_\sharp,\widehat{\mathbf{r}}_\sharp)$, and
\item $r_\sharp=\fn{r}_\sharp(m_\flat,q_\flat)$,
\end{itemize}
if $m_\flat\geq m_\sharp$ and $q_\flat\geq q_\sharp$ then
\begin{itemize}
\item $r_\sharp\geq q_\flat$,
\item there is a $\sigma_0$ such that whenever $\mu(\sigma_0\bigtriangleup\sigma)<1/D_\sharp$,
\[|(\rho_{m_\sharp}\lambda_{r_\sharp})(\sigma)-(\rho_{m_\flat}\lambda_{r_\sharp})(\sigma)|<1/E_\flat,\text{ and }\]
\item whenever $\mu(\sigma)<2/D_\sharp$, $|(\rho_{m_\sharp}\lambda_{r_\sharp})(\sigma)|<1/4E_\flat$.
\end{itemize}
\end{lemma}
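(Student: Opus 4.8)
The plan is to imitate the proof of Theorem~\ref{thm:q_vhs}: run an iteration in which the first index $m$ is increased only finitely often, with the bounded-fluctuations hypothesis on $(\rho_1\lambda_p)_p$ bounding the number of increases, the metastable uniform continuity of each slice $(\rho_m\lambda_r)_r$ supplying the witnessing function $\fn{r}_\sharp$ together with the modulus $D_\sharp$, and $\sigma_0$ emerging as the last set built by the iteration.

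First I would set up $\fn{r}_\sharp$ and dispose of conclusions (1) and (3), which really only concern the single measure $\rho_{m_\sharp}\lambda_{r_\sharp}$. For each first index $m$ and each target $q$, feed metastable uniform continuity of $(\rho_m\lambda_r)_r$ the error $4E_\flat$ and a trivial (identity-type) function to extract a value $\bar r(m,q)\geq q$ and a modulus $D(m,q)$ with $|(\rho_m\lambda_{\bar r(m,q)})(\sigma)|<1/4E_\flat$ whenever $\mu(\sigma)<1/D(m,q)$. I would define $\fn{r}_\sharp$ so that $\fn{r}_\sharp(m_\flat,q_\flat)\geq q_\flat$ and also lands in this stable range for the chosen first index $m_\sharp$; then conclusion (1) is immediate, and taking $D_\sharp$ slightly larger than $2\,D(m_\sharp,\cdot)$ absorbs the factor of two in ``$\mu(\sigma)<2/D_\sharp$'' and gives (3) once $m_\sharp$ is fixed. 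The catch is that $D_\sharp$ must be declared before $m_\flat=\fn{m}_\flat(D_\sharp,m_\sharp,q_\sharp,\fn{r}_\sharp)$ is revealed, so a single guess at $m_\sharp$ need not be consistent with the slice $(\rho_{m_\flat}\lambda_r)_r$; this is what forces the iteration.

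The iteration then starts with $m_0=n_\flat$, $\sigma_0=\emptyset$, and the associated modulus $D_0$, computes the adversary's $m_\flat$ from the current data, and checks whether the current choice already witnesses conclusions (2) and (3) with a suitably reduced error; if it fails---because some $m\in[m_0,m_\flat]$ and some $\sigma$ with $\mu(\sigma_0\bigtriangleup\sigma)$ small makes $m\mapsto(\rho_m\lambda_{r_\sharp})(\sigma)$ jump, or because $D_0$ is incompatible with $(\rho_{m_\flat}\lambda_r)_r$---we replace $m_0$ by the offending index (or by $m_\flat$) and $\sigma_0$ by the offending set and repeat, otherwise we stop and output the current $m_\sharp$, $\sigma_0$, $D_\sharp$. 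Exactly as in the proof of Theorem~\ref{thm:q_vhs}, the point is that these jumps cannot recur indefinitely: using the rapidly growing moduli of continuity to control the symmetric-difference corrections and the bounded-fluctuations hypothesis on $(\rho_1\lambda_p)_p$ to bound the number of genuine jumps yields an explicit bound on the number of iterations and hence explicit final values. I expect the main obstacle to be precisely this termination-with-a-bound step together with the bookkeeping of the nested functionals $\fn{m}_\flat$, $\fn{q}_\flat$, $\fn{r}_\sharp$: arranging the iteration so that the test sequences whose fluctuations are being counted genuinely reduce to fluctuations of $(\rho_1\lambda_p)_p$, and threading $D_\sharp$ and $\fn{r}_\sharp$ through consistently so that the circular dependency between $D_\sharp$ and $m_\flat$ noted above is actually resolved.
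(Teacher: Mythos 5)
Your high-level strategy matches the paper's: bound the number of times the first index can jump by the bounded-fluctuations hypothesis on $(\rho_1\lambda_p)_p$, and at each stage use metastable uniform continuity of the slice $(\rho_m\lambda_r)_r$ to produce the modulus $D$ and the stable range of the second index. Your handling of conclusions (1) and (3) is also correct. But the part you flag as ``the main obstacle''---the circular dependency between $D_\sharp$, $\fn{r}_\sharp$, and $m_\flat$---is a genuine gap, not a bookkeeping matter, and the forward iteration you sketch does not actually close it. The difficulty is this: to apply metastable uniform continuity of $(\rho_m\lambda_r)_r$, you must hand it a functional $\fn{r}^*(D^*,q^*)$ specifying how far the stable range of the second index must extend, and that value has to account for the $r$ produced by \emph{all remaining steps of the iteration}, which you cannot know from the current state. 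In Theorem \ref{thm:q_vhs} this problem does not arise because $\fn{m}(m_0)=\fn{m}_\flat(2\omega_{\nu_{m_0}}(16E_\flat),m_0)$ can be written in closed form from the moduli of continuity; here there is no analogous closed form.

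The paper resolves this by replacing your forward loop with a family of functionals $\fn{r}_{i,D,n,p}$ defined by \emph{backward} recursion on the remaining iteration budget $i$: $\fn{r}_{0,D,n,p}(m,q)=\max\{p,q\}$, and $\fn{r}_{i+1,D,n,p}$ is defined by feeding $\fn{r}_{i,D^*,m,q^*}$ (composed with $\fn{m}_\flat$ and $\fn{q}_\flat$) into metastable uniform continuity to extract $D^*,q^*$ and then evaluating. Each $\fn{r}_{i+1}$ thus ``knows'' how the next $i$ rounds would go before the adversary's $m_\flat$ is revealed, and the four-way case analysis propagates either a witness for the lemma or one more link in a chain of jumps of $(\rho_{k_j}\lambda_r)(\sigma)$. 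Only after this family is in hand does the paper invoke bounded fluctuations of $(\rho_1\lambda_p)_p$ with bound $B$, apply the construction at $i=B$, and observe that the fourth case would produce $B$ forbidden jumps. Your proposal would need to import exactly this recursion (or an equivalent defunctionalization of the loop) to be carried out; as written, each step of your iteration presupposes information that is only available after the iteration terminates.
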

\begin{proof}
  We define functions $\widehat{\mathbf{r}}_{i,D,n,p}$ so that for any $m, q$ we have $\widehat{\mathbf{r}}_{i,D,n,p}(m,q)\geq\max\{p,q\}$, and for any $\sigma_0$ one of the following holds:
  \begin{itemize}
  \item There exist $D_\sharp$, $m_\sharp$, $q_\sharp$, $\widehat{\mathbf{r}}_\sharp$ satisfying the lemma,
  \item There is a $\sigma$ with $\mu(\sigma)<2/D$ such that $|\rho_n\lambda_{\fn{r}_{i,D,n,p}(m,q)}(\sigma)|\geq 1/4E_\flat$,
  \item Whenever $\mu(\sigma_0\bigtriangleup\sigma)<1/D$,
\[|(\rho_n\lambda_{\fn{r}_{i,D,n,p}(m,q)})(\sigma)-(\rho_{m}\lambda_{\fn{r}_{i,D,n,p}(m,q)})(\sigma)|<1/E_\flat,\]
  \item There is a sequence $n=k_0<\cdots<k_i$ and a $\sigma$ with $\mu(\sigma_0\bigtriangleup\sigma)<2/D$ such that for each $j<i$, 
\[|(\rho_{k_j}\lambda_{\fn{r}_{i,D,n,p}(m,q)})(\sigma)-(\rho_{k_{j+1}}\lambda_{\fn{r}_{i,D,n,p}(m,q)})(\sigma)|\geq 1/2E_\flat.\]
  \end{itemize}
For $i=0$ we take $\widehat{\mathbf{r}}_{0,D,n,p}(m,q)=\max\{p,q\}$ since the final clause is satisfied trivially.

Suppose we have defined $\widehat{\mathbf{r}}_{i,D,n,p}(m,q)$ for all $D,n,p,m,q,\sigma_0$.  We now define $\fn{r}_{i+1,D,n,p}(m,q)$ for some fixed values $D,n,p,m,q$.  We assume $m\geq n$ and $q\geq p$; if not, we replace $m$ with $n$ or $q$ with $p$ as necessary.  We define $\fn{r}^*(D^*,q^*)$ by
\[\fn{r}^*_0(D^*,q^*)=\fn{r}_{i,D^*,m,q^*}(\fn{m}_\flat(D^*,m,q^*,\fn{r}_{i,D^*,m,q^*}),\fn{q}_\flat(D^*,m,q^*,\fn{r}_{i,D^*,m,q^*}))\]
and $\fn{r}^*(D^*,q^*)=\fn{r}^*_0(\max\{D^*,2D\},q^*)$.

By the metastable uniform continuity of $(\rho_m\lambda_r)_r$, we obtain $D^*,q^*$ such that whenever $\mu(\sigma)<2/D^*$ and $q\in[q^*,\fn{r}^*(D^*,q^*)]$, $|\rho_m\lambda_q(\sigma)|<1/4E_\flat$.  Without loss of generality we may assume $D^*\geq 2D$.  Let $m'=\fn{m}_\flat(D^*,m,q^*,\fn{r}_{i,D^*,m,q^*})$, $q'=\fn{q}_\flat(D^*,m,q^*,\fn{r}_{i,D^*,m,q^*})$, and $r=\fn{r}^*(D^*,q^*)=\fn{r}_{i,D^*,m,q^*}(m',p')$.  We define $\fn{r}_{i+1,D,n,p}(m,q)=r$.

We now check that for every $\sigma_0$, one of the four properties holds.  If there is any $\sigma$ with $\mu(\sigma)<2/D$ and $|(\rho_n\lambda_r)(\sigma)|\geq 1/4E_\flat$ then the second case holds, so assume not.  Similarly, if for every $\sigma$ with $\mu(\sigma_0\bigtriangleup\sigma)<1/D$ we have $|(\rho_n\lambda_r)(\sigma)-(\rho_m\lambda_r)(\sigma)|<1/E_\flat$ then the third case holds, so assume not, and fix a counterexample $\sigma$.

We apply the inductive hypothesis to $\fn{r}_{i,D^*,m,q^*}(m',q')$ and $\sigma$, so one of the four cases above must hold.  If the first case holds, we are done, since it resolves the first case for $\fn{r}_{i+1,D,n,q}$ as well.  We have chosen $D^*,q^*$ to rule out the second case.  If the third case holds then $D^*,m,q^*,\fn{r}_{i,D^*,m,q^*}$ satisfies the lemma.

The remaining possibility is the fourth case: there is a sequence $m=k_1<\cdots<k_{i+1}$ and a $\sigma'$ with $\mu(\sigma\bigtriangleup\sigma')<1/D^*$ such that for each $0<j<i+1$, $|(\rho_{k_j}\lambda_r)(\sigma)-(\rho_{k_{j+1}}\lambda_r)(\sigma)|\geq 1/2E_\flat$.  We take $n=k_0$.  Since $\mu(\sigma\bigtriangleup\sigma')<1/D^*<2/D^*\leq 1/D$, $|(\rho_m\lambda_r)(\sigma\bigtriangleup\sigma')|<1/4E_\flat$ and $|(\rho_n\lambda_r)(\sigma\bigtriangleup\sigma')|<1/4E_\flat$, so
\begin{align*}
  1/E_\flat
&\leq|(\rho_n\lambda_r)(\sigma)-(\rho_m\lambda_r)(\sigma)|\\
&\leq|(\rho_n\lambda_r)(\sigma')-(\rho_m\lambda_r)(\sigma')|
+|(\rho_n\lambda_r)(\sigma\bigtriangleup\sigma')|
+|(\rho_m\lambda_r)(\sigma\bigtriangleup\sigma')|\\
&\leq|(\rho_n\lambda_r)(\sigma')-(\rho_m\lambda_r)(\sigma')|+1/2E_\flat,
\end{align*}
and so $|(\rho_n\lambda_r)(\sigma')-(\rho_m\lambda_r)(\sigma')|\geq 1/2E_\flat$.  Since $\mu(\sigma_0\bigtriangleup\sigma')\leq\mu(\sigma_0\bigtriangleup\sigma)+\mu(\sigma\bigtriangleup\sigma')\leq 1/D+1/D^*\leq 2/D$ as needed, we satisfy the fourth case.

This completes the construction of the functions $\fn{r}_{i,D,n,p}$ and shows they have the desired properties.

Now fix $B$ large enough by the bounded fluctuations of $(\rho_1\lambda_p)_p$ and consider the function
\[\fn{r}^*(D^*,q^*)=\fn{r}_{B,D^*,1,q^*}(\fn{m}_\flat(D^*,1,q^*,\fn{r}_{B,D^*,1,q^*}),\fn{q}_\flat(D^*,1,q^*,\fn{r}_{B,D^*,1,q^*})).\]
By the metastable uniform continuity of $(\rho_1\lambda_p)_p$ we obtain $D^*,q^*$ such that whenever $\mu(\sigma)<2/D^*$, $|\rho_1\lambda_{\fn{r}^*(D^*,q^*)}|(\sigma)<1/4E_\flat$.  Let $m'=\fn{m}_\flat(D^*,1,q^*,\fn{r}_{B,D^*,1,q^*})$ and $q'=\fn{q}_\flat(D^*,1,q^*,\fn{r}_{B,D^*,1,q^*})$ and consider $r=\fn{r}^*_{B,D^*,1,q^*}(m',q')$ with $\emptyset$.  One of the four cases must hold; if the first holds, we are done.  We have ruled out the second by choice of $D^*, q^*$.  If the third holds then $D^*, 1, q^*, \fn{r}_{B,D^*,1,q^*}$ satisfies the claim.  If the fourth holds then we have a sequence $k_0<\cdots<k_B$ and a $\sigma$ so that for each $j<B$, $|(\rho_{k_j}\lambda_r)(\sigma)-(\rho_{k_{j+1}}\lambda_r)(\sigma)|\geq 1/2E_\flat$.  But this violates the choice of $B$.
\end{proof}

\begin{lemma}\label{thm:meta_bnd_3}
Suppose that
\begin{itemize}
\item $(\rho_1\lambda_p)_p$ has bounded fluctuations, and
\item for each $m$, $(\rho_m\lambda_r)_r$ is metastably uniformly continuous.
\end{itemize}

Then $(\rho_n\lambda_p)_{n,p}$ is $n_{/p}$-metastably uniformly continuous.
\end{lemma}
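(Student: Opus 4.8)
The plan is to obtain Lemma \ref{thm:meta_bnd_3} as a more or less immediate consequence of Lemma \ref{thm:meta_bnd_1}, whose hypotheses are identical and whose conclusion is strictly stronger. The one point requiring care is that Lemma \ref{thm:meta_bnd_1} controls $(\rho_{m_\sharp}\lambda_{r_\sharp})(\sigma)$ for $\sigma$ of \emph{small measure} together with the difference $(\rho_{m_\sharp}\lambda_{r_\sharp})(\sigma)-(\rho_{m_\flat}\lambda_{r_\sharp})(\sigma)$ for $\sigma$ \emph{near the exceptional set} $\sigma_0$, whereas the definition of $n_{/p}$-metastable uniform continuity asks for control of $(\rho_{m_\flat}\lambda_{r_\sharp})(\sigma)$ for $\sigma$ of small measure. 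So the work is to bridge ``small measure'' and ``near $\sigma_0$,'' which is done by the standard decomposition $\sigma=(\sigma_0\cup\sigma)\setminus(\sigma_0\setminus\sigma)$ already used in the proof of Theorem \ref{thm:q_vhs}: both $\sigma_0\cup\sigma$ and $\sigma_0\setminus\sigma$ are within $\mu$-distance $\mu(\sigma)$ of $\sigma_0$, so when $\mu(\sigma)$ is small they are near $\sigma_0$.

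Concretely, given $E_\flat,\fn{m}_\flat,\fn{q}_\flat$ as in the definition, I apply Lemma \ref{thm:meta_bnd_1} to $3E_\flat$, $\fn{m}_\flat$, $\fn{q}_\flat$, and any convenient base indices $n_\flat=p_\flat=1$, obtaining $D_\sharp,m_\sharp,q_\sharp,\fn{r}_\sharp$. I set $p_\sharp:=q_\sharp$ and claim $D_\sharp,m_\sharp,p_\sharp,\fn{r}_\sharp$ witness $n_{/p}$-metastable uniform continuity for $E_\flat,\fn{m}_\flat,\fn{q}_\flat$. Writing $m_\flat,q_\flat,r_\sharp$ for the derived values (the same in both statements, since the function signatures agree), the conditional hypothesis of the definition, $m_\flat\geq m_\sharp$ and $q_\flat\geq p_\sharp=q_\sharp$, is exactly the conditional hypothesis of Lemma \ref{thm:meta_bnd_1}; under it, the first bullet of Lemma \ref{thm:meta_bnd_1} gives $r_\sharp\geq q_\flat$, which is the index inequality required by the definition.

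For the continuity estimate, fix $\sigma$ with $\mu(\sigma)<1/D_\sharp$ and let $\sigma_0$ be the set supplied by the second bullet of Lemma \ref{thm:meta_bnd_1}. Since $\rho_{m_\flat}\lambda_{r_\sharp}$ and $\rho_{m_\sharp}\lambda_{r_\sharp}$ are additive, $\nu(\sigma)=\nu(\sigma_0\cup\sigma)-\nu(\sigma_0\setminus\sigma)$ for each of these $\nu$, and hence
\[|(\rho_{m_\flat}\lambda_{r_\sharp})(\sigma)|\leq|(\rho_{m_\flat}\lambda_{r_\sharp})(\sigma_0\cup\sigma)-(\rho_{m_\sharp}\lambda_{r_\sharp})(\sigma_0\cup\sigma)|+|(\rho_{m_\sharp}\lambda_{r_\sharp})(\sigma)|+|(\rho_{m_\sharp}\lambda_{r_\sharp})(\sigma_0\setminus\sigma)-(\rho_{m_\flat}\lambda_{r_\sharp})(\sigma_0\setminus\sigma)|.\]
Since $\mu(\sigma_0\bigtriangleup(\sigma_0\cup\sigma))=\mu(\sigma\setminus\sigma_0)\leq\mu(\sigma)<1/D_\sharp$ and likewise $\mu(\sigma_0\bigtriangleup(\sigma_0\setminus\sigma))=\mu(\sigma_0\cap\sigma)<1/D_\sharp$, the first and third terms are each $<1/3E_\flat$ by the second bullet of Lemma \ref{thm:meta_bnd_1}; and since $\mu(\sigma)<1/D_\sharp<2/D_\sharp$, the middle term is $<1/12E_\flat$ by the third bullet. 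The three contributions sum to $3/4E_\flat<1/E_\flat$, which is precisely the bound demanded by the definition of $n_{/p}$-metastable uniform continuity.

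I do not expect a genuine obstacle here, since Lemma \ref{thm:meta_bnd_1} carries all the combinatorial content; the only idea is the ``small measure $\Rightarrow$ near $\sigma_0$'' decomposition, and the only bookkeeping is the choice of the slack factor (here $3$) in the application of Lemma \ref{thm:meta_bnd_1} so that the three error terms add up to less than $1/E_\flat$.
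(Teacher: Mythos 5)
Your proof is correct and takes essentially the same route as the paper: apply Lemma~\ref{thm:meta_bnd_1} to a multiple of $E_\flat$, then write $(\rho_{m_\flat}\lambda_{r_\sharp})(\sigma)$ as the difference of its values on $\sigma_0\cup\sigma$ and $\sigma_0\setminus\sigma$ (both of which lie within $1/D_\sharp$ of $\sigma_0$ in symmetric difference) and invoke the second and third bullets of Lemma~\ref{thm:meta_bnd_1}. The paper uses $4E_\flat$ where you use $3E_\flat$, but that is only an inessential choice of slack.
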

\begin{proof}
Apply the previous lemma to $4E_\flat,\widehat{\mathbf{m}}_\flat,\widehat{\mathbf{p}}_\flat,0,0$ to obtain $D_\sharp,m_\sharp,q_\sharp,\widehat{\mathbf{r}}_\sharp$.  Choose $\sigma_0$ given by the second clause, let $m_\flat=\fn{m}_\flat(D_\sharp,m_\sharp,q_\sharp,\fn{r}_\sharp)$, $q_\flat=\fn{q}_\flat(D_\sharp,m_\sharp,q_\sharp,\fn{r}_\sharp)$, and $r_\sharp=\fn{r}_\sharp(m_\flat,q_\flat)$.  Then if $\mu(\sigma)<1/D_\sharp$,
\begin{align*}
  |(\rho_{m_\flat}\lambda_{r_\sharp})(\sigma)|
&=|(\rho_{m_\flat}\lambda_{r_\sharp})(\sigma_0\cup\sigma)-(\rho_{m_\flat}\lambda_{r_\sharp})(\sigma_0\setminus\sigma)|\\
&<|(\rho_{m_\sharp}\lambda_{r_\sharp})(\sigma_0\cup\sigma)-(\rho_{m_\sharp}\lambda_{r_\sharp})(\sigma_0\setminus\sigma)|+1/2E_\flat\\
&=|(\rho_{m_\sharp}\lambda_{r_\sharp})(\sigma)|+1/2E_\flat\\
&<1/E_\flat.
\end{align*}
\end{proof}

\section{Regularity Lemma}\label{sec:regularity}

The usual proof of our main theorem, involving actual $L^1$ functions, would use level sets.  In order to obtain an analog for absolutely continuous measures, we need approximate level sets.  These are given by a ``one-dimensional'' $L^1$ analog of the Szemer\'edi regularity lemma.  (One dimensional regularity lemmas show up in some expositions \cite{croot_Sz_notes} of the usual regularity lemma.)  Roughly, this will say that we can find pairs of partitions $\mathcal{B}\succeq\mathcal{A}$ such that for most $\sigma\in\mathcal{A}$ and most $\sigma'\in\mathcal{B}_\sigma$, $\delta_\nu(\sigma)$ is close to $\delta_\nu(\sigma')$, even though $\mathcal{B}$ is ``much finer'' than $\mathcal{A}$.  To make this precise we will need a number of definitions.

If we were working with $L^2$ bounded functions, the argument would be much simpler.  In order to deal with $L^1$ functions---equivalently, the absolutely continuous measures were are considering---we need to be able to ``cut-off'' sets of sufficiently high density.

\begin{definition}
  Given a partition $\mathcal{B}$, we define $\mathcal{B}_{\nu>K}=\{\sigma\in\mathcal{B}\mid|\delta_\nu(\sigma)|>K\}$ and $\mathcal{B}_{\nu\leq K}=\{\sigma\in\mathcal{B}\mid|\delta_\nu(\sigma)|\leq K\}$.
\end{definition}

Then $\mathcal{B}=\mathcal{B}_{\nu>K}\cup\mathcal{B}_{\nu\leq K}$, and when $K$ is large relative to $||\nu||_{L^1}$, we can be sure that $\mu(\mathcal{B}_{\nu>K})$ is small.

\begin{definition}
  By a \emph{function on partitions} we mean a function $\widehat{\mathbf{B}}$ such that for any $\mathcal{A}$, $\mathcal{A}\preceq\widehat{\mathbf{B}}(\mathcal{A})$.  

\end{definition}

\begin{definition}
  Let $\mathcal{B}_0\preceq\mathcal{B}$ be given.  We define
\[\mathfrak{D}_{E,\mathcal{B}_0,\nu}(\mathcal{B})=\{\sigma\in\mathcal{B}\mid \left|\delta_\nu(\sigma)-\delta_\nu(\sigma_{\mathcal{B}_0})\right|\geq 1/E.\}\]

\end{definition}
$\mathfrak{D}$ stands for ``difference'', since it is those elements of $\mathcal{B}$ on which the density $\delta_\nu$ has changed significantly.

Our goal is to prove the following theorem:
\begin{theorem}[One-dimensional $L^1$ Regularity]\label{thm:1D}
Let $\nu$, $\mathcal{A}_\flat$, $E_\flat$, $D_\flat$, and a function on partitions $\fn{B}_\flat$ be given.  Then there exists a $\mathcal{B}_\sharp\succeq\mathcal{A}_\flat$ such that for every $\mathcal{B}\in[\mathcal{B}_\sharp,\fn{B}_\flat(\mathcal{B}_\sharp)]$,
\[\mu(\mathfrak{D}_{E_\flat,\mathcal{B}_\sharp,\nu}(\mathcal{B}))<1/D_\flat.\]

\end{theorem}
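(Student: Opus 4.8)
The plan is to run an energy‑increment argument, of the kind familiar from proofs of the Szemer\'edi regularity lemma, adapted to the $L^1$ setting by replacing the square of the density with a convex, Huber‑type truncation so that the energy stays bounded while still detecting density changes.

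First I would fix a truncation level and record two easy facts. Set $K_\sharp = 3D_\flat\|\nu\|_{L^1}$ (recall $\|\nu\|_{L^1}<\infty$ when $\nu$ is absolutely continuous; if $\|\nu\|_{L^1}=0$ then $\nu\equiv 0$ and $\mathcal{B}_\sharp=\mathcal{A}_\flat$ already works, so assume it is positive). Since $\sum_{\sigma\in\mathcal{B}}\mu(\sigma)\,|\delta_\nu(\sigma)| = |\nu|(\mathcal{B})\le\|\nu\|_{L^1}$, a Markov estimate gives $\mu(\mathcal{B}_{\nu>K_\sharp})<\|\nu\|_{L^1}/K_\sharp\le 1/(3D_\flat)$ for every partition $\mathcal{B}$. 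Let $\phi\colon\mathbb{R}\to\mathbb{R}$ be given by $\phi(x)=x^2$ for $|x|\le K_\sharp$ and $\phi(x)=2K_\sharp|x|-K_\sharp^2$ for $|x|>K_\sharp$; this is $C^1$, convex, nonnegative, and satisfies $\phi(x)\le 2K_\sharp|x|$. For $\mathcal{B}\succeq\mathcal{A}_\flat$ define the energy $\mathcal{Q}(\mathcal{B})=\sum_{\sigma\in\mathcal{B}}\mu(\sigma)\,\phi(\delta_\nu(\sigma))$; then $0\le\mathcal{Q}(\mathcal{B})\le 2K_\sharp\|\nu\|_{L^1}=6D_\flat\|\nu\|_{L^1}^2$.

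The heart of the argument, and the step that needs care, is the energy‑increment inequality. Fix $\mathcal{B}_0\preceq\mathcal{B}$. For each $\tau\in\mathcal{B}_0$, applying convexity of $\phi$ at the point $\delta_\nu(\tau)$ together with $\sum_{\sigma'\in\mathcal{B}_\tau}\mu(\sigma')=\mu(\tau)$ and $\sum_{\sigma'\in\mathcal{B}_\tau}\nu(\sigma')=\nu(\tau)$ (which makes the first‑order terms cancel) yields $\sum_{\sigma'\in\mathcal{B}_\tau}\mu(\sigma')\,\phi(\delta_\nu(\sigma'))\ge\mu(\tau)\,\phi(\delta_\nu(\tau))$, with a nonnegative remainder on the right that contains $\mu(\sigma')(\delta_\nu(\sigma')-\delta_\nu(\tau))^2$ for each $\sigma'$ with $|\delta_\nu(\sigma')|\le K_\sharp$, provided $|\delta_\nu(\tau)|\le K_\sharp$ (on the linear part of $\phi$ only the tangent‑line bound is available, which still gives monotonicity). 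Summing over $\tau\in\mathcal{B}_0$ shows that $\mathcal{Q}$ is monotone under refinement and that
\[\mathcal{Q}(\mathcal{B})-\mathcal{Q}(\mathcal{B}_0)\ \ge\ \frac{1}{E_\flat^2}\,\mu\bigl(\{\sigma'\in\mathfrak{D}_{E_\flat,\mathcal{B}_0,\nu}(\mathcal{B})\ :\ |\delta_\nu(\sigma')|\le K_\sharp\text{ and }|\delta_\nu(\sigma'_{\mathcal{B}_0})|\le K_\sharp\}\bigr).\]
Now suppose $\mathcal{B}\in[\mathcal{B}_0,\fn{B}_\flat(\mathcal{B}_0)]$ witnesses a failure of the conclusion at $\mathcal{B}_0$, that is, $\mu(\mathfrak{D}_{E_\flat,\mathcal{B}_0,\nu}(\mathcal{B}))\ge 1/D_\flat$. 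The $\sigma'\in\mathfrak{D}_{E_\flat,\mathcal{B}_0,\nu}(\mathcal{B})$ with $|\delta_\nu(\sigma')|>K_\sharp$ have total measure $<1/(3D_\flat)$, and those whose $\mathcal{B}_0$‑parent has $|\delta_\nu(\sigma'_{\mathcal{B}_0})|>K_\sharp$ are contained in $\bigcup(\mathcal{B}_0)_{\nu>K_\sharp}$ and so also have total measure $<1/(3D_\flat)$; hence $\mathcal{Q}(\mathcal{B})-\mathcal{Q}(\mathcal{B}_0)\ge 1/(3D_\flat E_\flat^2)$.

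Finally I would iterate. Set $\mathcal{B}^{(0)}=\mathcal{A}_\flat$. Given $\mathcal{B}^{(i)}\succeq\mathcal{A}_\flat$, either every $\mathcal{B}\in[\mathcal{B}^{(i)},\fn{B}_\flat(\mathcal{B}^{(i)})]$ satisfies $\mu(\mathfrak{D}_{E_\flat,\mathcal{B}^{(i)},\nu}(\mathcal{B}))<1/D_\flat$, in which case we stop and take $\mathcal{B}_\sharp=\mathcal{B}^{(i)}$; or else we pick a witness $\mathcal{B}^{(i+1)}\in[\mathcal{B}^{(i)},\fn{B}_\flat(\mathcal{B}^{(i)})]$ to the failure, which again refines $\mathcal{A}_\flat$ and, by the previous paragraph, satisfies $\mathcal{Q}(\mathcal{B}^{(i+1)})\ge\mathcal{Q}(\mathcal{B}^{(i)})+1/(3D_\flat E_\flat^2)$. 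Since $\mathcal{Q}$ remains in $[0,6D_\flat\|\nu\|_{L^1}^2]$, the process must halt after at most $18\,D_\flat^2E_\flat^2\|\nu\|_{L^1}^2$ steps, and the $\mathcal{B}_\sharp$ it produces is as required. The only real obstacle is calibrating the increment step: one must check that the linear tail of $\phi$ does not destroy monotonicity of $\mathcal{Q}$, that the quadratic gain is lost only on pieces where $\sigma'$ or its $\mathcal{B}_0$‑parent has density above $K_\sharp$, and that $K_\sharp$ has been chosen large enough that those pieces together carry less mass than the $1/D_\flat$ supplied by $\mathfrak{D}_{E_\flat,\mathcal{B}_0,\nu}(\mathcal{B})$.
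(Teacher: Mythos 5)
Your proof is correct, and it takes a genuinely different (and in one respect cleaner) route than the paper's. The paper uses the piecewise density $\theta_{L^1}(\mathcal{C})=\sum_{\sigma\in\mathcal{C}_{\nu\le K}}\mu(\sigma)\delta_\nu^2(\sigma)+2K\sum_{\sigma\in\mathcal{C}_{\nu>K}}\mu(\sigma)|\delta_\nu(\sigma)|$, which has a \emph{jump} of height $K^2$ at the threshold $|\delta_\nu|=K$. That jump is put in deliberately because it makes the energy-increment calculation simple in the case where the singleton $\zeta$ crosses the cutoff (one gets $2K\mu(\zeta)|\delta_\nu(\sigma)|\ge\mu(\zeta)\delta_\nu^2(\sigma)$ directly). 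But it also destroys monotonicity under refinement, which the paper then has to recover by imposing the side condition $\mathcal{C}_{\nu>K}\subseteq\mathcal{D}$, defining the modified refinement operator $\fn{B}^K$, and patching at the end of the proof of Theorem \ref{thm:1D} to pass from $\fn{B}^K_\flat$ back to $\fn{B}_\flat$. You instead use the $C^1$ convex Huber regularization $\phi$, for which $\mathcal{Q}(\mathcal{B})=\sum_\sigma\mu(\sigma)\phi(\delta_\nu(\sigma))$ is automatically monotone under \emph{all} refinements via Jensen's inequality; the first-order terms cancel because of additivity of $\nu$ and $\mu$, and the Bregman slack supplies the quadratic gain $(\delta_\nu(\sigma')-\delta_\nu(\tau))^2$ whenever both $\sigma'$ and its parent $\tau$ are in the quadratic region. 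This eliminates the $\fn{B}^K$ detour entirely. The Markov-type control on the truncated pieces plays the same role in both arguments. Your iteration bound $18D_\flat^2E_\flat^2\|\nu\|_{L^1}^2$ is also slightly better than the paper's (which quotes $16D_\flat^3 E_\flat^2\|\nu\|_{L^1}^2+16D_\flat^2 E_\flat^2\|\nu\|_{L^1}^2$, largely because it bounds the quadratic part crudely by $K^2$). Since the same density $\theta_k$ is reused in the proof of Theorem \ref{thm:1D_sequential}, your replacement would simplify that proof in the same way. The one thing to be careful of if you extend this: the paper's later quantitative section tracks $|\mathcal{B}'|\le|\mathcal{B}|$ under $\fn{B}^K$ (since $\fn{B}^K$ only refines the small-density cells); without $\fn{B}^K$ you lose that incidental size bound, but it is not needed for Theorem \ref{thm:1D} itself.
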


By analogy with Szemer\'edi regularity, we expect the proof to proceed as follows: we define a notion of density $\theta(\mathcal{C})$ such that:
\begin{itemize}
\item For all partitions $\mathcal{C}$, $\theta(C)$ is non-negative and bounded by some fixed value $C$,
\item If $\mathcal{C}\preceq\mathcal{D}$ then $\theta(\mathcal{C})\leq\theta(\mathcal{D})$,
\item If $\mathcal{C}$ is not the desired $\mathcal{B}_\sharp$ then there exists a $\mathcal{C}'\succeq\mathcal{C}$ such that $\theta(\mathcal{C}')\geq\theta(\mathcal{C})+c$ where $c$ is a fixed constant.
\end{itemize}
Then failure to witness the theorem means we can increment $\theta$, and so within roughly $1/c$ steps we must find the desired witness.  (This method is known as the \emph{density} or \emph{energy increment method}, and is characteristic of finitary analogs of the proofs of $\Pi_3$ statements.)

If $\nu$ has bounded $L^2$ norm, the choice of density notion is standard:
\[\theta_{L^2}(\mathcal{C})=\sum_{\sigma\in\mathcal{C}}\mu(\sigma)\delta^2_{\nu}(\sigma).\]
It is easy to see that $\theta_{L^2}$ is bounded by the square of the $L^2$ norm of $\nu$.

However since $\nu$ need not have bounded $L^2$ norm, we have to ``cut-off'' this norm, making it linear when $\delta_{|\nu|}$ gets large enough.  By choosing the cut-off large enough, we can ensure that the portion where the cut-off occurs has small measure---say, measure at most $1/2D_\flat$---and is therefore negligible.  We choose
\[\theta_{L^1}(\mathcal{C})=\sum_{\sigma\in\mathcal{C}_{\nu\leq K}}\mu(\sigma)\delta^2_{\nu}(\sigma)+2K\sum_{\sigma\in\mathcal{C}_{\nu>K}}\mu(\sigma)\left|\delta_{\nu} (\sigma)\right|\]
where $K=\max\{2D_\flat||\nu||_{L^1},1\}$.

Unfortunately, we have now violated monotonicity under a minor but unavoidable circumstance: if $\mathcal{C}\preceq\mathcal{D}$ and $\sigma\in\mathcal{C}_{\nu>K}$, it could nonetheless be that $\mathcal{D}_\sigma\not\subseteq\mathcal{D}_{\nu>K}$.  The second, linear term in $\theta_{L^1}$ has some (necessary) leeway built into it---we multiply by $2K$, not just $K$---and that interferes with monotonicity.

We solve this by weakening the monotonicity requirement to only consider pairs $\mathcal{C}\preceq\mathcal{D}$ where $\mathcal{C}_{\nu>K}\subseteq\mathcal{D}$.  Given a $\mathcal{D}\succeq\mathcal{C}$ violating this condition, we can modify $\mathcal{D}$ on a set of small measure to satisfy this condition.

Given a function on partitions $\fn{B}$, we can think of $\fn{B}(\mathcal{A})$ as specifying, for each $\sigma\in\mathcal{A}$, a partition of $\sigma$, $\sigma=\bigcup\{\sigma\in\fn{B}(\mathcal{A})\mid \sigma_{\mathcal{A}}=\sigma\}$.  We modify $\fn{B}$ so that we only apply $\fn{B}$ to elements of $\mathcal{A}_{\nu\leq K}$.

\begin{definition}
Let $\fn{B}$ be a function on partitions.  We define $\fn{B}^K$ to be the function on partitions given by
\[\fn{B}^K(\mathcal{A})=\mathcal{A}_{\nu>K}\cup\{\sigma\in\fn{B}(\mathcal{A})\mid \sigma_{\mathcal{A}}\in\mathcal{A}_{\nu\leq K}\}.\]
\end{definition}
It is convenient that for any $\mathcal{A}$, $|\fn{B}^K(\mathcal{A})|\leq|\fn{B}(\mathcal{A})|$.

We now prove some basic properties about $\theta_{L^1}$.

\begin{lemma}
  For any $\mathcal{C}$, $0\leq \theta_{L^1}(\mathcal{C})\leq K^2+2K||\nu||_{L^1}$
\end{lemma}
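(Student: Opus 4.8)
The plan is to treat the two sums in the definition of $\theta_{L^1}$ separately, bounding each by one of the two summands in the target inequality. Non-negativity is immediate: every factor appearing in either sum---$\mu(\sigma)$, $\delta_\nu^2(\sigma)$, $2K$ (recall $K\geq 1$), and $|\delta_\nu(\sigma)|$---is non-negative, so $\theta_{L^1}(\mathcal{C})\geq 0$.

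For the upper bound, first handle the quadratic part $\sum_{\sigma\in\mathcal{C}_{\nu\leq K}}\mu(\sigma)\delta_\nu^2(\sigma)$. By definition of $\mathcal{C}_{\nu\leq K}$ we have $|\delta_\nu(\sigma)|\leq K$, hence $\delta_\nu^2(\sigma)\leq K^2$, so this sum is at most $K^2\sum_{\sigma\in\mathcal{C}_{\nu\leq K}}\mu(\sigma)=K^2\mu(\mathcal{C}_{\nu\leq K})$. Since $\mathcal{C}_{\nu\leq K}$ is a partition, its elements are pairwise disjoint, so additivity of $\mu$ together with monotonicity of $\mu$ (which follows from additivity and $\mu\geq 0$) gives $\mu(\mathcal{C}_{\nu\leq K})\leq\mu(\Omega)=1$; thus the quadratic part is at most $K^2$.

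For the linear part $2K\sum_{\sigma\in\mathcal{C}_{\nu>K}}\mu(\sigma)|\delta_\nu(\sigma)|$, I use that $\mu(\sigma)|\delta_\nu(\sigma)|=\mu(\sigma)\cdot\frac{|\nu(\sigma)|}{\mu(\sigma)}=|\nu(\sigma)|=|\nu|(\sigma)$, so the sum equals $2K\sum_{\sigma\in\mathcal{C}_{\nu>K}}|\nu|(\sigma)=2K\,|\nu|(\mathcal{C}_{\nu>K})$, which is at most $2K\,||\nu||_{L^1}$ by the definition of the $L^1$ norm as $\sup_{\mathcal{A}}|\nu|(\mathcal{A})$. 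Adding the two bounds yields $\theta_{L^1}(\mathcal{C})\leq K^2+2K\,||\nu||_{L^1}$. There is essentially no obstacle here; the only point requiring a moment's care is noting that $\mu$ of a partition is bounded by $\mu(\Omega)=1$ and that $||\nu||_{L^1}$ is finite, which holds since $\nu$ is the absolutely continuous measure fixed for the regularity argument.
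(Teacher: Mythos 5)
Your proof is correct and follows essentially the same route as the paper: bound the quadratic sum by $K^2\sum\mu(\sigma)\leq K^2$ using $|\delta_\nu(\sigma)|\leq K$ on $\mathcal{C}_{\nu\leq K}$, and bound the linear sum by $2K||\nu||_{L^1}$ after observing $\mu(\sigma)|\delta_\nu(\sigma)|=|\nu|(\sigma)$. The extra remarks about monotonicity of $\mu$ and finiteness of $||\nu||_{L^1}$ are correct but already implicit in the paper's one-line computation.
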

\begin{proof}
  The lower bound is obvious.  For the upper bound,
  \begin{align*}
    \theta_{L^1}(\mathcal{C})
&=\sum_{\sigma\in\mathcal{C}_{\nu\leq K}}\mu(\sigma)\delta_{\nu}^2(\sigma)+2K\sum_{\sigma\in\mathcal{C}_{\nu>K}}\mu(\sigma)\left|\delta_{\nu} (\sigma)\right|\\
&\leq\sum_{\sigma\in\mathcal{C}_{\nu\leq K}}\mu(\sigma)K^2+2K\sum_{\sigma\in\mathcal{C}}|\nu|(\sigma)\\
&\leq K^2+2K||\nu||_{L^1}.
  \end{align*}
\end{proof}

\begin{lemma}
  If $\mathcal{C}\preceq\mathcal{D}$ and $\mathcal{C}_{\nu>K}\subseteq\mathcal{D}$ then $\theta_{L^1}(\mathcal{C})\leq\theta_{L^1}(\mathcal{D})$.
\end{lemma}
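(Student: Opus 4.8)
The plan is to prove the inequality contribution-by-contribution: for each $\sigma\in\mathcal{C}$ we compare the amount $\sigma$ adds to $\theta_{L^1}(\mathcal{C})$ with the total amount that the cells of $\mathcal{D}_\sigma$ add to $\theta_{L^1}(\mathcal{D})$. Since $\mathcal{C}\preceq\mathcal{D}$, each cell of $\mathcal{D}$ lies in exactly one $\mathcal{D}_\sigma$ and each $\mathcal{D}_\sigma$ is a partition of $\sigma$, so summing these cell-wise comparisons over $\sigma\in\mathcal{C}$ gives $\theta_{L^1}(\mathcal{D})\ge\theta_{L^1}(\mathcal{C})$. The argument splits according to whether $\sigma\in\mathcal{C}_{\nu>K}$ or $\sigma\in\mathcal{C}_{\nu\leq K}$, where $K$ is the cutoff in the definition of $\theta_{L^1}$.

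If $\sigma\in\mathcal{C}_{\nu>K}$, the hypothesis $\mathcal{C}_{\nu>K}\subseteq\mathcal{D}$ together with $\mathcal{C}\preceq\mathcal{D}$ forces $\mathcal{D}_\sigma=\{\sigma\}$, and since $|\delta_\nu(\sigma)|>K$ we have $\sigma\in\mathcal{D}_{\nu>K}$, so $\sigma$ contributes exactly $2K\mu(\sigma)|\delta_\nu(\sigma)|$ to each side and there is nothing to prove. The interesting case is $\sigma\in\mathcal{C}_{\nu\leq K}$, where $\sigma$ contributes $\mu(\sigma)\delta_\nu^2(\sigma)$ to $\theta_{L^1}(\mathcal{C})$; set $m=\delta_\nu(\sigma)$, so $|m|\leq K$. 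The crux of the proof is the pointwise estimate that for every $\sigma'\in\mathcal{D}_\sigma$ the contribution of $\sigma'$ to $\theta_{L^1}(\mathcal{D})$ is at least $\mu(\sigma')\bigl(2m\,\delta_\nu(\sigma')-m^2\bigr)$, i.e.\ at least the tangent line to $t\mapsto t^2$ at $t=m$ evaluated at $\delta_\nu(\sigma')$ and scaled by $\mu(\sigma')$. If $|\delta_\nu(\sigma')|\leq K$ this is clear, since the contribution is $\mu(\sigma')\delta_\nu^2(\sigma')$ and $\delta_\nu^2(\sigma')-\bigl(2m\,\delta_\nu(\sigma')-m^2\bigr)=(\delta_\nu(\sigma')-m)^2\geq 0$. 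If $|\delta_\nu(\sigma')|>K$ the contribution is $2K\mu(\sigma')|\delta_\nu(\sigma')|$, and one checks $2K|t|\geq 2mt-m^2$ for $|t|>K$: when $t>K$ this is $2t(K-m)+m^2\geq 0$, true since $m\leq K$; when $t<-K$ it is $-2t(K+m)+m^2\geq 0$, true since $m\geq -K$. (This is the statement that clipping $t^2$ to grow linearly outside $[-K,K]$ leaves the function, hence its convex hull, unchanged on $[-K,K]$, which is exactly why the tangent-line bound survives the cutoff.)

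It then remains only to sum. Multiplying the pointwise estimate by $\mu(\sigma')$ and summing over $\sigma'\in\mathcal{D}_\sigma$, the total contribution of $\mathcal{D}_\sigma$ to $\theta_{L^1}(\mathcal{D})$ is at least $\sum_{\sigma'\in\mathcal{D}_\sigma}\mu(\sigma')\bigl(2m\,\delta_\nu(\sigma')-m^2\bigr)=2m\sum_{\sigma'\in\mathcal{D}_\sigma}\nu(\sigma')-m^2\sum_{\sigma'\in\mathcal{D}_\sigma}\mu(\sigma')=2m\,\nu(\sigma)-m^2\mu(\sigma)=m^2\mu(\sigma)=\mu(\sigma)\delta_\nu^2(\sigma)$, using additivity of $\nu$ and $\mu$ over the partition $\mathcal{D}_\sigma$ of $\sigma$ and $\nu(\sigma)=m\mu(\sigma)$. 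Adding this over all $\sigma\in\mathcal{C}_{\nu\leq K}$ and adding the equalities from the $\sigma\in\mathcal{C}_{\nu>K}$ case yields the claim. I expect the main obstacle to be seeing the right pointwise bound in the middle paragraph; note also that the hypothesis $\mathcal{C}_{\nu>K}\subseteq\mathcal{D}$ is genuinely needed, since splitting a high-density cell of $\mathcal{C}$ into low-density pieces can strictly decrease $\theta_{L^1}$, and this hypothesis is precisely what lets us run the tangent-line argument only on cells lying below the cutoff.
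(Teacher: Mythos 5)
Your proof is correct, and while it starts from the same reduction as the paper (compare each $\sigma\in\mathcal{C}$'s contribution against that of $\mathcal{D}_\sigma$, noting that the hypothesis $\mathcal{C}_{\nu>K}\subseteq\mathcal{D}$ makes the high-density cells trivial), the core inequality is established by a genuinely different argument. The paper's proof expands $\bigl(\sum_{\tau\in\mathcal{D}_\sigma}\mu(\tau)\delta_\nu(\tau)\bigr)^2$, splits $\mathcal{D}_\sigma$ into the low- and high-density pieces, applies Cauchy--Schwarz to the low-density part, and then bounds various cross terms by $K$ and by $\delta_\nu(\sigma)$ in a somewhat delicate algebraic chain. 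You instead observe that the integrand $\phi(t)=t^2$ for $|t|\leq K$, $\phi(t)=2K|t|$ for $|t|>K$ dominates the affine function $t\mapsto 2mt-m^2$ (the tangent to $t^2$ at $t=m$) whenever $|m|\leq K$, and then the inequality $\theta_{L^1}(\sigma)\leq\theta_{L^1}(\mathcal{D}_\sigma)$ follows by a single application of additivity of $\mu$ and $\nu$. This is cleaner, makes the monotonicity transparent as a linear-support argument, and also explains at a glance why the coefficient $2K$ (rather than $K$) in the definition of $\theta_{L^1}$ is needed: with $K$ in its place, $\phi$ would dip below the tangent line for $t$ just above $K$ when $m$ is near $K$. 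One minor note: your phrase ``multiplying the pointwise estimate by $\mu(\sigma')$'' is slightly redundant given that you stated the estimate already with the $\mu(\sigma')$ factor attached, but the displayed computation is correct as written.
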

\begin{proof}
It suffices to show that for each $\sigma\in\mathcal{C}_{\nu\leq K}$, $\theta_{L^1}(\sigma)\leq\theta_{L^1}(\mathcal{D}_\sigma)$.  Let us write $\mathcal{D}_\leq$ for $(\mathcal{D}_\sigma)_{\nu\leq K}$ and $\mathcal{D}_>$ for $\mathcal{D}_\sigma\setminus\mathcal{D}_\leq$.  Then
  \begin{align*}
\theta_{L^1}(\sigma)
&=    \mu(\sigma)\delta_{\nu}^2(\sigma)\\
&\leq\mu(\sigma)\left(\frac{\sum_{\tau\in\mathcal{D}_\sigma}\mu(\tau)\delta_{\nu}(\tau)}{\mu(\sigma)}\right)^2\\
&=\frac{\left(\sum_{\tau\in\mathcal{D}_\sigma}\mu(\tau)\delta_{\nu}(\tau)\right)^2}{\mu(\sigma)}\\
&=\frac{\left(\sum_{\tau\in\mathcal{D}_{\leq}}\mu(\tau)\delta_{\nu}(\tau)\right)^2}{\mu(\sigma)}\\
&\ \ \ \ +\frac{\left(\sum_{\tau\in\mathcal{D}_{>}}\mu(\tau)\delta_{\nu}(\tau)\right) \left(2\sum_{\tau\in\mathcal{D}_{\leq}}\mu(\tau)\delta_{\nu}(\tau)+\sum_{\tau '\in\mathcal{D}_{>}}\mu(\tau ')\delta_{\nu}(\tau ')\right)}{\mu(\sigma)}\\
&= \frac{\left(\sum_{\tau\in\mathcal{D}_{\leq}}\sqrt{\mu(\tau)}\left[\sqrt{\mu(\tau)}\delta_{\nu}(\tau)\right]\right)^2}{\mu(\sigma)}\\
&\ \ \ \ +\left(\sum_{\tau\in\mathcal{D}_{>}}\mu(\tau)\delta_{\nu}(\tau)\right) \frac{\sum_{\tau\in\mathcal{D}_{\leq}}\mu(\tau)\delta_{\nu}(\tau)}{\mu(\sigma)}\\
&\ \ \ \ +\left(\sum_{\tau\in\mathcal{D}_{>}}\mu(\tau)\delta_{\nu}(\tau)\right) \frac{\sum_{\tau\in\mathcal{D}_{\leq}}\mu(\tau)\delta_{\nu}(\tau)+\sum_{\tau '\in\mathcal{D}_{>}}\mu(\tau ')\delta_{\nu}(\tau ')}{\mu(\sigma)}\\
&\leq  \frac{\sum_{\tau\in\mathcal{D}_{\leq}}\mu(\tau)}{\mu(\sigma)}\sum_{\tau\in\mathcal{D}_{\leq}}\mu(\tau)\delta_{\nu}^2(\tau)\\
&\ \ \ \ +\left(\sum_{\tau\in\mathcal{D}_{>}}\mu(\tau)\delta_{\nu}(\tau)\right)\frac{ K\sum_{\tau\in\mathcal{D}_{\leq}}\mu(\tau)}{\mu(\sigma)}\\
&\ \ \ \ +\left(\sum_{\tau\in\mathcal{D}_{>}}\mu(\tau)\delta_{\nu}(\tau)\right)\delta_{\nu}(\sigma)\\
&\leq \sum_{\tau\in\mathcal{D}_{\leq}}\mu(\tau)\delta_{\nu}^2(\tau)+2K\sum_{\tau\in\mathcal{D}_{>}}\mu(\tau)\left|\delta_{\nu}(\tau)\right|\\
&=\theta_{L^1}(\mathcal{D}_\sigma).
  \end{align*}
\end{proof}

\begin{lemma}
  Suppose $\left|\delta_{\nu}(\sigma)\right|\leq K$ and $\sigma\preceq\mathcal{D}$.  Let $\mathcal{D}^*=\mathfrak{D}_{E_\flat,\sigma,\nu}(\mathcal{D})\cap\mathcal{D}_{\nu\leq K}$.  Then
\[\theta_{L^1}(\mathcal{D})\geq\theta_{L^1}(\sigma)+\mu(\mathcal{D}^*)/E_\flat^2.\]
\end{lemma}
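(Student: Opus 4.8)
The plan is to run the standard energy-increment (variance) computation, taking care to route the ``cut-off'' elements $\mathcal{D}_{\nu>K}$, where $\theta_{L^1}$ behaves linearly rather than quadratically, through the linear term. To set up notation, write $d=\delta_{\nu}(\sigma)$ (so $|d|\le K$ by hypothesis), and abbreviate $\mathcal{D}_\le=\mathcal{D}_{\nu\le K}$, $\mathcal{D}_>=\mathcal{D}_{\nu>K}$, $S_>=\sum_{\tau\in\mathcal{D}_>}\mu(\tau)\delta_{\nu}(\tau)$. First I would record the two elementary facts the argument rests on: since $|\delta_{\nu}(\sigma)|\le K$, the singleton partition is its own $\nu\le K$-part, so $\theta_{L^1}(\sigma)=\mu(\sigma)d^2$; and since $\sigma\preceq\mathcal{D}$ we have $\mathcal{D}=\mathcal{D}_\sigma$, so additivity of $\mu$ and $\nu$ gives $\sum_{\tau\in\mathcal{D}}\mu(\tau)=\mu(\sigma)$ and $\sum_{\tau\in\mathcal{D}}\mu(\tau)\delta_{\nu}(\tau)=\nu(\sigma)=\mu(\sigma)d$.

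Next I would reduce to a clean inequality. Because $\mathcal{D}^*\subseteq\mathcal{D}_\le$ and every $\tau\in\mathcal{D}^*$ lies in $\mathfrak{D}_{E_\flat,\sigma,\nu}(\mathcal{D})$ --- which, since $\tau_\sigma=\sigma$ for each $\tau\in\mathcal{D}$, just says $(\delta_{\nu}(\tau)-d)^2\ge 1/E_\flat^2$ --- we have
\[\mu(\mathcal{D}^*)/E_\flat^2\le\sum_{\tau\in\mathcal{D}_\le}\mu(\tau)(\delta_{\nu}(\tau)-d)^2.\]
So it suffices to show $\theta_{L^1}(\mathcal{D})\ge\mu(\sigma)d^2+\sum_{\tau\in\mathcal{D}_\le}\mu(\tau)(\delta_{\nu}(\tau)-d)^2$.

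Then it is pure algebra. Expanding the square, cancelling the common term $\sum_{\tau\in\mathcal{D}_\le}\mu(\tau)\delta_{\nu}^2(\tau)$ that appears on both sides, and substituting $\sum_{\tau\in\mathcal{D}_\le}\mu(\tau)=\mu(\sigma)-\mu(\mathcal{D}_>)$ and $\sum_{\tau\in\mathcal{D}_\le}\mu(\tau)\delta_{\nu}(\tau)=\mu(\sigma)d-S_>$, the desired inequality collapses to
\[2K\sum_{\tau\in\mathcal{D}_>}\mu(\tau)|\delta_{\nu}(\tau)|\ \ge\ 2dS_>-\mu(\mathcal{D}_>)d^2.\]
Since $\mu(\mathcal{D}_>)d^2\ge 0$ and $|d|\le K$, the right-hand side is at most $2dS_>\le 2|d|\,|S_>|\le 2K\sum_{\tau\in\mathcal{D}_>}\mu(\tau)|\delta_{\nu}(\tau)|$, using the triangle inequality for $S_>$. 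This finishes the proof.

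I do not expect a genuine obstacle here; the one point that warrants attention is the last step, where a factor of two is spent to absorb $|2dS_>|$ into the linear part of $\theta_{L^1}$ --- this is precisely why $\theta_{L^1}$ is defined with coefficient $2K$ rather than $K$ on the cut-off elements. It is also worth noting that the hypothesis $|\delta_{\nu}(\sigma)|\le K$ enters only through the bound $|d|\le K$ (and, trivially, to identify $\theta_{L^1}(\sigma)$ with $\mu(\sigma)d^2$); no largeness of $K$ relative to $\|\nu\|_{L^1}$ is used in this lemma.
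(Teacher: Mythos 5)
Your proposal is correct, and it achieves the result by a cleaner route than the paper's. Both proofs rest on the same energy-increment idea: expand the square, and use the $2K$ factor in the linear part of $\theta_{L^1}$ to absorb the cross term coming from the cut-off cells. But the paper first reduces to the case where $\mathcal{D}\setminus\mathcal{D}^*$ is a singleton $\zeta$ (asserting without detail that the general case ``follows from combining the two cases''), then splits according to whether $\left|\delta_\nu(\zeta)\right|\le K$, and in the second subcase leans on an observation about the sign of $\gamma_\zeta$. Your version dispenses with all of this. Reducing the claim to the single inequality
\[2K\sum_{\tau\in\mathcal{D}_>}\mu(\tau)\left|\delta_\nu(\tau)\right|\ \geq\ 2dS_> - \mu(\mathcal{D}_>)d^2\]
and then closing with the triangle inequality and $|d|\le K$ handles an arbitrary collection of cut-off cells uniformly and without tracking signs. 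This is a genuine improvement in clarity: it makes explicit that the only hypothesis used is $|d|\le K$, and that the factor of two built into $\theta_{L^1}$ is spent precisely to absorb $2|d|\,|S_>|$. The same mechanism is actually what makes the paper's subcase work, but it is obscured there by the singleton reduction and the sign discussion; your single computation exposes it directly.
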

\begin{proof}
For notational simplicity, we consider the case where $\mathcal{D}\setminus\mathcal{D}^*$ is a singleton (possibly a singleton of measure $0$); let us write $\zeta$ for this element.  (The general case follows from combining the two cases below.)

For each $\tau\in\mathcal{D}$, write $\gamma_\tau=\delta_{\nu}(\tau)-\delta_{\nu}(\sigma)$.   Then since
 \[\mu(\sigma)\delta_{\nu}(\sigma)=\sum_{\tau\in\mathcal{D}}\mu(\tau)\delta_{\nu}(\tau),\]
 we have
 \[\sum_\tau\mu(\tau)\gamma_\tau=\sum_\tau\mu(\tau) \delta_{\nu}(\tau)-\mu(\tau)\delta_{\nu}(\sigma)=0.\]

First, suppose $\left|\delta_{\nu}(\zeta)\right|\leq K$, so $\mathcal{D}_{\nu>K}=\emptyset$.  
\begin{align*}
  \theta_{L^1}(\mathcal{D})
&=\sum_{\tau\in\mathcal{D}}\mu(\tau)\delta^2_{\nu}(\tau)\\
&=\sum_{\tau\in\mathcal{D}}\mu(\tau)(\delta_{\nu}(\sigma)+\gamma_\tau)^2\\
&=\sum_{\tau\in\mathcal{D}}\mu(\tau)\delta^2_{\nu}(\sigma)+2\gamma_\tau\mu(\tau)\delta_{\nu}(\sigma)+\mu(\tau)\gamma_\tau^2\\
&=\theta_{L^1}(\sigma)+\sum_{\tau\in\mathcal{D}}\mu(\tau)\gamma_\tau^2\\
&\geq\theta_{L^1}(\sigma)+\mu(\mathcal{D}^*)/E^2_\flat.
\end{align*}

On the other hand, suppose $\left|\delta_{\nu}(\zeta)\right|>K$.  Since $\left|\delta_{\nu}(\tau)\right|<K$ for $\tau\in\mathcal{D}$, $\delta_{\nu}(\zeta)$ has the same sign as $\delta_\nu(\sigma)$, so $\gamma_\zeta$ also has the same sign.  In particular, $\left|\delta_\nu(\sigma)+\gamma_\zeta\right|=\left|\delta_\nu(\sigma)|+|\gamma_\zeta\right|$.  Then we have
\begin{align*}
  \theta_{L^1}(\mathcal{D})
&=\sum_{\tau\in\mathcal{D}}\mu(\tau)\delta^2_{\nu}(\tau)+2K\mu(\zeta)\left|\delta_{\nu}(\zeta)\right|\\
&=\sum_{\tau\in\mathcal{D}^*}\mu(\tau)(\delta_{\nu}(\sigma)+\gamma_\tau)^2+2K\mu(\zeta)\left|\delta_{\nu}(\sigma)+\gamma_\zeta\right|\\
&\geq\sum_{\tau\in\mathcal{D}^*}(\mu(\tau)\delta^2_{\nu}(\sigma)+2\mu(\tau) \delta_{\nu}(\sigma)\gamma_\tau+\mu(\tau)\gamma_\tau^2)+2K\mu(\zeta)\left|\delta_{\nu}(\sigma)\right|+2K \mu(\zeta)\left|\gamma_\zeta\right|\\
&\geq\sum_{\tau\in\mathcal{D}^*}(\mu(\tau)\delta^2_{\nu}(\sigma)+\mu(\tau)\gamma_\tau^2)+2K\mu(\zeta)\left|\delta_{\nu}(\sigma)\right|+2\mu(\zeta)\left(K \left|\gamma_\zeta\right|-\gamma_\zeta\delta_{\nu}(\sigma)\right)\\
&\geq\sum_{\tau\in\mathcal{D}^*}(\mu(\tau)\delta^2_{\nu}(\sigma)+\mu(\tau)\gamma_\tau^2)+\mu(\zeta)\delta^2_{\nu}(\sigma)\\
&\geq\theta_{L^1}(\sigma)+\mu(\mathcal{D}^*)/E^2_\flat.
\end{align*}
\end{proof}

\begin{cor}
  If $\mathcal{C}\preceq\mathcal{D}$ and $\mathcal{C}_{\nu>K}\subseteq\mathcal{D}$ then
\[\theta_{L^1}(\mathcal{D})\geq\theta_{L^1}(\mathcal{C})+\frac{1}{E_\flat^2}\mu(\mathcal{D}_{\nu\leq K}\cap\mathfrak{D}_{E_\flat,\mathcal{C},\nu}(\mathcal{D})).\]
\end{cor}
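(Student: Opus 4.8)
The plan is to deduce the corollary from the preceding lemma by summing it over the blocks of $\mathcal{D}$ lying above the individual elements of $\mathcal{C}$, after first recording the (trivial) additivity of $\theta_{L^1}$. Writing $\theta_{L^1}(\sigma)$ for $\theta_{L^1}(\{\sigma\})$, both $\theta_{L^1}(\mathcal{C})=\sum_{\sigma\in\mathcal{C}}\theta_{L^1}(\sigma)$ and, because $\mathcal{C}\preceq\mathcal{D}$ decomposes $\mathcal{D}$ as the disjoint union $\bigsqcup_{\sigma\in\mathcal{C}}\mathcal{D}_\sigma$, also $\theta_{L^1}(\mathcal{D})=\sum_{\sigma\in\mathcal{C}}\theta_{L^1}(\mathcal{D}_\sigma)$; here one uses only that whether an element $\tau$ of $\mathcal{D}$ falls under the quadratic or the linear term of $\theta_{L^1}$ depends on $\delta_\nu(\tau)$ alone, so $(\mathcal{D}_\sigma)_{\nu\leq K}=\mathcal{D}_\sigma\cap\mathcal{D}_{\nu\leq K}$ and the grouping is harmless. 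Thus it suffices to bound $\theta_{L^1}(\mathcal{D}_\sigma)-\theta_{L^1}(\sigma)$ from below for each $\sigma\in\mathcal{C}$.

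For $\sigma\in\mathcal{C}_{\nu\leq K}$ we have $|\delta_\nu(\sigma)|\leq K$, so the previous lemma applies to $\sigma$ and $\mathcal{D}_\sigma$ and gives $\theta_{L^1}(\mathcal{D}_\sigma)\geq\theta_{L^1}(\sigma)+\mu((\mathcal{D}_\sigma)^*)/E_\flat^2$, where $(\mathcal{D}_\sigma)^*=\mathfrak{D}_{E_\flat,\sigma,\nu}(\mathcal{D}_\sigma)\cap(\mathcal{D}_\sigma)_{\nu\leq K}$. For $\sigma\in\mathcal{C}_{\nu>K}$ the hypothesis $\mathcal{C}_{\nu>K}\subseteq\mathcal{D}$ forces $\mathcal{D}_\sigma=\{\sigma\}$ (the only elements of $\mathcal{D}$ contained in $\sigma\in\mathcal{D}$ are $\sigma$ itself and possibly measure‑zero pieces), so $\theta_{L^1}(\mathcal{D}_\sigma)=\theta_{L^1}(\sigma)$ and such blocks contribute nothing. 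Summing, $\theta_{L^1}(\mathcal{D})-\theta_{L^1}(\mathcal{C})\geq\frac{1}{E_\flat^2}\sum_{\sigma\in\mathcal{C}_{\nu\leq K}}\mu((\mathcal{D}_\sigma)^*)$.

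It remains to identify $\sum_{\sigma\in\mathcal{C}_{\nu\leq K}}\mu((\mathcal{D}_\sigma)^*)$ with $\mu(\mathcal{D}_{\nu\leq K}\cap\mathfrak{D}_{E_\flat,\mathcal{C},\nu}(\mathcal{D}))$, which is the only bookkeeping point. For $\tau\in\mathcal{D}_\sigma$ with $\sigma\in\mathcal{C}_{\nu\leq K}$ one has $\tau_{\mathcal{C}}=\sigma$, so $\tau\in\mathfrak{D}_{E_\flat,\mathcal{C},\nu}(\mathcal{D})$ iff $|\delta_\nu(\tau)-\delta_\nu(\sigma)|\geq 1/E_\flat$ iff $\tau\in\mathfrak{D}_{E_\flat,\sigma,\nu}(\mathcal{D}_\sigma)$, and $\tau\in\mathcal{D}_{\nu\leq K}$ iff $\tau\in(\mathcal{D}_\sigma)_{\nu\leq K}$; hence $\mathcal{D}_\sigma\cap\mathcal{D}_{\nu\leq K}\cap\mathfrak{D}_{E_\flat,\mathcal{C},\nu}(\mathcal{D})=(\mathcal{D}_\sigma)^*$. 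For $\sigma\in\mathcal{C}_{\nu>K}$ the single element $\sigma$ of $\mathcal{D}_\sigma$ satisfies $\sigma_{\mathcal{C}}=\sigma$, hence lies outside $\mathfrak{D}_{E_\flat,\mathcal{C},\nu}(\mathcal{D})$. Since $\mathcal{D}=\bigsqcup_{\sigma\in\mathcal{C}}\mathcal{D}_\sigma$, the set $\mathcal{D}_{\nu\leq K}\cap\mathfrak{D}_{E_\flat,\mathcal{C},\nu}(\mathcal{D})$ is exactly the disjoint union of the $(\mathcal{D}_\sigma)^*$ over $\sigma\in\mathcal{C}_{\nu\leq K}$, so the two quantities agree and the stated inequality follows. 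I expect no real obstacle; the only care needed is the treatment of the $\nu>K$ blocks and of any measure‑zero elements of $\mathcal{D}$.
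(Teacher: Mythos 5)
Your proof is correct and is essentially the argument the paper leaves implicit in calling this a corollary: you decompose $\theta_{L^1}$ additively as $\theta_{L^1}(\mathcal{D})=\sum_{\sigma\in\mathcal{C}}\theta_{L^1}(\mathcal{D}_\sigma)$, apply the preceding lemma to each block $\mathcal{D}_\sigma$ with $\sigma\in\mathcal{C}_{\nu\leq K}$, observe that the hypothesis $\mathcal{C}_{\nu>K}\subseteq\mathcal{D}$ makes the remaining blocks trivial (with $\sigma_{\mathcal{C}}=\sigma$, so they cannot lie in $\mathfrak{D}_{E_\flat,\mathcal{C},\nu}(\mathcal{D})$), and sum. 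The bookkeeping identifying $\sum_{\sigma\in\mathcal{C}_{\nu\leq K}}\mu((\mathcal{D}_\sigma)^*)$ with $\mu(\mathcal{D}_{\nu\leq K}\cap\mathfrak{D}_{E_\flat,\mathcal{C},\nu}(\mathcal{D}))$ is handled carefully and correctly.
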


\begin{cor}\label{thm:increment}
  If $\mathcal{C}\preceq\mathcal{D}$, $\mathcal{C}_{\nu>K}\subseteq\mathcal{D}$ and $\mu(\mathfrak{D}_{E_\flat,\mathcal{C},\nu}(\mathcal{D}))\geq 1/D_\flat$ then $\theta_{L^1}(\mathcal{D})\geq\theta_{L^1}(\mathcal{C})+\frac{1}{2D_\flat E_\flat^2}$.
\end{cor}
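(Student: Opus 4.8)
The plan is to derive Corollary~\ref{thm:increment} directly from the preceding corollary, the only extra ingredient being a bound on the measure of the ``high-density'' part $\mathcal{D}_{\nu>K}$ coming from the specific choice $K=\max\{2D_\flat||\nu||_{L^1},1\}$. The hypotheses $\mathcal{C}\preceq\mathcal{D}$ and $\mathcal{C}_{\nu>K}\subseteq\mathcal{D}$ are exactly what the previous corollary needs, so it already gives
\[\theta_{L^1}(\mathcal{D})\geq\theta_{L^1}(\mathcal{C})+\frac{1}{E_\flat^2}\,\mu\!\left(\mathcal{D}_{\nu\leq K}\cap\mathfrak{D}_{E_\flat,\mathcal{C},\nu}(\mathcal{D})\right),\]
and it remains only to show $\mu(\mathcal{D}_{\nu\leq K}\cap\mathfrak{D}_{E_\flat,\mathcal{C},\nu}(\mathcal{D}))\geq\frac{1}{2D_\flat}$.

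First I would bound $\mu(\mathcal{D}_{\nu>K})$. For each $\sigma\in\mathcal{D}_{\nu>K}$ we have $|\nu(\sigma)|=|\delta_\nu(\sigma)|\,\mu(\sigma)>K\mu(\sigma)$; summing over $\sigma\in\mathcal{D}_{\nu>K}$ and using $|\nu|(\mathcal{D}_{\nu>K})\leq||\nu||_{L^1}$ gives $||\nu||_{L^1}>K\,\mu(\mathcal{D}_{\nu>K})\geq 2D_\flat||\nu||_{L^1}\,\mu(\mathcal{D}_{\nu>K})$, hence $\mu(\mathcal{D}_{\nu>K})\leq\frac{1}{2D_\flat}$ (the degenerate case $||\nu||_{L^1}=0$ is trivial, since then every density vanishes and $\mathcal{D}_{\nu>K}=\emptyset$). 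Next, since $\mathfrak{D}_{E_\flat,\mathcal{C},\nu}(\mathcal{D})$ is the disjoint union of $\mathfrak{D}_{E_\flat,\mathcal{C},\nu}(\mathcal{D})\cap\mathcal{D}_{\nu\leq K}$ and $\mathfrak{D}_{E_\flat,\mathcal{C},\nu}(\mathcal{D})\cap\mathcal{D}_{\nu>K}$, additivity of $\mu$ on partitions yields
\[\mu\!\left(\mathcal{D}_{\nu\leq K}\cap\mathfrak{D}_{E_\flat,\mathcal{C},\nu}(\mathcal{D})\right)\geq\mu\!\left(\mathfrak{D}_{E_\flat,\mathcal{C},\nu}(\mathcal{D})\right)-\mu(\mathcal{D}_{\nu>K})\geq\frac{1}{D_\flat}-\frac{1}{2D_\flat}=\frac{1}{2D_\flat}.\]
Substituting this into the displayed inequality above gives $\theta_{L^1}(\mathcal{D})\geq\theta_{L^1}(\mathcal{C})+\frac{1}{2D_\flat E_\flat^2}$, as claimed.

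There is no genuine obstacle here: the statement is essentially a bookkeeping consequence of the previous corollary. The one point that must be handled with care is checking that $K$ was chosen large enough that discarding $\mathcal{D}_{\nu>K}$ costs at most half of the available mass $1/D_\flat$ of $\mathfrak{D}_{E_\flat,\mathcal{C},\nu}(\mathcal{D})$, which is precisely the role of the factor $2D_\flat$ in the definition of $K$.
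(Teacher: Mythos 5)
Your proof is correct and takes essentially the same route as the paper: the preceding corollary plus the observation that $\mu(\mathcal{D}_{\nu>K})<1/2D_\flat$. The only cosmetic difference is that you derive this bound directly from $K\geq 2D_\flat\|\nu\|_{L^1}$ and $|\nu|(\mathcal{D}_{\nu>K})\leq\|\nu\|_{L^1}$, whereas the paper delegates the identical computation to Lemma~\ref{thm:L1_density_bound}.
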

\begin{proof}
  Follows from the previous corollary using the fact that, by Lemma \ref{thm:L1_density_bound}, $\mu(\mathcal{D}_{\nu>K})<1/2D_\flat$.
\end{proof}

We can now prove the regularity lemma:
\begin{proof}[Proof of Theorem \ref{thm:1D}]
We assume $K=2D_\flat||\nu||_{L^1}$.  (In the case where $2D_\flat||\nu||_{L^1}<1$, we obtain slightly different bounds, but the argument is unchanged.)

  Let $\mathcal{A}_0=\mathcal{A}_\flat$.  Given $\mathcal{A}_i$, if there is any $\mathcal{B}\in[\mathcal{A}_i,\widehat{\mathbf{B}}^K_\flat(\mathcal{A}_i)]$ such that $\mu(\mathfrak{D}_{E_\flat,\mathcal{A}_i,\nu}(\mathcal{B}))\geq 1/2D_\flat$, take $\mathcal{A}_{i+1}$ to be such a $\mathcal{B}$.

By Corollary \ref{thm:increment}, $\theta_{L^1}(\mathcal{A}_{i+1})\geq\theta_{L^1}(\mathcal{A}_i)+\frac{1}{4D_\flat E_\flat^2}$.  Since $\theta_{L^1}(\mathcal{A}_i)\leq K^2+2K||\nu||_{L^1}=4D_\flat^2||\nu||_{L^1}^2+4D_\flat||\nu||_{L^1}^2$, there must be some $i\leq 16D_\flat^3 E_\flat^2||\nu||^2_{L^1}+16D_\flat^2 E_\flat^2||\nu||_{L^1}^2$ so that for every $\mathcal{B}\in[\mathcal{A}_i,\widehat{\mathbf{B}}^K_\flat(\mathcal{A}_i)]$, $\mu(\mathfrak{D}_{E_\flat,\mathcal{A}_i,\nu}(\mathcal{B}))< 1/2D_\flat$.

Suppose $\mathcal{B}\in[\mathcal{A}_i,\fn{B}_\flat(\mathcal{A}_i)]$, and let $\mathcal{B}'=(\mathcal{A}_i)_{\nu>K}\cup\{\sigma\in\mathcal{B}\mid \sigma_{\mathcal{A}_i}\in(\mathcal{A}_i)_{\nu\leq K}\}$.  Then $\mathcal{B}'\preceq\fn{B}_\flat^K(\mathcal{A}_i)$, so $\mu(\mathfrak{D}_{E_\flat,\mathcal{A}_i,\nu}(\mathcal{B}'))<1/2D_\flat$.  If $\sigma\in\mathfrak{D}_{E_\flat,\mathcal{A}_i,\nu}(\mathcal{B})$ then either $\sigma\in\mathcal{B}'$, and therefore $\sigma\in \mathfrak{D}_{E_\flat,\mathcal{A}_i,\nu}(\mathcal{B}')$, or $\sigma_{\mathcal{A}}\in(\mathcal{A}_i)_{\nu>K}$.  Therefore 
\[\mu(\mathfrak{D}_{E_\flat,\mathcal{A}_i,\nu}(\mathcal{B}))\leq \mu(\mathfrak{D}_{E_\flat,\mathcal{A}_i,\nu}(\mathcal{B}'))+\mu(\mathcal{A}_{\nu>K})<1/2D_\flat+1/2D_\flat.\]
\end{proof}

We need to strengthen this theorem to sequences of functions.  Since we're no longer able to fix $\fn{B}^K$ in advance (we don't know what $\nu$ to use), we need a modification.


\begin{theorem}\label{thm:1D_sequential}
  Let $(\nu_n)_n$ be a sequence with $||\nu_n||_{L^1}\leq B$ for all $n$.  Let $\mathcal{A}_\flat,E_\flat, D_\flat,\fn{m}_\flat$, and $\fn{B}_\flat$ be given.  Then there exists a $\mathcal{B}_\sharp\succeq\mathcal{A}_\flat$, an $n_\sharp$, and a $\fn{k}_\sharp$ so that whenever $\mathcal{B}_\sharp\preceq\mathcal{B}\preceq\mathcal{B}'\preceq\widehat{\mathbf{B}}_\flat(n_\sharp,\fn{k}_\sharp,\mathcal{B}_\sharp)$, setting
\[m_\flat=\fn{m}_\flat(n_\sharp,\fn{k}_\sharp,\mathcal{B}_\sharp,\mathcal{B},\mathcal{B}')\]
and $k_\sharp=\fn{k}_\sharp(m_\flat,\mathcal{B}')$, if $m_\flat\geq n_\sharp$ then we have $k_\sharp\geq m_\flat$ and
 $\mu(\mathfrak{D}_{E_\flat,\mathcal{B},k_\sharp}(\mathcal{B}'))<1/D_\flat$.
\end{theorem}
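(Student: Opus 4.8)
The plan is to run the energy-increment argument of Theorem~\ref{thm:1D}, but to let the choice of which member of the sequence $(\nu_n)$ we regularize at each step be driven by the challenge function $\fn{m}_\flat$, exploiting the uniform bound $\|\nu_n\|_{L^1}\le B$ to fix a single cutoff and a single energy bound valid for the whole sequence. Concretely, I would first set $K=\max\{2D_\flat B,1\}$, which is admissible for every $\nu_n$, so that the energy $\theta_{L^1}^{\nu_n}$ of Section~\ref{sec:regularity} attached to $\nu_n$ and $K$ satisfies $0\le\theta_{L^1}^{\nu_n}(\mathcal{C})\le\Theta:=K^2+2KB$ for all $n$ and all $\mathcal{C}$. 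Only finitely many indices are ever relevant at one time, so the operation $\fn{B}\mapsto\fn{B}^K$ (now one per index) can be applied for all relevant indices at once, exactly as in the proof of Theorem~\ref{thm:1D}, to keep the high-density cells intact and preserve monotonicity of all the relevant energies simultaneously.

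Next I would carry out a triangle-inequality reduction from ``$\mathcal{B}$ to $\mathcal{B}'$'' down to ``$\mathcal{B}_\sharp$ to everything''. If $\mathcal{B}_\sharp\preceq\mathcal{B}\preceq\mathcal{B}'$ then $(\sigma_{\mathcal{B}})_{\mathcal{B}_\sharp}=\sigma_{\mathcal{B}_\sharp}$ for each $\sigma\in\mathcal{B}'$, so if $\sigma\notin\mathfrak{D}_{2E_\flat,\mathcal{B}_\sharp,k}(\mathcal{B}')$ and $\sigma_{\mathcal{B}}\notin\mathfrak{D}_{2E_\flat,\mathcal{B}_\sharp,k}(\mathcal{B})$ then $\sigma\notin\mathfrak{D}_{E_\flat,\mathcal{B},k}(\mathcal{B}')$; summing over cells of $\mathcal{B}$,
\[
\mu(\mathfrak{D}_{E_\flat,\mathcal{B},k}(\mathcal{B}'))\le\mu(\mathfrak{D}_{2E_\flat,\mathcal{B}_\sharp,k}(\mathcal{B}))+\mu(\mathfrak{D}_{2E_\flat,\mathcal{B}_\sharp,k}(\mathcal{B}')).
\]
Hence it suffices to produce $\mathcal{B}_\sharp$ and, for each relevant index $k$, a guarantee that $\mu(\mathfrak{D}_{2E_\flat,\mathcal{B}_\sharp,k}(\mathcal{B}))<1/2D_\flat$ for every $\mathcal{B}$ in the window $[\mathcal{B}_\sharp,\fn{B}_\flat(n_\sharp,\fn{k}_\sharp,\mathcal{B}_\sharp)]$, which is precisely the conclusion of Theorem~\ref{thm:1D} applied to $\nu_k$ with parameters $2E_\flat$ and $2D_\flat$.

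The construction proper is an iteration producing $\mathcal{A}_\flat=\mathcal{C}_0\preceq\mathcal{C}_1\preceq\cdots$, a growing finite set $S_0\subseteq S_1\subseteq\cdots$ of ``prepared'' indices, and at each stage $j$ a candidate function $\fn{k}^{(j)}$ which redirects each value $m_\flat$ to an index of $S_j$ that is $\ge m_\flat$ and at which $\mathcal{C}_j$ has already been regularized; a candidate $n_\sharp$ is carried along as well. If $(\mathcal{C}_j,n_\sharp,\fn{k}^{(j)})$ already verifies the statement, stop. Otherwise a failure exhibits, for the redirected index $k$, a pair $\mathcal{B}\preceq\mathcal{B}'$ in the current window with $\mu(\mathfrak{D}_{2E_\flat,\mathcal{C}_j,k}(\mathcal{B}))\ge1/2D_\flat$ (or a reachable $m_\flat$ for which $S_j$ has no adequate index, in which case one first adjoins a fresh index $\ge m_\flat$); in either case I would, passing through $\fn{B}^K$ for every index of $S_j$ at once (as in Theorem~\ref{thm:1D}, so that $\theta_{L^1}^{\nu_{k'}}$ does not drop for any $k'\in S_j$), apply Corollary~\ref{thm:increment} to obtain $\mathcal{C}_{j+1}$ with $\theta_{L^1}^{\nu_k}(\mathcal{C}_{j+1})\ge\theta_{L^1}^{\nu_k}(\mathcal{C}_j)+\frac{1}{16D_\flat E_\flat^{2}}$.

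Termination, together with the bookkeeping that makes it go through, is the heart of the matter, and I expect it — rather than any measure-theoretic estimate — to be the main obstacle. Since the energies $\theta_{L^1}^{\nu_{k'}}$ never decrease along the iteration and are bounded by $\Theta$, each index $k'$ can be the site of at most $16D_\flat E_\flat^{2}\Theta$ increments, so the iteration has length at most $16D_\flat E_\flat^{2}\Theta\cdot|S_\infty|$; the delicate point is to bound $|S_\infty|$. For this one uses that $\fn{B}_\flat(n_\sharp,\fn{k}^{(j)},\mathcal{C}_j)$ is always a single finite partition, so only finitely many challenge values $m_\flat$ are reachable at a given stage, and one must design the redirection inside $\fn{k}^{(j)}$ — and nest the functions $\fn{m}_\flat$ and $\fn{B}_\flat$ in the style of the inductive nesting in the proof of Lemma~\ref{thm:quant_convergence_partition_weak} — so that a genuinely new index enters $S$ only finitely often. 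Once the iteration is known to halt, say at stage $j$, one sets $\mathcal{B}_\sharp=\mathcal{C}_j$, takes $\fn{k}_\sharp$ to be the monotonization of $\fn{k}^{(j)}$, and $n_\sharp$ as carried along; then $k_\sharp\ge m_\flat$ holds by construction, and $\mu(\mathfrak{D}_{E_\flat,\mathcal{B},k_\sharp}(\mathcal{B}'))<1/2D_\flat+1/2D_\flat=1/D_\flat$ follows from the triangle-inequality reduction above together with the regularity of $\mathcal{C}_j$ for $\nu_{k_\sharp}$ and the $\fn{B}^K$ correction, exactly as at the end of the proof of Theorem~\ref{thm:1D}.
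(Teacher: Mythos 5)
Your overall outline---an energy increment driven by the challenge function, a triangle-inequality reduction from the pair $(\mathcal{B},\mathcal{B}')$ to the pairs $(\mathcal{B}_\sharp,\mathcal{B})$ and $(\mathcal{B}_\sharp,\mathcal{B}')$, and the cutoff device $\fn{B}^K$---matches the paper's general framework. But the termination bookkeeping, which you yourself flag as the heart of the matter, is precisely where there is a genuine gap, and the architecture you choose makes the gap hard to close.

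You propose to maintain a growing finite set $S_j$ of ``prepared'' indices and to bound the iteration length by the energy bound times $|S_\infty|$, leaving the bound on $|S_\infty|$ open. This is circular: $|S_\infty|$ can be bounded only after the iteration is known to halt, but your halting argument presupposes a bound on $|S_\infty|$. The nesting you appeal to (\`a la Lemma~\ref{thm:quant_convergence_partition_weak}) does not obviously supply such a bound, because the reachable challenge values $m_\flat=\fn{m}_\flat(n_\sharp,\fn{k}^{(j)},\mathcal{C}_j,\mathcal{B},\mathcal{B}')$ shift every time $\mathcal{C}_j$ or $\fn{k}^{(j)}$ is updated, so nothing in the setup prevents a genuinely new index from entering $S$ at every stage. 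There is also a quantitative problem with ``apply $\fn{B}^K$ for every index of $S_j$ at once'': each set $(\mathcal{C}_j)_{\nu_{k'}>K}$ has measure $<1/2D_\flat$, so protecting all of them simultaneously removes a portion of the space of measure up to $|S_j|/2D_\flat$, which can swallow the $\geq 1/2D_\flat$ slice of $\mathfrak{D}$ on which Corollary~\ref{thm:increment} relies; inflating $K$ to compensate inflates the energy bound, feeding back into the length estimate.

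The paper avoids all of this by not maintaining a set at all. The key claim in its proof is an induction on a length parameter $i$: for any prefix $\mathcal{A}_0\preceq\cdots\preceq\mathcal{A}_d$ and any base value $m_0$, either a witness for the theorem is produced, or there is a chain of $i$ further refinements together with a \emph{single} index $m\geq m_0$ such that every one of the $i$ energy increments along the chain is for that same $\theta_m$. The response function $\fn{k}_i(m,\mathcal{B}')$ is defined by recursion on $i$, threading one index all the way down to the base case $\fn{k}_0(m,\mathcal{B}')=m$, and the cutoff $\fn{B}^K$ and the increment are performed only for that one $\nu_k$ at each step. Because all $i$ increments land on the same $\theta_m$, which is bounded by $K^2+2KB$ uniformly in $m$, choosing $i$ on the order of $2^5D_\flat E_\flat^2(K^2+2KB)$ makes the second alternative impossible and forces the first. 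This is the missing organizing idea: structure the recursion so that every increment lands on a single index, rather than spreading increments over a set whose size you have no way to control in advance.
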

That $k_\sharp$ is independent of $\mathcal{B}$ is an incidental simplification because of the actual calculations involved.
\begin{proof}
As above, we set $K=4D_\flat B$ and define
\[\theta_k(\mathcal{C})=\sum_{\sigma\in\mathcal{C}_{\nu_k\leq K}}\mu(\sigma)\delta^2_{\nu_k}(\sigma)+2K\sum_{\sigma\in\mathcal{C}_{\nu_k>K}}\mu(\sigma)\left|\delta_{\nu_k} (\sigma)\right|.\]

The main step is the following claim:
\begin{claim}
  Let $m_0$, $\mathcal{A}_0\preceq\ldots\preceq\mathcal{A}_d$ and $i$ be given.  Then either:
  \begin{itemize}
  \item There are $n_\sharp$, $\widehat{\mathbf{k}}_\sharp$, and $\mathcal{B}_\sharp$ satisfying the theorem or
  \item There is a sequence of extensions $\mathcal{A}_d\preceq\mathcal{A}_{d+1}\preceq\cdots\preceq\mathcal{A}_{d+i}$ and an $m\geq m_0$ so that 
for all $j<i$ we have $\theta_m(\mathcal{A}_{d+j+1})\geq\theta_m(\mathcal{A}_{d+j})+1/2^5D_\flat E^2_\flat$.
  \end{itemize}
\end{claim}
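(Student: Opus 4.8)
The plan is to prove the claim by induction on $i$, running the density-increment scheme from the proof of Theorem~\ref{thm:1D} while forcing all of the $\theta$-increments produced in the second alternative to use one and the same index. For $i=0$ the second alternative is vacuous: the one-term chain consisting of $\mathcal{A}_d$ alone, together with $m=m_0$, works. For the inductive step assume the claim at depth $i$ and fix $m_0$ and $\mathcal{A}_0\preceq\cdots\preceq\mathcal{A}_d$. First I would dispose of an easy case: if for \emph{some} index $m$ and \emph{some} $\mathcal{B}'\succeq\mathcal{A}_d$ the depth-$i$ claim, applied to $m$, the chain $\mathcal{A}_0\preceq\cdots\preceq\mathcal{A}_d\preceq\mathcal{B}'$, depth $i$, and the same $E_\flat,D_\flat,\fn{m}_\flat,\fn{B}_\flat$, returns its first alternative (witnesses for Theorem~\ref{thm:1D_sequential}), then those same data witness the first alternative of the depth-$(i+1)$ claim --- Theorem~\ref{thm:1D_sequential} refers only to $\mathcal{A}_\flat=\mathcal{A}_0$ and to $E_\flat,D_\flat,\fn{m}_\flat,\fn{B}_\flat$, never to $m_0$ or to the extra partitions of the chain, so the transfer is immediate. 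So assume this never occurs; then for every $m$ and every $\mathcal{B}'\succeq\mathcal{A}_d$ the depth-$i$ claim returns its second alternative: an index $m'(m,\mathcal{B}')\geq m$ together with an $i$-step refinement chain above $\mathcal{B}'$ every step of which increments $\theta_{m'(m,\mathcal{B}')}$ by $1/2^5D_\flat E^2_\flat$.

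Under that assumption I would put $\mathcal{B}_\sharp=\mathcal{A}_d$, $n_\sharp=m_0$, and $\fn{k}_\sharp(m,\mathcal{B}')=m'(m,\mathcal{B}')$ (monotonized as usual, which is harmless since $m'(m,\mathcal{B}')\geq m$), and ask whether $(\mathcal{B}_\sharp,n_\sharp,\fn{k}_\sharp)$ witnesses Theorem~\ref{thm:1D_sequential} for the given data. If yes, the first alternative of the depth-$(i+1)$ claim holds. If no, there are $\mathcal{A}_d\preceq\mathcal{B}\preceq\mathcal{B}'\preceq\fn{B}_\flat(n_\sharp,\fn{k}_\sharp,\mathcal{B}_\sharp)$ with $m_\flat:=\fn{m}_\flat(n_\sharp,\fn{k}_\sharp,\mathcal{B}_\sharp,\mathcal{B},\mathcal{B}')\geq n_\sharp$ such that, writing $k_\sharp=\fn{k}_\sharp(m_\flat,\mathcal{B}')=m'(m_\flat,\mathcal{B}')$, one has $\mu(\mathfrak{D}_{E_\flat,\mathcal{B},k_\sharp}(\mathcal{B}'))\geq 1/D_\flat$ (the requirement $k_\sharp\geq m_\flat$ is automatic). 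Now $k_\sharp=m'(m_\flat,\mathcal{B}')$ is precisely the index the depth-$i$ claim selected when applied to $m_\flat$ and the chain $\mathcal{A}_0\preceq\cdots\preceq\mathcal{A}_d\preceq\mathcal{B}'$, so that application has already handed us $\mathcal{B}'=:\mathcal{A}_{d+1}\preceq\mathcal{A}_{d+2}\preceq\cdots\preceq\mathcal{A}_{d+1+i}$ incrementing $\theta_{k_\sharp}$ by $1/2^5D_\flat E^2_\flat$ at each step --- these are $i$ of the wanted $i+1$ increments, with $m:=k_\sharp$.

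It remains to produce the bottom increment $\theta_{k_\sharp}(\mathcal{A}_{d+1})\geq\theta_{k_\sharp}(\mathcal{A}_d)+1/2^5D_\flat E^2_\flat$ from $\mu(\mathfrak{D}_{E_\flat,\mathcal{B},k_\sharp}(\mathcal{B}'))\geq 1/D_\flat$, which I would extract as in the proof of Theorem~\ref{thm:1D}. A triangle-inequality dichotomy using $\mathcal{A}_d\preceq\mathcal{B}\preceq\mathcal{B}'$ shows that $\mathfrak{D}_{E_\flat,\mathcal{B},k_\sharp}(\mathcal{B}')$ is contained in $\mathfrak{D}_{2E_\flat,\mathcal{A}_d,k_\sharp}(\mathcal{B}')$ together with the union of $\mathfrak{D}_{2E_\flat,\mathcal{A}_d,k_\sharp}(\mathcal{B})$, so that $\mu(\mathfrak{D}_{2E_\flat,\mathcal{A}_d,k_\sharp}(\mathcal{E}))\geq 1/2D_\flat$ for $\mathcal{E}$ one of $\mathcal{B},\mathcal{B}'$; passing to the $K$-cut-off $\mathcal{A}_{d+1}$ of $\mathcal{E}$ over $\mathcal{A}_d$ relative to $\nu_{k_\sharp}$ (formed as in the definition of $\fn{B}^K$) restores the hypothesis $(\mathcal{A}_d)_{\nu_{k_\sharp}>K}\subseteq\mathcal{A}_{d+1}$ required by the density-increment corollaries while changing the relevant sets only by $O(B/K)$ in measure (Lemma~\ref{thm:L1_density_bound}, with $K=4D_\flat B$ and $||\nu_{k_\sharp}||_{L^1}\leq B$); and Corollary~\ref{thm:increment}, applied to $\mathcal{A}_d\preceq\mathcal{A}_{d+1}$ at scale $2E_\flat$, then supplies the increment. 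If $\mathcal{E}=\mathcal{B}$ one checks $\mathcal{A}_{d+1}\preceq\mathcal{B}'$ so that $\mathcal{A}_d\preceq\mathcal{A}_{d+1}\preceq\mathcal{B}'\preceq\mathcal{A}_{d+2}\preceq\cdots$ is still a refinement chain (alternatively, run the dichotomy already inside the definition of $\fn{k}_\sharp$ so that the chain is monotone by construction).

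The step I expect to be the real obstacle is the coordination of the single index $m=k_\sharp$ across all $i+1$ increments: the bottom increment is pinned to whatever scale $\fn{k}_\sharp$ returns, so $\fn{k}_\sharp$ must literally equal the scale-selecting output of the inductive hypothesis --- this is why $\fn{k}_\sharp$ is defined recursively and why the preliminary split on whether the inductive hypothesis ever produces its first alternative has to come first. Everything else is careful bookkeeping: the two passages through $K$-cut-offs (one implicit, so that $\theta_{k_\sharp}$-monotonicity is available when the extended chain is fed to the inductive hypothesis, one explicit in the bottom increment) and the loss from replacing $E_\flat$ by $2E_\flat$ in the dichotomy are exactly what shrink the guaranteed increment to $1/2^5D_\flat E^2_\flat$ rather than the $1/4D_\flat E^2_\flat$ of Theorem~\ref{thm:1D}.
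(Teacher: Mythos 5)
Your proof takes essentially the same approach as the paper: induct on $i$, let $\fn{k}_\sharp(m,\mathcal{B}')$ return the index produced by the depth-$i$ claim applied to $m$ and the chain extended by $\mathcal{B}'$, test $(m_0,\mathcal{A}_d,\fn{k}_\sharp)$ against the theorem, and if it fails use a $K$-cutoff plus Corollary~\ref{thm:increment} to obtain the bottom increment and splice onto the chain returned by the inductive hypothesis. The only cosmetic difference is that the paper always forms the nested cutoffs $\mathcal{B}_\star$ (of $\mathcal{B}$ over $\mathcal{A}_d$) and $\mathcal{B}'_\star$ (of $\mathcal{B}'$ over $\mathcal{B}_\star$) and takes $\mathcal{A}_{d+1}=\mathcal{B}'_\star\preceq\mathcal{B}'$, which makes the splice with the inductive chain immediate, whereas you cut off only whichever of $\mathcal{B},\mathcal{B}'$ carries the large difference set and then correctly flag, and repair, the resulting seam in the case $\mathcal{E}=\mathcal{B}$.
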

\begin{claimproof}
  By induction on $i$.  When $i=0$, this is trivial.

  Suppose the claim holds for $i$; we show it for $i+1$.  Suppose we are given $\mathcal{A}_0,\ldots,\mathcal{A}_d$ and $m_0$.  Consider the function $\widehat{\mathbf{k}}_i(m,\mathcal{B}')$ given by applying the inductive hypothesis to $m$ and $\mathcal{A}_0,\ldots,\mathcal{A}_d,\mathcal{B}'$.  If $m_0$, $\mathcal{A}_d$, $\widehat{\mathbf{k}}_i$ satisfy the theorem (or we ever end up in the first case), we are done, so suppose not.  Then there are $\mathcal{B},\mathcal{B}'\in[\mathcal{A}_d,\widehat{\mathbf{B}}_\flat(m_0,\widehat{\mathbf{k}}_i,\mathcal{A}_d)]$ so that if $m=\fn{m}_\flat(m_0,\fn{k}_i,\mathcal{A}_d,\mathcal{B},\mathcal{B}')\geq m_0$ and $k=\fn{k}_i(m,\mathcal{B}')$, we have $\mu(\mathfrak{D}_{E_\flat,\mathcal{B},k}(\mathcal{B}'))\geq 1/D_\flat$.  This means that either $\mu(\mathfrak{D}_{2E_\flat,\mathcal{A}_d,k}(\mathcal{B}))\geq 1/2D_\flat$ or $\mu(\mathfrak{D}_{2E_\flat,\mathcal{A}_d,k}(\mathcal{B}'))\geq 1/2D_\flat$.

Let $\mathcal{B}_\star=(\mathcal{A}_d)_{\nu_k>K}\cup\{\sigma\in\mathcal{B}\mid\left|\delta_{\nu_k}(\sigma_{\mathcal{A}_d})\right|\leq K\}$ and $\mathcal{B}'_\star=(\mathcal{B}_\star)_{\nu_k>K}\cup\{\sigma\in\mathcal{B}'\mid\left|\delta_{\nu_k}(\sigma_{\mathcal{B}_\star})\right|\leq K\}$.  Then either $\mu(\mathfrak{D}_{2E_\flat,\mathcal{A}_d,k}(\mathcal{B}))\geq 1/4D_\flat$ or $\mu(\mathfrak{D}_{2E_\flat,\mathcal{A}_d,k}(\mathcal{B}'))\geq 1/4D_\flat$, so $\theta_m(\mathcal{B}'_\star)\geq\theta_m(\mathcal{A}_d)+1/2^5D_\flat E_\flat^2$.  We may take $\mathcal{A}_{d+1}$ to be $\mathcal{B}'_\star$.

By the definition of $\widehat{\mathbf{k}}_i$, either we obtain $\widehat{\mathbf{k}}_\sharp$ and $\mathcal{B}_\sharp$ satisfying the theorem, in which case we are done, or we find an extension $\mathcal{A}_{d+1}\preceq\mathcal{A}_{(d+1)+1}\preceq\cdots\preceq\mathcal{A}_{(d+1)+i}$ so that
\[\theta_k(\mathcal{A}_{(d+1)+j+1})\geq\theta_k(\mathcal{A}_{(d+1)+j})+1/2^5D_\flat E_\flat^2\]
for all $j<i$.  So $\mathcal{A}_{d+1}\preceq\cdots\preceq\mathcal{A}_{d+i+1}$ is the desired extension.
\end{claimproof}

Since $\theta_m(\mathcal{B})\leq K^2+2KB=2^4D_\flat^2 B^2+2^3D_\flat B^2$, applying the claim with $i=2^{10}D_\flat^3 E^2_\flat B^2+2^9D_\flat^2 E^2_\flat B^2$ and $m_0=0$ means the second case is impossible, so we must be in the first case, satisfying the theorem.
\end{proof}

Before going on, we need the following observation:
\begin{lemma}
  Suppose $|\nu-\nu'|(\mathcal{B})<1/DE$.  Then
\[\mu(\{\sigma\in\mathcal{B}\mid |\delta_\nu(\sigma)-\delta_{\nu'}(\sigma)|\geq 1/E\})<1/D.\]
\end{lemma}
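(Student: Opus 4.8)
The plan is to unwind the definition of density pointwise and then sum over the bad set. Recall that $\delta_\nu(\sigma)=\nu(\sigma)/\mu(\sigma)$, so for any single $\sigma$ we have
\[\left|\delta_\nu(\sigma)-\delta_{\nu'}(\sigma)\right|=\frac{|\nu(\sigma)-\nu'(\sigma)|}{\mu(\sigma)}=\frac{|\nu-\nu'|(\sigma)}{\mu(\sigma)},\]
using that $\nu-\nu'$ is again an additive function (as a difference of additive functions) and that $|\nu-\nu'|(\sigma)$ is by definition just $|(\nu-\nu')(\sigma)|$. Thus the condition $|\delta_\nu(\sigma)-\delta_{\nu'}(\sigma)|\geq 1/E$ is equivalent to $|\nu-\nu'|(\sigma)\geq\mu(\sigma)/E$.

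Next I would set $\mathcal{S}=\{\sigma\in\mathcal{B}\mid |\delta_\nu(\sigma)-\delta_{\nu'}(\sigma)|\geq 1/E\}$. Since $\mathcal{S}\subseteq\mathcal{B}$, it is a partition (the elements are pairwise disjoint), and by the pointwise reformulation above, summing over $\sigma\in\mathcal{S}$ gives
\[|\nu-\nu'|(\mathcal{S})=\sum_{\sigma\in\mathcal{S}}|\nu-\nu'|(\sigma)\geq\sum_{\sigma\in\mathcal{S}}\frac{\mu(\sigma)}{E}=\frac{\mu(\mathcal{S})}{E}.\]
On the other hand, because $|\nu-\nu'|$ is a sum of nonnegative terms over the partition, $|\nu-\nu'|(\mathcal{S})\leq|\nu-\nu'|(\mathcal{B})<1/DE$. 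Combining the two inequalities yields $\mu(\mathcal{S})/E<1/DE$, i.e. $\mu(\mathcal{S})<1/D$, which is exactly the claim.

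This argument is essentially bookkeeping, so there is no real obstacle; the only point one must be slightly careful about is that $|\nu-\nu'|$ on a partition means the sum of the absolute values of the values on each block, so that monotonicity $|\nu-\nu'|(\mathcal{S})\leq|\nu-\nu'|(\mathcal{B})$ for a sub-partition $\mathcal{S}\subseteq\mathcal{B}$ holds trivially term-by-term. No appeal to additivity of $|\cdot|$ is needed, only additivity of $\nu-\nu'$ itself, which we have.
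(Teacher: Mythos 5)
Your proof is correct and is essentially the same argument as the paper's: you both express $|\nu-\nu'|(\mathcal{B})$ as $\sum_{\sigma}|\delta_\nu(\sigma)-\delta_{\nu'}(\sigma)|\mu(\sigma)$ and lower-bound the contribution of the bad set. The only difference is presentation: the paper argues by contradiction (assume $\mu(\mathcal{B}^*)\geq 1/D$, derive $|\nu-\nu'|(\mathcal{B})\geq 1/DE$), while you run the same chain of inequalities directly.
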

\begin{proof}
  Suppose not, so setting $\mathcal{B}^*=\{\sigma\in\mathcal{B}\mid |\delta_\nu(\sigma)-\delta_{\nu'}(\sigma)|\geq 1/E\}$.  Then
\[|\nu-\nu'|(\mathcal{B})
=\sum_{\sigma\in\mathcal{B}}|\delta_\nu(\sigma)-\delta_{\nu'}(\sigma)|\mu(\sigma)
\geq\sum_{\sigma\in\mathcal{B}^*}|\delta_\nu(\sigma)-\delta_{\nu'}(\sigma)|\mu(\sigma)
\geq 1/DE.\]
\end{proof}

We will need a stronger form of Theorem \ref{thm:1D_sequential} in which we achieve regularity, not for a single $\nu_n$, but for all $\nu_n$ with $n$ in an interval.

\subsection{An Aside about Notation}
The following lemma is the first instance of a pattern we will need many times: we wish to apply several lemmas in a nested fashion, and to do this, we need to define a nested series of functions satisfying the premises of those theorems.  We will distinguish variables belonging to a given application of a theorem by subscripts.  The outermost theorem will be subscripted with $0$ (to indicate no dependencies).  For instance, suppose we have two theorems: Theorem A says 
\begin{quote}
  For all $\fn{m}_\flat$ there are $n_\sharp$ and $p_\sharp$ such that $\fn{m}_\flat(n_\sharp,p_\sharp)$ has a convenient property.
\end{quote}
and Theorem B says 
\begin{quote}
  For all $\fn{m}_\flat$ there is an $n_\sharp$ such that $\fn{m}_\flat(n_\sharp)$ has a desirable property.
\end{quote}

We wish to prove a theorem in which the function $\fn{m}_\flat$ to which we apply Theorem A is defined using Theorem B.  We would write this as follows:

\begin{proof}[Proof of a hypothetical theorem]
We define a function $\fn{m}_0$ so that we can apply Theorem A to it.
\begin{quotationb*}
  Suppose $n_0$ and $p_0$ are given.  We write $\dagger$ to abbreviate $n_0,p_0$.  We now define a function $\fn{m}_\dagger$ so that we can apply Theorem B to it.

  \begin{quotationb*}
    Suppose $n_\dagger$ is given.  Define $\fn{m}_\dagger(n_\dagger)=f(n_\dagger,n_0,p_0)$.
  \end{quotationb*}

  By Theorem B applied to $\fn{m}_\dagger$, we obtain a value $n_\dagger$ so that $\fn{m}_\dagger(n_\dagger)$ has a desirable property.  We now define $\fn{m}_0(n_0,p_0)=g(n_\dagger)$.
\end{quotationb*}
By Theorem A applied to $\fn{m}_0$, we obtain $n_0,p_0$ so that $\fn{m}_0(n_0,p_0)$ has a convenient property.  This allows us to complete the proof.
\end{proof}

In particular, note that the subscripts distinguish the variables relevant to Theorem A from those relevant to Theorem B, and indicate the dependencies ($n_\dagger$ depends on $n_0,p_0$, for instance), and the use of the bars on the left of the text to indicate the scope of the variables.

\subsection{The Strong Form of Regularity}
\begin{theorem}\label{thm:1D_sequential_interval}
     Let $(\nu_n)_n$ be a metastably weakly convergent sequence of functions with $||\nu_n||_{L^1}\leq B$ for all $n$.  Let $\mathcal{A}_\flat,E_\flat,D_\flat,\widehat{\mathbf{B}}_\flat,\fn{m}_\flat,\widehat{\mathbf{L}}_\flat$ be given.  Then there exists a $\mathcal{B}_\sharp\succeq\mathcal{A}_\flat$, an $n_\sharp$, and an $\widehat{\mathbf{k}}_\sharp$ so that, setting $m_\flat=\fn{m}_\flat(n_\sharp,\fn{k}_\sharp,\mathcal{B}_\sharp)$ and $\widehat{\mathbf{l}}_\flat=\widehat{\mathbf{L}}_\flat(n_\sharp,\widehat{\mathbf{k}}_\sharp,\mathcal{B}_\sharp)$, if $m_\flat\geq n_\sharp$ then whenever $\mathcal{B}_\sharp\preceq\mathcal{B}\preceq\mathcal{B}'\preceq\widehat{\mathbf{B}}_\flat(n_\sharp,\fn{k}_\sharp,\mathcal{B}_\sharp)$, we may set
\[k_\sharp=\fn{k}_\sharp(m_\flat,\widehat{\mathbf{l}}_\flat,\mathcal{B},\mathcal{B}')\]
and then $k_\sharp\geq m_\flat$ and for every $l\in[k_\sharp,\widehat{\mathbf{l}}_\flat(k_\sharp)]$, $\mu(\mathfrak{D}_{E_\flat,\mathcal{B},l}(\mathcal{B}'))<1/D_\flat$.
 \end{theorem}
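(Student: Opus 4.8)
The plan is to invoke the single-index regularity result, Theorem~\ref{thm:1D_sequential}, once at a sharpened tolerance in order to produce the partition $\mathcal{B}_\sharp$ and an index $k$ at which a single $\nu_k$ is regular with respect to the pair $(\mathcal{B},\mathcal{B}')$, and then to propagate this to the whole block $[k,\fn{l}_\flat(k)]$ using the metastable weak convergence of $(\nu_n)_n$ via Lemma~\ref{thm:quant_convergence_partition_weak}. The two steps are joined by the estimate established above that $|\nu-\nu'|(\mathcal{C})<1/(DE)$ implies $\mu(\{\sigma\in\mathcal{C}\mid|\delta_\nu(\sigma)-\delta_{\nu'}(\sigma)|\ge 1/E\})<1/D$, together with the facts that $|\nu-\nu'|$ does not decrease under refinement and that passing from a subfamily of $\mathcal{B}$ to the subfamily of $\mathcal{B}'$ below it preserves total $\mu$-measure.

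For the analytic content, fix $l\in[k,\fn{l}_\flat(k)]$; for $\sigma\in\mathcal{B}'$ the triangle inequality gives
\[
\bigl|\delta_{\nu_l}(\sigma)-\delta_{\nu_l}(\sigma_{\mathcal{B}})\bigr|
\le\bigl|\delta_{\nu_l}(\sigma)-\delta_{\nu_k}(\sigma)\bigr|
+\bigl|\delta_{\nu_k}(\sigma)-\delta_{\nu_k}(\sigma_{\mathcal{B}})\bigr|
+\bigl|\delta_{\nu_k}(\sigma_{\mathcal{B}})-\delta_{\nu_l}(\sigma_{\mathcal{B}})\bigr|,
\]
so $\mathfrak{D}_{E_\flat,\mathcal{B},l}(\mathcal{B}')$ is contained in the union of three sets: the exceptional set of the middle term, which is precisely $\mathfrak{D}_{3E_\flat,\mathcal{B},k}(\mathcal{B}')$; the set of $\sigma\in\mathcal{B}'$ on which $\delta_{\nu_l}$ and $\delta_{\nu_k}$ differ by at least $1/3E_\flat$; and the pullback to $\mathcal{B}'$ of the set of $\tau\in\mathcal{B}$ on which they differ by at least $1/3E_\flat$. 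Applying Theorem~\ref{thm:1D_sequential} with tolerances $3E_\flat$ and $3D_\flat$ bounds the measure of the first by $1/3D_\flat$; and provided $|\nu_l-\nu_k|(\mathcal{B}')<1/(9D_\flat E_\flat)$, so that also $|\nu_l-\nu_k|(\mathcal{B})\le|\nu_l-\nu_k|(\mathcal{B}')<1/(9D_\flat E_\flat)$, the density-transfer estimate bounds each of the other two by $1/3D_\flat$; the three bounds sum to $1/D_\flat$. So everything reduces to arranging, for $k=k_\sharp$ and all $l$ in the block, that $|\nu_l-\nu_k|(\mathcal{B}')<1/(9D_\flat E_\flat)$. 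Since $\mathcal{B}'$ is known at the point where $k_\sharp$ is computed, with $|\mathcal{B}'|$ bounded by $|\fn{B}_\flat(n_\sharp,\fn{k}_\sharp,\mathcal{B}_\sharp)|$, I apply Lemma~\ref{thm:quant_convergence_partition_weak} to the partition $\mathcal{B}'$ with tolerance $9D_\flat E_\flat\,|\mathcal{B}'|$ and with the monotone function $m\mapsto\fn{l}_\flat(\fn{k}_0(m,\mathcal{B}'))$, where $\fn{k}_0$ is the function returned by Theorem~\ref{thm:1D_sequential}; this produces an $m_1$ whose metastable block $[m_1,\fn{l}_\flat(\fn{k}_0(m_1,\mathcal{B}'))]$ contains $[\fn{k}_0(m_1,\mathcal{B}'),\fn{l}_\flat(\fn{k}_0(m_1,\mathcal{B}'))]$, and summing the per-cell bound over the $|\mathcal{B}'|$ cells of $\mathcal{B}'$ yields the required $L^1$ estimate at the chosen tolerance.

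To make the index produced by Theorem~\ref{thm:1D_sequential} coincide with the left endpoint of this metastable block I use the nested-function device of the notation aside. Theorem~\ref{thm:1D_sequential} is the outer application, fed $\mathcal{A}_\flat$, the tolerances $3E_\flat,3D_\flat$, a function on partitions derived from $\fn{B}_\flat$ (chosen so that the range of $\mathcal{B}'$ matches $[\mathcal{B}_\sharp,\fn{B}_\flat(n_\sharp,\fn{k}_\sharp,\mathcal{B}_\sharp)]$), and a threshold function $\fn{m}_0$ which, from the data it receives---in particular the argument that will be instantiated as $\fn{k}_0$---reconstructs the function $\fn{k}_\sharp$ we intend to output, forms $\fn{l}_\flat=\fn{L}_\flat(n_\sharp,\fn{k}_\sharp,\mathcal{B}_\sharp)$ and $m_\flat=\fn{m}_\flat(n_\sharp,\fn{k}_\sharp,\mathcal{B}_\sharp)$, runs the call to Lemma~\ref{thm:quant_convergence_partition_weak} just described starting from $\max\{m_\flat,n_\sharp\}$, and returns the resulting $m_1$. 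Theorem~\ref{thm:1D_sequential} then certifies the regularity of $\nu_{\fn{k}_0(m_1,\mathcal{B}')}$; I take $\mathcal{B}_\sharp,n_\sharp$ to be its other outputs and set $k_\sharp:=\fn{k}_0(m_1,\mathcal{B}')$. Because $\fn{k}_\sharp$ receives $m_\flat$ and $\fn{l}_\flat$ among its arguments, it can replay verbatim the same call to Lemma~\ref{thm:quant_convergence_partition_weak}, reproducing the same $m_1$ and hence the same $k_\sharp$, so no genuine self-reference occurs; and $k_\sharp=\fn{k}_0(m_1,\mathcal{B}')\ge m_1\ge m_\flat$ gives $k_\sharp\ge m_\flat$, while $m_1\ge n_\sharp$ discharges the hypothesis of Theorem~\ref{thm:1D_sequential}.

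I expect the principal obstacle to be this bookkeeping rather than any new inequality: reconciling the five-argument threshold function expected by Theorem~\ref{thm:1D_sequential} with the three-argument $\fn{m}_\flat$ and $\fn{L}_\flat$ here, making the range of $\mathcal{B}'$ literally the same in the two statements, and verifying that the internal copy of the metastable-weak-convergence call inside $\fn{m}_0$ and the copy inside $\fn{k}_\sharp$ are supplied with identical inputs and hence return identical outputs. The genuinely analytic content is only the three-term triangle inequality and the two density-transfer estimates above.
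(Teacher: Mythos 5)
Your proposal is correct and follows essentially the same route as the paper's own proof: one application of Theorem~\ref{thm:1D_sequential} at tolerances $(3E_\flat,3D_\flat)$, with the threshold function for that application running a call to Lemma~\ref{thm:quant_convergence_partition_weak} on the function $m\mapsto\fn{l}_\flat(\fn{k}_0(m,\mathcal{B}'))$ that $\fn{k}_\sharp$ then replays, followed by the three-term triangle inequality on densities to cover $[k_\sharp,\fn{l}_\flat(k_\sharp)]$. The only cosmetic variation is that you apply Lemma~\ref{thm:quant_convergence_partition_weak} once to $\mathcal{B}'$ at the $|\mathcal{B}'|$-scaled tolerance and pass through the density-transfer lemma (using $|\nu_l-\nu_k|(\mathcal{B})\le|\nu_l-\nu_k|(\mathcal{B}')$ for the coarse level), whereas the paper applies the convergence lemma separately to $\mathcal{B}$ and $\mathcal{B}'$; this is not a different method, and your explicit use of the density-transfer lemma makes the step from an $L^1$ bound to a small exceptional set of density-differences slightly cleaner than the paper's phrasing.
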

 \begin{proof}
In order to apply Theorem \ref{thm:1D_sequential}, we prepare to define functions $\fn{m}_0(n_0,\fn{k}_0,\mathcal{B}_0,\mathcal{B},\mathcal{B}')$ and $\fn{B}_0(n_0,\fn{k}_0,\mathcal{B}_0)$.

\begin{quotationb*}
  Suppose $n_0$, $\fn{k}_0$ and $\mathcal{B}_0$ are given.  We abbreviate $n_0, \fn{k}_0,\mathcal{B}_0$ by $\dagger$.

In order to apply $\fn{L}_\flat$, we need to define a function $\fn{k}_{\dagger}(m,\fn{l}_\flat,\mathcal{B},\mathcal{B}')$.

\begin{quotationb*}
  On input $m,\fn{l}_\flat,\mathcal{B}\preceq\mathcal{B}'$ we may apply the metastable weak convergence of $(\nu_n)_n$ twice to find an $m_{\dagger,m,\fn{l}_\flat,\mathcal{B},\mathcal{B}'}\geq m$ so that for each
\[k,k'\in[m_{\dagger,m,\fn{l}_\flat,\mathcal{B},\mathcal{B}'},\fn{l}_\flat(\fn{k}_0(m_{\dagger,n,\fn{l}_\flat,\mathcal{B},\mathcal{B}'},\mathcal{B}'))],\]
the set of $\sigma\in\mathcal{B}$ such that $|\nu_{k}-\nu_{k'}|(\sigma)\geq 1/3 E_\flat$ has measure $<1/3D_\flat$ and the set of $\sigma\in\mathcal{B}'$ such that $|\nu_k-\nu_{k'}|(\sigma)\geq 1/3 E_\flat$ has measure $<1/3D_\flat$.

We define
\begin{itemize}
\item $\fn{k}_{\dagger}(m,\fn{l}_\flat,\mathcal{B},\mathcal{B}')=\fn{k}_0(m_{\dagger,m,\fn{l}_\flat,\mathcal{B},\mathcal{B}'},\mathcal{B}')$.
\end{itemize}
\end{quotationb*}

We now define
\begin{itemize}
\item $\fn{m}_0(n_0,\fn{k}_0,\mathcal{B}_0,\mathcal{B},\mathcal{B}')=m_{\dagger,\fn{m}_\flat(n_0,\fn{k}_\dagger,\mathcal{B}_0),\fn{L}_\flat(n_0,\fn{k}_\dagger,\mathcal{B}_0),\mathcal{B},\mathcal{B}'}$,
\item $\fn{B}_0(n_0,\fn{k}_0,\mathcal{B}_0)=\fn{B}_\flat(n_0,\fn{m}_{\dagger},\mathcal{B}_0)$.
\end{itemize}
\end{quotationb*}

By Lemma \ref{thm:1D_sequential} applied to $\mathcal{A}_\flat,3E_\flat,3D_\flat,\fn{m}_0,\fn{B}_0$, we obtain $n_0$, $\fn{k}_0$ and $\mathcal{B}_0$.  We set $n_\sharp=n_0$, $\fn{k}_\sharp=\fn{k}_\dagger$ and $\mathcal{B}_\sharp=\mathcal{B}_0$.  Let $m_\flat=\fn{m}_\flat(n_\sharp,\fn{k}_\sharp,\mathcal{B}_\sharp)$ and $\fn{l}_\flat=\fn{L}_\flat(n_\sharp,\fn{k}_\sharp,\mathcal{B}_\sharp)$ and consider some $\mathcal{B}\preceq\mathcal{B}'\in[\mathcal{B}_\sharp,\fn{B}_\flat(n_\sharp,\fn{k}_\sharp,\mathcal{B}_\sharp)]=[\mathcal{B}_0,\fn{B}_0(n_0,\fn{k}_0,\mathcal{B}_0)]$.

Set $k_\sharp=\fn{k}_\sharp(m_\flat,\fn{l}_\flat,\mathcal{B},\mathcal{B}')$.  By choice of $\fn{k}_0$ and $\mathcal{B}_0$, $k_\sharp=\fn{k}_0(m_{\dagger,m_\flat,\fn{l}_\flat,\mathcal{B},\mathcal{B}'},\mathcal{B}')\geq m_{\dagger,m_\flat,\fn{l}_\flat,\mathcal{B},\mathcal{B}'}\geq m_\flat$ and $\mu(\mathfrak{D}_{3E_\flat,\mathcal{B},k_\sharp}(\mathcal{B}'))<1/3D_\flat$.

Consider any $k\in[k_\sharp,\fn{l}_\flat(k_\sharp)]\subseteq [m_{\dagger,m_\flat,\fn{l}_\flat,\mathcal{B},\mathcal{B}'},\fn{l}_\flat(\fn{k}_0(m_{\dagger,m_\flat,\fn{l}_\flat,\mathcal{B},\mathcal{B}'},\mathcal{B}'))]$.  Suppose $\sigma\not\in\mathfrak{D}_{3E_\flat,\mathcal{B},k_\sharp}(\mathcal{B}')$.  Then
\[|\nu_k(\sigma)-\nu_k(\sigma_{\mathcal{B}})|\leq|\nu_{k_\sharp}-\nu_k|(\sigma)+|\nu_{k_\sharp}-\nu_k|(\sigma_{\mathcal{B}})+1/3E_\flat<1/E_\flat.\]
Except for a set of $\sigma$ of measure less than $2/3D_\flat$, the first two values are each bounded by $1/3E_\flat$, so except on a set of measure less than $1/D_\flat$, $|\nu_k(\sigma)-\nu_k(\sigma_{\mathcal{B}})|<1/E_\flat$.  That is, $\mu(\mathfrak{D}_{E_\flat,\mathcal{B},k}(\mathcal{B}'))<1/D_\flat$.
\end{proof}

\section{Exchanges}

\subsection{$E$-Constant Partitions}

When $\rho(A)=\int_A f\,d\mu$, a natural and useful operation is to decompose $\Omega$ into approximate level sets---to fix $E$ and define $A_c=\{\omega\mid -1/2E\leq f(\omega)-c<1/2E\}$.  We could pick a collection of values $c$ so that these sets are pairwise disjoint.  This is an infinite partition, but if we know $||f||_{L^1}\leq B$ then we could choose a large number of values $c$ so that $\int_{\Omega\setminus\bigcup_{c\in S}A_c}|f|d\mu<1/E$ by taking $S=\{-K,-K+1/E,-K+2/E,\ldots,-K+2KE/E$ for big enough $K$.

In our setting, the analog of a partition into sets $A_c$ is the notion of an $E$-constant set:
\begin{definition}
  We say $\rho$ is \emph{$E$-constant} on $\mathcal{B}$ if for every $\sigma\in\mathcal{B}$ and every $\lambda$, $|(\rho\lambda)(\sigma)-(\rho\ast\lambda)(\sigma)|<|\lambda(\sigma)|/E$.
\end{definition}

The useful situation is to have a partition $\mathcal{B}'$ and a $\mathcal{B}^-\subseteq \mathcal{B}'$ so that some $\rho$ is $E$-constant on $\mathcal{B}'\setminus\mathcal{B}^-$---we may think of $\mathcal{B}'$ as having the form $\{A_c\mid c\in S\}$ for some large finite $S$, and $\mathcal{B}^-$ as being some further partition of $\{\omega\mid |f(\omega)|>K\}$.

\begin{lemma}\label{thm:refinement}
  Suppose $\rho$ is $E$-constant on $\mathcal{B}'\setminus\mathcal{B}^-$ and that $||\rho||_{L^1},||\lambda||_{L^1}\leq B$.  Then for any $\mathcal{B}\preceq\mathcal{B}'$ and any $C$,
\[\sum_{\sigma\in\mathcal{B}_{\lambda\leq C}}\left|(\rho\lambda)(\sigma)-(\rho\ast\lambda)(\sigma)\right|<\frac{2B}{E}+C|\rho|(\mathfrak{D}_{E,\mathcal{B},\lambda}(\mathcal{B}')\cup\mathcal{B}^-)+|\rho\lambda|(\mathfrak{D}_{E,\mathcal{B},\lambda}(\mathcal{B}')\cup\mathcal{B}^-).\]
\end{lemma}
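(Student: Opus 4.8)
The plan is to estimate the discrepancy $(\rho\lambda)(\sigma)-(\rho\ast\lambda)(\sigma)$ for each fixed $\sigma\in\mathcal{B}_{\lambda\leq C}$ by refining $\sigma$ all the way down to $\mathcal{B}'_\sigma$ and comparing block by block. Since $(\rho\lambda)(\sigma)=\lim_{\mathcal{A}\succeq\sigma}(\rho\ast\lambda)(\mathcal{A})$, the product $\rho\lambda$ is additive over refinements, so $(\rho\lambda)(\sigma)=\sum_{\sigma'\in\mathcal{B}'_\sigma}(\rho\lambda)(\sigma')$. The key point is that, although $\rho\ast\lambda$ is \emph{not} additive, one can still split $(\rho\ast\lambda)(\sigma)$ over $\mathcal{B}'_\sigma$ by keeping the $\lambda$-density at the coarse level fixed: using $\rho(\sigma)=\sum_{\sigma'\in\mathcal{B}'_\sigma}\mu(\sigma')\delta_\rho(\sigma')$,
\[(\rho\ast\lambda)(\sigma)=\frac{\rho(\sigma)\lambda(\sigma)}{\mu(\sigma)}=\delta_\lambda(\sigma)\sum_{\sigma'\in\mathcal{B}'_\sigma}\mu(\sigma')\delta_\rho(\sigma').\]
Subtracting and applying the triangle inequality, it suffices to bound $\sum_{\sigma'\in\mathcal{B}'_\sigma}\left|(\rho\lambda)(\sigma')-\mu(\sigma')\delta_\rho(\sigma')\delta_\lambda(\sigma)\right|$ for each $\sigma$ and then sum over $\sigma\in\mathcal{B}_{\lambda\leq C}$.

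Now fix $\sigma$ and a block $\sigma'\in\mathcal{B}'_\sigma$ and split into two cases. If $\sigma'\notin\mathfrak{D}_{E,\mathcal{B},\lambda}(\mathcal{B}')\cup\mathcal{B}^-$, then $\sigma'\in\mathcal{B}'\setminus\mathcal{B}^-$, so $E$-constancy of $\rho$ gives $\left|(\rho\lambda)(\sigma')-(\rho\ast\lambda)(\sigma')\right|<|\lambda(\sigma')|/E$; and $\sigma'\notin\mathfrak{D}_{E,\mathcal{B},\lambda}(\mathcal{B}')$ together with $\sigma'_{\mathcal{B}}=\sigma$ gives $|\delta_\lambda(\sigma')-\delta_\lambda(\sigma)|<1/E$, hence $\left|(\rho\ast\lambda)(\sigma')-\mu(\sigma')\delta_\rho(\sigma')\delta_\lambda(\sigma)\right|=|\rho(\sigma')|\cdot|\delta_\lambda(\sigma')-\delta_\lambda(\sigma)|<|\rho(\sigma')|/E$. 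So such a block contributes less than $\big(|\lambda(\sigma')|+|\rho(\sigma')|\big)/E$. If instead $\sigma'\in\mathfrak{D}_{E,\mathcal{B},\lambda}(\mathcal{B}')\cup\mathcal{B}^-$, I bound crudely, using $|\delta_\lambda(\sigma)|\leq C$:
\[\left|(\rho\lambda)(\sigma')-\mu(\sigma')\delta_\rho(\sigma')\delta_\lambda(\sigma)\right|\leq|\rho\lambda|(\sigma')+C|\rho|(\sigma').\]

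Summing over all $\sigma\in\mathcal{B}_{\lambda\leq C}$ and all $\sigma'\in\mathcal{B}'_\sigma$ then finishes the proof: since $\mathcal{B}\preceq\mathcal{B}'$ the families $\mathcal{B}'_\sigma$ are pairwise disjoint sub-collections of $\mathcal{B}'$, so the first-kind blocks together contribute less than $\frac{1}{E}\big(|\lambda|(\mathcal{B}')+|\rho|(\mathcal{B}')\big)\leq\frac{2B}{E}$ by $||\lambda||_{L^1},||\rho||_{L^1}\leq B$, while the second-kind blocks all lie in the sub-collection $\mathfrak{D}_{E,\mathcal{B},\lambda}(\mathcal{B}')\cup\mathcal{B}^-$ of $\mathcal{B}'$ and so together contribute at most $|\rho\lambda|(\mathfrak{D}_{E,\mathcal{B},\lambda}(\mathcal{B}')\cup\mathcal{B}^-)+C|\rho|(\mathfrak{D}_{E,\mathcal{B},\lambda}(\mathcal{B}')\cup\mathcal{B}^-)$. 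There is no deep obstacle here; the one thing to be careful about is this final aggregation — making sure the per-$\sigma$ estimates add up to exactly the stated global quantities with no double counting — which is why it is convenient to arrange that every block is a single element of $\mathcal{B}'$ tagged by which of the two cases it falls in. The usual conventions dispose of degenerate blocks with $\mu(\sigma')=0$, and if the right-hand side is infinite the inequality is vacuous.
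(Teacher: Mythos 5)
Your proof is correct and follows essentially the same route as the paper's: you fix $\sigma\in\mathcal{B}_{\lambda\leq C}$, refine to $\mathcal{B}'_\sigma$, use $E$-constancy of $\rho$ together with $\sigma'\notin\mathfrak{D}_{E,\mathcal{B},\lambda}(\mathcal{B}')$ on the good blocks, bound the bad blocks crudely via $|\delta_\lambda(\sigma)|\leq C$, and then sum. The only cosmetic difference is that the paper collects the bad blocks into a single set $\upsilon_\sigma$ and runs a chain of $\gamma$-error approximations, whereas you treat each $\sigma'\in\mathcal{B}'_\sigma$ block by block; the estimates and the final aggregation are the same.
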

\begin{proof}
  Since $\rho$ is $E$-constant on $\mathcal{B}'\setminus\mathcal{B}^-$, for each $\tau\in\mathcal{B}'\setminus\mathcal{B}^-$ we have
\[|(\rho\lambda)(\tau)-(\rho\ast\lambda)(\tau)|<\frac{1}{E}|\lambda(\tau)|=\frac{1}{E}|\lambda|(\tau).\]
Write $\mathcal{B}^+=\mathcal{B}'\setminus(\mathcal{B}^-\cup\mathfrak{D}_{E,\mathcal{B},\lambda}(\mathcal{B}'))$.  For any $\sigma\in\mathcal{B}$, let $\upsilon_\sigma=\sigma\setminus\bigcup\mathcal{B}^+$.  (We may think of $\upsilon_\sigma$ as the ``bad'' subset of $\sigma$.)  Note that if $\tau\in\mathcal{B}^+$ then $\tau\not\in\mathfrak{D}_{E,\mathcal{B},\lambda}(\mathcal{B}')$, so $|\delta_\lambda(\tau)-\delta_\lambda(\sigma)|<1/E$.  

Then for any $\sigma\in\mathcal{B}_{\leq C}$ we have
\begin{align*}
  (\rho\lambda)(\sigma)
&=\sum_{\tau\in\mathcal{B}'_\sigma}(\rho\lambda)(\tau)\\
&=(\rho\lambda)(\upsilon_\sigma)+\sum_{\tau\in\mathcal{B}_\sigma^+}(\rho\lambda)(\tau)\\
&=(\rho\lambda)(\upsilon_\sigma)+\sum_{\tau\in\mathcal{B}_\sigma^+}(\rho\ast\lambda)(\tau) +\gamma_\tau&|\gamma_\tau|<\frac{1}{E}|\lambda|(\tau)\\
&=(\rho\lambda)(\upsilon_\sigma)+\sum_{\tau\in\mathcal{B}_\sigma^+}\rho(\tau)\delta_\lambda(\tau)+\gamma_\tau &|\gamma_\tau|<\frac{1}{E}|\lambda|(\tau)\\
&=(\rho\lambda)(\upsilon_\sigma)+\sum_{\tau\in\mathcal{B}_\sigma^+}\rho(\tau)\delta_\lambda(\sigma)+\gamma'_\tau&|\gamma'_\tau|<\frac{1}{E}|\rho|(\tau)+\frac{1}{E}|\lambda|(\tau)\\
&=(\rho\lambda)(\upsilon_\sigma)+\rho(\sigma\setminus\upsilon_\sigma)\delta_\lambda(\sigma)+\gamma'_\sigma&|\gamma'_\sigma|<\frac{1}{E}\sum_{\tau\in\mathcal{B}'_\sigma}(|\rho|(\tau)+|\lambda|(\tau))\\
&=(\rho\lambda)(\upsilon_\sigma)+\rho(\sigma)\delta_\lambda(\sigma)+\gamma''_\sigma&|\gamma''_\sigma|<C|\rho|(\upsilon_\sigma)+\frac{1}{E}\sum_{\tau\in\mathcal{B}'_\sigma}(|\rho|(\tau)+|\lambda|(\tau))\\
&=(\rho\lambda)(\upsilon_\sigma)+(\rho\ast\lambda)(\sigma)+\gamma''_\sigma&|\gamma''_\sigma|<C|\rho|(\upsilon_\sigma)+\frac{1}{E}\sum_{\tau\in\mathcal{B}'_\sigma}(|\rho|(\tau)+|\lambda|(\tau)).\\
\end{align*}
Summing over all $\sigma\in\mathcal{B}_{\lambda\leq C}$, we obtain
\[\sum_{\sigma\in\mathcal{B}_{\lambda\leq C}}\left|(\rho\lambda)(\sigma)-(\rho\ast\lambda)(\sigma)\right|<\frac{2B}{E}+C|\rho|(\mathfrak{D}_{E,\mathcal{B},\lambda}(\mathcal{B}')\cup\mathcal{B}^-)+|\rho\lambda|(\mathfrak{D}_{E,\mathcal{B},\lambda}(\mathcal{B}')\cup\mathcal{B}^-).\]
\end{proof}

\subsection{An Exchange of Limits}

We now come to a series of lemma constituting the main part of our argument.  We make the following assumptions:
\begin{itemize}
\item[$(\ast)_1$] Each $(\lambda_p)_p$ has bounded fluctuations with bound independent of $p$,
\item[$(\ast)_2$] $(\rho_n)_n$ has bounded fluctuations with bound independent of $n$,
\item[$(\ast)_3$] There is a fixed bound $B$ such that for each $n$, $||\rho_n||_{L^1}\leq B$ and for each $p$, $||\lambda_p||_{L^1}\leq B$,
\item[$(\ast)_4$] For any $E, D$, $\rho_n$ (resp. $\lambda_p$) and $\mathcal{B}$, there is a $\mathcal{B}'\succeq\mathcal{B}$ and a $\mathcal{B}''\subseteq\mathcal{B}'$ so that $\rho_n$ (resp. $\lambda_p$) is $E$-constant on $\mathcal{B}'\setminus\mathcal{B}''$ and $\mu(\mathcal{B}'')<1/D$.
\end{itemize}
This last assumption is of course true if $\rho_n$ is given by an $L^1$ function---intersect the elements of $\mathcal{B}$ with the level sets of the function.  We could drop this assumption, replacing it by uses of the regularity lemma above, at the cost of further complicating the proof.

We refer to these assumptions collectively as $(\ast)$.

For technical reasons, we need a variant of Lemma \ref{thm:1D_sequential_interval} which is essentially the result of combining two applications of it:
\begin{lemma}\label{thm:1D_sequential_interval_double}
Suppose $(*)$ holds.  Let $E_\flat, D_\flat, \widehat{\mathbf{B}}_\flat^0, \widehat{\mathbf{B}}_\flat^1, \fn{m}_\flat,\fn{L}_\flat, \fn{q}_\flat, \fn{S}_\flat$ be given.  There exists a $\mathcal{B}_\sharp\succeq\{\Omega\}$, $n_\sharp$ and $p_\sharp$, a $\fn{k}_\sharp$, and an $\fn{r}_\sharp$ so that, setting
\begin{itemize}
\item $\mathcal{B}_\flat^0=\fn{B}_\flat^0(n_\sharp,p_\sharp,\fn{k}_\sharp,\fn{r}_\sharp,\mathcal{B}_\sharp)$,
\item $\mathcal{B}_\flat^1=\fn{B}_\flat^1(n_\sharp,p_\sharp,\fn{k}_\sharp,\fn{r}_\sharp,\mathcal{B}_\sharp)$,
\item $m_\flat=\fn{m}_\flat(n_\sharp,p_\sharp,\fn{k}_\sharp,\fn{r}_\sharp,\mathcal{B}_\sharp)$,
\item $q_\flat=\fn{q}_\flat(n_\sharp,p_\sharp,\fn{k}_\sharp,\fn{r}_\sharp,\mathcal{B}_\sharp)$,
\item $\fn{l}_\flat=\fn{L}_\flat(n_\sharp,p_\sharp,\fn{k}_\sharp,\fn{r}_\sharp,\mathcal{B}_\sharp)$,
\item $\fn{s}_\flat=\fn{S}_\flat(n_\sharp,p_\sharp,\fn{k}_\sharp,\fn{r}_\sharp,\mathcal{B}_\sharp)$,
\item $k_\sharp=\fn{k}_\sharp(m_\flat,q_\flat,\fn{l}_\flat,\fn{s}_\flat,\mathcal{B}_\flat^0,\mathcal{B}_\flat^1)$,
\item $r_\sharp=\fn{r}_\sharp(m_\flat,q_\flat,\fn{l}_\flat,\fn{s}_\flat,\mathcal{B}_\flat^0,\mathcal{B}_\flat^1)$,
\end{itemize}
if $m_\flat\geq n_\sharp$ and $q_\flat\geq p_\sharp$, we have $k_\sharp\geq m_\flat$, $r_\sharp\geq q_\flat$, for every $l\in[k_\sharp,\fn{l}_\flat(k_\sharp,r_\sharp)]$ $\mu(\mathfrak{D}_{E_\flat,\mathcal{B}_\sharp,l}(\mathcal{B}_\flat^0))<1/D_\flat$ and for every $s\in[r_\sharp,\fn{s}_\flat(k_\sharp,r_\sharp)]$ $\mu(\mathfrak{D}_{E_\flat,\mathcal{B}_\sharp,s}(\mathcal{B}_\flat^1))<1/D_\flat$.
\end{lemma}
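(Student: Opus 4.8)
The plan is to obtain the lemma from two nested applications of Theorem~\ref{thm:1D_sequential_interval}: an outer one to the sequence $(\rho_n)_n$ and, inside the data fed to it, an inner one to $(\lambda_p)_p$. First I would note that the hypotheses of Theorem~\ref{thm:1D_sequential_interval} are available for both sequences: by $(\ast)_2$ the sequence $(\rho_n)_n$ has bounded fluctuations, hence is metastably weakly convergent, and by $(\ast)_3$ it is uniformly $L^1$-bounded by $B$; symmetrically for $(\lambda_p)_p$ via $(\ast)_1$ and $(\ast)_3$.

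The outer application, subscripted $0$, applies Theorem~\ref{thm:1D_sequential_interval} to $(\rho_n)_n$ with starting partition $\{\Omega\}$ and the given $E_\flat,D_\flat$, producing $n_\sharp:=n_0$, a partition $\mathcal{B}_0$, and a function $\fn{k}_0$. Inside the scope of $(n_0,\fn{k}_0,\mathcal{B}_0)$ I would run Theorem~\ref{thm:1D_sequential_interval} again, subscripted $\dagger$ and applied to $(\lambda_p)_p$ with starting partition $\mathcal{B}_0$, producing $p_\sharp:=p_\dagger$, a partition $\mathcal{B}_\sharp:=\mathcal{B}_\dagger\succeq\mathcal{B}_0\succeq\{\Omega\}$, and a function $\fn{r}_\dagger$ (so that the $\lambda$-direction plays the roles $n\mapsto p$, $m\mapsto q$, $k\mapsto r$, $l\mapsto s$). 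Once the inner data is in hand the output functions $\fn{k}_\sharp,\fn{r}_\sharp$ of the present lemma can be defined from $\fn{k}_0$ and $\fn{r}_\dagger$ (see below), and hence so can $m_\flat,q_\flat,\fn{l}_\flat,\fn{s}_\flat,\mathcal{B}_\flat^0,\mathcal{B}_\flat^1$, which are simply the results of feeding $(n_\sharp,p_\sharp,\fn{k}_\sharp,\fn{r}_\sharp,\mathcal{B}_\sharp)$ into the given $\fn{m}_\flat,\fn{q}_\flat,\fn{L}_\flat,\fn{S}_\flat,\fn{B}_\flat^0,\fn{B}_\flat^1$. I would then arrange the functions passed into the two applications so that the outer $\fn{m}_0$ returns $\max\{n_0,m_\flat\}$ and the outer $\fn{B}_0$ returns $\mathcal{B}_\flat^0$, while the inner $\fn{q}_\dagger$ returns $\max\{p_\dagger,q_\flat\}$ and the inner $\fn{B}_\dagger$ returns $\mathcal{B}_\flat^1$. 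Since $\fn{B}_\flat^0$ and $\fn{B}_\flat^1$ are functions on partitions, $\mathcal{B}_\sharp\preceq\mathcal{B}_\flat^0$ and $\mathcal{B}_\sharp\preceq\mathcal{B}_\flat^1$, so taking $\mathcal{B}=\mathcal{B}_\sharp$, $\mathcal{B}'=\mathcal{B}_\flat^0$ in the outer conclusion and $\mathcal{B}=\mathcal{B}_\sharp$, $\mathcal{B}'=\mathcal{B}_\flat^1$ in the inner conclusion, all the interval containments the two theorems demand are satisfied. Under the hypotheses $m_\flat\geq n_\sharp$ and $q_\flat\geq p_\sharp$ of the present lemma, the outer application then yields $k_\sharp\geq m_\flat$ and the bound on $\mu(\mathfrak{D}_{E_\flat,\mathcal{B}_\sharp,l}(\mathcal{B}_\flat^0))$ on its interval of $l$, and the inner application yields $r_\sharp\geq q_\flat$ and the bound on $\mu(\mathfrak{D}_{E_\flat,\mathcal{B}_\sharp,s}(\mathcal{B}_\flat^1))$ on its interval of $s$.

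The step I expect to be the main obstacle is reconciling the interval-bound functions: in the present lemma $\fn{l}_\flat$ and $\fn{s}_\flat$ are binary, depending on both $k_\sharp$ (produced by the outer application) and $r_\sharp$ (produced by the inner one), so the naive setup is circular. I would break the circularity by handing the inner application the \emph{unary} interval function
\[r\mapsto \fn{s}_\flat\bigl(\fn{k}_0(m_\flat,\,[\,k\mapsto\fn{l}_\flat(k,r)\,],\,\mathcal{B}_\sharp,\mathcal{B}_\flat^0),\,r\bigr),\]
i.e.\ computing the corresponding $k$-value as a function of $r$ through the already-constructed outer output $\fn{k}_0$ and only then applying $\fn{s}_\flat$; the outputs $\fn{r}_\sharp$ and $\fn{k}_\sharp$ are then defined by the analogous substitutions into $\fn{r}_\dagger$ and into $\fn{k}_0$ (with $r_\sharp=\fn{r}_\sharp(\dots)$ substituted for the free $r$), and the outer application is fed the unary interval function $k\mapsto\fn{l}_\flat(k,r_\sharp)$, where $r_\sharp$ is by then a determined value. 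With these choices the value returned by the outer application is exactly $k_\sharp$, its regularity interval is $[k_\sharp,\fn{l}_\flat(k_\sharp,r_\sharp)]$, and the inner interval endpoint evaluates to $\fn{s}_\flat(k_\sharp,r_\sharp)$, which is what the statement asks for.

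What remains is the routine bookkeeping illustrated in the notation aside preceding this subsection: writing out the definitions of $\fn{m}_0,\fn{B}_0,\fn{L}_0$ and $\fn{q}_\dagger,\fn{B}_\dagger,\fn{S}_\dagger$ so that each quantity is referenced only within the nested scope in which it has been introduced, checking that all the auxiliary functions are monotone and return values above their relevant arguments (which is what the $\max$'s above secure), and tracing the $\flat/\sharp$ subscripts to confirm that $\mathcal{B}_\sharp,n_\sharp,p_\sharp,\fn{k}_\sharp,\fn{r}_\sharp$ witness the conclusion.
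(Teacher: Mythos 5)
Your proposal matches the paper's proof: two nested applications of Theorem~\ref{thm:1D_sequential_interval}, the outer to $(\rho_n)_n$ starting from $\{\Omega\}$ and the inner to $(\lambda_p)_p$ starting from the outer $\mathcal{B}_0$, with the circularity between $k_\sharp$ and $r_\sharp$ broken exactly as in the paper's helper functions $\fn{k}_{\star,r}$ and $\fn{s}_\star$ by first computing the $k$-value as a unary function of $r$ through $\fn{k}_0$ and then feeding the resulting $r_\sharp$ back to fix the outer interval function $k\mapsto\fn{l}_\flat(k,r_\sharp)$. The minor cosmetic difference (using $\max\{n_0,m_\flat\}$ etc.\ rather than bare values) is harmless since the lemma's conclusion is anyway conditional on $m_\flat\ge n_\sharp$ and $q_\flat\ge p_\sharp$.
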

\begin{proof}
  We prepare for the first application of Lemma \ref{thm:1D_sequential_interval}.

  \begin{quotationb*}
    Let $\mathcal{B}_0, n_0, \fn{k}_0$ be given.   Write $\dagger=\mathcal{B}_0,n_0,\fn{k}_0$.  We now prepare for a second application of Lemma \ref{thm:1D_sequential_interval}.

    \begin{quotationb*}
      Let $\mathcal{B}_\dagger, p_\dagger, \fn{r}_\dagger$ be given.  Write $\star$ for $\dagger,\mathcal{B}_\dagger,p_\dagger,\fn{r}_\dagger$.

We first define some helper functions:
      \begin{itemize}
      \item $\fn{l}_{\fn{l},r}(k)=\fn{l}(k,r)$,
      \item $\fn{k}_{\star,r}(m,q,\fn{l},\fn{s},\mathcal{B}^0,\mathcal{B}^1) =\fn{k}_0(m,\fn{l}_{\fn{l},r},\mathcal{B}_\dagger,\mathcal{B}^0)$,
      \item $\fn{s}_{\star,\mathcal{B}^0,\mathcal{B}^1,m,q,\fn{l},\fn{s}}(r)=\fn{s}(\fn{k}_{\star,r}(m,q,\fn{l},\fn{s},\mathcal{B}^0,\mathcal{B}^1),r)$,
      \item $\fn{r}_\star(m,q,\fn{l},\fn{s},\mathcal{B}^0,\mathcal{B}^1)=\fn{r}_\dagger(q,\fn{s}_{\star,\mathcal{B}^0,\mathcal{B}^1,m,q,\fn{l},\fn{s}},\mathcal{B}_\dagger,\mathcal{B}^1)$,
      \item $\fn{k}_\star(m,q,\fn{l},\fn{s},\mathcal{B}^0,\mathcal{B}^1)=\fn{k}_{\star,\fn{r}_\star(m,q,\fn{l},\fn{s},\mathcal{B}^0,\mathcal{B}^1)}(m,q,\fn{l},\fn{s},\mathcal{B}^0,\mathcal{B}^1)$,
      \item $\fn{l}_\star=\fn{L}_\flat(n_0,p_\dagger,\fn{k}_\star,\fn{r}_\star,\mathcal{B}_\dagger)$,
      \item $\fn{s}_\star=\fn{S}_\flat(n_0,p_\dagger,\fn{k}_\star,\fn{r}_\star,\mathcal{B}_\dagger)$.
      \item $m_\star=\fn{m}_\flat(n_0,p_\dagger,\fn{k}_\star,\fn{r}_\star,\mathcal{B}_\dagger)$.
      \item $q_\star=\fn{q}_\flat(n_0,p_\dagger,\fn{k}_\star,\fn{r}_\star,\mathcal{B}_\dagger)$.
      \end{itemize}

      We can now define the functions needed for an application of Lemma \ref{thm:1D_sequential_interval}.
      \begin{itemize}
      \item $\fn{B}_\dagger(p_\dagger,\fn{r}_\dagger,\mathcal{B}_\dagger)=\fn{B}_\flat^1(n_0,p_\dagger,\fn{k}_\star,\fn{r}_\star,\mathcal{B}_\dagger)$,
      \item $\fn{q}_\dagger(p_\dagger,\fn{r}_\dagger, \mathcal{B}_\dagger)=\fn{q}_\flat(n_0,p_\dagger,\fn{k}_\star,\fn{r}_\star,\mathcal{B}_\dagger)$,
      \item $\fn{S}_\dagger(p_\dagger,\fn{q}_\dagger,\mathcal{B}_\dagger)=\fn{s}_{\star,\mathcal{B}^0_\star,\mathcal{B}^1_\star,m_\star,q_\star,\fn{l}_\star,\fn{s}_\star}$.
      \end{itemize}
    \end{quotationb*}
    By Lemma \ref{thm:1D_sequential_interval} applied to $\mathcal{B}_0, E_\flat, D_\flat, \fn{B}_\dagger$, $\fn{q}_\dagger$, $\fn{S}_\dagger$ we find $\mathcal{B}_\dagger, p_\dagger, \fn{r}_\dagger$ such that, setting:
    \begin{itemize}
    \item $q_\dagger=\fn{q}_\dagger(p_\dagger,\fn{r}_\dagger, \mathcal{B}_\dagger)$, and
    \item $\fn{s}_\dagger=\fn{S}_\dagger(p_\dagger, \fn{r}_\dagger,\mathcal{B}_\dagger)$,
    \end{itemize}
if $q_\dagger\geq p_\dagger$ then whenever $\mathcal{B}_\dagger\preceq\mathcal{B}\preceq\mathcal{B}'\preceq\fn{B}_\dagger(p_\dagger, \fn{r}_\dagger,\mathcal{B}_\dagger)$, we may set $r_\dagger=\fn{r}_\dagger(q_\dagger,\fn{s}_\dagger,\mathcal{B},\mathcal{B}')$ and have $r_\dagger\geq q_\dagger$ and for every $s\in[r_\dagger,\fn{s}_\dagger(r_\dagger)]$, we have $\mu(\mathfrak{D}_{E_\flat,\mathcal{B},s}(\mathcal{B}'))<1/D_\flat$.

Note that we have now defined values $\mathcal{B}_\dagger, p_\dagger,q_\dagger,\fn{r}_\dagger,\fn{s}_\dagger$, all depending on $\dagger$---that is, as functions of $n_0,\fn{k}_0,\mathcal{B}_0$---as well as functions $\fn{r}_\star,\fn{k}_\star$, $\fn{l}_\star,\fn{s}_\star$ which can be derived from these by the definitions above.

    We now set:
    \begin{itemize}
    \item $\fn{B}_0(n_0,\fn{k}_0,\mathcal{B}_0)=\fn{B}_\flat^0(n_0,p_\dagger,\fn{k}_\star,\fn{r}_\star,\mathcal{B}_\dagger)$,
    \item $\fn{m}_0(n_0,\fn{k}_0,\mathcal{B}_0)=\fn{m}_\flat(n_0,p_\dagger,\fn{k}_\star,\fn{r}_\star,\mathcal{B}_\dagger)$,
    \item $\fn{L}_0(n_0,\fn{k}_0,\mathcal{B}_0)=\fn{l}_{\fn{L}_\flat(n_0,p_\dagger,\fn{k}_\star,\fn{r}_\star,\mathcal{B}_\dagger),\fn{r}_\dagger(q_\dagger,\fn{s}_\dagger,\mathcal{B}_\dagger,\mathcal{B}^1_\star)}$.
    \end{itemize}
  \end{quotationb*}
By Lemma \ref{thm:1D_sequential_interval} applied to $\{\Omega\}, E_\flat, D_\flat, \fn{B}_0, \fn{m}_0, \fn{L}_0$, we find $\mathcal{B}_0, n_0, \fn{k}_0$ such that, setting
\begin{itemize}
\item $m_0=\fn{m}_0(n_0,\fn{k}_0,\mathcal{B}_0)$, and
\item $\fn{l}_0=\fn{L}_0(n_0,\fn{k}_0,\mathcal{B}_0)$,
\end{itemize}
if $m_0\geq n_0$ then whenever $\mathcal{B}_0\preceq\mathcal{B}\preceq\fn{B}_0(n_0,\fn{k}_0,\mathcal{B}_0)$ we may set $k_0=\fn{k}_0(m_0,\fn{l}_0,\mathcal{B},\mathcal{B}')$ and then $k_0\geq m_0$ and for every $l\in[k_0,\fn{l}_0(k_0)]$, $\mu(\mathfrak{D}_{E_\flat,\mathcal{B},l}(\mathcal{B}'))<1/D_\flat$.

We may now set:
\begin{itemize}
\item $\mathcal{B}_\sharp=\mathcal{B}_\dagger$,
\item $n_\sharp=n_0$,
\item $p_\sharp=p_\dagger$,
\item $\fn{k}_\sharp=\fn{k}_\star$, and
\item $\fn{r}_\sharp=\fn{r}_\star$.
\end{itemize}
We must check that these satisfy the claim.  We define the following values as specified in the statement of this lemma:
\begin{itemize}
\item $\mathcal{B}_\flat^0=\fn{B}_\flat^0(n_\sharp,p_\sharp,\fn{k}_\sharp,\fn{r}_\sharp,\mathcal{B}_\sharp)=\fn{B}_0(n_0,\fn{k}_0,\mathcal{B}_0)$,
\item $\mathcal{B}_\flat^1=\fn{B}_\flat^1(n_\sharp,p_\sharp,\fn{k}_\sharp,\fn{r}_\sharp,\mathcal{B}_\sharp)=\fn{B}_\dagger(p_\dagger,\fn{r}_\dagger,\mathcal{B}_\dagger)$,
\item $m_\flat=\fn{m}_\flat(n_\sharp,p_\sharp,\fn{k}_\sharp,\fn{r}_\sharp,\mathcal{B}_\sharp)=\fn{m}_0(m_0,\fn{k}_0,\mathcal{B}_0)=m_0$,
\item $q_\flat=\fn{q}_\flat(n_\sharp,p_\sharp,\fn{k}_\sharp,\fn{r}_\sharp,\mathcal{B}_\sharp)=\fn{q}_\dagger(p_\dagger,\fn{r}_\dagger,\mathcal{B}_\dagger)=q_\dagger$,
\item $\fn{l}_\flat=\fn{L}_\flat(n_\sharp,p_\sharp,\fn{k}_\sharp,\fn{r}_\sharp,\mathcal{B}_\sharp)=\fn{l}_\star$,
\item $\fn{s}_\flat=\fn{S}_\flat(n_\sharp,p_\sharp,\fn{k}_\sharp,\fn{r}_\sharp,\mathcal{B}_\sharp)=\fn{s}_\star$,
\item $k_\sharp=\fn{k}_\sharp(m_\flat,q_\flat,\fn{l}_\flat,\fn{s}_\flat,\mathcal{B}_\flat^0,\mathcal{B}_\flat^1)$,
\item $r_\sharp=\fn{r}_\sharp(m_\flat,q_\flat,\fn{l}_\flat,\fn{s}_\flat,\mathcal{B}_\flat^0,\mathcal{B}_\flat^1)=\fn{r}_\dagger(q_\flat,\fn{s}_{\star,\mathcal{B}^0_\flat,\mathcal{B}^1_\flat,m_\flat,q_\flat,\fn{l}_\flat,\fn{s}_\flat},\mathcal{B}_\sharp,\mathcal{B}_\flat^1)$.
\end{itemize}

Many of the other quantities we defined above are equal to these values:
\begin{itemize}
\item $\fn{s}_{\star,\mathcal{B}^0_\flat,\mathcal{B}^1_\flat,m_\flat,q_\flat,\fn{l}_\flat,\fn{s}_\flat}=\fn{S}_\dagger(p_\dagger,\fn{q}_\dagger,\mathcal{B}_\dagger)=\fn{s}_\dagger$,
\item $r_\dagger=\fn{r}_\dagger(q_\dagger,\fn{s}_\dagger,\mathcal{B}_\dagger,\mathcal{B}^1_\star)=r_\sharp$,
\item $\fn{l}_0=\fn{L}_0(n_0,\fn{k}_0,\mathcal{B}_0)=\fn{l}_{\fn{l}_\flat,r_\dagger}$,
\item $k_\sharp=\fn{k}_{\star,r_\sharp}(m_\flat,q_\flat,\fn{l}_\flat,\fn{s}_\flat,\mathcal{B}^0_\flat,\mathcal{B}^1_\flat)=\fn{k}_0(m_\flat, \fn{l}_{\fn{l}_\flat,r_\sharp},\mathcal{B}_\sharp,\mathcal{B}^1_\flat)=k_0$,
\item $\fn{l}_0(k_\sharp)=\fn{l}_\flat(k_\sharp,r_\sharp)$,
\item $\fn{s}_\dagger(r_\sharp)=\fn{s}_\flat(k_\sharp,r_\sharp)$.
\end{itemize}

Suppose $m_\flat\geq n_\sharp$ and $q_\flat\geq p_\sharp$.  Then we have $\mathcal{B}_0\preceq\mathcal{B}_\sharp\preceq\mathcal{B}^0_\flat=\fn{B}_0(n_0,\fn{k}_0,\mathcal{B}_0)$, and so $k_\sharp=k_0\geq m_0=m_\flat$ and for every $l\in[k_\sharp,\fn{l}_0(k_\sharp)]=[k_\sharp,\fn{l}_\flat(k_\sharp,r_\sharp)]$, $\mu(\mathfrak{D}_{E_\flat,\mathcal{B}_\sharp,l}(\mathcal{B}^1_\flat))<1/D_\flat$.

We also have $\mathcal{B}_\dagger=\mathcal{B}_\sharp\preceq\mathcal{B}^1_\flat=\fn{B}_\dagger(p_\dagger,\fn{r}_\dagger,\mathcal{B}_\dagger)$, so $r_\sharp=r_\dagger\geq q_\dagger=q_\flat$ and for every $s\in[r_\sharp,\fn{s}_\dagger(r_\sharp)]=[r_\sharp,\fn{s}_\flat(k_\sharp,r_\sharp)]$ we have $\mu(\mathfrak{D}_{E_\flat,\mathcal{B}_\sharp,s}(\mathcal{B}^1_\flat))<1/D_\flat$.
\end{proof}

The following lemma is our first approximation to the final result; it shows that we can attain some sort of bound on
\[|(\rho_m\lambda_s)(\Omega)-(\rho_l\lambda_q)(\Omega)|\]
when $s,l$ are suitably chosen and much larger than $m,q$.  The remainder of the argument will amount to refining the right side of the inequality to depend only on $E_\flat$.

\begin{lemma}\label{thm:control_interval}
Suppose $(*)$ holds.  Then for every $E_\flat, D^0_\flat\leq D^1_\flat, p_\flat, n_\flat, \fn{L}_\flat, \fn{S}_\flat$ there are $m_\sharp\geq n_\flat$, $q_\sharp\geq p_\flat$, $\fn{k}_\sharp$, $\fn{r}_\sharp$ so that, setting
\begin{itemize}
\item $\fn{l}_\flat=\fn{L}_\flat(m_\sharp,q_\sharp,\fn{k}_\sharp,\fn{r}_\sharp)$,
\item $\fn{s}_\flat=\fn{S}_\flat(m_\sharp,q_\sharp,\fn{k}_\sharp,\fn{r}_\sharp)$,
\item $k_\sharp=\fn{k}_\sharp(\fn{l}_\flat,\fn{s}_\flat)$,
\item $r_\sharp=\fn{r}_\sharp(\fn{l}_\flat,\fn{s}_\flat)$,
\item $l_\flat=\fn{l}_\flat(k_\sharp,r_\sharp)$,
\item $s_\flat=\fn{s}_\flat(k_\sharp,r_\sharp)$,
\end{itemize}
we have $k_\sharp\geq m_\sharp$, $r_\sharp\geq q_\sharp$, and if $l_\flat\geq k_\sharp$ and $s_\flat\geq r_\sharp$ then for any $s\in[r_\sharp,s_\flat]$ and $l\in[k_\sharp,l_\flat]$ there are sets $\mathcal{B}^-,\mathcal{B}^{0,-}$, and $\mathcal{B}^{1,-}$ with $\mu(\mathcal{B}^-)<4/D^0_\flat$, $\mu(\mathcal{B}^{0,-})<2/D_\flat^1$, and $\mu(\mathcal{B}^{1,-})<2/D_\flat^1$ so that
\begin{align*}
\left|(\rho_{m_\sharp}\lambda_s)(\Omega)-(\rho_l\lambda_{q_\sharp})(\Omega)\right|
&\leq|\rho_{m_\sharp}\lambda_s|(\mathcal{B}^-)+|\rho_l\lambda_{q_\sharp}|(\mathcal{B}^-)\\
&\ \ \ \ \ \ \ \ +BD_\flat^0|\rho_{m_\sharp}|(\mathcal{B}^{0,-})+|\rho_{m_\sharp}\lambda_s|(\mathcal{B}^{0,-})\\
&\ \ \ \ \ \ \ \ +BD_\flat^0|\lambda_{q_\sharp}|(\mathcal{B}^{1,-})+|\rho_l\lambda_{q_\sharp}|(\mathcal{B}^{1,-})+\frac{6}{E_\flat}.\\
\end{align*}
\end{lemma}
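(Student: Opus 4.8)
The plan is to compare both $(\rho_{m_\sharp}\lambda_s)(\Omega)$ and $(\rho_l\lambda_{q_\sharp})(\Omega)$ to the single \emph{symmetric} quantity
\[\Xi=\sum_{\sigma\in\mathcal{B}_\sharp}\frac{\rho_{m_\sharp}(\sigma)\,\lambda_{q_\sharp}(\sigma)}{\mu(\sigma)}=\sum_{\sigma\in\mathcal{B}_\sharp}(\rho_{m_\sharp}\ast\lambda_{q_\sharp})(\sigma),\]
where $\mathcal{B}_\sharp$ is obtained from a single application of Lemma \ref{thm:1D_sequential_interval_double} (in the end $\Xi$ will be restricted to the $\sigma$ surviving all approximations, the omitted ones being absorbed into $\mathcal{B}^-$). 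For the first quantity I pass to the refinement $\mathcal{B}^1_\flat$ of $\mathcal{B}_\sharp$ on which $\rho_{m_\sharp}$ is $E$-constant off a set of measure $<1/D^1_\flat$ (assumption $(\ast)_4$ with $D=D^1_\flat$) and on which, by Lemma \ref{thm:1D_sequential_interval_double}, $\lambda_s$ is regular over $\mathcal{B}_\sharp$ for every $s\in[r_\sharp,s_\flat]$; Lemma \ref{thm:refinement}, applied with $\mathcal{B}=\mathcal{B}_\sharp$, $\mathcal{B}'=\mathcal{B}^1_\flat$, $C=BD^0_\flat$, then gives that $(\rho_{m_\sharp}\lambda_s)(\Omega)$ equals $\sum_{\sigma\in\mathcal{B}_\sharp}(\rho_{m_\sharp}\ast\lambda_s)(\sigma)$ up to: the $\frac{2B}{E}$ term of Lemma \ref{thm:refinement}; the terms $BD^0_\flat|\rho_{m_\sharp}|(\mathcal{B}^{0,-})+|\rho_{m_\sharp}\lambda_s|(\mathcal{B}^{0,-})$, where $\mathcal{B}^{0,-}$ collects the $\mathfrak{D}$-set (small by Lemma \ref{thm:1D_sequential_interval_double}) together with the $(\ast)_4$-exceptional set; and $|\rho_{m_\sharp}\lambda_s|(\mathcal{B}^-)$ on the $\sigma$ with $\delta_{|\lambda_s|}(\sigma)>BD^0_\flat$, whose union has $\mu$-measure $<1/D^0_\flat$ since $|\lambda_s|(\mathcal{B}_\sharp)\leq B$. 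Writing $(\rho_{m_\sharp}\ast\lambda_s)(\sigma)=\rho_{m_\sharp}(\sigma)\,\delta_{\lambda_s}(\sigma)$, I then replace $\delta_{\lambda_s}(\sigma)$ by $\delta_{\lambda_{q_\sharp}}(\sigma)$; this costs at most $\frac{1}{E}|\rho_{m_\sharp}|(\mathcal{B}_\sharp)\leq\frac{B}{E}$ away from a small set of $\sigma$ that is absorbed into $\mathcal{B}^-$, because the bounded fluctuations of $(\lambda_p)_p$ (assumption $(\ast)_1$), passed through Lemma \ref{thm:quant_convergence_partition} and the lemma converting an $L^1$ bound on $\mathcal{B}_\sharp$ into a density bound off a small set, make $|\delta_{\lambda_s}(\sigma)-\delta_{\lambda_{q_\sharp}}(\sigma)|$ small simultaneously for every $s\in[r_\sharp,s_\flat]$, provided $q_\sharp$ is taken to be the stabilization index supplied by that lemma. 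The result is $(\rho_{m_\sharp}\lambda_s)(\Omega)\approx\Xi$. The quantity $(\rho_l\lambda_{q_\sharp})(\Omega)$ is treated symmetrically, using the refinement $\mathcal{B}^0_\flat$ on which $\lambda_{q_\sharp}$ is $E$-constant and (again by Lemma \ref{thm:1D_sequential_interval_double}) $\rho_l$ is regular over $\mathcal{B}_\sharp$ for $l\in[k_\sharp,l_\flat]$, Lemma \ref{thm:refinement} with $\lambda_{q_\sharp}$ now playing the role of the $E$-constant measure, and the bounded fluctuations of $(\rho_n)_n$ (assumption $(\ast)_2$) to replace $\delta_{\rho_l}(\sigma)$ by $\delta_{\rho_{m_\sharp}}(\sigma)$. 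Adding the two chains, taking the various $E$'s to be large enough multiples of $BE_\flat$ that the accumulated reciprocals stay below $6/E_\flat$, and collecting the exceptional sets as they arise (the $\mathfrak{D}$-sets and $(\ast)_4$-exceptional sets into $\mathcal{B}^{0,-},\mathcal{B}^{1,-}$, measure $<2/D^1_\flat$ each; the high-density and fluctuation-exceptional sets into $\mathcal{B}^-$, measure $<4/D^0_\flat$), yields the stated inequality.

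The reason the whole argument must be one application of Lemma \ref{thm:1D_sequential_interval_double} is that the $(\ast)_4$-refinements require knowing $\rho_{m_\sharp}$ and $\lambda_{q_\sharp}$, which become available only after that lemma has run. Following the nested-function style of the aside on notation, the six function parameters handed to Lemma \ref{thm:1D_sequential_interval_double} are defined so that, once it produces its data $(\mathcal{B}_0,n_0,p_0,\fn{k}_0,\fn{r}_0)$: the ``$\fn{m}$'' parameter runs Lemma \ref{thm:quant_convergence_partition} for $(\rho_n)_n$ on $\mathcal{B}_0$ from $\max\{n_\flat,n_0\}$ and returns its stabilization index $m_\sharp$; the ``$\fn{q}$'' parameter does the same for $(\lambda_p)_p$, returning $q_\sharp\geq\max\{p_\flat,p_0\}$; the ``$\fn{B}^1$'' parameter recomputes $m_\sharp$ and returns the $(\ast)_4$-refinement of $\mathcal{B}_0$ relative to $\rho_{m_\sharp}$, while ``$\fn{B}^0$'' does so relative to $\lambda_{q_\sharp}$; and the ``$\fn{L}$'' and ``$\fn{S}$'' parameters relay the given $\fn{L}_\flat,\fn{S}_\flat$ after rewriting argument lists and composing with the interval-functions used inside the two invocations of Lemma \ref{thm:quant_convergence_partition}, arranged so that the intervals $[k_\sharp,\fn{l}_\flat(k_\sharp,r_\sharp)]$ and $[r_\sharp,\fn{s}_\flat(k_\sharp,r_\sharp)]$ that ultimately appear lie inside the $(\rho_n)_n$- and $(\lambda_p)_p$-stability intervals and inside the regularity intervals of Lemma \ref{thm:1D_sequential_interval_double}. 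Each of these functions merely ``computes a downstream value on the assumption that its own input is realized'', and monotonicity of all the functions makes the compositions legitimate. One then reads off $m_\sharp,q_\sharp$ as above and sets $\fn{k}_\sharp:=\fn{k}_0$, $\fn{r}_\sharp:=\fn{r}_0$.

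I expect the main obstacle to be exactly this bookkeeping: checking that the web of definitions is acyclic (each $(\ast)_4$-refinement and each use of Lemma \ref{thm:quant_convergence_partition} touches only data already fixed when it is evaluated), and that every interval appearing in the conclusion lands inside the interval on which the corresponding upstream regularity or stability statement was secured. The analytic content is otherwise the routine two-sided comparison through $\Xi$ outlined above. The one genuine subtlety is that the cardinality of $\mathcal{B}_\sharp$ is not bounded a priori: it may be used freely inside the function parameters (where $\mathcal{B}_0$ is in hand, so one can, for instance, take the parameter $E$ in Lemma \ref{thm:quant_convergence_partition} proportional to $|\mathcal{B}_0|$ to push the required density estimates through), but it must never enter a final bound — which is why the substitutions of $\delta_{\lambda_s}$ and $\delta_{\rho_l}$ are carried out as density estimates over $\mathcal{B}_\sharp$, with error controlled by $|\rho_{m_\sharp}|(\mathcal{B}_\sharp)\leq B$ and $|\lambda_{q_\sharp}|(\mathcal{B}_\sharp)\leq B$, rather than elementwise over the finer partitions.
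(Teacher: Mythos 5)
Your proposal is correct and follows essentially the same route as the paper: a single application of Lemma \ref{thm:1D_sequential_interval_double}, with two nested uses of Lemma \ref{thm:quant_convergence_partition} inside the function parameters to fix the stable indices $m_\sharp,q_\sharp$ and the $(\ast)_4$-refinements, then Lemma \ref{thm:refinement} on each side, then a triangle-inequality comparison of the $\ast$-products over $\mathcal{B}_\sharp$. The only differences are cosmetic: the paper triangulates $\rho_{m_\sharp}\ast\lambda_s$ against $\rho_l\ast\lambda_{q_\sharp}$ through the intermediary $\rho_l\ast\lambda_s$ (rather than through $\Xi=\sum\rho_{m_\sharp}\ast\lambda_{q_\sharp}$), and it handles the $|\mathcal{B}_\sharp|$-dependence by applying Lemma \ref{thm:quant_convergence_partition} at precision $|\mathcal{B}_0|BD^0_\flat E_\flat$ and summing directly, rather than invoking the density-conversion lemma and absorbing an extra exceptional set into $\mathcal{B}^-$ as you do — both bookkeeping choices close the same gap.
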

\begin{proof}

By $(\ast)_4$, for any $m,\mathcal{B}$ there are $\mathcal{B}'\succeq\mathcal{B}$ and a $\mathcal{B}''\subseteq\mathcal{B}'$ such that $\rho_m$ is $BE_\flat$-constant on $\mathcal{B}'\setminus\mathcal{B}''$ and $\mu(\mathcal{B}'')<1/D^1_\flat$.  Write $\fn{B}^0_\star(\mathcal{B},m)=\mathcal{B}'$ and $\fn{B}^{0,-}_\star(\mathcal{B},m)=\mathcal{B}''$.  Similarly, for any $q,\mathcal{B}$ there are $\mathcal{B}'\succeq\mathcal{B}$ and a $\mathcal{B}''\subseteq\mathcal{B}'$ such that $\lambda_r$ is $BE_\flat$-constant on $\mathcal{B}'\setminus\mathcal{B}''$ and $\mu(\mathcal{B}'')<1/D^1_\flat$.  Write $\fn{B}^1_\star(\mathcal{B},q)=\mathcal{B}'$ and $\fn{B}^{1,-}_\star(\mathcal{B},q)=\mathcal{B}''$.

We plan to use Lemma \ref{thm:1D_sequential_interval_double}.  We need to define $\fn{B}^0_0(n_0,p_0,\fn{k}_0,\fn{r}_0,\mathcal{B}_0)$, $\fn{B}^1_0(n_0,p_0,\fn{k}_0,\fn{r}_0,\mathcal{B}_0)$, $\fn{m}_0(n_0,p_0,\fn{k}_0,\fn{r}_0,\mathcal{B}_0)$, $\fn{q}_0(n_0,p_0,\fn{k}_0,\fn{r}_0,\mathcal{B}_0)$, $\fn{L}_0(n_0,p_0,\fn{k}_0,\fn{r}_0,\mathcal{B}_0)$, and $\fn{S}_0(n_0,p_0,\fn{k}_0,\fn{r}_0,\mathcal{B}_0)$.

\begin{quotationb*}
  On input $n_0,p_0,\fn{k}_0,\fn{r}_0,\mathcal{B}_0$, which we abbreviate $\dagger$, we proceed as follows.  We plan to apply Lemma \ref{thm:quant_convergence_partition}, so we define $\fn{m}_\dagger(m_\dagger)$.

  \begin{quotationb*}
    Let input $m_\dagger$ be given; we abbreviate $\dagger,m_\dagger$ by $\ddagger$.  We plan to apply Lemma \ref{thm:quant_convergence_partition} again, so we define $\fn{q}_\ddagger(q_\ddagger)$.

    \begin{quotationb*}
      Let $q_\ddagger$ be given.  We define
      \begin{itemize}
      \item $\fn{k}_{\ddagger,q_\ddagger}(\fn{l},\fn{s})=\fn{k}_0(m_\dagger,q_\ddagger,\fn{l},\fn{s},\fn{B}^0_\star(\mathcal{B}_0,m_\dagger),\fn{B}^1_\star(\mathcal{B}_0,q_\ddagger))$,
      \item $\fn{r}_{\ddagger,q_\ddagger}(\fn{l},\fn{s})=\fn{r}_0(m_\dagger,q_\ddagger,\fn{l},\fn{s},\fn{B}^0_\star(\mathcal{B}_0,m_\dagger),\fn{B}^1_\star(\mathcal{B}_0,q_\ddagger))$,
      \item $\fn{l}_{\ddagger,q_\ddagger}=\fn{L}_\flat(m_\dagger,q_\ddagger,\fn{k}_{\ddagger,q_\ddagger},\fn{r}_{\ddagger,q_\ddagger})$,
      \item $\fn{s}_{\ddagger,q_\ddagger}=\fn{S}_\flat(m_\dagger,q_\ddagger,\fn{k}_{\ddagger,q_\ddagger},\fn{r}_{\ddagger,q_\ddagger})$,
      \item $\fn{q}_\ddagger(q_\ddagger)=\fn{s}_{\ddagger,q_\ddagger}(\fn{k}_{\ddagger,q_\ddagger}(\fn{l}_{\ddagger,q_\ddagger},\fn{s}_{\ddagger,q_\ddagger}),\fn{r}_{\ddagger,q_\ddagger}(\fn{l}_{\ddagger,q_\ddagger},\fn{s}_{\ddagger,q_\ddagger}))$.
      \end{itemize}
    \end{quotationb*}

By Lemma \ref{thm:quant_convergence_partition} applied to $|\mathcal{B}_0|BD_\flat^0 E_\flat$, $3D_\flat^0$, $\fn{q}_\ddagger$, $\max\{p_\flat,p_0\}$, we obtain $q_\ddagger\geq \max\{p_\flat,p_0\}$ and a set $\mathcal{B}^{1,=}\subseteq\mathcal{B}_0$ so that $\mu(\mathcal{B}^{1,=})<1/3D_\flat^0$ and for each $q,q'\in[q_\ddagger,\fn{q}_\ddagger(q_\ddagger)]$ and each $\sigma\in\mathcal{B}_0\setminus\mathcal{B}^{1,=}$, $|\lambda_q(\sigma)-\lambda_{q'}(\sigma)|<1/|\mathcal{B}_0|BD_\flat^0E_\flat$.

  We define
\begin{itemize}
\item $\fn{m}_\dagger(m_\dagger)=\fn{l}_{\ddagger,q_\ddagger}(\fn{k}_{\ddagger,q_\ddagger}(\fn{l}_{\ddagger,q_\ddagger},\fn{s}_{\ddagger,q_\ddagger}),\fn{r}_{\ddagger,q_\ddagger}(\fn{l}_{\ddagger,q_\ddagger},\fn{s}_{\ddagger,q_\ddagger}))$.
\end{itemize}
  \end{quotationb*}
By Lemma \ref{thm:quant_convergence_partition} applied to $|\mathcal{B}_0|BD_\flat^0E_\flat$, $3D_\flat^0$, $\fn{m}_\dagger$, $\max\{n_\flat,n_0\}$, we obtain $m_\dagger\geq\max\{n_\flat,n_0\}$ and a set $\mathcal{B}^{0,=}\subseteq\mathcal{B}_0$ so that $\mu(\mathcal{B}^{0,=})<1/3D_\flat^0$ and for each $m,m'\in[m_\dagger,\fn{m}_\dagger(m_\dagger)]$ and each $\sigma\in\mathcal{B}_0\setminus\mathcal{B}^{0,=}$, $|\rho_m(\sigma)-\rho_{m'}(\sigma)|<1/|\mathcal{B}_0|BD_\flat^0 E_\flat$.

We can now set
\begin{itemize}
\item $\fn{B}^0_0(n_0,p_0,\fn{k}_0,\fn{r}_0,\mathcal{B}_0)=\fn{B}^0_\star(\mathcal{B}_0,m_\dagger)$,
\item $\fn{B}^1_0(n_0,p_0,\fn{k}_0,\fn{r}_0,\mathcal{B}_0)=\fn{B}^1_\star(\mathcal{B}_0,q_\ddagger)$,
\item $\fn{q}_0(n_0,p_0,\fn{k}_0,\fn{r}_0,\mathcal{B}_0)=q_\ddagger$,
\item $\fn{m}_0(n_0,p_0,\fn{k}_0,\fn{r}_0,\mathcal{B}_0)=m_\dagger$,
\item $\fn{L}_0(n_0,p_0,\fn{k}_0,\fn{r}_0,\mathcal{B}_0)=\fn{l}_{\ddagger,q_\ddagger}$,
\item $\fn{S}_0(n_0,p_0,\fn{k}_0,\fn{r}_0,\mathcal{B}_0)=\fn{s}_{\ddagger,q_\ddagger}$.
\end{itemize}
\end{quotationb*}
We apply Lemma \ref{thm:1D_sequential_interval_double} to $BE_\flat$, $D_\flat^1$, $\fn{B}^0_0, \fn{B}^1_0, \fn{m}_0,\fn{L}_0,\fn{q}_0,\fn{S}_0$.  We obtain $n_0,p_0,\mathcal{B}_0,\fn{k}_0,\fn{r}_0$.  We set $m_\sharp=m_\dagger$, $q_\sharp=q_\ddagger$, $\fn{k}_\sharp=\fn{k}_{\ddagger,q_\ddagger}$, and $\fn{r}_\sharp=\fn{r}_{\ddagger,q_\ddagger}$.

Set
\begin{itemize}
\item $\fn{l}_\flat=\fn{L}_\flat(m_\sharp,q_\sharp,\fn{k}_\sharp,\fn{r}_\sharp)$,
\item $\fn{s}_\flat=\fn{S}_\flat(m_\sharp,q_\sharp,\fn{k}_\sharp,\fn{r}_\sharp)$,
\item $k_\sharp=\fn{k}_\sharp(\fn{l}_\flat,\fn{s}_\flat)$, and
\item $r_\sharp=\fn{r}_\sharp(\fn{l}_\flat,\fn{s}_\flat)$.
\end{itemize}
Note that $k_\sharp\geq m_\dagger=m_\sharp$ and $r_\sharp\geq q_\dagger=q_\sharp$.  Suppose $\fn{l}_\flat(k_\sharp,r_\sharp)\geq k_\sharp$ and $\fn{s}_\flat(k_\sharp,r_\sharp)\geq r_\sharp$ and let $s\in[r_\sharp,\fn{s}_\flat(k_\sharp,r_\sharp)]$ and $l\in[k_\sharp,\fn{l}_\flat(k_\sharp,r_\sharp)]$ be given.

Observe that $\fn{l}_\flat=\fn{L}_\flat(m_\dagger,q_\ddagger,\fn{k}_{\ddagger,q_\ddagger},\fn{r}_{\ddagger,q_\ddagger})=\fn{l}_{\ddagger,q_\ddagger}$ and similarly, $\fn{s}_\flat=\fn{s}_{\ddagger,q_\ddagger}$.  Therefore
$r_\sharp=\fn{r}_{\ddagger,q_\ddagger}(\fn{l}_\flat,\fn{s}_\flat)=\fn{r}_0(m_\sharp,q_\sharp,\fn{l}_{\ddagger,q_\ddagger},\fn{s}_{\ddagger,q_\ddagger},\fn{B}^0_\star(\mathcal{B}_0,m_\dagger),\fn{B}^1_\star(\mathcal{B}_0,q_\ddagger))$ and similarly, for $k_\sharp$.  Therefore, by our application of Lemma \ref{thm:1D_sequential_interval_double}, since $m_\sharp\geq n_0$ and $q_\sharp\geq p_0$, we have $k_\sharp\geq m_\sharp$, $r_\sharp\geq q_\sharp$, for every $l\in[k_\sharp,\fn{l}_\flat(k_\sharp,r_\sharp)]$,  $\mu(\mathfrak{D}_{BE_\flat,\mathcal{B}_0,l}(\fn{B}^0_\star(\mathcal{B}_0,m_\dagger)))<1/D_\flat^1$, and for every $s\in[r_\sharp, \fn{s}_\flat(k_\sharp,r_\sharp)]$, $\mu(\mathfrak{D}_{B E_\flat,\mathcal{B}_0,s}(\fn{B}^1_\star(\mathcal{B}_0,q_\ddagger)))<1/D_\flat^1$.

Fix some $l\in [k_\sharp,\fn{l}_\flat(k_\sharp,r_\sharp)]$, $s\in[r_\sharp, \fn{s}_\flat(k_\sharp,r_\sharp)]$.  Set $\mathcal{B}^0_\flat=\fn{B}^0_\star(\mathcal{B}_0,m_\dagger)$, $\mathcal{B}^1_\flat=\fn{B}^1_\star(\mathcal{B}_0,q_\ddagger)$, $\mathcal{B}^{0,-}=\mathfrak{D}_{BE_\flat,\mathcal{B}_0,l}(\mathcal{B}^0_\flat)\cup\fn{B}^{0,-}_\star(\mathcal{B}_0,m_\dagger)$, $\mathcal{B}^{1,-}=\mathfrak{D}_{B E_\flat,\mathcal{B}_0,s}(\mathcal{B}^1_\flat)\cup\fn{B}^{1,-}_\star(\mathcal{B}_0,q_\ddagger)$.  Therefore $\mu(\mathcal{B}^{0,-}),\mu(\mathcal{B}^{1,-})<2/D_\flat^1$.

By Lemma \ref{thm:refinement}, 
\[\sum_{\sigma\in(\mathcal{B}_0)_{\lambda_s\leq BD_\flat^0}}|(\rho_{m_\sharp}\lambda_s)(\sigma)-(\rho_{m_\sharp}\ast\lambda_s)(\sigma)|<\frac{2}{E_\flat}+BD_\flat^0|\rho_{m_\sharp}|(\mathcal{B}^{0,-})+|\rho_{m_\sharp}\lambda_s|(\mathcal{B}^{0,-})\]
and
\[\sum_{\sigma\in (\mathcal{B}_0)_{\rho_l\leq BD_\flat^0}}|(\rho_l\lambda_{q_\sharp})(\sigma)-(\rho_l\ast\lambda_{q_\sharp})(\sigma)|<\frac{2}{E_\flat}+BD_\flat^0|\lambda_{q_\sharp}|(\mathcal{B}^{1,-})+|\rho_l\lambda_{q_\sharp}|(\mathcal{B}^{1,-}).\]

If $\sigma\in(\mathcal{B}_0)_{\lambda_s\leq BD_\flat^0,\rho_l\leq BD_\flat^0}\setminus(\mathcal{B}^{0,-}\cup\mathcal{B}^{1,-})$ then we have
\begin{align*}
|(\rho_{m_\sharp}\ast\lambda_s)(\sigma)-(\rho_l\ast\lambda_{q_\sharp})(\sigma)|
&\leq|(\rho_{m_\sharp}\ast\lambda_s)(\sigma)-(\rho_l\ast\lambda_s)(\sigma)|+|(\rho_l\ast\lambda_s)(\sigma)-(\rho_l\ast\lambda_{q_\sharp})(\sigma)|\\
&\leq |\delta_{\lambda_s}(\sigma)|\cdot|\rho_{m_\sharp}(\sigma)-\rho_l(\sigma)|+|\delta_{\rho_l}(\sigma)|\cdot|\lambda_s(\sigma)-\lambda_{q_\sharp}(\sigma)|\\
&\leq BD_\flat^0\cdot\frac{1}{|\mathcal{B}_0|BD_\flat^0 E_\flat}+BD_\flat^0\cdot\frac{1}{|\mathcal{B}_0|BD_\flat^0 E_\flat}\\
&=\frac{2}{|\mathcal{B}_0|E_\flat}.
\end{align*}

Let $\mathcal{B}^-=\mathcal{B}^{0,-}\cup\mathcal{B}^{1,-}\cup(\mathcal{B}_0)_{\rho_l>BD_\flat^0} \cup(\mathcal{B}_0)_{\lambda_s>BD_\flat^0}$ and $\mathcal{B}^+=\mathcal{B}_0\setminus\mathcal{B}^-$.  We have
\begin{align*}
  |(\rho_{m_\sharp}\lambda_s)(\Omega)-(\rho_l\lambda_{q_\sharp})(\Omega)|
&=|(\rho_{m_\sharp}\lambda_s)(\mathcal{B}_0)-(\rho_l\lambda_{q_\sharp})(\mathcal{B}_0)|\\
&\leq|(\rho_{m_\sharp}\lambda_s)(\mathcal{B}^+)-(\rho_l\lambda_{q_\sharp})(\mathcal{B}^+)|+|\rho_{m_\sharp}\lambda_s|(\mathcal{B}^-)+|\rho_l\lambda_{q_\sharp}|(\mathcal{B}^-)\\
&<|(\rho_{m_\sharp}\ast\lambda_s)(\mathcal{B}^+)-(\rho_l\ast\lambda_{q_\sharp})(\mathcal{B}^+)|+|\rho_{m_\sharp}\lambda_s|(\mathcal{B}^-)+|\rho_l\lambda_{q_\sharp}|(\mathcal{B}^-)\\
&\ \ \ \ \ \ \ \ +BD_\flat^0|\rho_{m_\sharp}|(\mathcal{B}^{0,-})+|\rho_{m_\sharp}\lambda_s|(\mathcal{B}^{0,-})\\
&\ \ \ \ \ \ \ \ +BD_\flat^0|\lambda_{q_\sharp}|(\mathcal{B}^{1,-})+|\rho_l\lambda_{q_\sharp}|(\mathcal{B}^{1,-})+\frac{4}{E_\flat}\\
&\leq|\rho_{m_\sharp}\lambda_s|(\mathcal{B}^-)+|\rho_l\lambda_{q_\sharp}|(\mathcal{B}^-)\\
&\ \ \ \ \ \ \ \ +BD_\flat^0|\rho_{m_\sharp}|(\mathcal{B}^{0,-})+|\rho_{m_\sharp}\lambda_s|(\mathcal{B}^{0,-})\\
&\ \ \ \ \ \ \ \ +BD_\flat^0|\lambda_{q_\sharp}|(\mathcal{B}^{1,-})+|\rho_l\lambda_{q_\sharp}|(\mathcal{B}^{1,-})+\frac{6}{E_\flat}\\
\end{align*}
\end{proof}

What remains is a series of lemma in which we choose $D_\flat^0,D_\flat^1$ large enough to bound the various terms.

\begin{lemma}\label{thm:control_interval_D1}
Suppose $(*)$ holds.  Then for every $E_\flat, D^0_\flat\leq D^1_\flat, p_\flat, n_\flat, \fn{L}_\flat, \fn{S}_\flat$ there are $m_\sharp\geq n_\flat$, $q_\sharp\geq p_\flat$, $\fn{k}_\sharp$, $\fn{r}_\sharp$ so that, setting
\begin{itemize}
\item $\fn{l}_\flat=\fn{L}_\flat(m_\sharp,q_\sharp,\fn{k}_\sharp,\fn{r}_\sharp)$,
\item $\fn{s}_\flat=\fn{S}_\flat(m_\sharp,q_\sharp,\fn{k}_\sharp,\fn{r}_\sharp)$,
\item $k_\sharp=\fn{k}_\sharp(\fn{l}_\flat,\fn{s}_\flat)$,
\item $r_\sharp=\fn{r}_\sharp(\fn{l}_\flat,\fn{s}_\flat)$,
\item $l_\flat=\fn{l}_\flat(k_\sharp,r_\sharp)$,
\item $s_\flat=\fn{s}_\flat(k_\sharp,r_\sharp)$,
\end{itemize}
we have $k_\sharp\geq m_\sharp$, $r_\sharp\geq q_\sharp$, and if $l_\flat\geq k_\sharp$ and $s_\flat\geq r_\sharp$ then for any $s\in[r_\sharp,s_\flat]$ and $l\in[k_\sharp,l_\flat]$  there are sets $\mathcal{B}^-,\mathcal{B}^{0,-}$, and $\mathcal{B}^{1,-}$ with $\mu(\mathcal{B}^-)<4/D_\flat^0$, $\mu(\mathcal{B}^{0,-})<2/D_\flat^0$, and $\mu(\mathcal{B}^{1,-})<2/D_\flat^1$ so that
\begin{align*}
\left|(\rho_{m_\sharp}\lambda_s)(\Omega)-(\rho_l\lambda_{q_\sharp})(\Omega)\right|
&\leq|\rho_{m_\sharp}\lambda_s|(\mathcal{B}^-)+|\rho_l\lambda_{q_\sharp}|(\mathcal{B}^-)+|\rho_{m_\sharp}\lambda_s|(\mathcal{B}^{0,-})\\
&\ \ \ \ \ \ \ \ +BD_\flat^0|\lambda_{q_\sharp}|(\mathcal{B}^{1,-})+|\rho_l\lambda_{q_\sharp}|(\mathcal{B}^{1,-})+\frac{7}{E_\flat}.\\
\end{align*}
\end{lemma}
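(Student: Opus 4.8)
The plan is to deduce this from Lemma \ref{thm:control_interval} by arranging for the witness $m_\sharp$ it produces to satisfy a uniform-continuity estimate strong enough to absorb the term $BD^0_\flat|\rho_{m_\sharp}|(\mathcal{B}^{0,-})$ into the $1/E_\flat$ error. The difficulty, as usual, is circularity: the set $\mathcal{B}^{0,-}$ --- and $m_\sharp$ itself --- depends on the value fed in for $D^1_\flat$, so we cannot first inspect $\rho_{m_\sharp}$ and only then pick $D^1_\flat$ large enough to control $|\rho_{m_\sharp}|$ on sets of the relevant size. The device for getting around this is a metastable use of Vitali-Hahn-Saks: by $(\ast)_2$ the sequence $(\rho_n)_n$ has bounded fluctuations, hence is metastably weakly convergent, and each $\rho_n$ is absolutely continuous (it comes from an $L^1$ function), so Theorem \ref{thm:q_vhs}, together with the lemma before it passing from single sets to partitions, applies to $(\rho_n)_n$; we feed it a function argument that itself invokes Lemma \ref{thm:control_interval}.

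Concretely, prepare to apply that partition form of Theorem \ref{thm:q_vhs} to $(\rho_n)_n$ with error parameter $\lceil BD^0_\flat E_\flat\rceil$ and starting point $n_\flat$, using the function $\fn{m}_0$ defined by letting $\fn{m}_0(D,m)$ be the value $m_\sharp$ that Lemma \ref{thm:control_interval} returns on input $E_\flat$, $D^0_\flat$, $\max\{D^1_\flat,3D\}$, $p_\flat$, $\max\{n_\flat,m\}$, $\fn{L}_\flat$, $\fn{S}_\flat$. Vitali-Hahn-Saks then yields $m_1\geq n_\flat$ and $D_1$ with $|\rho_m|(\mathcal{A})<1/(BD^0_\flat E_\flat)$ whenever $\mu(\mathcal{A})<1/D_1$ and $m\in[m_1,\fn{m}_0(D_1,m_1)]$. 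Now apply Lemma \ref{thm:control_interval} to $E_\flat$, $D^0_\flat$, $\widetilde D^1_\flat:=\max\{D^1_\flat,3D_1\}$, $p_\flat$, $m_1$, $\fn{L}_\flat$, $\fn{S}_\flat$, obtaining $m_\sharp\geq m_1$, $q_\sharp\geq p_\flat$, $\fn{k}_\sharp$, $\fn{r}_\sharp$; these are the witnesses we return. By construction $\fn{m}_0(D_1,m_1)$ is this same $m_\sharp$, so $m_\sharp\in[m_1,\fn{m}_0(D_1,m_1)]$ and hence $|\rho_{m_\sharp}|(\mathcal{A})<1/(BD^0_\flat E_\flat)$ for every partition $\mathcal{A}$ with $\mu(\mathcal{A})<1/D_1$.

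The auxiliary quantities $\fn{l}_\flat,\fn{s}_\flat,k_\sharp,r_\sharp,l_\flat,s_\flat$ are defined by the same formulas in both statements, so the conclusion of Lemma \ref{thm:control_interval} transfers verbatim: $k_\sharp\geq m_\sharp$, $r_\sharp\geq q_\sharp$, and when $l_\flat\geq k_\sharp$ and $s_\flat\geq r_\sharp$ we get, for each $s\in[r_\sharp,s_\flat]$ and $l\in[k_\sharp,l_\flat]$, sets $\mathcal{B}^-,\mathcal{B}^{0,-},\mathcal{B}^{1,-}$ with $\mu(\mathcal{B}^-)<4/D^0_\flat$, $\mu(\mathcal{B}^{0,-})<2/\widetilde D^1_\flat$, $\mu(\mathcal{B}^{1,-})<2/\widetilde D^1_\flat$ and the displayed inequality of Lemma \ref{thm:control_interval}. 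Since $\mu(\mathcal{B}^{0,-})<2/\widetilde D^1_\flat\leq 2/(3D_1)<1/D_1$, we get $BD^0_\flat|\rho_{m_\sharp}|(\mathcal{B}^{0,-})<1/E_\flat$; charging this summand to the error turns $6/E_\flat$ into $7/E_\flat$ and gives exactly the claimed inequality, while $2/\widetilde D^1_\flat\leq 2/D^0_\flat$, $2/\widetilde D^1_\flat\leq 2/D^1_\flat$, and $m_\sharp\geq m_1\geq n_\flat$ supply the remaining bounds. The only delicate point is the self-referential specification of $\fn{m}_0$, which is the standard functional-interpretation manoeuvre for turning a ``choose $\delta$ after seeing the point'' argument into a metastable one; everything else is bookkeeping, and the step could alternatively be inlined into the proof of Lemma \ref{thm:control_interval} at the cost of another layer of nested function definitions.
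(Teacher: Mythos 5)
Your proof is correct and follows the same strategy as the paper: define $\fn{m}_0(D,m)$ by a nested call to Lemma~\ref{thm:control_interval} with the bumped-up $D^1$ parameter, feed that into Theorem~\ref{thm:q_vhs} to obtain $D_1,m_1$, then re-run Lemma~\ref{thm:control_interval} on the same inputs and use the VHS estimate to absorb the $BD^0_\flat|\rho_{m_\sharp}|(\mathcal{B}^{0,-})$ term. The only cosmetic difference is in the bookkeeping of constants: the paper applies the single-set form of Theorem~\ref{thm:q_vhs} at $4BD^0_\flat E_\flat$ and uses $\max\{D_0,D^1_\flat\}$, splitting $\mathcal{B}^{0,-}$ into a few pieces to handle the measure-$2/D_0$ bound, whereas you use the partition-form estimate at $\lceil BD^0_\flat E_\flat\rceil$ with $\max\{D^1_\flat,3D\}$ so that $\mu(\mathcal{B}^{0,-})<1/D_1$ applies directly; both give the same $1/E_\flat$ absorption.
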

\begin{proof}
  Towards the use of uniform continuity, we define $\fn{m}_0(D_0,m_0)$.
  \begin{quotationb*}
    Let $D_0, m_0$, which we abbreviate $\dagger$, be given.

By Lemma \ref{thm:control_interval} applied to $E_\flat, D^0_\flat, \max\{D_0,D^1_\flat\}, p_\flat, m_0, \fn{L}_\flat,\fn{S}_\flat$ we obtain $m_\dagger, q_\dagger, \fn{k}_\dagger, \fn{r}_\dagger$. 
We define
\[\fn{m}_0(D_0,m_0)=m_\dagger.\]
  \end{quotationb*}
By Theorem \ref{thm:q_vhs} applied to $4BD_\flat^0E_\flat, \fn{m}_0, n_\flat$ we obtain $D_0, m_0\geq n_\flat$ so that whenever $m\in[m_0,\fn{m}_0(D_0,m_0)]$ and $\mu(\sigma)<1/D_0$, $|\rho_m(\sigma)|<1/4BD_\flat^0E_\flat$.

We set $m_\sharp=m_\dagger$, $q_\sharp=q_\dagger$, $\fn{k}_\sharp=\fn{k}_\dagger$, and $\fn{r}_\sharp=\fn{r}_\dagger$.  Let $\fn{l}_\flat,\fn{s}_\flat,k_\sharp,r_\sharp$ be as in the statement.  Let $s\in[r_\sharp,\fn{s}_\flat(k_\sharp,r_\sharp)]$ and $l\in[k_\sharp,\fn{l}_\flat(k_\sharp,r_\sharp)]$ be given, so for appropriate $\mathcal{B}^-,\mathcal{B}^{0,-},\mathcal{B}^{1,-}$, 
\begin{align*}
\left|(\rho_{m_\sharp}\lambda_s)(\Omega)-(\rho_l\lambda_{q_\sharp})(\Omega)\right|
&\leq|\rho_{m_\sharp}\lambda_s|(\mathcal{B}^-)+|\rho_l\lambda_{q_\sharp}|(\mathcal{B}^-)\\
&\ \ \ \ \ \ \ \ +BD_\flat^0|\rho_{m_\sharp}|(\mathcal{B}^{0,-})+|\rho_{m_\sharp}\lambda_s|(\mathcal{B}^{0,-})\\
&\ \ \ \ \ \ \ \ +BD_\flat^0|\lambda_{q_\sharp}|(\mathcal{B}^{1,-})+|\rho_l\lambda_{q_\sharp}|(\mathcal{B}^{1,-})+\frac{6}{E_\flat}.\\
\end{align*}

Since $\mu(\mathcal{B}^{0,-})<2/D_0$, $|\rho_{m_\sharp}(\mathcal{B}^{0,-})|<1/BD_\flat^0 E_\flat$, and therefore
\begin{align*}
\left|(\rho_{m_\sharp}\lambda_s)(\Omega)-(\rho_l\lambda_{q_\sharp})(\Omega)\right|
&\leq|\rho_{m_\sharp}\lambda_s|(\mathcal{B}^-)+|\rho_l\lambda_{q_\sharp}|(\mathcal{B}^-)+|\rho_{m_\sharp}\lambda_s|(\mathcal{B}^{0,-})\\
&\ \ \ \ \ \ \ \ +BD_\flat^0|\lambda_{q_\sharp}|(\mathcal{B}^{1,-})+|\rho_l\lambda_{q_\sharp}|(\mathcal{B}^{1,-})+\frac{7}{E_\flat}\\
\end{align*}
as desired.
\end{proof}

\begin{lemma}\label{thm:control_interval_D2}
Suppose $(*)$ holds.  Then for every $E_\flat, D_\flat, p_\flat, n_\flat, \fn{L}_\flat, \fn{S}_\flat$ there are $m_\sharp\geq n_\flat$, $q_\sharp\geq p_\flat$, $\fn{k}_\sharp$, $\fn{r}_\sharp$ so that, setting
\begin{itemize}
\item $\fn{l}_\flat=\fn{L}_\flat(m_\sharp,q_\sharp,\fn{k}_\sharp,\fn{r}_\sharp)$,
\item $\fn{s}_\flat=\fn{S}_\flat(m_\sharp,q_\sharp,\fn{k}_\sharp,\fn{r}_\sharp)$,
\item $k_\sharp=\fn{k}_\sharp(\fn{l}_\flat,\fn{s}_\flat)$,
\item $r_\sharp=\fn{r}_\sharp(\fn{l}_\flat,\fn{s}_\flat)$,
\item $l_\flat=\fn{l}_\flat(k_\sharp,r_\sharp)$,
\item $s_\flat=\fn{s}_\flat(k_\sharp,r_\sharp)$,
\end{itemize}
we have $k_\sharp\geq m_\sharp$, $r_\sharp\geq q_\sharp$, and if $l_\flat\geq k_\sharp$ and $s_\flat\geq r_\sharp$ then for any $s\in[r_\sharp,s_\flat]$ and $l\in[k_\sharp,l_\flat]$ there are sets $\mathcal{B}^-,\mathcal{B}^{0,-}$, and $\mathcal{B}^{1,-}$ with $\mu(\mathcal{B}^-)<4/D_\flat$, $\mu(\mathcal{B}^{0,-})<2/D_\flat$, and $\mu(\mathcal{B}^{1,-})<2/D_\flat$ so that
\begin{align*}
\left|(\rho_{m_\sharp}\lambda_s)(\Omega)-(\rho_l\lambda_{q_\sharp})(\Omega)\right|
&\leq|\rho_{m_\sharp}\lambda_s|(\mathcal{B}^-)+|\rho_l\lambda_{q_\sharp}|(\mathcal{B}^-)+|\rho_{m_\sharp}\lambda_s|(\mathcal{B}^{0,-})+|\rho_l\lambda_{q_\sharp}|(\mathcal{B}^{1,-})+\frac{8}{E_\flat}.\\
\end{align*}
\end{lemma}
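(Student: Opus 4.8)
The plan is to mirror the proof of Lemma~\ref{thm:control_interval_D1} with the roles of $\rho,m$ and $\lambda,q$ interchanged: that lemma used the quantitative Vitali--Hahn--Saks theorem (Theorem~\ref{thm:q_vhs}) on $(\rho_n)_n$ to absorb the term $BD_\flat^0|\rho_{m_\sharp}|(\mathcal{B}^{0,-})$ into the error, and here we apply the same theorem to $(\lambda_p)_p$ to absorb the remaining term $BD_\flat^0|\lambda_{q_\sharp}|(\mathcal{B}^{1,-})$. The sequence $(\lambda_p)_p$ is eligible: by $(\ast)_1$ it has bounded fluctuations, hence is metastably weakly convergent (bounded fluctuations strengthens metastable weak convergence), by $(\ast)_3$ it is uniformly $L^1$-bounded, and, just as for $\rho$ in Lemma~\ref{thm:control_interval_D1}, each $\lambda_p$ has a modulus of absolute continuity.

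Concretely, set $D^0=D^1=D_\flat$. Towards an application of Theorem~\ref{thm:q_vhs} to $(\lambda_p)_p$, define a function $\fn{q}_0(D_0,q_0)$ as follows: given $D_0,q_0$, apply Lemma~\ref{thm:control_interval_D1} to $E_\flat, D_\flat, \max\{D_0,D_\flat\}, q_0, n_\flat, \fn{L}_\flat, \fn{S}_\flat$ (so $q_0$ occupies the ``$p_\flat$'' slot and $n_\flat$ the ``$n_\flat$'' slot), obtaining $m_\dagger, q_\dagger, \fn{k}_\dagger, \fn{r}_\dagger$, and put $\fn{q}_0(D_0,q_0)=q_\dagger$. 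Now apply Theorem~\ref{thm:q_vhs} to $4BD_\flat E_\flat, \fn{q}_0, p_\flat$, obtaining $D_0$ and $q_0\geq p_\flat$ such that whenever $q\in[q_0,\fn{q}_0(D_0,q_0)]$ and $\mu(\sigma)<1/D_0$, $|\lambda_q(\sigma)|<1/4BD_\flat E_\flat$. Set $m_\sharp=m_\dagger$, $q_\sharp=q_\dagger$, $\fn{k}_\sharp=\fn{k}_\dagger$, $\fn{r}_\sharp=\fn{r}_\dagger$; note $m_\sharp\geq n_\flat$ and $q_\sharp\geq q_0\geq p_\flat$.

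With $\fn{l}_\flat,\fn{s}_\flat,k_\sharp,r_\sharp,l_\flat,s_\flat$ defined from these values as in the statement, and any $s\in[r_\sharp,s_\flat]$, $l\in[k_\sharp,l_\flat]$, Lemma~\ref{thm:control_interval_D1} supplies $\mathcal{B}^-,\mathcal{B}^{0,-},\mathcal{B}^{1,-}$ with $\mu(\mathcal{B}^-)<4/D_\flat$, $\mu(\mathcal{B}^{0,-})<2/D_\flat$, $\mu(\mathcal{B}^{1,-})<2/\max\{D_0,D_\flat\}$, together with the displayed inequality carrying error $+7/E_\flat$ and the term $BD_\flat|\lambda_{q_\sharp}|(\mathcal{B}^{1,-})$. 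Since $q_\sharp=q_\dagger=\fn{q}_0(D_0,q_0)\in[q_0,\fn{q}_0(D_0,q_0)]$ and $\mu(\mathcal{B}^{1,-})<2/D_0$, decomposing $\mathcal{B}^{1,-}$ by the sign of $\lambda_{q_\sharp}$ and using the uniform continuity bound just obtained gives $BD_\flat|\lambda_{q_\sharp}|(\mathcal{B}^{1,-})<1/E_\flat$ (this is why the Vitali--Hahn--Saks parameter carries the factor $4$, exactly as in Lemma~\ref{thm:control_interval_D1}); moreover $\mu(\mathcal{B}^{1,-})<2/\max\{D_0,D_\flat\}\leq 2/D_\flat$, so the measure bounds in the conclusion hold. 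Absorbing the term raises the error to $+8/E_\flat$, which is the desired inequality.

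The only real work is the bookkeeping of the nested function definitions: ensuring that $q_\sharp$ lands at the right endpoint of the interval $[q_0,\fn{q}_0(D_0,q_0)]$ over which Theorem~\ref{thm:q_vhs} controls $(\lambda_p)_p$, and that passing $\max\{D_0,D_\flat\}$ rather than $D_\flat$ as the ``$D^1_\flat$'' parameter of Lemma~\ref{thm:control_interval_D1} makes $\mathcal{B}^{1,-}$ small enough in terms of $D_0$ while keeping its measure below $2/D_\flat$. There is no genuinely new difficulty beyond what was already handled for $\rho$ in Lemma~\ref{thm:control_interval_D1}.
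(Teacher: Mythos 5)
Your proposal is correct and follows essentially the same route as the paper: define $\fn{q}_0(D_0,q_0)$ by calling Lemma \ref{thm:control_interval_D1} with the enlarged second cutoff $\max\{D_0,D_\flat\}$, apply Theorem \ref{thm:q_vhs} with parameter $4BD_\flat E_\flat$ to $(\lambda_p)_p$, and absorb the residual $BD_\flat|\lambda_{q_\sharp}|(\mathcal{B}^{1,-})$ term at the cost of an extra $1/E_\flat$. (As a small remark, the paper's text actually lists the call to Lemma \ref{thm:control_interval_D1} as $\ldots,\max\{D_\flat,D_0\},n_\flat,q_0,\ldots$, swapping the $p_\flat$ and $n_\flat$ slots relative to what the argument requires; the order you give, $q_0$ in the $p_\flat$ slot and $n_\flat$ in the $n_\flat$ slot, is the one that makes $m_\sharp\geq n_\flat$, $q_\sharp\geq p_\flat$, and $q_\sharp\in[q_0,\fn{q}_0(D_0,q_0)]$ all come out, so your version appears to correct a typo rather than deviate from the intended proof.)
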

\begin{proof}
  Towards the use of uniform continuity, we define $\fn{q}_0(D_0,q_0)$.
  \begin{quotationb*}
    Let $D_0, q_0$, which we abbreviate $\dagger$, be given.

By Lemma \ref{thm:control_interval_D1} applied to $E_\flat, D_\flat,
\max\{D_\flat,D_0\}, n_\flat, q_0, \fn{L}_\flat,\fn{S}_\flat$ we obtain $m_\dagger, q_\dagger, \fn{k}_\dagger, \fn{r}_\dagger$.  We define
\[\fn{q}_0(D_0,q_0)=q_\dagger.\]
  \end{quotationb*}
By Theorem \ref{thm:q_vhs} applied to $4BD_\flat E_\flat, \fn{q}_0, p_\flat$ we obtain $D_0\geq D_\flat, q_0\geq p_\flat$.

We set $m_\sharp=m_\dagger$, $q_\sharp=q_\dagger$, $\fn{k}_\sharp=\fn{k}_\dagger$, and $\fn{r}_\sharp=\fn{r}_\dagger$, and let $\fn{l}_\flat, \fn{s}_\flat, k_\sharp, r_\sharp$ be as in the statement, so also $\fn{l}_\flat=\fn{l}_\dagger$, $\fn{s}_\flat=\fn{s}_\dagger$, $k_\sharp=k_\dagger$, and $r_\sharp=r_\dagger$.  Let $s\in[r_\sharp,\fn{s}_\flat(k_\sharp,r_\sharp)]$ and $l\in[k_\sharp,\fn{l}_\flat(k_\sharp,r_\sharp)]$ be given, so for appropriate $\mathcal{B}^-,\mathcal{B}^{0,-},\mathcal{B}^{1,-}$, 
\begin{align*}
\left|(\rho_{m_\sharp}\lambda_s)(\Omega)-(\rho_l\lambda_{q_\sharp})(\Omega)\right|
&\leq|\rho_{m_\sharp}\lambda_s|(\mathcal{B}^-)+|\rho_l\lambda_{q_\sharp}|(\mathcal{B}^-)+|\rho_{m_\sharp}\lambda_s|(\mathcal{B}^{0,-})\\
&\ \ \ \ \ \ \ \ +BD_\flat|\lambda_{q_\sharp}|(\mathcal{B}^{1,-})+|\rho_l\lambda_{q_\sharp}|(\mathcal{B}^{1,-})+\frac{7}{E_\flat}.\\
\end{align*}

Since $\mu(\mathcal{B}^{1,-})<2/D_\flat$, we have $|\lambda_{q_\sharp}(\mathcal{B}^{1,-})|<1/BD_\flat E_\flat$, and therefore
\begin{align*}
\left|(\rho_{m_\sharp}\lambda_s)(\Omega)-(\rho_l\lambda_{q_\sharp})(\Omega)\right|
&\leq|\rho_{m_\sharp}\lambda_s|(\mathcal{B}^-)+|\rho_l\lambda_{q_\sharp}|(\mathcal{B}^-)+|\rho_{m_\sharp}\lambda_s|(\mathcal{B}^{0,-})+|\rho_l\lambda_{q_\sharp}|(\mathcal{B}^{1,-})+\frac{8}{E_\flat}\\
\end{align*}
as desired.
\end{proof}

\begin{lemma}\label{thm:control_interval_E1}
Suppose $(*)$ holds.  Then for every $E_\flat, D_\flat, p_\flat, n_\flat, \fn{L}_\flat, \fn{r}_\flat$ there are $m_\sharp\geq n_\flat$, $q_\sharp\geq p_\flat$, $\fn{k}_\sharp$, $\fn{s}_\sharp$ so that, setting
\begin{itemize}
\item $\fn{l}_\flat=\fn{L}_\flat(m_\sharp,q_\sharp,\fn{k}_\sharp,\fn{s}_\sharp)$,
\item $r_\flat=\fn{r}_\flat(m_\sharp,q_\sharp,\fn{k}_\sharp,\fn{s}_\sharp)$,
\item $k_\sharp=\fn{k}_\sharp(\fn{l}_\flat,r_\flat)$,
\item $s_\sharp=\fn{s}_\sharp(\fn{l}_\flat,r_\flat)$,
\item $l_\flat=\fn{l}_\flat(k_\sharp,s_\sharp)$,
\end{itemize}
we have $k_\sharp\geq m_\sharp$, if $r_\flat\geq q_\sharp$ then $s_\sharp\geq r_\flat$, and for any $l\in[k_\sharp,l_\flat]$ there are sets $\mathcal{B}^-,\mathcal{B}^{0,-}$, and $\mathcal{B}^{1,-}$ with $\mu(\mathcal{B}^-)<4/D_\flat$, $\mu(\mathcal{B}^{0,-})<2/D_\flat$, and $\mu(\mathcal{B}^{1,-})<2/D_\flat$ so that
\[\left|(\rho_{m_\sharp}\lambda_{s_\sharp})(\Omega)-(\rho_l\lambda_{q_\sharp})(\Omega)\right|\leq|\rho_l\lambda_{q_\sharp}|(\mathcal{B}^{-})+|\rho_l\lambda_{q_\sharp}|(\mathcal{B}^{1,-})+\frac{20}{E_\flat}.\]
\end{lemma}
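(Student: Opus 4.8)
The plan is to deduce this from Lemma~\ref{thm:control_interval_D2}. That lemma already delivers, for a whole interval of $s$'s, the bound $|(\rho_{m_\sharp}\lambda_s)(\Omega)-(\rho_l\lambda_{q_\sharp})(\Omega)|\le|\rho_{m_\sharp}\lambda_s|(\mathcal{B}^-)+|\rho_l\lambda_{q_\sharp}|(\mathcal{B}^-)+|\rho_{m_\sharp}\lambda_s|(\mathcal{B}^{0,-})+|\rho_l\lambda_{q_\sharp}|(\mathcal{B}^{1,-})+8/E_\flat$, and the only obstruction to the statement we want is the pair of terms carrying $\rho_{m_\sharp}\lambda_s$. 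These I would eliminate by pinning down a single well-chosen $s_\sharp$ inside that interval, paying $12/E_\flat$ in the process; the role of the produced function $\fn{s}_\sharp$ is precisely to read off this $s_\sharp$ once the downstream data $\fn{L}_\flat,\fn{r}_\flat$ have been resolved, and $\fn{k}_\sharp$ is inherited (up to a shift) from the application of Lemma~\ref{thm:control_interval_D2}.

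The fact that makes this work is that, once $m_\sharp$ is fixed, the sequence $(\rho_{m_\sharp}\lambda_r)_r$ is metastably uniformly continuous: for each target precision there are a threshold $D$ and an interval $[r_0,\fn{r}(D,r_0)]$ on which $|\rho_{m_\sharp}\lambda_r|(\mathcal{A})$ stays below that precision whenever $\mu(\mathcal{A})<1/D$. This is the one-sided analogue of Lemmas~\ref{thm:meta_bnd_1} and \ref{thm:meta_bnd_3}, and under $(\ast)$ I would prove it by using $(\ast)_4$ (applied to $\rho_{m_\sharp}$, and then on the leftover small set to $\lambda_r$) together with the $L^1$ bound $(\ast)_3$ to replace, off a set of arbitrarily small $\mu$-measure, $(\rho_{m_\sharp}\lambda_r)(\sigma)$ by $\delta_{\rho_{m_\sharp}}(\sigma)\lambda_r(\sigma)$ with $\delta_{\rho_{m_\sharp}}$ bounded there (symmetrically with the two factors exchanged); the first expression is then controlled, uniformly in $r$ over an interval, by the $L^1$-uniform-continuity form of Theorem~\ref{thm:q_vhs} applied to $(\lambda_p)_p$, which is metastably weakly convergent by $(\ast)_1$, and the symmetric one by Theorem~\ref{thm:q_vhs} applied to $(\rho_n)_n$ via $(\ast)_2$. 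Since $\rho_{m_\sharp}\lambda_{s_\sharp}$ need not be globally $L^1$, the reduction via $(\ast)_4$ must be iterated a bounded number of times — the bound coming from $(\ast)_3$ — so that the residual ``bad'' set is pushed below every measure threshold that matters before the density cut-offs are imposed.

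With that fact available, I would run the template of Lemmas~\ref{thm:control_interval_D1} and \ref{thm:control_interval_D2}: rather than applying Lemma~\ref{thm:control_interval_D2} directly, apply it inside a function of a candidate threshold $D_0$, with parameter $\max\{D_\flat,D_0\}$, and feed it an $\fn{S}_\flat$ assembled from $\fn{L}_\flat$, $\fn{r}_\flat$ and the function-argument of the metastable-uniform-continuity statement above; this forces the interval $[r_\sharp,\fn{s}_\flat(k_\sharp,r_\sharp)]$ returned by Lemma~\ref{thm:control_interval_D2} to contain a value $s_\sharp\ge r_\flat$ at which $\rho_{m_\sharp}\lambda_{s_\sharp}$ is continuous at precision $6/E_\flat$ on sets of $\mu$-measure $<4/D_\flat$. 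Closing the loop is then a matter of invoking Theorem~\ref{thm:q_vhs} on the functions just built — once for $(\rho_n)_n$, once for $(\lambda_p)_p$ — exactly as the earlier lemmas invoke it once; this fixes $D_0$, $m_\sharp\ge n_\flat$, $q_\sharp\ge p_\flat$, $s_\sharp$, and the functions $\fn{k}_\sharp,\fn{s}_\sharp$. Substituting $s=s_\sharp$ into the bound of Lemma~\ref{thm:control_interval_D2} makes $|\rho_{m_\sharp}\lambda_{s_\sharp}|(\mathcal{B}^-)<6/E_\flat$ and $|\rho_{m_\sharp}\lambda_{s_\sharp}|(\mathcal{B}^{0,-})<6/E_\flat$, so the constant becomes $8/E_\flat+12/E_\flat=20/E_\flat$; the monotonicity clauses $k_\sharp\ge m_\sharp$ and ``$r_\flat\ge q_\sharp$ implies $s_\sharp\ge r_\flat$'' are bookkept through the nested definitions exactly as in those lemmas.

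The main obstacle I expect is not any individual estimate but the plumbing: arranging the nested functional dependencies so that $m_\sharp$ (which the product-continuity step needs) and the threshold $D_0$ (which the call to Lemma~\ref{thm:control_interval_D2} that yields $m_\sharp$ needs) are mutually consistent, while simultaneously meeting the downstream constraint $s_\sharp\ge r_\flat$. This is handled by the now-familiar device of defining every quantity as a function of the candidate values and only afterwards applying Theorem~\ref{thm:q_vhs}; the genuinely new wrinkle is that two such applications are required, one for each factor sequence, because the product-continuity estimate draws on the uniform continuity of both $(\rho_n)_n$ and $(\lambda_p)_p$.
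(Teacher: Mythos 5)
The main idea of your plan is right at a very high level --- one eliminates the $\rho_{m_\sharp}\lambda_s$ terms in the bound from Lemma~\ref{thm:control_interval_D2} by making a good choice of $s_\sharp$ and paying $12/E_\flat$ --- but the mechanism you propose for that choice is not the one the paper uses, and as stated it leaves a genuine gap.

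The paper gets those two terms under control by invoking $n_{/p}$-metastable uniform continuity of the double sequence $(\rho_n\lambda_p)_{n,p}$ (the conclusion of Lemma~\ref{thm:meta_bnd_3}). This is the single outermost tool in the proof: $\fn{m}_0$ and $\fn{q}_0$ are defined as functions of the candidate data $(D_0,m_0,p_0,\fn{r}_0)$, Lemma~\ref{thm:control_interval_D2} is applied inside those definitions to produce $m_\dagger, q_\dagger$, and then one application of $n_{/p}$-metastable uniform continuity delivers the crucial estimate $|(\rho_{m_\dagger}\lambda_{r_0})|(\mathcal{B})<1/E_\flat$ for $\mu(\mathcal{B})<1/D_0$, with $m_\dagger$ and $r_0$ \emph{jointly} coordinated. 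You, by contrast, never invoke that notion: you use the single-index statement ``for each fixed $m$, $(\rho_m\lambda_r)_r$ is metastably uniformly continuous'' and then try to close the loop with two separate applications of Theorem~\ref{thm:q_vhs}, one to $(\rho_n)_n$ and one to $(\lambda_p)_p$. That does not close the loop. The $D_0$ and interval returned by the single-$m$ statement depend on which $m$ you plug in; $m_\sharp$ is only produced after Lemma~\ref{thm:control_interval_D2} is applied, and that application already needs $D_0$. This circularity is precisely what the extra quantifier complexity in the definition of $n_{/p}$-metastable uniform continuity is designed to absorb, and Theorem~\ref{thm:q_vhs} applied to the two factor sequences does not do the same job: it controls $|\rho_m(\sigma)|$ and $|\lambda_s(\sigma)|$ separately, not $|(\rho_m\lambda_s)(\sigma)|$, and a product of two additive functions that are each small on a small set can still carry arbitrarily large product mass there. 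So the two factor-by-factor calls you describe are the wrong tool, both logically (they don't coordinate $m_\sharp$ with $s_\sharp$) and analytically (they don't bound the product measure).

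Your proposed auxiliary derivation of the single-$m$ uniform continuity --- iterating $(\ast)_4$ on $\rho_{m_\sharp}$ and then $\lambda_r$, pushing the bad set below every relevant threshold --- also doesn't visibly terminate: $(\ast)_4$ only gives $E$-constancy, not a bound on $\delta_{\rho_{m_\sharp}}$ on the surviving pieces, and on the residual bad set where both factors have large density the product mass is unbounded by any finite iteration using only the factorwise $L^1$ norm from $(\ast)_3$. In the paper this premise is simply one of the hypotheses of Lemma~\ref{thm:meta_bnd_1}, not something re-derived from $(\ast)$ here, and it is then bootstrapped to the double-sequence notion via Lemma~\ref{thm:meta_bnd_3}. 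The fix is to replace your two calls to Theorem~\ref{thm:q_vhs} with a single call to $n_{/p}$-metastable uniform continuity, defining $\fn{m}_0(D_0,m_0,p_0,\fn{r}_0)$ and $\fn{q}_0(D_0,m_0,p_0,\fn{r}_0)$ so that the $m_\dagger,q_\dagger,\fn{k}_\dagger,\fn{r}_\dagger$ produced inside by Lemma~\ref{thm:control_interval_D2} feed into its conclusion, and reading off $s_\sharp=\fn{r}_0(m_\dagger,q_0)$; the constraint $s_\sharp\ge r_\flat$ is then part of the helper-function bookkeeping rather than a separate pass through $(\lambda_p)_p$.
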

\begin{proof}
  Towards the use of $n_{/p}$-metastable uniform continuity, we define functions $\fn{m}_0(D_0,m_0,p_0,\fn{r}_0)$ and $\fn{q}_0(D_0,m_0,p_0,\fn{r}_0)$.
  \begin{quotationb*}
    Let $D_0, m_0, p_0, \fn{r}_0$, which we abbreviate $\dagger$, be given.  Without loss of generality, we assume $D_0\geq D_\flat$.  In order to apply Lemma \ref{thm:control_interval_D2} we define
      \begin{itemize}
      \item $\fn{s}_{m_\dagger,r}(k,r')=\fn{r}_0(m_\dagger,\max\{r,r',p_0\})$,
      \item $\fn{l}_{m_\dagger,\fn{l},r}(k,r')=\fn{l}(k,\fn{r}_0(m_\dagger,\max\{r,r',p_0\}))$,
      \item $\fn{k}_{m_\dagger,\fn{k}_\dagger}(\fn{l},r)=\fn{k}_\dagger(\fn{l}_{m_\dagger,\fn{l},r},\fn{s}_{m_\dagger,r})$,
      \item $\fn{s}_{m_\dagger,\fn{r}_\dagger}(\fn{l},r)=\fn{r}_0(m_\dagger,\max\{r,\fn{r}_\dagger(\fn{l}_{m_\dagger,\fn{l},r},\fn{s}_{m_\dagger,r}),p_0\})$,
      \item $\fn{L}_\dagger(m_\dagger,q_\dagger,\fn{k}_\dagger,\fn{r}_\dagger)=\fn{l}_{m_\dagger,\fn{L}_\flat(m_\dagger,q_\dagger,\fn{k}_{m_\dagger,\fn{k}_\dagger},\fn{s}_{m_\dagger,\fn{r}_\dagger}),\fn{r}_\flat(m_\dagger,q_\dagger,\fn{k}_{m_\dagger,\fn{k}_\dagger},\fn{s}_{m_\dagger,\fn{r}_\dagger})}$,
      \item $\fn{S}_\dagger(m_\dagger,q_\dagger,\fn{k}_\dagger,\fn{r}_\dagger)=\fn{s}_{m_\dagger,\fn{r}_\flat(m_\dagger,q_\dagger,\fn{k}_{m_\dagger,\fn{k}_\dagger},\fn{s}_{m_\dagger,\fn{r}_\dagger})}$.
      \end{itemize}

We apply Lemma \ref{thm:control_interval_D2} to $E_\flat, D_0, \max\{p_\flat,p_0\}, n_\flat, \fn{L}_\dagger, \fn{S}_\dagger$ in order to obtain $m_\dagger, q_\dagger, \fn{k}_\dagger, \fn{r}_\dagger$.  Set $\fn{l}_\dagger=\fn{L}_\dagger(m_\dagger,q_\dagger,\fn{k}_\dagger,\fn{r}_\dagger)$, $\fn{s}_\dagger=\fn{S}_\dagger(m_\dagger,q_\dagger,\fn{k}_\dagger,\fn{r}_\dagger)$, $k_\dagger=\fn{k}_\dagger(\fn{l}_\dagger,\fn{s}_\dagger)$, and $r_\dagger=\fn{r}_\dagger(\fn{l}_\dagger,\fn{s}_\dagger)$.

We set
\begin{itemize}
\item $\fn{m}_0(D_0,m_0,p_0,\fn{r}_0)=m_\dagger$, and
\item $\fn{q}_0(D_0,m_0,p_0,\fn{r}_0)=\max\{\fn{r}_\flat(m_\dagger,q_\dagger,\fn{k}_{m_\dagger,\fn{k}_\dagger},\fn{s}_{m_\dagger,\fn{r}_\dagger}),r_\dagger,p_0\}$.
\end{itemize}
  \end{quotationb*}

By $n_{/p}$-metastable uniform continuity we obtain $D_0,m_0,p_0,\fn{r}_0$.  Set $q_0=\fn{q}_0(D_0,m_0,p_0,\fn{r}_0)$ and $r_0=\fn{r}_0(m_\dagger,q_0)$.  Since $q_0\geq p_0$, we have $r_0\geq q_0$ and when $\mu(\mathcal{B})<1/D_0$, $|\rho_{m_\dagger}\lambda_{r_0}|(\mathcal{B})<1/E_\flat$.

We set $m_\sharp=m_\dagger$, $q_\sharp=q_\dagger$, $\fn{k}_\sharp=\fn{k}_{m_\dagger,\fn{k}_\dagger}$, and $\fn{s}_\sharp=\fn{s}_{m_\dagger,\fn{r}_\dagger}$.  Set $r_\flat=\fn{r}_\flat(m_\sharp,q_\sharp,\fn{k}_\sharp,\fn{s}_\sharp)$, $\fn{l}_\flat=\fn{L}_\flat(m_\sharp,q_\sharp,\fn{k}_\sharp,\fn{s}_\sharp)$, $s_\sharp=\fn{s}_\sharp(\fn{l}_\flat,r_\flat)$, $k_\sharp=\fn{k}_\sharp(\fn{l}_\flat,r_\flat)$, and $l_\flat=\fn{l}_\flat(k_\sharp,s_\sharp)$.

Observe that
\begin{itemize}
\item $r_\flat=\fn{r}_\flat(m_\dagger,q_\dagger,\fn{k}_{m_\dagger,\fn{k}_\dagger},\fn{s}_{m_\dagger,\fn{r}_\dagger})$,
\item $\fn{l}_\dagger=\fn{l}_{m_\dagger,\fn{l}_\flat,r_\flat}$,
\item $\fn{s}_\dagger=\fn{s}_{m_\dagger,r_\flat}$,
\item $r_\dagger=\fn{r}_\dagger(\fn{l}_\dagger,\fn{s}_\dagger)=\fn{r}_\dagger(\fn{l}_{m_\dagger,\fn{l}_\flat,r_\flat},\fn{s}_{m_\dagger,r_\flat})$,
\item $q_0=\max\{r_\flat,r_\dagger,p_0\}$,
\item $s_\sharp=\fn{s}_{m_\dagger,\fn{r}_\dagger}(\fn{l}_\flat,r_\flat)=\fn{r}_0(m_\dagger,\max\{r_\flat,r_\dagger,p_0\})=\fn{r}_0(m_\dagger,q_0)=\fn{s}_{m_\dagger,r_\flat}(k_\dagger,r_\dagger)=\fn{s}_\dagger(k_\dagger,r_\dagger)$,
\item $k_\sharp=\fn{k}_{m_\dagger,\fn{k}_\dagger}(\fn{l}_\flat,r_\flat)=\fn{k}_\dagger(\fn{l}_\dagger,\fn{s}_\dagger)=k_\dagger$,
\item $l_\flat=\fn{l}_\flat(k_\sharp,s_\sharp)=\fn{l}_\flat(k_\dagger,\fn{r}_0(m_\dagger,q_0))=\fn{l}_{m_\dagger,\fn{l}_\flat,r_\flat}(k_\dagger,r_\dagger)=\fn{l}_\dagger(k_\dagger,r_\dagger)$.
\end{itemize}

We have $k_\sharp\geq m_\sharp$.  Suppose $r_\flat\geq q_\sharp$; then $s_\sharp=\fn{r}_0(m_\dagger,q_0)\geq q_0\geq r_\flat$.  Let $l\in[k_\sharp,l_\flat]$ be given.  Since $l\in[k_\dagger,\fn{l}_\dagger(k_\dagger,r_\dagger)]$ and $s_\sharp\in[r_\dagger,\fn{s}_\dagger(k_\dagger,r_\dagger)]$, there are sets $\mathcal{B}^-, \mathcal{B}^{0,-}, \mathcal{B}^{1,-}$ with $\mu(\mathcal{B}^-)<4/D_0$, $\mu(\mathcal{B}^{0,-})<2/D_0$, and $\mu(\mathcal{B}^{1,-})<2/D_0$ so that
\[\left|(\rho_{m_\sharp}\lambda_{s_\sharp})(\Omega)-(\rho_l\lambda_{q_\sharp})(\Omega)\right|\leq|\rho_{m_\sharp}\lambda_{s_\sharp}|(\mathcal{B}^-)+|\rho_l\lambda_{q_\sharp}|(\mathcal{B}^{-})+|\rho_{m_\sharp}\lambda_{s_\sharp}|(\mathcal{B}^{0,-})+|\rho_l\lambda_{q_\sharp}|(\mathcal{B}^{1,-})+\frac{8}{E_\flat}.\]

Since $m_\sharp\in[m_0,\fn{m}_0(D_0,m_0,p_0,\fn{r}_0)]$, $s_\sharp=\fn{r}_0(m_\dagger,q_0)$, $\mu(\mathcal{B}^-)<4/D_0$ and $\mu(\mathcal{B}^{0,-})<2/D_0$, we have $|\rho_{m_\sharp}\lambda_{s_\sharp}|(\mathcal{B}^-)+|\rho_{m_\sharp}\lambda_{s_\sharp}|(\mathcal{B}^{0,-})<12/E_\flat$, so 
\[\left|(\rho_{m_\sharp}\lambda_{s_\sharp})(\Omega)-(\rho_l\lambda_{q_\sharp})(\Omega)\right|\leq|\rho_l\lambda_{q_\sharp}|(\mathcal{B}^{-})+|\rho_l\lambda_{q_\sharp}|(\mathcal{B}^{1,-})+\frac{20}{E_\flat}.\]
\end{proof}

One more application of the same technique eliminates the last extraneous term in the bound, giving the desired result.

\begin{theorem}\label{thm:control_interval_E2}
Suppose $(*)$ holds.  Then for every $E_\flat, p_\flat, n_\flat, \fn{k}_\flat, \fn{r}_\flat$ there are $m_\sharp\geq n_\flat$, $q_\sharp\geq p_\flat$, $\fn{l}_\sharp$, $\fn{s}_\sharp$ so that, setting
\begin{itemize}
\item $k_\flat=\fn{k}_\flat(m_\sharp,q_\sharp,\fn{l}_\sharp,\fn{s}_\sharp)$,
\item $r_\flat=\fn{r}_\flat(m_\sharp,q_\sharp,\fn{l}_\sharp,\fn{s}_\sharp)$,
\item $l_\sharp=\fn{l}_\sharp(k_\flat,r_\flat)$,
\item $s_\sharp=\fn{s}_\sharp(k_\flat,r_\flat)$,
\end{itemize}
we have
\[\left|(\rho_{m_\sharp}\lambda_{s_\sharp})(\Omega)-(\rho_{l_\sharp}\lambda_{q_\sharp})(\Omega)\right|\leq\frac{32}{E_\flat}.\]
\end{theorem}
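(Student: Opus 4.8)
The proof will mirror the proofs of Lemmas~\ref{thm:control_interval_D1}, \ref{thm:control_interval_D2} and~\ref{thm:control_interval_E1}: we inspect the conclusion of Lemma~\ref{thm:control_interval_E1} and observe that the only terms not yet bounded by a fixed multiple of $1/E_\flat$ are $|\rho_l\lambda_{q_\sharp}|(\mathcal{B}^{-})$ and $|\rho_l\lambda_{q_\sharp}|(\mathcal{B}^{1,-})$, where $\mathcal{B}^{-},\mathcal{B}^{1,-}$ are partitions of small measure, $q_\sharp$ is an index we choose ourselves, and $l$ is a large index chosen \emph{after} $q_\sharp$. These are precisely the terms controlled by a uniform-continuity statement for the products $(\rho_n\lambda_p)_{n,p}$ in which the $\rho$-index is the large, dependent one, i.e.\ by $p_{/n}$-metastable uniform continuity, the dual of Lemma~\ref{thm:meta_bnd_3} (valid under $(\ast)$). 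So the plan is: feed $p_{/n}$-metastable uniform continuity a family of ``given'' functions that are themselves produced by an application of Lemma~\ref{thm:control_interval_E1}, extract from it a $D_0$ and the witnessing moderate $\lambda$-index and large-$\rho$-index function, and recombine.

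In detail, I would set up to apply $p_{/n}$-metastable uniform continuity at a parameter which is a small multiple of $E_\flat$. Inside the block where we are handed $D_\dagger$ together with the associated moderate indices and functions, I would define helper functions by composing the given $\fn{k}_\flat$ and $\fn{r}_\flat$ of the theorem with the functions supplied by the uniform-continuity statement, so as to produce inputs $\fn{L}_\dagger$, $\fn{r}_\dagger$ (and a shifted $p_\flat$, $n_\flat$) for Lemma~\ref{thm:control_interval_E1}, applied with $D_\flat$ taken to be a large multiple of $D_\dagger$ (say $4D_\dagger$). That application returns $m_\dagger,q_\dagger,\fn{k}_\dagger,\fn{s}_\dagger$, which I would then package, exactly as in Lemma~\ref{thm:1D_sequential_interval_double} and Lemma~\ref{thm:control_interval_E1}, into the ``given'' functions demanded by $p_{/n}$-metastable uniform continuity---in particular the function that is asked to output the large $\rho$-index. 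Applying $p_{/n}$-metastable uniform continuity then yields $D_0$, a moderate $\lambda$-index (which becomes $q_\sharp$) and the large-$\rho$-index function (which, composed with the helper functions, becomes $\fn{l}_\sharp$), together with the guarantee that $|\rho_l\lambda_{q_\sharp}|(\mathcal{B})<1/E_\flat$ whenever $\mu(\mathcal{B})<1/D_0$. One then unwinds the definitions, as in the final part of the proof of Lemma~\ref{thm:control_interval_E1}, to read off $m_\sharp,q_\sharp,\fn{l}_\sharp,\fn{s}_\sharp$ and to check that all the ``$\geq$'' side-conditions ($k_\sharp\geq m_\sharp$, $s_\sharp\geq r_\flat$, $l_\sharp$ in the expected interval, etc.) line up across the layers.

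To close, apply the conclusion of Lemma~\ref{thm:control_interval_E1} with $D_\flat=4D_0$: this gives $\mu(\mathcal{B}^{-})<1/D_0$ and $\mu(\mathcal{B}^{1,-})<1/(2D_0)$, so the uniform-continuity bound (used in its partition form, splitting each of $\mathcal{B}^-$, $\mathcal{B}^{1,-}$ into the parts where $\rho_{l_\sharp}\lambda_{q_\sharp}$ is non-negative and negative) makes each of $|\rho_{l_\sharp}\lambda_{q_\sharp}|(\mathcal{B}^{-})$ and $|\rho_{l_\sharp}\lambda_{q_\sharp}|(\mathcal{B}^{1,-})$ at most roughly $2/E_\flat$. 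The bound from Lemma~\ref{thm:control_interval_E1} was $20/E_\flat$ plus these two terms, so the total is at most $32/E_\flat$ (with room to spare, which is why the constant is so generous).

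The main obstacle is not conceptual but bookkeeping: the functions fed to $p_{/n}$-metastable uniform continuity must be assembled from the given $\fn{k}_\flat,\fn{r}_\flat$ composed through the functions returned by Lemma~\ref{thm:control_interval_E1}, one layer deeper than in Lemma~\ref{thm:control_interval_E1} itself, and one must track carefully that the monotonicity/``$\geq$'' hypotheses needed to invoke each lemma ($m_\flat\geq n_\sharp$, $q_\flat\geq p_\sharp$, and so on) are genuinely satisfied after all the compositions. A secondary point requiring care is the partition version of $p_{/n}$-metastable uniform continuity, since the leftover quantities in Lemma~\ref{thm:control_interval_E1} are sums over partitions rather than values on single sets; this is handled, as for ordinary metastable uniform continuity, by the sign decomposition of the partition.
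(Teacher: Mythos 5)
Your proposal matches the paper's proof in structure and in the key idea: inside a block that receives the data of $p_{/n}$-metastable uniform continuity, you invoke Lemma~\ref{thm:control_interval_E1} with helper functions built from the given $\fn{k}_\flat, \fn{r}_\flat$, then define $\fn{q}_0, \fn{m}_0$ from the outputs, close the block by applying $p_{/n}$-metastable uniform continuity, and finally use the resulting $D_0$ together with the measure bounds $\mu(\mathcal{B}^-),\mu(\mathcal{B}^{1,-})=O(1/D_\flat)$ to make the two leftover terms $|\rho_{l_\sharp}\lambda_{q_\sharp}|(\mathcal{B}^-)+|\rho_{l_\sharp}\lambda_{q_\sharp}|(\mathcal{B}^{1,-})$ small. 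Your accounting ($D_\flat=4D_\dagger$, sign-splitting the partitions, landing at roughly $24/E_\flat$) differs only cosmetically from the paper's (which arrives at $20/E_\flat+12/E_\flat=32/E_\flat$); both are well within the stated constant. Your observation that the uniform-continuity conclusion must be promoted from single sets $\sigma$ to partitions $\mathcal{B}$ via the sign decomposition is correct, and is indeed something the paper elides when it writes $|\rho_{k_0}\lambda_{q_\dagger}|(\mathcal{B}')<1/E_\flat$ directly.
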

\begin{proof}
    Towards the use of $p_{/n}$-metastable uniform continuity, we define functions $\fn{q}_0(D_0,q_0,n_0,\fn{k}_0)$ and $\fn{m}_0(D_0,q_0,n_0,\fn{k}_0)$.

    \begin{quotationb*}
      Let $D_0, q_0, n_0, \fn{k}_0$, which we abbreviate $\dagger$, be given.  In order to apply Lemma \ref{thm:control_interval_E1} we define
      \begin{itemize}
      \item $\fn{l}_{\star,q,k}(k',r)=\fn{k}_0(q,\max\{k,k'\})$,
      \item $\fn{l}_{\star,q,\fn{k}}(k,r)=\fn{k}_0(q,\max\{k,\fn{k}(r,\fn{l}_{\star,q,k})\})$,
      \item $\fn{s}_{\star,q,\fn{k},\fn{s}}(k,r)=\fn{s}(r,\fn{l}_{\star,q,\fn{k}})$,
      \item $\fn{r}_\dagger(m_\dagger, q_\dagger, \fn{s}_\dagger, \fn{k}_\dagger)=\fn{r}_\flat(m_\dagger,q_\dagger,\fn{s}_{\star,q_\dagger,\fn{k}_\dagger,\fn{s}_\dagger},\fn{l}_{\star,q_\dagger,\fn{k}_\dagger})$,
      \item $\fn{L}_\dagger(m_\dagger, q_\dagger, \fn{s}_\dagger, \fn{k}_\dagger)=\fn{l}_{\star,q_\dagger,\fn{k}_\flat(m_\dagger,q_\dagger,\fn{s}_{\star,q,\fn{k}_\dagger,\fn{s}_\dagger},\fn{l}_{\star,q_\dagger,\fn{k}_\dagger})}$,
      \end{itemize}

By Lemma \ref{thm:control_interval_E1} we obtain $m_\dagger, q_\dagger, \fn{s}_\dagger, \fn{k}_\dagger$.  It is convenient to define
\begin{itemize}
\item $\fn{s}_\star=\fn{s}_{\star,q_\dagger,\fn{k}_\dagger,\fn{s}_\dagger}$,
\item $\fn{l}_\star=\fn{l}_{\star,q_\dagger,\fn{k}_\dagger}$,
\item $r_\dagger=\fn{r}_\flat(m_\dagger,q_\dagger,\fn{s}_\star,\fn{l}_\star)$,
\item $\fn{l}_\dagger=\fn{l}_{\star,q_\dagger,\fn{k}_\flat(m_\dagger,q_\dagger,\fn{s}_\star,\fn{l}_\star)}$,
\item $s_\dagger=\fn{s}_\dagger(r_\dagger,\fn{l}_\dagger)$,
\item $k_\dagger=\fn{k}_\dagger(r_\dagger,\fn{l}_\dagger)$.
\end{itemize}

We may then set
\begin{itemize}
\item $\fn{q}_0(D_0,q_0,n_0,\fn{k}_0)=q_\dagger$, and
\item $\fn{m}_0(D_0,q_0,n_0,\fn{k}_0)=\max\{\fn{k}_\flat(m_\dagger,q_\dagger,\fn{s}_\star,\fn{l}_\star),k_\dagger\}$.
\end{itemize}
    \end{quotationb*}
By metastable uniform continuity we obtain $D_0, q_0, n_0, \fn{k}_0$.  Set $m_0=\fn{m}_0(D_0,q_0,n_0,\fn{k}_0)$ and $k_0=\fn{k}_0(q_\dagger,m_0)$.  Since $m_0\geq n_0$, we have $k_0\geq m_0$ and when $\mu(\mathcal{B}')<1/D_0$, $|\rho_{k_0}\lambda_{q_\dagger}|(\mathcal{B}')<1/E_\flat$.

We set $m_\sharp=m_\dagger, q_\sharp=q_\dagger$, $\fn{l}_\sharp=\fn{l}_\star$, $\fn{s}_\sharp=\fn{s}_\star$.  Let $k_\flat, r_\flat, l_\sharp, s_\sharp$ be as in the statement.  Then since $l_\sharp=\fn{l}_\star(k_\flat, r_\flat)=\fn{k}_0(q_\dagger,\max\{k_\flat,k_\dagger\})=\fn{l}_{\star,k_\flat}(k_\dagger,s_\dagger)=\fn{l}_\dagger(k_\dagger,s_\dagger)$ and $l_\sharp\geq k_\dagger$, we have 
\[\left|(\rho_{m_\sharp}\lambda_{s_\sharp})(\Omega)-(\rho_{l_\sharp}\lambda_{q_\sharp})(\Omega)\right|\leq|\rho_{l_\sharp}\lambda_{q_\sharp}|(\mathcal{B}^-)+|\rho_{l_\sharp}\lambda_{q_\sharp}|(\mathcal{B}^{1,-})+\frac{20}{E_\flat}\]
where $\mu(\mathcal{B}^-)<4/D_\flat$ and $\mu(\mathcal{B}^{1,-})<2/D_\flat$.  Therefore $|\rho_{l_\sharp}\lambda_{q_\sharp}|(\mathcal{B}^-)+|\rho_{l_\sharp}\lambda_{q_\sharp}|(\mathcal{B}^{1,-})<12/E_\flat$, and so
\[\left|(\rho_{m_\sharp}\lambda_{s_\sharp})(\Omega)-(\rho_{l_\sharp}\lambda_{q_\sharp})(\Omega)\right|\leq\frac{32}{E_\flat}\]
\end{proof}

\section{Quantitative Bounds}\label{sec:bounds}

In this section we work out a concrete case of the bounds given by the work above, in essentially the simplest non-trivial case.  This illustrates just how large the bounds above get, and is the result needed in \cite{towsner:banach}.

The simplest meaningful case of \ref{thm:control_interval_E2} is to swap the order of the indices---that is, to show that, for every $\epsilon$, there exist $s>m$ and $l>q$ so that
\[|(\rho_m\lambda_s)(\Omega)-(\rho_l\lambda_q)(\Omega)|<\epsilon.\]
Our goal in this section is to obtain a bound on $l$ and $s$ under some assumptions about the sequences $\rho_n$ and $\lambda_p$.

Throughout this section, we make the following assumptions (we use $\nu_d$ to stand in for either some $\rho_n$ or $\lambda_p$), which are essentially quantitative versions of the assumptions $(\ast)$ above:
\begin{itemize}
\item[$(\ast)^Q_1$] Each sequence $(\rho_n\lambda_p)_p$ (for fixed $n$) and $(\rho_n\lambda_p)_n$ (for fixed $p$) has bounded fluctuations with the uniform bound $8 B^2E^2$,
\item[$(\ast)^Q_2$] For all $\rho_m$, $\omega_{\rho_m}(E)\leq E2^m$ and for all $\lambda_p$, $\omega_{\lambda_p}(E)\leq E2^p$,
\item[$(\ast)^Q_3$] $||\nu_d||_{L^1}\leq B$ for all $\nu_d$,
\item[$(\ast)^Q_4$] For any $E,D$, any $\nu_d$, and any $\mathcal{B}$, there is a $\mathcal{B}'\succeq\mathcal{B}$ and a $\mathcal{B}''\subseteq\mathcal{B}$ such that $\nu_d$ is $E$-constant on $\mathcal{B}'$, $\mu(\mathcal{B}'')<1/D$, and $|\mathcal{B}'|\leq 2BDE|\mathcal{B}|$.
\end{itemize}
The last condition is exactly what we would expect if $\nu_d$ were the Radon-Nikodym derivative of an actual $L^1$ function---we just take $\mathcal{B}''$ to consist of the points where the underlying function has large absolute value, and $\mathcal{B}'$ to consist of approximate level sets.  We refer to these four assumptions collectively as $(\ast)^Q$.

We will say that a function $f(x)$ is:
\begin{itemize}
\item Polynomial if there are $a,b,c$ so that $f(x)\leq ax^b+c$ for all $x$,
\item Exponential if there are $a,b,c,d$ so that $f(x)\leq a^{bx^c+d}$ for all $x$,
\item Double exponential if there are $a,b,c,d$ so that $f(x)\leq a^{a^{bx^c+d}}$ for all $x$.
\end{itemize}
We say a function of multiple inputs, $f(x_1,\ldots,x_n)$ is polynomial (resp. exponential, double exponential) if there is a polynomial (resp. exponential, double exponential) function $f'(x)$ so that for all $x_1,\ldots,x_n$, $f(x_1,\ldots,x_n)\leq f'(\prod_i x_i)$.

We will briefly need to keep track of functions which are polynomial in some inputs and exponential in others; we say $f(x;y)$ is poly-exp if there are $a,b,c,d$ so that for all $x,y$, $f(x;y)\leq (ax)^{by^c+d}$.  We say a function of multiple inputs, $f(x_1,\ldots,x_n;y_1,\ldots,y_m)$ is poly-exp if there is a poly-exp function $f'(x;y)$ so that for all $x_1,\ldots,x_n, y_1,\ldots,y_m$, $f(x_1,\ldots,x_n;y_1,\ldots,y_m)\leq f'(\prod_i x_i;\prod_j y_j)$.

\begin{lemma}[Bounds for Lemma \ref{thm:quant_convergence_partition}]
Suppose $(\ast)^Q$ holds.  Then there is an exponential function $\mathfrak{a}_0(B,D_\flat,E_\flat)$ so that for every $D_\flat,E_\flat, \mathcal{B}_\flat, \fn{m}_\flat, n_\flat$ there is an $m_\sharp\in[n_\flat, \fn{m}_\flat^{\mathfrak{a}_0(B,D_\flat,E_\flat)}(n_\flat)]$ such that, taking
\[\mathcal{B}=\{\sigma\in\mathcal{B}_\flat\mid \text{for every }m,m'\in[m_\sharp,\widehat{\mathbf{m}}_\flat(m_\sharp)]\text{, }|\nu_m-\nu_{m'}|(\sigma)< 1/E_\flat\},\]
we have $\mu(\mathcal{B})\geq (1-/D_\flat) \mu(\mathcal{B}_\flat)$.
\end{lemma}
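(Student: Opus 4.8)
The plan is simply to reread the proof of Lemma~\ref{thm:quant_convergence_partition} and track the size of the witness it produces. That proof fixes $V_\sharp$ to be a bound on the number of fluctuations of $(\nu_n)$ at precision (essentially) $2E_\flat$, builds by induction on $k$ a value $v$ with $\mu(\mathcal{E}(v,n_\flat,\fn{m}_\flat))<(1-1/V_\sharp)^k\mu(\mathcal{B}_\flat)$, and takes $k=\lceil \ln(1/D_\flat)/\ln(1-1/V_\sharp)\rceil$. The witness $m_\sharp$ returned is exactly $\fn{m}_\flat^{v}(n_\flat)$ for this $v$, so it suffices to bound $v$; since $\fn{m}_\flat$ is monotone with $\fn{m}_\flat(m)\geq m$, any $\mathfrak{a}_0\geq v$ gives $m_\sharp=\fn{m}_\flat^{v}(n_\flat)\in[n_\flat,\fn{m}_\flat^{\mathfrak{a}_0}(n_\flat)]$ as required.

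First I would pin down $V_\sharp$ using $(\ast)^Q_1$: the sequence $(\nu_n)$, being of the form $(\rho_n\lambda_p)_p$ or $(\rho_n\lambda_p)_n$, has bounded fluctuations at precision $E$ with uniform bound $8B^2E^2$, so $V_\sharp\leq 8B^2(2E_\flat)^2=32B^2E_\flat^2$ (up to a fixed constant factor to pass to the ``$m_\sharp=\fn{m}_\flat^v(n_\flat)$'' form of bounded fluctuations that the proof of Lemma~\ref{thm:quant_convergence_partition} actually invokes). Next I would trace the induction on $k$: the base case gives $v_1\leq V_\sharp$, and the inductive step replaces $v_k$ by $v_k\cdot V_\sharp+v'$ with $v'\leq V_\sharp$, so $v_k\leq\sum_{j=1}^{k}V_\sharp^{\,j}\leq V_\sharp^{\,k+1}$ for $V_\sharp\geq 2$. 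Since $-\ln(1-1/V_\sharp)\geq 1/V_\sharp$, the chosen $k$ satisfies $k\leq 1+V_\sharp\ln D_\flat$. Hence
\[
v\ \leq\ V_\sharp^{\,k+1}\ \leq\ (32B^2E_\flat^2)^{\,2+32B^2E_\flat^2\ln D_\flat},
\]
and I would set $\mathfrak{a}_0(B,D_\flat,E_\flat)$ equal to the right-hand side (when $D_\flat=1$ the statement is vacuous and any value works).

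Finally I would verify $\mathfrak{a}_0$ is exponential in the sense of the paper: writing $x=BD_\flat E_\flat$ we have $32B^2E_\flat^2\leq 32x^2$ and $\ln D_\flat\leq x$, so $\mathfrak{a}_0\leq(32x^2)^{2+32x^3}\leq e^{(2+32x^3)\cdot 32x^2}\leq e^{1088x^5}$, which has the required form $a^{bx^c+d}$. I do not expect a genuine obstacle; the only delicate bookkeeping is matching the abstract fluctuation bound $V_\sharp$ appearing inside the proof of Lemma~\ref{thm:quant_convergence_partition} (stated there at precision $2E_\flat$ and routed through the auxiliary lemma giving $m_\sharp$ of the exact form $\fn{m}_\flat^{v}(n_\flat)$) with the concrete uniform bound $8B^2E^2$ furnished by $(\ast)^Q_1$, a step that costs only a constant factor and does not affect the conclusion that the iteration count is exponential in $B$, $D_\flat$, $E_\flat$.
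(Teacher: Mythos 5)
Your proposal is correct and follows essentially the same route as the paper: re-examine the proof of Lemma~\ref{thm:quant_convergence_partition}, take $V_\sharp$ from $(\ast)^Q_1$ (with the small bookkeeping adjustment for the exact-form version of bounded fluctuations, which you rightly flag), trace the induction to bound the iterate count by roughly $V_\sharp^{k}$ with $k\approx V_\sharp\ln D_\flat$, and observe that the result is exponential. The paper's closed-form choice of $\mathfrak{a}_0$ looks slightly different (it keeps the $-\ln D_\flat/\ln(1-1/V_\sharp)$ exponent rather than replacing it by $V_\sharp\ln D_\flat$, and is a little cavalier about constants), but the substance and the final exponential bound are the same.
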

\begin{proof}
Set $\mathfrak{a}_0(B,D_\flat,E_\flat)=[2^5B^2E_\flat^2]^{\frac{-\ln D_\flat}{\ln(1-\frac{1}{4B^2E_\flat^2})}}$.

  In the proof, $V_\sharp=2^5B^2E^2_\flat$ and 
\[k=\lceil\frac{\ln(1/D_\flat)}{\ln(1-1/4B^2E^2_\flat)}\rceil.\]

One can check that $(2^5B^2E^2_\flat)^{-1/\ln(1-1/4B^2E^2_\flat)}$ grows less quickly than $e^{(2^5B^2E^2_\flat)^2}$, so we can bound $\mathfrak{a}_0(B,D_\flat,E_\flat)$ by $2^{2^{10}B^4E^4_\flat\ln D_\flat}$, which has the specified bounds.
\end{proof}

We will often need the quantity $\mathfrak{a}_0(B,3D_\flat,3E_\flat)+1$, which we abbreviate $\mathfrak{a}(B,D_\flat,E_\flat)$. $\mathfrak{a}$ is also exponential.

\subsection{Bounds on Regularity}

The next few lemmas give bounds on (cases of) the various forms of the the one-dimensional regularity lemma, culminating in bounds on a special case of Lemma \ref{thm:1D_sequential_interval}.  

We first note the function
\begin{itemize}
\item $\mathfrak{b}_0(B,D_\flat,E_\flat)=2^{10}D^3_\flat E^2_\flat B^2+2^9 D^2_\flat E^2_\flat B^2$.
\end{itemize}
which appears in the proof of Lemma \ref{thm:1D_sequential}; this is essentially the bound on the number of iterations needed in that argument.  It turns out we will mostly need
\begin{itemize}
\item $\mathfrak{b}(B,D_\flat,E_\flat)=\mathfrak{b}(B,3D_\flat,3E_\flat)$.
\end{itemize}
Both of these functions are polynomial.

There are two major simplifications in this special case which will make it much easier to find bounds on the various sequential versions.  The first is that we will fix, throughout this subsection, a constant $z_0$ so that all the functions $\fn{B}_\flat$, $\fn{B}_\flat^0$, and $\fn{B}_\flat^1$ will satisfy the bound $|\fn{B}_\flat(\cdots,\mathcal{B})|\leq z_0|\mathcal{B}|$, independently of other parameters.  Along with this, we will assume that we always apply our lemmas with $|\mathcal{A}_\flat|\leq z_0^{(\mathfrak{b}(B,D_\flat,E_\flat)+1)(\mathfrak{b}(B,D_\flat,E_\flat))}$, and that all partitions $\mathcal{B}$ appearing in the proofs in this subsection satisfy $|\mathcal{B}|\leq z_0^{(\mathfrak{b}(B,D_\flat,E_\flat)+2)\mathfrak{b}(B,D_\flat,E_\flat)}$.

The second simplification is that we have a fixed function $\fn{u}:\mathbb{N}\rightarrow\mathbb{N}$, and all the functions we deal with will be bounded by, roughly, iterations of $\fn{u}$ above some base value.  Abstractly, we say:
\begin{itemize}
\item A function $\fn{m}(n)$ is $\fn{u}$-bounded by $c$ above $n_-$ if $\fn{m}(n)\leq\fn{u}^c(\max\{n,n_-\})$,
\item A function $\fn{m}(n,\fn{k})$ is $\fn{u}$-bounded by $f$ above $n_-$ if whenever $\fn{k}$ is $\fn{u}$-bounded by $c$ above $n_-$, $\fn{m}(n,\fn{k})\leq\fn{u}^{f(c)}(\max\{n,n_-\})$.
\end{itemize}
Note that this second clause allows us to iterate if we interpret $c$ as being itself a function and $f$ as a higher order functional.  We try to limit the quantity of special cases we need to deal with, but it will be helpful to generalize the second clause to the case where there are two input functions:
\begin{itemize}
\item A function $\fn{m}(n,\fn{k},\fn{r})$ is $\fn{u}$-bounded by $f$ above $n_-$ if whenever $\fn{k}$ is $\fn{u}$-bounded by $c$ above $n_-$ and $\fn{r}$ is $\fn{u}$-bounded by $d$ above $n_-$, $\fn{m}(n,\fn{k},\fn{r})\leq\fn{u}^{f(c,d)}(\max\{n,n_-\})$.
\end{itemize}

For consistency with the later arguments, we find bounds for Lemma \ref{thm:1D_sequential} with the values $3D_\flat, 3E_\flat$.

\begin{lemma}[Bounds on \ref{thm:1D_sequential}]
Suppose that $(*)^Q$ holds.  There exist poly-exp functions $\mathfrak{c}(a;B,D_\flat,E_\flat)$ and $\mathfrak{d}(a;B,D_\flat,E_\flat)$ so that whenever $z_0, n_-, a, b, \mathcal{A}_\flat, E_\flat, D_\flat, \fn{m}_\flat$, and $\fn{B}_\flat$ are given such that:
\begin{itemize}
\item $|\mathcal{A}_\flat|\leq z_0^{(\mathfrak{b}(B,D_\flat,E_\flat)+1)(\mathfrak{b}(B,D_\flat,E_\flat))}$,
\item For all $\fn{m},\mathcal{B}$, $|\fn{B}_\flat(\fn{m},\mathcal{B})|\leq z_0|\mathcal{B}|$,
\item The function $(n,\fn{k})\mapsto\sup_{\mathcal{B}_0,\mathcal{B}_1,\mathcal{B}_2}\fn{m}_\flat(n,\fn{k},\mathcal{B}_0,\mathcal{B}_1,\mathcal{B}_1)$ is $\fn{u}$-bounded by $\fn{f}(x)=ax+b$ above $n_-$ (where the supremum ranges over partitions refining $\mathcal{A}_\flat$ and satisfying $|\mathcal{B}_i|\leq z_0^{(\mathfrak{b}(B,D_\flat,E_\flat)+2)(\mathfrak{b}(B,D_\flat,E_\flat))}$),
\end{itemize}
there exist $\mathcal{B}_\sharp\succeq\mathcal{A}_\flat$, $n_\sharp$, and $\fn{k}_\sharp$ such that:
\begin{itemize}
\item $|\mathcal{B}_\sharp|\leq z_0^{\mathfrak{b}(B,D_\flat,E_\flat)}|\mathcal{A}_\flat|$, 
\item $n_\sharp\leq \fn{u}^{\mathfrak{d}(a;B,D_\flat,E_\flat)b}(n_-)$,
\item The function $n\mapsto\sup_{\mathcal{B}}\fn{k}_\sharp(n,\mathcal{B})$ is $\fn{u}$-bounded by $\mathfrak{c}(a;B,D_\flat,E_\flat)b$ above $n_\sharp$ (with the supremum ranging over the same partitions as above),
\item Whenever $\mathcal{B}_\sharp\preceq\mathcal{B}\preceq\mathcal{B}'\preceq\fn{B}_\flat(\fn{m}_\sharp,\mathcal{B}_\sharp)$, setting $m_\flat=\fn{m}_\flat(n_\sharp,\fn{k}_\sharp,\mathcal{B}_\sharp,\mathcal{B},\mathcal{B}')$ and $k_\sharp=\fn{k}_\sharp(m_\flat,\mathcal{B}')$, if $m_\flat\geq n_\sharp$ then we have $k_\sharp\geq m_\flat$ and $\mu(\mathfrak{D}_{3E_\flat,\mathcal{B},k_\sharp}(\mathcal{B}'))<1/3D_\flat$.
\end{itemize}
\end{lemma}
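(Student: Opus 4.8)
The plan is to reopen the density–increment proof of Theorem~\ref{thm:1D_sequential} and instrument its main Claim, which is proved by induction on $i$ and invoked once with $i=\mathfrak{b}_0(B,3D_\flat,3E_\flat)\le\mathfrak{b}(B,D_\flat,E_\flat)$ (the parameters here are $3D_\flat,3E_\flat$, since the conclusion is $\mu(\mathfrak{D}_{3E_\flat,\mathcal{B},k_\sharp}(\mathcal{B}'))<1/3D_\flat$). In that proof the energy $\theta_k$ is bounded by $2^4D_\flat^2B^2+2^3D_\flat B^2$ and each genuine pass through the inductive step raises it by at least $1/2^5D_\flat E_\flat^2$, so the argument consists of at most $\mathfrak{b}(B,D_\flat,E_\flat)$ passes, and each pass consumes exactly one application of $\fn{B}_\flat$ and one of $\fn{m}_\flat$. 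Nothing about the regularity conclusion itself needs to be re-proved: it is literally the conclusion of Theorem~\ref{thm:1D_sequential} with $E_\flat,D_\flat$ replaced by $3E_\flat,3D_\flat$, so the work is purely to read off the sizes of the objects produced.

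The first and easier half is the partition bound. The refinement chain $\mathcal{A}_\flat=\mathcal{A}_0\preceq\mathcal{A}_1\preceq\cdots$ built in the argument grows only through applications of $\fn{B}_\flat$: the intermediate coarsenings $\mathcal{B}_\star,\mathcal{B}'_\star$ that appear there satisfy $\mathcal{B}_\star\preceq\mathcal{B}$ and $\mathcal{B}'_\star\preceq\mathcal{B}'$ and hence do not increase cardinality, while $\fn{B}^K_\flat$ never adds parts. Since $|\fn{B}_\flat(\fn{m},\mathcal{B})|\le z_0|\mathcal{B}|$ and there are at most $\mathfrak{b}(B,D_\flat,E_\flat)$ steps, every $\mathcal{A}_i$, and in particular $\mathcal{B}_\sharp$, has size at most $z_0^{\mathfrak{b}(B,D_\flat,E_\flat)}|\mathcal{A}_\flat|$. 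Combined with the hypothesis $|\mathcal{A}_\flat|\le z_0^{(\mathfrak{b}(B,D_\flat,E_\flat)+1)\mathfrak{b}(B,D_\flat,E_\flat)}$, a routine check shows every partition that is ever fed to $\fn{m}_\flat$ refines $\mathcal{A}_\flat$ and stays within the bound $z_0^{(\mathfrak{b}(B,D_\flat,E_\flat)+2)\mathfrak{b}(B,D_\flat,E_\flat)}$ appearing in the $\fn{u}$-boundedness hypothesis, so that hypothesis applies at every use.

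The substance is the recursion on $\fn{u}$-iteration counts. Writing $\fn{k}_i$ for the function supplied at level $i$ of the Claim's induction and $c_i$ for a bound on the $\fn{u}$-iteration count of its $\sup$-over-partitions version, one shows by induction that $c_0$ is a fixed constant (the level-$0$ function is essentially the identity) and $c_{i+1}\le\fn{f}(c_i)+O(1)=ac_i+b+O(1)$: the one new composition with $\fn{m}_\flat$ contributes the affine map $\fn{f}(x)=ax+b$ by hypothesis, and the $\fn{B}_\flat$ and $\fn{B}^K_\flat$ manipulations contribute only bounded constants to the count. Solving yields $c_i\le a^i(c_0+b)$ up to constants, and the value $n_\sharp$ produced by the final invocation is reached after at most $\sum_{i<\mathfrak{b}(B,D_\flat,E_\flat)}\fn{f}(c_i)$ iterations of $\fn{u}$ above $n_-$, i.e., of the order of $\mathfrak{b}(B,D_\flat,E_\flat)\,a^{\mathfrak{b}(B,D_\flat,E_\flat)+1}b$, while $\fn{k}_\sharp$ inherits a $\fn{u}$-bound of the order of $a^{\mathfrak{b}(B,D_\flat,E_\flat)}b$ above $n_\sharp$. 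Taking $\mathfrak{d}(a;B,D_\flat,E_\flat)$ and $\mathfrak{c}(a;B,D_\flat,E_\flat)$ to be these expressions with the factor $b$ divided out, each has the shape $(\mathrm{const}\cdot a)^{\mathrm{poly}(B,D_\flat,E_\flat)}$ and is therefore poly-exp, since $\mathfrak{b}$ is polynomial; this gives $n_\sharp\le\fn{u}^{\mathfrak{d}(a;B,D_\flat,E_\flat)b}(n_-)$ and the stated $\fn{u}$-bound $\mathfrak{c}(a;B,D_\flat,E_\flat)b$ above $n_\sharp$ for $\fn{k}_\sharp$.

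The main obstacle is not any single estimate but the higher-order bookkeeping. Because $\fn{m}_\flat$ takes the function $\fn{k}$ as an argument, at each level one must first establish that $\fn{k}_i$ really is $\fn{u}$-bounded by $c_i$ \emph{uniformly} over all admissible partition arguments before the $\fn{u}$-boundedness of $\fn{m}_\flat$ can be invoked, and one must keep careful track of which base point ($n_-$, or the current $n_\sharp$) each bound is asserted above as the construction nests. Beyond this disciplined substitution, no new idea is required.
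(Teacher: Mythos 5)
You follow the same strategy as the paper: reopen the density‑increment proof of Theorem~\ref{thm:1D_sequential}, extract from its inductive Claim a recurrence for the $\fn{u}$‑iteration count $c_i$ of $\fn{k}_i$, and read off the partition bound from the fact that only $\fn{B}_\flat$ enlarges partitions. The partition bookkeeping and the general shape of the argument are right.

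The one substantive slip is in the recurrence. You write $c_{i+1}\le \fn{f}(c_i)+O(1)=ac_i+b+O(1)$, treating the passage from level $i$ to level $i+1$ as a single composition with $\fn{m}_\flat$. But in the Claim, $\fn{k}_{i+1}(m_0,\mathcal{B})$ is defined as $\fn{k}_i\bigl(\fn{m}_\flat(m_0,\fn{k}_i,\mathcal{A}_d,\mathcal{B},\mathcal{B}'),\mathcal{B}'\bigr)$, so $\fn{k}_i$ occurs \emph{twice}: once as the functional argument fed to $\fn{m}_\flat$ (contributing $ac_i+b$ via $\fn{f}$), and once more applied to the output of $\fn{m}_\flat$ (contributing another $c_i$). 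The correct recurrence is therefore $c_{i+1}\le (a+1)c_i+b$, not $ac_i+b+O(1)$; the paper's solution $c_i\le (a+2)^ib$ reflects this. Your ``$O(1)$'' is a constant and cannot absorb the missing $c_i$. As it happens, the conclusion survives: with the corrected recurrence one still gets $c_{\mathfrak{b}}\le(a+2)^{\mathfrak{b}}b\le(2a+2)^{\mathfrak{b}}b$, which is poly‑exp of the form you want, and the $n_\sharp$ bound as a sum over levels is likewise poly‑exp. So the statement is not endangered, but the derived recurrence is wrong in a way that would bite in any situation where the base of the exponential mattered, and your explicit constants are off by this factor.
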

\begin{proof}
If we ignore the bounds, this is essentially Lemma \ref{thm:1D_sequential} applied to $\mathcal{A}_\flat, 3E_\flat, 3D_\flat, \fn{m}_\flat,\fn{B}_\flat$.  We obtain bounds by examining the proof of Lemma \ref{thm:1D_sequential}.  

The main step in the proof is the construction of the sequence of functions $\fn{k}_i$, with $\fn{k}_\sharp$ bounded by $\fn{k}_{\mathfrak{b}(B,D_\flat,E_\flat)}$.  First, note that when $\mathcal{A}_0=\mathcal{A}_\flat$, each element in the sequence of partitions constructed in the proof satisfies $\mathcal{A}_{i+1}\preceq\fn{B}_\flat(m_0,\fn{k}_i,\mathcal{A}_i)$, so we have $|\mathcal{A}_i|\leq z_0^i|\mathcal{A}_\flat|$, and so $|\mathcal{B}_\sharp|\leq z_0^{\mathfrak{b}(B,D_\flat,E_\flat)}|\mathcal{A}_\flat|$.

We show inductively that $\fn{k}_i(m,\mathcal{B})$ is $\fn{u}$-bounded by $(a+2)^ib$ above $n_-$.

Clearly $\fn{k}_0(m,\mathcal{B})=m=\fn{u}^0(m)$.  When $m_0\geq n_-$, we have
\begin{align*}
\fn{k}_{i+1}(m_0,\mathcal{B})
&=\fn{k}_i(\fn{m}_\flat(m_0,\fn{k}_i,\mathcal{A}_d,\mathcal{B},\mathcal{B}'))\\
&\leq \fn{u}^{(a+2)^ib}(\fn{u}^{a(a+2)^ib+b}(m_0))\\
&=\fn{u}^{(a+2)^ib+a(a+2)^ib+b}(m_0)\\
&=\fn{u}^{(a+2)^ib(a+1)+b}(m_0)\\
&\leq\fn{u}^{(a+2)^{i+1}b}(m_0).\\
\end{align*}

It suffices, in the last step, to work with $i=\mathfrak{b}(B,D_\flat,E_\flat)$ and $m_0=n_-$, so $\fn{k}_\sharp=\fn{k}_{\mathfrak{b}(D_\flat,E_\flat)}$ is $\fn{u}$-bounded by $(a+2)^{\mathfrak{b}(B,D_\flat,E_\flat)}b$ above $n_-$; we see that this bound is poly-exp of the right form.  For the largest value that might be used for $n_\sharp$, take the values $n_0=n_-$, $n_{i+1}=\fn{m}_\flat(n_i,\fn{k}_{\mathfrak{b}(B,D_\flat,3E_\flat)-i},\mathcal{A}_d,\mathcal{B},\mathcal{B}')$, and observe that $n_\sharp=n_i$ for some $i$.  Since $n_i\leq \fn{u}^{\sum_{j\leq i}a(a+2)^{\mathfrak{b}(B,D_\flat,3E_\flat)-i}b+b}(n_-)$, we see that the bound $\mathfrak{d}(a,b;B,D_\flat,E_\flat)=\sum_{j\leq i}a(a+2)^{\mathfrak{b}(B,D_\flat,3E_\flat)-i}b+b$ also has the promised size.
\end{proof}

\begin{lemma}[Bounds on \ref{thm:1D_sequential_interval}]
Suppose $(*)^Q$ holds.  Then there exists a poly-exp function $\mathfrak{e}(a;B,D_\flat,E_\flat)$ so that whenever $z_0, n_-,\mathcal{A}_\flat, E_\flat, D_\flat, \fn{B}_\flat, \fn{n}_\flat, \fn{L}_\flat$ are given such that:
\begin{itemize}
\item $|\mathcal{A}_\flat|\leq z_0^{(\mathfrak{b}(B,D_\flat,E_\flat)+1)(\mathfrak{b}(B,D_\flat,E_\flat))}$,
\item For all $\fn{m},\mathcal{B}$, $|\fn{B}_\flat(\fn{m},\mathcal{B})|\leq z_0|\mathcal{B}|$,
\item For all $n,\fn{k}$ and all $\mathcal{B}\succeq\mathcal{A}_\flat$ with $|\mathcal{B}|\leq z_0^{\mathfrak{b}(B,D_\flat,E_\flat)}|\mathcal{A}_\flat|$, the function $k\mapsto\fn{L}_\flat(n,\fn{k},\mathcal{B})(k)$ is $\fn{u}$-bounded by $1$ above $\max\{n_-,n,\fn{m}_\flat(n,\fn{k},\mathcal{B})\}$,
\item The function $(n,\fn{k})\mapsto\sup_{\mathcal{B}}\fn{m}_\flat(n,\fn{k},\mathcal{B})$ is $\fn{u}$-bounded by $\fn{f}(f)=af(1)+b$ above $n_-$ (where the supremum is over suitable $\mathcal{B}$ as above),
\end{itemize}
there are $n_\sharp$, $\mathcal{B}_\sharp$ and $\fn{m}_\sharp$ such that:
\begin{itemize}
\item $|\mathcal{B}_\sharp|\leq z_0^{\mathfrak{b}(B,D_\flat,E_\flat)}|\mathcal{A}_\flat|$,
\item $n_\sharp\leq \fn{u}^{\mathfrak{e}(a;B,D_\flat,E_\flat)b}(n_-)$,
\item $\fn{k}_\sharp$ is $\fn{u}$-bounded by $f(x)=\mathfrak{a}(B,D_\flat,E_\flat)x+\mathfrak{e}(a;B,D_\flat,E_\flat)b$ above $n_\sharp$,
\item Setting $m_\flat=\fn{m}_\flat(n_\sharp,\fn{k}_\sharp,\mathcal{B}_\sharp)$ and $\fn{l}_\flat=\fn{L}_\flat(n,\fn{m},\mathcal{B})$, if $m_\flat\geq n_\sharp$ then for any $\mathcal{B}_\sharp\preceq\mathcal{B}\preceq\mathcal{B}'\preceq\fn{B}_\flat(n_\sharp,\fn{k}_\sharp,\mathcal{B}_\sharp)$, setting $k_\sharp=\fn{k}_\sharp(m_\flat,\fn{l}_\flat,\mathcal{B},\mathcal{B}')$, we have $k_\sharp\geq m_\flat$ and for every $l\in[k_\sharp,\fn{l}_\flat(k_\sharp)]$, $\mu(\mathfrak{D}_{E_\flat,\mathcal{B},k}(\mathcal{B}'))<1/D_\flat$.
\end{itemize}
\end{lemma}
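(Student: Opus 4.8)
The plan is to re-run the proof of Theorem~\ref{thm:1D_sequential_interval} essentially verbatim, but to replace its single appeal to Lemma~\ref{thm:1D_sequential} by the bounds-version of that lemma proved just above, and to replace its two appeals to metastable weak convergence of $(\nu_n)_n$ by the already-established bounds for Lemma~\ref{thm:quant_convergence_partition} (those two appeals really produce the statement ``the defective set of $\sigma$ has measure $<1/3D_\flat$'', so under $(\ast)^Q$ they cost only a bounded number of extra iterations of the relevant lookahead function; the choice $\mathfrak{a}(B,D_\flat,E_\flat)=\mathfrak{a}_0(B,3D_\flat,3E_\flat)+1$ was made precisely to absorb both the $3\times$ in $D_\flat,E_\flat$ and the fact that the step is done twice). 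The shape of the bounds is then forced by the composition of these two ingredients: the inner metastable step contributes the exponential-in-$(B,D_\flat,E_\flat)$ factor $\mathfrak{a}$, and the outer application of Lemma~\ref{thm:1D_sequential} contributes the poly-exp dependence on $a$ through $\mathfrak{c},\mathfrak{d}$.

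Concretely, first I would reconstruct the inner function $\fn{k}_\dagger(m,\fn{l},\mathcal{B},\mathcal{B}')=\fn{k}_0(m_{\dagger,\dots},\mathcal{B}')$, where $m_{\dagger,\dots}$ lies within $\mathfrak{a}(B,D_\flat,E_\flat)$-many iterations of $x\mapsto\fn{l}(\fn{k}_0(x,\mathcal{B}'))$ above $m$. Using the hypothesis that $\fn{L}_\flat$ is always $\fn{u}$-bounded by $1$, one checks that if the input $\fn{k}_0$ is $\fn{u}$-bounded by $x$ above $n_-$ then $\fn{k}_\dagger$ (with its internal $\fn{l}=\fn{l}_\flat$ substituted) is $\fn{u}$-bounded by $(\mathfrak{a}+1)x+\mathfrak{a}$, and hence that the function $\fn{m}_0$ defined in that proof is $\fn{u}$-bounded by a genuinely \emph{affine} functional $\fn{f}(x)=a_0x+b_0$ above $n_-$, with $a_0$ linear in $a$ and $b_0$ linear in $b$ (the remaining coefficients being polynomial in $B,E_\flat$ times the metastable budget). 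Here one uses the $\fn{m}_\flat$-hypothesis $\fn{f}(f)=af(1)+b$ evaluated at the bound just obtained for $\fn{k}_\dagger$. This affine bound is exactly what is needed to invoke the bounds-version of Lemma~\ref{thm:1D_sequential}; the hypotheses on $|\mathcal{A}_\flat|$ and on $|\fn{B}_0|$ are inherited verbatim, since $\fn{B}_0$ is an instance of $\fn{B}_\flat$.

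Second, I would apply the bounds-version of Lemma~\ref{thm:1D_sequential} to $\mathcal{A}_\flat,E_\flat,D_\flat,\fn{m}_0,\fn{B}_0$, obtaining $\mathcal{B}_0,n_0,\fn{k}_0$ with $|\mathcal{B}_0|\le z_0^{\mathfrak{b}(B,D_\flat,E_\flat)}|\mathcal{A}_\flat|$, with $n_0\le\fn{u}^{\mathfrak{d}(a_0;B,D_\flat,E_\flat)b_0}(n_-)$, and with $\fn{k}_0$ $\fn{u}$-bounded by the constant $\mathfrak{c}(a_0;B,D_\flat,E_\flat)b_0$ above $n_0$; then set $n_\sharp=n_0$, $\mathcal{B}_\sharp=\mathcal{B}_0$, $\fn{k}_\sharp=\fn{k}_\dagger$. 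Since $a_0$ is affine in $a$ and $b_0$ affine in $b$ with coefficients that are at worst exponential in $(B,D_\flat,E_\flat)$ and polynomial in $a$, substituting into the poly-exp functions $\mathfrak{c},\mathfrak{d}$ and collecting terms (taking $b\ge1$ without loss of generality so that stray additive $a$-terms are absorbed into the coefficient of $b$, and using that poly-exp functions are closed under products and under precomposition with affine maps) yields $n_\sharp\le\fn{u}^{\mathfrak{e}(a;B,D_\flat,E_\flat)b}(n_-)$ and, feeding $\fn{k}_\sharp=\fn{k}_\dagger$ an $\fn{l}_\flat$ that is $\fn{u}$-bounded by $x$ above $n_\sharp$, a bound $\mathfrak{a}(B,D_\flat,E_\flat)x+(\mathfrak{a}+1)\mathfrak{c}(a_0;\dots)b_0\le\mathfrak{a}(B,D_\flat,E_\flat)x+\mathfrak{e}(a;B,D_\flat,E_\flat)b$, enlarging the poly-exp $\mathfrak{e}$ as needed. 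The regularity conclusion itself is word for word the conclusion of Theorem~\ref{thm:1D_sequential_interval}, read off from its proof under the same identifications $\fn{k}_\sharp=\fn{k}_\dagger$, $m_\flat=\fn{m}_\flat(n_\sharp,\fn{k}_\sharp,\mathcal{B}_\sharp)$, $\fn{l}_\flat=\fn{L}_\flat(n_\sharp,\fn{k}_\sharp,\mathcal{B}_\sharp)$.

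The hard part will be the bookkeeping in the second step: verifying that $\fn{m}_0$ is $\fn{u}$-bounded by a genuinely affine functional rather than merely a poly-exp one, since the bounds-version of Lemma~\ref{thm:1D_sequential} is stated only for affine $\fn{f}$. This is where the hypothesis that $\fn{L}_\flat$ has lookahead $\fn{u}$-bounded by $1$ is used essentially (it prevents the inner metastable iteration from multiplying the input bound), and where one must confirm that the two nested metastable applications together cost at most $\mathfrak{a}(B,D_\flat,E_\flat)$ iterations of the lookahead function. Everything after that is substitution into the already-established poly-exp estimates together with the closure properties of the poly-exp class.
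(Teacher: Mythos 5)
Your proposal matches the paper's argument essentially step for step: first bound $\fn{k}_\dagger$ by tracking one application of the bounds for Lemma~\ref{thm:quant_convergence_partition} at $3D_\flat,3E_\flat$ and using the $\fn{u}$-bounded-by-$1$ hypothesis on $\fn{L}_\flat$ to keep the resulting functional of $\fn{m}_0$ affine, then apply the bounds-version of Lemma~\ref{thm:1D_sequential} and fold $a_0,b_0$ back through $\mathfrak{c},\mathfrak{d}$ to define $\mathfrak{e}$. The only small imprecision is your heuristic about what the $+1$ in $\mathfrak{a}=\mathfrak{a}_0(B,3D_\flat,3E_\flat)+1$ ``absorbs'' (in the paper's computation it accounts for the final composition with $\fn{k}_0$, and the $3\times$ is already carried inside $\mathfrak{a}_0$'s arguments); this does not affect the argument.
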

\begin{proof}
We examine the proof of Lemma \ref{thm:1D_sequential_interval}.  Suppose we have been given $n_0,\mathcal{B}_0$, and a $\fn{k}_0$ which is $\fn{u}$-bounded by $y$ above $n_0$.  

Suppose we have also fixed $\fn{l}_\flat$ which is $\fn{u}$-bounded by $x$ above $\max\{n_0,n_-\}$ and a $\mathcal{B}'\succeq\mathcal{B}\succeq\mathcal{B}_0$ with $|\mathcal{B}'|\leq z_0^{\mathfrak{b}(B,D_\flat,E_\flat)}|\mathcal{A}_\flat|$.  The function $m\mapsto\fn{l}_\flat(\fn{k}_0(m,\mathcal{B}'))$ is $\fn{u}$-bounded by $y+x$ above $\max\{n_0,n_-\}$.  The function
\[m\mapsto m_{\dagger,m,\fn{l}_\flat,\mathcal{B},\mathcal{B}'}\]
is obtained by applying Lemma \ref{thm:quant_convergence_partition} with $3D_\flat, 3E_\flat$, and is therefore $\fn{u}$-bounded by $\mathfrak{a}_0(B,3D_\flat,3E_\flat)(y+x)$ above $\max\{n_0,n_-\}$.  

The function $(m,\fn{l}_\flat)\mapsto m_{\dagger,m,\fn{l}_\flat,\mathcal{B},\mathcal{B}'}$ is therefore $\fn{u}$-bounded by $f_y(x)=\mathfrak{a}_0(B,3D_\flat,3E_\flat)(y+x)$ above $\max\{n_0,n_-\}$, and so 
\[(m,\fn{l}_\flat)\mapsto\sup_{\mathcal{B},\mathcal{B}'}\fn{k}_\dagger(m,\fn{l}_\flat,\mathcal{B},\mathcal{B}')\]
is bounded by $f'_y(x)=\mathfrak{a}_0(B,3D_\flat,3E_\flat)(y+x)+y+x=\mathfrak{a}(B,D_\flat,E_\flat)(y+x)$.

The function $(n_0,\fn{k}_0)\mapsto\sup_{\mathcal{B}}\fn{m}_\flat(n_0,\fn{k}_\dagger,\mathcal{B}_0)$ is therefore $\fn{u}$-bounded by $af'_y(1)+b=\mathfrak{a}(B,D_\flat,E_\flat)a(y+1)+b$ above $n_-$.  This at last lets us consider the function $\fn{m}_0$: it is $\fn{u}$-bounded by
\[f''(y)=\mathfrak{a}(B,D_\flat,E_\flat)a(y+1)+b+\mathfrak{a}_0(B,3D_\flat,3E_\flat)(y+1)\leq \mathfrak{a}(B,D_\flat,E_\flat)(a+1)(y+1)+b\]
above $n_-$.

We are in the setting of the previous lemma where $(n,\fn{k})\mapsto\sup_{\mathcal{B}_0,\mathcal{B}_1,\mathcal{B}_2}\fn{m}_0(n,\fn{k},\mathcal{B}_0,\mathcal{B}_1,\mathcal{B}_1)$ is $\fn{u}$-bounded by $f''(y)$; rewriting $f''(y)$ as a linear function,
\[f''(y)=\mathfrak{a}(B,D_\flat,E_\flat)(a+1)y+\mathfrak{a}(B,D_\flat,E_\flat)(a+1)+b.\]
 Therefore $\fn{k}_0$ is $\fn{u}$-bounded by
\[\mathfrak{c}(\mathfrak{a}(B,D_\flat,E_\flat)(a+1);B,D_\flat, E_\flat)(\mathfrak{a}(B,D_\flat,E_\flat)(a+1)+b)\]
and
\[n_0\leq \fn{u}^{\mathfrak{d}(\mathfrak{a}(B,D_\flat,E_\flat)(a+1);D_\flat, E_\flat)(\mathfrak{a}(B,D_\flat,E_\flat)(a+1)+b)}(n_-).\]

The function $\fn{k}_\sharp$ is therefore bounded by
\[f'_{\mathfrak{c}(\mathfrak{a}(B,D_\flat,E_\flat)(a+1);D_\flat, E_\flat)(\mathfrak{a}(B,D_\flat,E_\flat)(a+1)+b)}(x),\]
 which has the specified form (in particular, it is linear in $b$, polynomial in $a$ and exponential in $B,D_\flat,E_\flat$).

Similarly $n_\sharp=n_0$ is bounded by $\fn{u}^{\mathfrak{d}(\mathfrak{a}(B,D_\flat,E_\flat)(a+1);D_\flat, E_\flat)(\mathfrak{a}(B,D_\flat,E_\flat)(a+1)+b)}(n_-)$, which also has the specified bounds.
\end{proof}

\subsection{Controlling Intervals}

In this subsection we obtain bounds on a special case of Lemma \ref{thm:control_interval}.  We continue to work with a fixed function $\fn{u}$.

\begin{lemma}[Bounds on \ref{thm:1D_sequential_interval_double}]
Suppose $(*)^Q$ holds.  There are poly-exp functions $\mathfrak{f}(\rho;B,D_\flat,E_\flat)$ and $\mathfrak{g}(\rho;B,D_\flat,E_\flat)$ so that whenever $z, \rho, n_-, D_\flat, E_\flat, \fn{B}^0_\flat, \fn{B}^1_\flat, \fn{m}_\flat, \fn{L}_\flat, \fn{q}_\flat$, and $\fn{S}_\flat$ are given such that:
\begin{itemize}
\item For all $n,p,\mathcal{B},\fn{k},\fn{r}$ and $i\in\{0,1\}$, $|\fn{B}^i_\flat(n,p,\fn{k},\fn{r},\mathcal{B})|\leq z|\mathcal{B}|$,
\item $\fn{m}_\flat(n,p,\fn{k},\fn{r},\mathcal{B})$ is $\fn{u}$-bounded by $\rho( f(1,1)+ g(1,1)+1)$ above $n_-$,
\item $\fn{q}_\flat(n,p,\fn{k},\fn{r},\mathcal{B})$ is $\fn{u}$-bounded by $\rho( f(1,1)+ g(1,1)+1)$ above $n_-$,
\item For any $n,p,\mathcal{B},\fn{k},\fn{r}$, the function $m\mapsto\fn{L}_\flat(n,p,\fn{k},\fn{r},\mathcal{B})(m,q)$ does not depend on $q$ and is $\fn{u}$-bounded by $1$ above $\max\{n_-,n,p, \fn{m}_\flat(n,p,\fn{k},\fn{r},\mathcal{B}),\allowbreak \fn{q}_\flat(n,p,\fn{k},\fn{r},\mathcal{B})+1\}$,
\item For any $n,p,\mathcal{B},\fn{k},\fn{r}$, the function $q\mapsto\fn{S}_\flat(n,p,\fn{k},\fn{r},\mathcal{B})(m,q)$ does not depend on $m$ and is $\fn{u}$-bounded by $1$ above $\max\{n_-,n,p,\fn{m}_\flat(n,p,\fn{k},\fn{r},\mathcal{B})+1,\fn{q}_\flat(n,p,\fn{k},\fn{r},\mathcal{B})\}$,
\end{itemize}
then there exist $\mathcal{B}_\sharp$, $n_\sharp$, $p_\sharp$, $\fn{k}_\sharp$, and $\fn{r}_\sharp$ such that:
\begin{itemize}
\item $|\mathcal{B}_0|\leq z^{\mathfrak{b}(B,D_\flat,E_\flat)(\mathfrak{b}(B,D_\flat,E_\flat)+2)}$,
\item $n_\sharp\leq\fn{u}^{\mathfrak{f}(\rho;B,D_\flat,E_\flat)}(n_-)$,
\item $p_\sharp\leq\fn{u}^{\mathfrak{g}(\rho;B,D_\flat,E_\flat)}(n_-)$,
\item $\fn{k}_\sharp$ is $\fn{u}$-bounded by $\mathfrak{a}(B,D_\flat,E_\flat)x+\mathfrak{f}(\rho;B,D_\flat,E_\flat)$ above $\max\{n_\sharp,p_\sharp\}$,
\item $\fn{r}_\sharp$ is $\fn{u}$-bounded by $\mathfrak{a}(B,D_\flat,E_\flat)y+\mathfrak{g}(\rho;B,D_\flat,E_\flat)$ above $\max\{n_\sharp,p_\sharp\}$,
\end{itemize}
\end{lemma}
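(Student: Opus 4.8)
The plan is to re-run the proof of Lemma~\ref{thm:1D_sequential_interval_double} essentially verbatim, replacing each of the two nested appeals to Lemma~\ref{thm:1D_sequential_interval} by the corresponding appeal to its quantitative refinement (the preceding ``Bounds on \ref{thm:1D_sequential_interval}'' lemma), and checking that every auxiliary function introduced along the way preserves $\fn{u}$-boundedness with the governing degree functional transforming in a poly-exp fashion. The original proof only composes the given data $\fn{k}_0,\fn{r}_0,\fn{k}_\dagger,\fn{r}_\dagger$ together with the helper functions $\fn{l}_{\fn{l},r}$, $\fn{k}_{\star,r}$, $\fn{s}_{\star,\ldots}$, $\fn{r}_\star$, $\fn{k}_\star$, $\fn{l}_\star$, $\fn{s}_\star$ a bounded number of times, and each composition either restricts a two-variable function to one variable or substitutes one $\fn{u}$-bounded function into another; so the net effect on degrees is a bounded composite of the affine-in-the-outer-variable, poly-exp-coefficient maps produced by the bounds lemma, which is again of that shape. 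This is what keeps the final functionals poly-exp after two nestings.

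For the inner application I would fix $\mathcal{B}_0,n_0,\fn{k}_0$ with $\fn{k}_0$ $\fn{u}$-bounded above $n_0$ (degree to be tracked) and then $\mathcal{B}_\dagger,p_\dagger,\fn{r}_\dagger$, and walk down the list of $\star$-definitions: $\fn{l}_{\fn{l},r}$ inherits the degree of $\fn{l}$; $\fn{k}_{\star,r}$ is $\fn{u}$-bounded by $\fn{k}_0$'s functional applied to that degree; $\fn{s}_{\star,\mathcal{B}^0,\mathcal{B}^1,m,q,\fn{l},\fn{s}}$ substitutes $\fn{k}_{\star,r}$ into $\fn{s}$; $\fn{r}_\star$ substitutes this into $\fn{r}_\dagger$; and $\fn{k}_\star,\fn{l}_\star,\fn{s}_\star,m_\star,q_\star$ are one further layer of substitution into $\fn{L}_\flat,\fn{S}_\flat,\fn{m}_\flat,\fn{q}_\flat$. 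The hypotheses on $\fn{m}_\flat,\fn{q}_\flat,\fn{L}_\flat,\fn{S}_\flat$ (the parameter $\rho$, the $\fn{m}_\flat(\cdots)+1$ thresholds, and the ``does not depend on the other index'' clauses) are exactly what make the resulting $\fn{B}_\dagger,\fn{q}_\dagger,\fn{S}_\dagger$ meet the input shape required by the bounds for Lemma~\ref{thm:1D_sequential_interval}. Feeding these in (over $n_-$, partition growth factor $z$, error data $E_\flat,D_\flat$) yields $\mathcal{B}_\dagger,p_\dagger,\fn{r}_\dagger$ with $p_\dagger\le\fn{u}^{\mathfrak{e}(\,\cdot\,;B,D_\flat,E_\flat)\,\beta}(n_-)$ for an appropriate poly-exp $\beta$ coming from the compositions above, $\fn{r}_\dagger$ of degree $\mathfrak{a}(B,D_\flat,E_\flat)\,y+\mathfrak{e}(\,\cdot\,;B,D_\flat,E_\flat)\,\beta$, and $|\mathcal{B}_\dagger|\le z^{\mathfrak{b}(B,D_\flat,E_\flat)}|\mathcal{B}_0|$.

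For the outer application I would assemble $\fn{B}_0,\fn{m}_0,\fn{L}_0$ as in the original proof (again bounded substitutions of the $\star$-functions into $\fn{B}^0_\flat,\fn{m}_\flat,\fn{L}_\flat$ and of the $\fn{l}_{\cdot,\cdot}$) and apply the bounds for Lemma~\ref{thm:1D_sequential_interval} once more, now starting from $\{\Omega\}$, to get $\mathcal{B}_0,n_0,\fn{k}_0$ with $n_0\le\fn{u}^{\mathfrak{e}(\,\cdot\,;B,D_\flat,E_\flat)}(n_-)$, $|\mathcal{B}_0|\le z^{\mathfrak{b}(B,D_\flat,E_\flat)}$, and $\fn{k}_0$ $\fn{u}$-bounded by $x\mapsto\mathfrak{a}(B,D_\flat,E_\flat)x+\mathfrak{e}(\,\cdot\,;B,D_\flat,E_\flat)$ above $n_0$. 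Reading off the assignments of the original proof, $\mathcal{B}_\sharp=\mathcal{B}_\dagger$, $n_\sharp=n_0$, $p_\sharp=p_\dagger$, $\fn{k}_\sharp=\fn{k}_\star$, $\fn{r}_\sharp=\fn{r}_\star$, and propagating the degree bounds through the finitely many remaining substitutions gives the claimed shapes: $n_\sharp\le\fn{u}^{\mathfrak{f}(\rho;B,D_\flat,E_\flat)}(n_-)$, $p_\sharp\le\fn{u}^{\mathfrak{g}(\rho;B,D_\flat,E_\flat)}(n_-)$, with $\fn{k}_\sharp$ and $\fn{r}_\sharp$ affine in $x$ and $y$ respectively with coefficient $\mathfrak{a}(B,D_\flat,E_\flat)$ and additive term $\mathfrak{f}$, $\mathfrak{g}$, all taken above $\max\{n_\sharp,p_\sharp\}$; and for the partition size, chaining the two growth factors, $|\mathcal{B}_\sharp|=|\mathcal{B}_\dagger|\le z^{2\mathfrak{b}(B,D_\flat,E_\flat)}\le z^{\mathfrak{b}(B,D_\flat,E_\flat)(\mathfrak{b}(B,D_\flat,E_\flat)+2)}$. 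Here $\mathfrak{f},\mathfrak{g}$ are whatever finite composites of $\mathfrak{a},\mathfrak{b},\mathfrak{e}$, of the coefficient $\rho$, and of the constants in the hypotheses on $\fn{m}_\flat,\fn{q}_\flat$ arise; being bounded composites of poly-exp functions, they are poly-exp.

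The main obstacle is purely the bookkeeping: the interpretation functions carry many arguments, and one must be scrupulous about which base point each $\fn{u}$-bound is taken above---$n_-$, $n_0$, $\max\{n_\sharp,p_\sharp\}$, or one of the intermediate $\max\{\ldots,\fn{m}_\flat(\cdots)+1,\fn{q}_\flat(\cdots)\}$ thresholds appearing in the hypotheses on $\fn{L}_\flat,\fn{S}_\flat$---and about the fact that the bounds for Lemma~\ref{thm:1D_sequential_interval} return only degree functionals that are \emph{affine} in the outer variable with poly-exp coefficients. Once one verifies that each $\star$-substitution is such an affine-with-poly-exp-coefficient operation, and that $n_\sharp$ and $p_\sharp$ dominate all the intermediate thresholds (immediate from monotonicity of $\fn{u}$ and $\fn{u}^c(n)\ge n$), the estimates assemble mechanically.
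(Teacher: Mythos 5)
Your approach is the paper's approach: trace the two nested appeals to Lemma~\ref{thm:1D_sequential_interval} in the proof of Lemma~\ref{thm:1D_sequential_interval_double}, replacing each by its quantitative refinement, and propagate $\fn{u}$-bounded-by-affine degree functionals through the finitely many helper-function substitutions, exactly as the paper does.

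One bookkeeping step is wrong, though. Your claim that ``chaining the two growth factors'' gives $|\mathcal{B}_\sharp|=|\mathcal{B}_\dagger|\le z^{2\mathfrak{b}(B,D_\flat,E_\flat)}$ treats each application of the bounds on Lemma~\ref{thm:1D_sequential_interval} as contributing a factor $z^{\mathfrak{b}}$ to the partition size. But the outer application cannot be invoked with growth factor $z$: the function $\fn{B}_0(n_0,\fn{k}_0,\mathcal{B}_0)=\fn{B}^0_\flat(n_0,p_\dagger,\fn{k}_\star,\fn{r}_\star,\mathcal{B}_\dagger)$ already refines through $\mathcal{B}_\dagger\succeq\mathcal{B}_0$ with $|\mathcal{B}_\dagger|\le z^{\mathfrak{b}(B,D_\flat,E_\flat)}|\mathcal{B}_0|$, so $|\fn{B}_0(n_0,\fn{k}_0,\mathcal{B}_0)|\le z^{\mathfrak{b}(B,D_\flat,E_\flat)+1}|\mathcal{B}_0|$, and the outer application must be run with $z_0=z^{\mathfrak{b}(B,D_\flat,E_\flat)+1}$. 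This gives $|\mathcal{B}_0|\le z_0^{\mathfrak{b}(B,D_\flat,E_\flat)}=z^{\mathfrak{b}(B,D_\flat,E_\flat)(\mathfrak{b}(B,D_\flat,E_\flat)+1)}$, and then $|\mathcal{B}_\sharp|=|\mathcal{B}_\dagger|\le z^{\mathfrak{b}(B,D_\flat,E_\flat)}|\mathcal{B}_0|\le z^{\mathfrak{b}(B,D_\flat,E_\flat)(\mathfrak{b}(B,D_\flat,E_\flat)+2)}$, which is the bound in the statement. The intermediate $z^{2\mathfrak{b}(B,D_\flat,E_\flat)}$ is not attainable and should be dropped; the rest of your argument survives unchanged since the final bound is what the lemma asserts.
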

\begin{proof}
We need to first analyze the inner application of Lemma \ref{thm:1D_sequential_interval}.  Suppose we have fixed $\mathcal{B}_0, n_0$, and $\fn{k}_0$ so that $|\mathcal{B}_0|\leq z^{\mathfrak{b}(B,D_\flat,E_\flat)(\mathfrak{b}(B,D_\flat,E_\flat)+1)}$ and $\fn{k}_0$ is $\fn{u}$-bounded by $h_1(x)$ above $\max\{n_-,n_0\}$.

Suppose we are given $\mathcal{B}_\dagger, p_\dagger, \fn{r}_\dagger$ with $|\mathcal{B}_\dagger|\leq z^{\mathfrak{b}(B,D_\flat,E_\flat)}|\mathcal{B}_0|$ and $\fn{r}_\dagger$ is $\fn{u}$-bounded by $h_2(y)$ above $\max\{n_-,n_0,p_\dagger\}$. 

We note bounds on the various helper functions under the assumption (as will turn out to be the case) that $\fn{l}(m,q)$ does not depend on $q$ and is $\fn{u}$-bounded by $1$ above $\max\{n_-,n_0,p_\dagger\}$, and that $\fn{s}(m,q)$ does not depend on $m$ and is $\fn{u}$-bounded by $1$ above $\max\{n_-,n_0,p_\dagger\}$.
\begin{itemize}
\item $\fn{l}_{\fn{l},r}$ does not depend on $r$ and is also $\fn{u}$-bounded by $1$ above $\max\{n_-,n_0,p_\dagger\}$,
\item $\fn{k}_{\star,r}$ does not depend on $q$ and is $\fn{u}$-bounded by $h_1(1)$ above $\max\{n_-,n_0,p_\dagger\}$,
\item $\fn{s}_{\star,\mathcal{B}^0,\mathcal{B}^1,m,q,\fn{l},\fn{s}}$ is also $\fn{u}$-bounded by $1$ above $\max\{n_-,n_0,p_\dagger\}$,
\item $\fn{r}_\star$ is $\fn{u}$-bounded by $h_2(y)$ above $\max\{n_-,n_0,p_\dagger\}$,
\item $\fn{k}_\star$ is $\fn{u}$-bounded by $h_1(x)$ above $\max\{n_-,n_0,p_\dagger\}$,
\item $\fn{l}_\star(m,q)$ does not depend on $q$ and is $\fn{u}$-bounded by $1$ above $\max\{n_-,n_0,p_\dagger\}$,
\item $\fn{s}_\star(m,q)$ does not depend on $m$ and is $\fn{u}$-bounded by $1$ above $\max\{n_-,n_0,p_\dagger\}$,
\item $|\mathcal{B}^0_\star|,|\mathcal{B}^1_\star|\leq z|\mathcal{B}|$,
\item $m_\star\leq\fn{u}^{\rho (h_1(1)+h_2(1)+1)}(\max\{n_-,n_0,p_\dagger\})$,
\item $q_\star\leq\fn{u}^{\rho (h_1(1)+h_2(1)+1)}(\max\{n_-,n_0,p_\dagger\})$.
\end{itemize}

We now see that $\fn{q}_\dagger$ is $\fn{u}$-bounded by $\mathfrak{g}_{h_1}(h_2)=\rho (h_1(1)+h_2(1)+1)$ above $\max\{n_-,n_0\}$ and 
\[r\mapsto \fn{S}_\dagger(p_\dagger,\mathcal{B}_\dagger,\fn{q}_\dagger)(r)\]
 is $\fn{u}$-bounded by $1$ above $\max\{n_-,n_0,p_\dagger\}$.  

When we apply the previous lemma to $z, \mathcal{B}_0, E_\flat, D_\flat, \fn{B}_\dagger, \fn{q}_\dagger, \fn{S}_\dagger$, we obtain $\mathcal{B}_\dagger, p_\dagger, \fn{r}_\dagger$ such that:
\begin{itemize}
\item $|\mathcal{B}_\dagger|\leq z^{\mathfrak{b}(B,D_\flat,E_\flat)}|\mathcal{B}_0|$,
\item $p_\dagger\leq \fn{u}^{\mathfrak{e}(\rho;B,D_\flat,E_\flat)\rho (h_1(1)+1)}(n_0)$,
\item $\fn{r}_\dagger$ is $\fn{u}$-bounded by $\mathfrak{a}(B,D_\flat,E_\flat)y+\mathfrak{e}(\rho;B,D_\flat,E_\flat)\rho (h_1(1)+1)$ above $p_\dagger$.
\end{itemize}

We now turn to the outer application of Lemma \ref{thm:1D_sequential_interval}.  $\fn{m}_0$ is $\fn{u}$-bounded by
\begin{align*}
&\rho(h_1(1)+\mathfrak{a}(B,D_\flat,E_\flat)1+\mathfrak{e}(\rho;B,D_\flat,E_\flat)\rho (h_1(1)+1)+1)\\
=&\rho h_1(1)+\rho\mathfrak{a}(B,D_\flat,E_\flat)+\rho\mathfrak{e}(\rho;B,D_\flat,E_\flat)\rho (h_1(1)+1)+\rho\\
=&\rho\mathfrak{a}(B,D_\flat,E_\flat)+\rho^2\mathfrak{e}(\rho;B,D_\flat,E_\flat)+\rho+(\rho +\rho^2\mathfrak{e}(\rho;B,D_\flat,E_\flat))h_1(1).
\end{align*}

For any $n_0, \fn{k}_0$, the function $k\mapsto\fn{L}_0(n_0,\fn{k}_0,\mathcal{B}_0)(k)$ is $\fn{u}$-bounded by $1$ above $\max\{n_-,n_0,p_\dagger\}$.  Therefore by the previous lemma applied with $z_0=z^{\mathfrak{b}(B,D_\flat,E_\flat)+1}$, we obtain $\mathcal{B}_0, n_0, \fn{k}_0$ such that:
\begin{itemize}
\item $n_0\leq\fn{u}^{\mathfrak{e}(\rho\mathfrak{a}(B,D_\flat,E_\flat)+\rho^2\mathfrak{e}(\rho;B,D_\flat,E_\flat)+\rho;B,D_\flat,E_\flat)(\rho +\rho^2\mathfrak{e}(\rho;B,D_\flat,E_\flat))}(n_-)$,
\item $\fn{k}_\sharp$ is $\fn{u}$-bounded by $\mathfrak{a}(B,D_\flat,E_\flat)x+\mathfrak{e}(\rho\mathfrak{a}(B,D_\flat,E_\flat)+\rho^2\mathfrak{e}(\rho;B,D_\flat,E_\flat)+\rho;B,D_\flat,E_\flat)(\rho +\rho^2\mathfrak{e}(\rho;B,D_\flat,E_\flat))$ above $\max\{n_-,n_0\}$.
\end{itemize}

We obtain final bounds with:
\begin{itemize}
\item $|\mathcal{B}_\sharp|\leq z^{\mathfrak{b}(B,D_\flat,E_\flat)(\mathfrak{b}(B,D_\flat,E_\flat)+2)}$,
\item $n_\sharp=n_0\leq \fn{u}^{\mathfrak{f}(\rho;B,D_\flat;E_\flat)}(n_-)$,
\item $p_\sharp=p_\dagger\leq \fn{u}^{\mathfrak{g}(\rho;B,D_\flat;E_\flat)}(n_-)$,
\item $\fn{k}_\sharp$ is $\fn{u}$-bounded by $\mathfrak{a}(B,D_\flat,E_\flat)x+\mathfrak{f}(\rho;D_\flat;E_\flat)$ above $\max\{n_-,n_\sharp,p_\sharp\}$,
\item $\fn{r}_\sharp$ is $\fn{u}$-bounded by $\mathfrak{a}(B,D_\flat,E_\flat)y+\mathfrak{g}(\rho;D_\flat;E_\flat)$ above $\max\{n_-,n_\sharp,p_\sharp\}$.
\end{itemize}

\end{proof}

\begin{lemma}[Bounds on \ref{thm:control_interval}]
Suppose $(*)^Q$ holds.  There is a double exponential function $\mathfrak{i}(B,D^0_\flat,D^1_\flat,E_\flat)$ so that for any $n_\flat,p_\flat, D^0_\flat, D^1_\flat, E_\flat, \fn{u}$ such that:
\begin{itemize}
\item $\fn{u}(m)>m$ for all $m$,
\item For any $m,q,\fn{k},\fn{r}$, the function $k\mapsto\fn{L}_\flat(m,q,\fn{k},\fn{r})(k,r)$ does not depend on $r$ and is $\fn{u}$-bounded by $1$ above $\max\{n_\flat,p_\flat,m,q+1\}$,
\item For any $m,q,\fn{k},\fn{r}$, the function $r\mapsto \fn{S}_\flat(m,q,\fn{k},\fn{r})(k,r)$ does not depend on $k$ and is $\fn{u}$-bounded by $1$ above $\max\{n_\flat,p_\flat,m+1,q\}$,
\end{itemize}
 there are $m_\sharp\geq n_\flat$, $q_\sharp\geq p_\flat$, $\fn{k}_\sharp$, and $\fn{r}_\sharp$ such that:
\begin{itemize}
\item $m_\sharp\leq \fn{u}^{\mathfrak{i}(B,D^0_\flat,D^1_\flat,E_\flat)}(\max\{n_\flat,p_\flat\})$,
\item $q_\sharp\leq \fn{u}^{\mathfrak{i}(B,D^0_\flat,D^1_\flat,E_\flat)}(\max\{n_\flat,p_\flat\})$,
\item $\fn{k}_\sharp$ is $\fn{u}$-bounded by $\mathfrak{a}(B,D^1_\flat,BE_\flat)x+\mathfrak{i}(B,D^0_\flat,D^1_\flat,E_\flat)$ above $\max\{m_\sharp, q_\sharp\}$,
\item $\fn{r}_\sharp$ is $\fn{u}$-bounded by $\mathfrak{a}(B,D^1_\flat,BE_\flat)x+\mathfrak{i}(B,D^0_\flat,D^1_\flat,E_\flat)$ above $\max\{m_\sharp, q_\sharp\}$,
\item Setting $\fn{l}_\flat=\fn{L}_\flat(m_\sharp,q_\sharp,\fn{k}_\sharp,\fn{r}_\sharp)$, $\fn{s}_\flat=\fn{S}_\flat(m_\sharp,q_\sharp,\fn{k}_\sharp,\fn{r}_\sharp)$, $k_\sharp=\fn{k}_\sharp(\fn{l}_\flat,\fn{s}_\flat)$ and $r_\sharp=\fn{r}_\sharp(\fn{l}_\flat,\fn{s}_\flat)$, we have $k_\sharp\geq m_\sharp$, $r_\sharp\geq q_\sharp$, and if $l_\flat\geq k_\sharp$ and $s_\flat\geq r_\sharp$ then for any $s\in[r_\sharp,\fn{s}_\flat(k_\sharp,r_\sharp)]$ and $l\in[k_\sharp,\fn{l}_\flat(k_\sharp,r_\sharp)]$, there are sets $\mathcal{B}^-$, $\mathcal{B}^{0,-}$, and $\mathcal{B}^{1,-}$ with $\mu(\mathcal{B}^-)<4/D^0_\flat$, $\mu(\mathcal{B}^{0,-})<2/D^0_\flat$, and $\mu(\mathcal{B}^{1,-})<2/D^1_\flat$ so that
\begin{align*}
\left|(\rho_{m_\sharp}\lambda_s)(\Omega)-(\rho_l\lambda_{q_\sharp})(\Omega)\right|
&\leq|\rho_{m_\sharp}\lambda_s|(\mathcal{B}^-)+|\rho_l\lambda_{q_\sharp}|(\mathcal{B}^-)\\
&\ \ \ \ \ \ \ \ +BD_\flat^0|\rho_{m_\sharp}|(\mathcal{B}^{0,-})+|\rho_{m_\sharp}\lambda_s|(\mathcal{B}^{0,-})\\
&\ \ \ \ \ \ \ \ +BD_\flat^0|\lambda_{q_\sharp}|(\mathcal{B}^{1,-})+|\rho_l\lambda_{q_\sharp}|(\mathcal{B}^{1,-})+\frac{6}{E_\flat}.\\
\end{align*}
\end{itemize}
\end{lemma}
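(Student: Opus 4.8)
The plan is to re-run the proof of Lemma~\ref{thm:control_interval} step for step, instrumenting each application of a prior lemma with its quantitative counterpart: the size control in $(\ast)^Q_4$, the exponential estimate $\mathfrak{a}_0$ (equivalently $\mathfrak{a}$) on Lemma~\ref{thm:quant_convergence_partition}, and the poly-exp estimates $\mathfrak{f},\mathfrak{g}$ on Lemma~\ref{thm:1D_sequential_interval_double}. Note first that the concluding inequality for $\left|(\rho_{m_\sharp}\lambda_s)(\Omega)-(\rho_l\lambda_{q_\sharp})(\Omega)\right|$, together with the sets $\mathcal{B}^-,\mathcal{B}^{0,-},\mathcal{B}^{1,-}$ and their measure bounds, is produced verbatim in the proof of Lemma~\ref{thm:control_interval} and needs no new estimate; all the quantitative work goes into bounding $m_\sharp$, $q_\sharp$, $\fn{k}_\sharp$ and $\fn{r}_\sharp$.

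First I would pin down the per-step size-blowup factor. In that proof $(\ast)_4$ is invoked with cut-off parameter $BE_\flat$ and measure parameter $D^1_\flat$, so under $(\ast)^Q_4$ the refinements $\fn{B}^0_\star(\mathcal{B},m)$ and $\fn{B}^1_\star(\mathcal{B},q)$ satisfy $|\fn{B}^i_\star(\mathcal{B},\cdot)|\leq 2B\cdot D^1_\flat\cdot BE_\flat\,|\mathcal{B}|$; set $z=2B^2 D^1_\flat E_\flat$, a polynomial. The bounds version of Lemma~\ref{thm:1D_sequential_interval_double} then ensures that every partition $\mathcal{B}_0$ occurring in the proof has $|\mathcal{B}_0|\leq Z:=z^{\mathfrak{b}(B,D^1_\flat,BE_\flat)(\mathfrak{b}(B,D^1_\flat,BE_\flat)+2)}$, and since $\mathfrak{b}$ is polynomial and $z$ polynomial, $Z$ is exponential in $B,D^0_\flat,D^1_\flat,E_\flat$. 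Crucially this bound on $|\mathcal{B}_0|$ is available a priori, before $\mathcal{B}_0$ is actually produced, which is the one place the nesting could appear circular.

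Next I would propagate $\fn{u}$-bounds through the two nested applications of Lemma~\ref{thm:quant_convergence_partition} used to prepare for Lemma~\ref{thm:1D_sequential_interval_double}. Using the hypotheses that $k\mapsto\fn{L}_\flat(m,q,\fn{k},\fn{r})(k,r)$ and $r\mapsto\fn{S}_\flat(m,q,\fn{k},\fn{r})(k,r)$ are $\fn{u}$-bounded by $1$ above the indicated maxima, together with the affine $\fn{u}$-bounds on $\fn{k}_0,\fn{r}_0$ supplied by the bounds version of Lemma~\ref{thm:1D_sequential_interval_double}, each helper function $\fn{k}_{\ddagger,q_\ddagger},\fn{r}_{\ddagger,q_\ddagger},\fn{l}_{\ddagger,q_\ddagger},\fn{s}_{\ddagger,q_\ddagger},\fn{q}_\ddagger$ is $\fn{u}$-bounded by an affine function whose slope is controlled by $\mathfrak{a}(B,D^1_\flat,BE_\flat)$. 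The decisive point is that Lemma~\ref{thm:quant_convergence_partition} is applied to $\fn{q}_\ddagger$ (and later to $\fn{m}_\dagger$) with $E$-parameter $|\mathcal{B}_0|BD^0_\flat E_\flat\leq ZBD^0_\flat E_\flat$ and $D$-parameter $3D^0_\flat$, so the number of iterations contributed is $\mathfrak{a}_0(B,3D^0_\flat,3ZBD^0_\flat E_\flat)$; since $\mathfrak{a}_0$ is exponential in its $E$-slot and $Z$ is already exponential, this is \emph{double} exponential in $B,D^0_\flat,D^1_\flat,E_\flat$. Composing with the affine bounds on $\fn{m}_\dagger,\fn{q}_\ddagger$ then makes $m_\dagger=\fn{m}_0(\ldots)$ and $q_\ddagger=\fn{q}_0(\ldots)$ $\fn{u}$-bounded by a double-exponential number of iterations above $\max\{n_\flat,p_\flat\}$, and, being numbers insensitive to the particular function arguments, they fit the constant-bound clause demanded of $\fn{m}_\flat,\fn{q}_\flat$ in the bounds version of Lemma~\ref{thm:1D_sequential_interval_double}.

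Finally I would feed $\fn{B}^0_0,\fn{B}^1_0,\fn{m}_0,\fn{q}_0,\fn{L}_0,\fn{S}_0$ into the bounds version of Lemma~\ref{thm:1D_sequential_interval_double} with $E$-parameter $BE_\flat$, $D$-parameter $D^1_\flat$, and $\rho$ the fluctuation bound from $(\ast)^Q_1$ (polynomial in $B$ and $E_\flat$), checking that $\fn{B}^i_0$ blow up size by $z$ and that $\fn{L}_0=\fn{l}_{\ddagger,q_\ddagger}$, $\fn{S}_0=\fn{s}_{\ddagger,q_\ddagger}$ inherit the ``does not depend on $\ldots$'' and ``$\fn{u}$-bounded by $1$ above $\max\{\ldots\}$'' clauses from $\fn{L}_\flat,\fn{S}_\flat$. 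The lemma then yields $n_\sharp,p_\sharp$ of the same double-exponential order and $\fn{k}_\sharp,\fn{r}_\sharp$ that are $\fn{u}$-bounded by $\mathfrak{a}(B,D^1_\flat,BE_\flat)x$ plus a double-exponential constant; defining $\mathfrak{i}(B,D^0_\flat,D^1_\flat,E_\flat)$ to dominate all the double-exponential quantities that occurred and unwinding $m_\sharp=m_\dagger$, $q_\sharp=q_\ddagger$, $\fn{k}_\sharp=\fn{k}_{\ddagger,q_\ddagger}$, $\fn{r}_\sharp=\fn{r}_{\ddagger,q_\ddagger}$ as in the original proof completes the argument. The main obstacle is bookkeeping rather than mathematics: tracking, for each of the half-dozen helper functions, which base point its $\fn{u}$-bound is measured above, and verifying that those base points and the ``independent of'' clauses match exactly what the bounds versions of Lemma~\ref{thm:quant_convergence_partition} and Lemma~\ref{thm:1D_sequential_interval_double} require.
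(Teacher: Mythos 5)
Your proposal follows the same structure as the paper's proof: identify $z=2B^2D^1_\flat E_\flat$ from $(\ast)^Q_4$, obtain the a-priori bound $Z$ on $|\mathcal{B}_0|$ from the bounds version of Lemma~\ref{thm:1D_sequential_interval_double}, propagate $\fn{u}$-bounds through the nested applications of Lemma~\ref{thm:quant_convergence_partition} with $E$-parameter $ZBD^0_\flat E_\flat$ to get a double-exponential iteration count $\theta$, and then feed everything back into the bounds for Lemma~\ref{thm:1D_sequential_interval_double}.

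There is, however, one concrete misstep in the last paragraph: you identify the parameter $\rho$ in the bounds version of Lemma~\ref{thm:1D_sequential_interval_double} as ``the fluctuation bound from $(\ast)^Q_1$ (polynomial in $B$ and $E_\flat$).'' That is not what $\rho$ controls there. In the hypotheses of that lemma, $\rho$ is the quantity appearing in the clause that $\fn{m}_\flat$ and $\fn{q}_\flat$ are $\fn{u}$-bounded by $\rho(f(1,1)+g(1,1)+1)$, and you have just established (correctly, in your preceding sentence) that $\fn{m}_0$ and $\fn{q}_0$ are bounded by a double-exponential number of iterations of $\fn{u}$. So the $\rho$ you must plug in is itself double exponential --- concretely, something like $\theta^2+2\theta+1$ where $\theta=\mathfrak{a}_0(B,3D^0_\flat,\mathfrak{h}_0(B,D^1_\flat,E_\flat)BD^0_\flat E_\flat)$, absorbing the composed iteration counts from the two uses of Lemma~\ref{thm:quant_convergence_partition}. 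If $\rho$ were polynomial, then $\mathfrak{f}(\rho;\cdot)$ and $\mathfrak{g}(\rho;\cdot)$ in that lemma would only be exponential, contradicting your own (correct) claim that $n_\sharp$ and $p_\sharp$ come out double exponential. The conclusion you write down matches the paper; the mislabeling of $\rho$ would have to be repaired for the bookkeeping to actually close.
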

\begin{proof}
We examine the proof of Lemma \ref{thm:control_interval}.

The functions $\fn{B}^i_\star$ are particularly simple: $|\fn{B}^i_\star(\mathcal{B},\cdot)|\leq 2D^1_\flat B^2E_\flat|\mathcal{B}|$ no matter what the second input is.  This means that when we apply the previous lemma, we will do so with $z=2D^1_\flat B^2E_\flat$, which means that the set $\mathcal{B}_0$ we ultimately consider will have $|\mathcal{B}_0|\leq (2D^1_\flat B^2E_\flat)^{\mathfrak{b}(B,D^1_\flat,BE_\flat)(\mathfrak{b}(B,D^1_\flat,BE_\flat)+2)}$.  We write $\mathfrak{h}_0(B,D^1_\flat, E_\flat)=(2D^1_\flat B^2E_\flat)^{\mathfrak{b}(B,D^1_\flat,BE_\flat)(\mathfrak{b}(B,D^1_\flat,BE_\flat)+2)}$.

Suppose that, as in the proof, we are given $n_0, p_0, \fn{k}_0, \fn{r}_0, \mathcal{B}_0$ with $|\mathcal{B}_0|\leq \mathfrak{h}_0(B,D^1_\flat, E_\flat)$, $\fn{k}_0$ is $\fn{u}$-bounded by $h_1(x,y)$ above $\max\{n_\flat, p_\flat, n_0,p_0\}$, and $\fn{r}_0$ is $\fn{u}$-bounded by $h_2(x,y)$ above $\max\{n_\flat, p_\flat, n_0,p_0\}$.

Suppose we are further given the values $m_\dagger$ and $q_\ddagger$.  Then
\begin{itemize}
\item $\fn{l}_{\ddagger,q_\ddagger}=\fn{L}_\flat(m_\dagger,q_\ddagger,\fn{k}_{\ddagger,q_\ddagger},\fn{r}_{\ddagger,q_\ddagger})$ is $\fn{u}$-bounded by $1$ above $\max\{n_\flat, p_\flat, n_0,p_0,m_\dagger,q_\ddagger+1\}$,
\item $\fn{s}_{\ddagger,q_\ddagger}=\fn{S}_\flat(m_\dagger,q_\ddagger,\fn{k}_{\ddagger,q_\ddagger},\fn{r}_{\ddagger,q_\ddagger})$ is $\fn{u}$-bounded by $1$ above $\max\{n_\flat, p_\flat, n_0,p_0,m_\dagger+1,q_\ddagger\}$,
\item $\fn{k}_{\ddagger,q_{\ddagger}}(\fn{l}_{\ddagger,q_{\ddagger}},\fn{s}_{\ddagger,q_{\ddagger}})=\fn{k}_0(m_\dagger,q_\ddagger,\fn{l}_{\ddagger,q_{\ddagger}},\fn{s}_{\ddagger,q_{\ddagger}},\ldots)$ is bounded by $\fn{u}^{h_1(1,1)}(\max\{n_\flat, p_\flat, n_0,p_0,m_\dagger+1,q_\ddagger+1\})$,
\item $\fn{r}_{\ddagger,q_{\ddagger}}(\fn{l}_{\ddagger,q_{\ddagger}},\fn{s}_{\ddagger,q_{\ddagger}})=\fn{r}_0(m_\dagger,q_{\ddagger},\fn{l}_{\ddagger,q_\ddagger},\fn{s}_{\ddagger,q_\ddagger},\cdots)$ is bounded by $\fn{u}^{h_2(1,1)}(\max\{n_\flat, p_\flat, n_0,p_0,m_\dagger+1,q_\ddagger+1\})$,
\item $\fn{q}_\ddagger$ is $\fn{u}$-bounded by $1+h_2(1,1)$ above $\max\{n_\flat, p_\flat, n_0, p_0, m_\dagger+1\}$.
\end{itemize}

Since the value $q_\ddagger$ is obtained by an application of Lemma \ref{thm:quant_convergence_partition} to $\fn{q}_\ddagger$, it follows that $q_\ddagger$ is bounded by $\fn{u}^{\mathfrak{a}_0(B,3D_\flat^0,|\mathcal{B}_0|BD^0_\flat E_\flat)(1+h_2(1,1))}(\max\{n_\flat, p_\flat, n_0, p_0, m_\dagger+1\})$.  We write $\theta$ for $\mathfrak{a}_0(B,3D_\flat^0,\mathfrak{h}_0(B,D^1_\flat, E_\flat)BD^0_\flat E_\flat)$; note that $\theta$ is exponential in $D_\flat^0$ and double exponential in $B$, $D^1_\flat$ and $E_\flat$, and we have $q_\ddagger\leq\fn{u}^{\theta(1+h_2(1,1))}(\max\{n_\flat, p_\flat, n_0, p_0, m_\dagger+1\})$. 

In particular, the function $m_\dagger\mapsto q_\ddagger$ is $\fn{u}$-bounded by $\theta(1+h_2(1,1))+1$ above $\max\{n_\flat, p_\flat, n_0,p_0\}$.

Given $m_\dagger$, the value $\fn{k}_{\ddagger,q_\ddagger}(\fn{l}_{\ddagger,q_\ddagger},\fn{r}_{\ddagger,q_\ddagger})$ is bounded by $\fn{u}^{h_1(1,1)+\theta(1+h_2(1,1))}(\max\{n_\flat, p_\flat, n_0,p_0,m_\dagger+1\})$, so $\fn{m}_\dagger$ is $\fn{u}$-bounded by $2+h_1(1,1)+\theta(1+h_2(1,1))$ above $\max\{n_\flat, p_\flat, n_0,p_0\}$.  Therefore $m_\dagger$ is bounded by $\fn{u}^{\theta(2+h_1(1,1)+\theta(1+h_2(1,1)))}(\max\{n_\flat, p_\flat, n_0,p_0\})$.

We now prepare to apply the previous lemma:
\begin{itemize}
\item Each $\fn{B}^i_0$ satisfies $|\fn{B}^i_0(n_0,p_0,\fn{k}_0,\fn{r}_0,\mathcal{B}_0)|\leq z|\mathcal{B}_0|$,
\item $\fn{m}_0(n_0,p_0,\fn{k}_0,\fn{r}_0,\mathcal{B}_0)$ is $\fn{u}$-bounded by $\theta(2+h_1(1,1)+\theta(1+h_2(1,1)))$ above $\max\{n_\flat, p_\flat\}$,
\item $\fn{q}_0(n_0,p_0,\fn{k}_0,\fn{r}_0,\mathcal{B}_0)$ is $\fn{u}$-bounded by $\theta(2+h_1(1,1)+\theta(1+h_2(1,1)))+\theta(1+h_2(1,1))$ above $\max\{n_\flat, p_\flat\}$,
\item For any $n_0,p_0,\fn{k}_0,\fn{r}_0,\mathcal{B}_0$, the function $m\mapsto\fn{L}_0(n_0,p_0,\fn{k}_0,\fn{r}_0,\mathcal{B}_0)(m,q)$ does not depend on $q$ and is $\fn{u}$-bounded by $1$ above $\max\{n_\flat, p_\flat, n_0,p_0,m_\dagger,q_\ddagger+1\}$,
\item For any $n_0,p_0,\fn{k}_0,\fn{r}_0,\mathcal{B}_0$, the function $q\mapsto\fn{S}_0(n_0,p_0,\fn{k}_0,\fn{r}_0,\mathcal{B}_0)(m,q)$ does not depend on $m$ and is $\fn{u}$-bounded by $1$ above $\max\{n_\flat, p_\flat,n_0,p_0,m_\dagger+1,q_\ddagger\}$,.
\end{itemize}

This puts us in the setting of the previous lemma with $\rho=\theta^2+2\theta+1$, so we obtain
\begin{itemize}
\item $n_0\leq\fn{u}^{f(\rho;B,D^1_\flat,BE_\flat)}(\max\{n_\flat,p_\flat\})$,
\item $p_0\leq\fn{u}^{g(\rho;B,D^1_\flat,BE_\flat)}(\max\{n_\flat,p_\flat\})$,
\item $\fn{k}_0$ is $\fn{u}$-bounded by $\mathfrak{a}(B,D^1_\flat,BE_\flat)x+\mathfrak{f}(\rho;B,D^1_\flat,BE_\flat)$ above $\max\{n_\flat, p_\flat, n_0,p_0\}$,
\item $\fn{r}_0$ is $\fn{u}$-bounded by $\mathfrak{a}(B,D^1_\flat,BE_\flat)y+\mathfrak{g}(\rho;B,D^1_\flat,BE_\flat)$ above $\max\{n_\flat, p_\flat, n_0,p_0\}$.
\end{itemize}

Therefore, plugging $h_1(x,y)=\mathfrak{a}(B,D^1_\flat,BE_\flat)x+\mathfrak{f}(\rho;B,D^1_\flat,BE_\flat)$ and $h_2(x,y)=\mathfrak{a}(B,D^1_\flat,BE_\flat)y+\mathfrak{g}(\rho;B,D^1_\flat,BE_\flat)$ in to the equations above, we obtain the desired bounds:
\begin{itemize}
\item $m_\sharp\leq \fn{u}^{\mathfrak{i}(B,D^0_\flat,D^1_\flat,E_\flat)}(\max\{n_\flat,p_\flat\})$,
\item $q_\sharp\leq \fn{u}^{\mathfrak{i}(B,D^0_\flat,D^1_\flat,E_\flat)}(\max\{n_\flat,p_\flat\})$,
\item $\fn{k}_\sharp$ is $\fn{u}$-bounded by $\mathfrak{a}(B,D^1_\flat,BE_\flat)x+\mathfrak{i}(B,D^0_\flat,D^1_\flat,E_\flat)$ above $\max\{m_\sharp, q_\sharp\}$,
\item $\fn{r}_\sharp$ is $\fn{u}$-bounded by $\mathfrak{a}(B,D^1_\flat,BE_\flat)x+\mathfrak{i}(B,D^0_\flat,D^1_\flat,E_\flat)$ above $\max\{m_\sharp, q_\sharp\}$.
\end{itemize}

\end{proof}

\subsection{Fast Growing Functions}
At this point our bounds start growing much more rapidly.  Suppose we have fixed a function $\fn{u}$.  We define:
\begin{itemize}
\item $C=2^{29}B^6E_\flat^6$,
\item $\fn{w}_{0,\fn{u},B,E}(m)=\fn{u}(m)$,
\item $\fn{w}_{j+1,\fn{u},B,E}(m)=\fn{w}_{j,\fn{u},E}^{\mathfrak{a}(B, E 2^{m+5},BE)C^2+\mathfrak{i}(B,E2^{m+5},E 2^{m+5},E)C}(m)$.
\end{itemize}

We will ultimately be interested in the case where $\fn{u}=\mathrm{suc}$ where $\mathrm{suc}(m)=m+1$.  Observe that in this case, $\fn{w}_{1,\mathrm{suc},E}$ is triply exponential, $\fn{w}_{2,\mathrm{suc},E}(m)$ is a tower of exponents of size roughly triply exponential in $m$.  To express the bounds more generally, recall the fast-growing hierarchy:
\begin{itemize}
\item $f_0(m)=m+1$,
\item $f_{j+1}(m)=f_j^m(m)$.
\end{itemize}

Then we have
\begin{lemma}
  There is a $c$ so that for all $j,m,E,B$, $\fn{w}_{j,\mathrm{suc},E}(m)\leq f_{2j+1}(m+E+B+c)$.
\end{lemma}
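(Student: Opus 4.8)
The plan is to prove the bound by induction on $j$, combining a few elementary majorization properties of the fast-growing hierarchy with the observation that the exponent
\[g_{B,E}(m):=\mathfrak{a}(B,E2^{m+5},BE)\,C^{2}+\mathfrak{i}(B,E2^{m+5},E2^{m+5},E)\,C\]
governing the recursion $\fn{w}_{j+1,\mathrm{suc},B,E}(m)=\fn{w}_{j,\mathrm{suc},B,E}^{\,g_{B,E}(m)}(m)$ is triply exponential in $m$. Indeed, since $\mathfrak{a}$ is exponential, $\mathfrak{i}$ is double exponential, and $C=2^{29}B^{6}E^{6}$ is polynomial, a short computation (pulling the factor $2^{m+5}$ out of the innermost exponent) gives $g_{B,E}(m)\le 2^{2^{2^{P(m+B+E)}}}$ for one fixed polynomial $P$, depending only on the constants witnessing those growth classifications.

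I would first record the routine facts about $f_{k}$: each $f_{k}$ is strictly increasing with $f_{k}(n)\ge n$, hence $f_{k}(n+a)\ge f_{k}(n)+a$ and $f_{k+1}(n)\ge f_{k}(n)$ for $n\ge 1$; and the iteration estimate $f_{k}^{\,t}(N)\le f_{k+1}(\max\{N,t\})$. From these I extract the two statements actually used: \emph{(i)} if $\fn{w}(n)\le f_{k}(n+a)$ for all $n$, with $a\ge 1$, then $\fn{w}^{\,t}(n)\le f_{k+1}(n+ta)$ for every $t$ (prove $\fn{w}^{\,t}(n)\le f_{k}^{\,t}(n+ta)$ by induction on $t$, then apply the iteration estimate using $n+ta\ge t$); and \emph{(ii)} for $a\ge 2$, if $N\le f_{k+1}(a)$ then $f_{k+1}(N)\le f_{k+2}(a)$, since $f_{k+2}(a)=f_{k+1}^{\,a}(a)\ge f_{k+1}^{\,2}(a)=f_{k+1}(f_{k+1}(a))$. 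All of these are straightforward inductions.

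The main induction then proceeds with $a:=E+B+c$, the constant $c$ to be fixed at the end. For $j=0$, $\fn{w}_{0,\mathrm{suc},B,E}(m)=m+1\le 2(m+E+B+c)=f_{1}(m+E+B+c)$ once $c\ge 1$. For $j=1$, iterating $\mathrm{suc}$ exactly $g_{B,E}(m)$ times gives $\fn{w}_{1,\mathrm{suc},B,E}(m)=m+g_{B,E}(m)$, and I would check directly that $m+g_{B,E}(m)\le f_{3}(m+E+B+c)$, using the triple-exponential bound on $g_{B,E}$ together with the fact that $f_{3}(n)$ majorizes every fixed-height tower of exponentials with polynomial top once $n$ exceeds an absolute constant; this is what forces $c$ to be enlarged. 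For the step $j\to j+1$ with $j\ge 1$: the hypothesis $\fn{w}_{j,\mathrm{suc},B,E}(n)\le f_{2j+1}(n+a)$ and \emph{(i)} give $\fn{w}_{j+1,\mathrm{suc},B,E}(m)=\fn{w}_{j,\mathrm{suc},B,E}^{\,g_{B,E}(m)}(m)\le f_{2j+2}\bigl(m+g_{B,E}(m)\,a\bigr)$; then \emph{(ii)} (with $m+a\ge 2$) reduces the target $\fn{w}_{j+1,\mathrm{suc},B,E}(m)\le f_{2j+3}(m+a)=f_{2(j+1)+1}(m+a)$ to the single inequality $m+g_{B,E}(m)\,a\le f_{2j+2}(m+a)$. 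Since $j\ge 1$ forces $2j+2\ge 4$ and $f_{2j+2}\ge f_{4}\ge f_{3}$, this is the same tower-versus-polynomial estimate used in the $j=1$ case.

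I expect the only real work to be that last estimate: showing, uniformly in $m,B,E$, that $m+g_{B,E}(m)(E+B+c)$ — a quantity whose size is essentially a height-three tower in $m$, with a polynomial in $m+B+E$ on top and a factor linear in $E+B+c$ — is dominated by $f_{3}$ (hence $f_{4}$, $f_{2j+2},\dots$) evaluated at $m+E+B+c$. Because one further level of the tower defeats any fixed polynomial in the topmost exponent, this holds once $c$ exceeds an absolute constant that absorbs all the parameters hidden in the ``exponential''/``double exponential'' bounds for $\mathfrak{a}$, $\mathfrak{i}$, and $C$; the remainder is routine arithmetic in the fast-growing hierarchy.
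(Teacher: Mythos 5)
Your proposal is correct and follows the same basic plan as the paper: induct on $j$, with the key input being that the iteration count $\mathfrak{a}(B,E2^{m+5},BE)C^{2}+\mathfrak{i}(B,E2^{m+5},E2^{m+5},E)C$ is triply exponential in $m$ while $f_{3}$ (hence $f_{2j+1}$ for $j\geq 1$) is tower-exponential. Where you differ is in being more careful at two places where the paper's chain of inequalities is loose. First, the paper's inductive step uses $f_{3}(m+c)\leq f_{2j+1}(m+c)$, which requires $2j+1\geq 3$, i.e.\ $j\geq 1$; the step from $j=0$ to $j=1$ therefore needs a direct argument, which you supply ($\fn{w}_{1}(m)=m+g_{B,E}(m)\leq f_{3}(m+E+B+c)$) and the paper does not. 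Second, the paper writes $\fn{w}_{j}^{\,t}(m)\leq f_{2j+1}^{\,t}(m+E+B+c)$, which silently drops the additive $+(E+B+c)$ that accumulates with each iterate; your lemma~(i), giving $\fn{w}^{\,t}(n)\leq f_{k+1}(n+ta)$, tracks that accumulation correctly and then absorbs it via lemma~(ii). Both proofs converge on the same tower-versus-triple-exponential estimate absorbed into the constant $c$, so this is the same approach, cleaned up rather than reorganized.
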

\begin{proof}
Since $\mathfrak{a}(B, E 2^{m+5},BE)C^2+\mathfrak{i}(B,E2^{m+5},E 2^{m+5},E)C$ is triply exponential in $m$ while $f_3$ is tower exponential, there is a $c$ so that for all $m,E,B$, $\mathfrak{a}(B, E 2^{m+5},BE)C^2+\mathfrak{i}(B,E2^{m+5},E 2^{m+5},E)C\leq f_3(m+E+B+c)$.  (Indeed, $c=5$ suffices.)

 When $j=0$ the statement is immediate, since $\fn{w}_{0,\mathrm{suc},E}(m)=m+1\leq f_3(m+c)$.

  Suppose the claim holds for $j$.  Then $\fn{w}_{j+1,\mathrm{suc},E}(m)=\fn{w}_{j,\fn{u},E}^{\mathfrak{a}(B, E 2^{m+5},BE)C^2+\mathfrak{i}(B,E2^{m+5},E 2^{m+5},E)C}(m)$.  Then
  \begin{align*}
\fn{w}_{j+1,\mathrm{suc},E}(m)
&=    \fn{w}_{j,\fn{u},E}^{\mathfrak{a}(B, E 2^{m+5},BE)C^2+\mathfrak{i}(B,E2^{m+5},E 2^{m+5},E)C}(m)\\
&\leq\fn{w}_{j,\fn{u},E}^{f_3(m+c)}(m)\\
&\leq f_{2j+1}^{f_3(m+c)}(m+E+B+c)\\
&\leq f_{2j+1}^{f_{2j+1}(m+c)}(m+E+B+c)\\
&\leq f_{2j+2}^{m+c}(m+E+B+c)\\
&= f_{2j+3}(m+E+B+c).
  \end{align*}
\end{proof}

\subsection{Bounds on Uniform Continuity}
Before continuing, we need bounds on Theorem \ref{thm:q_vhs} and Theorem \ref{thm:meta_bnd_3}.

\begin{lemma}[Bounds on Theorem \ref{thm:q_vhs}]\label{thm:q_vhs_bnd}
Suppose $(*)^Q$ holds.  Let $E_\flat, D_\flat, n_\flat$ be given.  Suppose $\fn{m}_\flat(E_\flat 2^{m_0+5},m_0)\leq \fn{v}(m_0)$ for all $m_0\geq\max\{n_\flat, \ln D_\flat\}$.  Then there is an $m_\sharp\leq\fn{v}^{2^7B^2E_\flat^2}(\max\{n_\flat,\ln D_\flat\})$ such that, setting $D_\sharp=2^{m_\sharp}\geq D_\flat$, for each $m\in[m_\sharp,\fn{m}(D_\sharp,m_\sharp)]$, whenever $\mu(\sigma)<1/D_\sharp$, $|\nu_m(\sigma)|<1/E_\flat$.
\end{lemma}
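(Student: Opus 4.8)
The plan is to reread the proof of Theorem~\ref{thm:q_vhs} and read off bounds, invoking $(\ast)^Q$ at exactly two points. First, by $(\ast)^Q_2$ we may take each modulus to be $\omega_{\nu_m}(E):=E\,2^m$ (replacing the canonical modulus by this larger one is admissible, and it automatically satisfies the doubling hypothesis $\omega_{\nu_{m+1}}(E)\ge 2\omega_{\nu_m}(E)$ imposed at the start of that proof). With this choice the auxiliary search function $\fn{m}(m_0)=\fn{m}_\flat\bigl(2\omega_{\nu_{m_0}}(16E_\flat),m_0\bigr)$ becomes $\fn{m}_\flat(E_\flat 2^{m_0+5},m_0)$, which is $\le\fn{v}(m_0)$ for $m_0\ge\max\{n_\flat,\ln D_\flat\}$ by hypothesis (using monotonicity of $\fn{m}_\flat$), and the values produced are $D_i=2\omega_{\nu_{m_i}}(16E_\flat)=E_\flat 2^{m_i+5}$. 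I would run the construction of Theorem~\ref{thm:q_vhs} starting from $m_0:=\max\{n_\flat,\ln D_\flat\}$ rather than from $n_\flat$, setting $m_\sharp$ to be the stopping index $m_i$ and $D_\sharp:=D_i$; then $m_\sharp\ge n_\flat$ is automatic, $D_\sharp\ge D_0=E_\flat 2^{m_0+5}\ge D_\flat$, and $D_\sharp\le 2^{m_\sharp+5+\lceil\log_2 E_\flat\rceil}$ (so, up to a harmless additive shift in $m_\sharp$, $D_\sharp$ has the stated form $2^{m_\sharp}$). With $m_\sharp=m_i$ and $D_\sharp=D_i$ the interval $[m_\sharp,\fn{m}_\flat(D_\sharp,m_\sharp)]$ of the conclusion is exactly the interval $[m_i,\fn{m}(m_i)]$ on which the final estimate of the proof of Theorem~\ref{thm:q_vhs} is established, so that estimate transfers verbatim once we have the iteration bound.

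The only non-routine ingredient is an explicit bound on the length of the sequence $m_0<m_1<\cdots$ produced by the construction; the proof of Theorem~\ref{thm:q_vhs} shows it is finite only via metastable weak convergence, which gives no bound, and here I would instead use bounded fluctuations, available under $(\ast)^Q_1$ with uniform bound $8B^2E^2$ at scale $E$. Suppose the construction reaches $m_{V+1}$ with witnesses $\sigma_1,\dots,\sigma_{V+1}$. As in the proof of Theorem~\ref{thm:q_vhs}, $\mu(\sigma_{j+1}\bigtriangleup\sigma_{V+1})<2/D_{j+1}$, so by the modulus property and additivity $|\nu_{m_j}(\sigma_{V+1})-\nu_{m_{j+1}}(\sigma_{V+1})|\ge c/E_\flat$ for every $j\le V$ and a fixed constant $c>0$. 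Now apply the definition of bounded fluctuations to the ``jump'' function $\fn{m}'$ (with $\fn{m}'(m')=m_{i+1}$ for $i$ least with $m_i\ge m'$, extended arbitrarily past $m_V$), to the starting point $m_0$, and to the set $\sigma_{V+1}$: since $(\fn{m}')^{j}(m_0)=m_j$, this yields some $m^\star\in[m_0,m_V]$ with $|\nu_m(\sigma_{V+1})-\nu_{m'}(\sigma_{V+1})|<c/E_\flat$ for all $m,m'\in[m^\star,\fn{m}'(m^\star)]$; but, taking $i$ least with $m_i\ge m^\star$, both $m_i$ and $m_{i+1}$ lie in $[m^\star,\fn{m}'(m^\star)]=[m^\star,m_{i+1}]$, contradicting the preceding estimate. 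Hence the construction halts within $V$ steps, and tuning the thresholds of the proof of Theorem~\ref{thm:q_vhs} so that the relevant fluctuation scale is $4E_\flat$ (witness threshold $3/8E_\flat$, modulus scale $16E_\flat$, so $c=1/4$) makes $V=8B^2(4E_\flat)^2=2^7B^2E_\flat^2$.

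Putting this together, the construction stops at some step $i\le V$, so $m_i\le\fn{m}^{V}(m_0)\le\fn{v}^{V}(m_0)=\fn{v}^{2^7B^2E_\flat^2}(\max\{n_\flat,\ln D_\flat\})$ (using $\fn{m}(x)\le\fn{v}(x)$ for $x\ge m_0$ together with monotonicity), which is the required bound on $m_\sharp$; and the continuity conclusion for $m\in[m_\sharp,\fn{m}_\flat(D_\sharp,m_\sharp)]$ and $\mu(\sigma)<1/D_\sharp$ is then exactly the last display in the proof of Theorem~\ref{thm:q_vhs}. The main obstacle is precisely the quantitative termination just sketched: the ``jump function'' device is what allows a bounded-fluctuations hypothesis to replace the metastable-convergence appeal by an explicit iteration count, and pinning the constant at $2^7B^2E_\flat^2$ is a matter of carefully bookkeeping the several thresholds (and absorbing the additive constant relating $D_\sharp$ to the power of two $2^{m_\sharp}$).
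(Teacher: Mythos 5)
Your approach matches the paper's, which likewise reads off bounds from the proof of Theorem~\ref{thm:q_vhs} and replaces the metastable-convergence appeal with bounded fluctuations (at a scale tuned so that the iteration count is $2^7B^2E_\flat^2$). The paper's own proof of this lemma is essentially just the assertion ``since we have bounded fluctuations, we need only consider $m_{2^7B^2E_\flat^2}$,'' so your jump-function argument supplies exactly the termination reasoning the paper leaves implicit; your observation that $D_\sharp$ really has the form $E_\flat 2^{m_\sharp+5}$ rather than $2^{m_\sharp}$ is also accurate (compare the analogous clause in Lemma~\ref{thm:meta_bnd_1_q}).
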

\begin{proof}
Examining the proof of Theorem \ref{thm:q_vhs}, the function $\fn{m}(m_0)$ is just $\fn{m}_\flat(E_\flat 2^{m_0+5},m_0)$, which is bounded by $\fn{v}(m_0)$.  The sequence of values $m_i$ are given by $m_0=\max\{n_\flat,\ln D_\flat\}$ (to ensure that $E_\flat 2^{m_0+5}\geq D_\flat$), $m_{i+1}\leq \fn{m}_\flat(E_\flat 2^{m_i+5},m_i)\leq\fn{v}(m_i)$.  Since we have bounded fluctuations, we need only consider $m_{2^7B^2E_\flat^2}$, so $m_\sharp\leq\fn{v}^{2^7B^2E_\flat^2}(\max\{n_\flat,\ln D_\flat\})$ as claimed.
\end{proof}

\begin{lemma}[Bounds on Lemma \ref{thm:meta_bnd_1}]\label{thm:meta_bnd_1_q}
    Suppose $(*)^Q$ holds.  For any $d, E_\flat, \fn{m}_\flat, \fn{q}_\flat, n_\flat, p_\flat$ such that:
  \begin{itemize}
  \item For any $D, m, q, \fn{r}$ such that $\fn{r}$ is $\fn{w}_{j,\fn{u},E_\flat}$-bounded by $C$ above $\max\{m,q,\ln D,n_\flat,p_\flat\}$, $\fn{m}_\flat(D,m,q,\fn{r})\leq\fn{w}^{2^{12}B^4E^4_\flat}_{j+d,\fn{u},E_\flat}(\max\{m,q,\ln D,n_\flat,p_\flat\})$,
  \item For any $D, m, q, \fn{r}$ such that $\fn{r}$ is $\fn{w}_{j,\fn{u},E_\flat}$-bounded by $C$ above $\max\{m,q,\ln D,n_\flat,p_\flat\}$, $\fn{q}_\flat(D,m,q,\fn{r})\leq\fn{w}^{2^{12}B^4E^4_\flat}_{j+d,\fn{u},E_\flat}(\max\{m,q,\ln D,n_\flat,p_\flat\})$, 
 \end{itemize}
there exist $D_\sharp, m_\sharp\geq n_\flat, q_\sharp\geq p_\flat, \fn{r}_\sharp$ such that:
\begin{itemize}
  \item $m_\sharp\leq \fn{w}_{d(2^9B^2E^2_\flat),\fn{u},E_\flat}^{C}(\max\{n_\flat,p_\flat\})$
  \item $q_\sharp\leq  \fn{w}_{d(2^9B^2E^2_\flat),\fn{u},E_\flat}^{C}(\max\{n_\flat,p_\flat\})$,
  \item $D_\sharp=E_\flat 2^{q_\sharp+5}$,
  \item $\fn{r}_\sharp$ is $\fn{w}^C_{d(2^9B^2E^2_\flat),\fn{u},E_\flat}$-bounded by $1$ above $\max\{m_\sharp,q_\sharp,n_\flat,p_\flat\}$,
\end{itemize}
and, setting $m_\flat=\fn{m}_\flat(D_\sharp,m_\sharp,q_\sharp,\fn{r}_\sharp)$, if $q_\flat=\fn{q}_\flat(D_\sharp,m_\sharp,q_\sharp,\fn{r}_\sharp)\geq q_\sharp$ then $r_\sharp=\fn{r}_\sharp(m_\flat,q_\flat)\geq q_\flat$, there is a $\sigma_0$ such that whenever $\mu(\sigma_0\bigtriangleup\sigma)<1/D_\sharp$, $|(\rho_{m_\sharp}\lambda_{r_\sharp})(\sigma)-(\rho_{m_\flat}\lambda_{r_\sharp})(\sigma)|<4/E_\flat$, and whenever $\mu(\sigma)<2/D_\sharp$, $|(\rho_{m_\sharp}\lambda_{r_\sharp})(\sigma)|<1/16E_\flat$.
\end{lemma}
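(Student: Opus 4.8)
The plan is to re-run the proof of Lemma~\ref{thm:meta_bnd_1} with $E_\flat$ replaced throughout by $4E_\flat$ — this accounts for the constants $4/E_\flat$ and $1/16E_\flat$ in the conclusion, since $1/(4E_\flat)\le 4/E_\flat$ and $1/(4\cdot 4E_\flat)=1/16E_\flat$ — and to track, by induction on the parameter $i$ appearing in that proof, the growth rate of the functions $\fn{r}_{i,D,n,p}$ in terms of the hierarchy $\fn{w}_{j,\fn{u},E_\flat}$. Two external inputs govern the induction. First, the outer recursion in the proof of Lemma~\ref{thm:meta_bnd_1} terminates after $V:=2^9B^2E_\flat^2$ steps: this is the bounded-fluctuation count of $(\rho_1\lambda_p)_p$ furnished by $(\ast)^Q_1$, invoked at the scale $2\cdot 4E_\flat=8E_\flat$ at which that proof uses metastable weak convergence, and $8B^2(8E_\flat)^2=2^9B^2E_\flat^2$. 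Second, at each step the proof of Lemma~\ref{thm:meta_bnd_1} invokes the metastable uniform continuity of a sequence $(\rho_m\lambda_r)_r$; in the effective setting this is not handed to us outright, so we supply its quantitative content by combining Lemma~\ref{thm:q_vhs_bnd} (for the $\rho_m$ side, whose modulus is $E2^{m}$ by $(\ast)^Q_2$, producing the factor governed by $\mathfrak{a}$) with a Lemma~\ref{thm:control_interval}-style argument through the level-set machinery of $(\ast)^Q_4$ and Lemma~\ref{thm:refinement} to absorb the product measure $\rho_m\lambda_r$ (producing the factor governed by $\mathfrak{i}$). This is exactly why $\fn{w}_{j+1}$ iterates $\fn{w}_j$ precisely $\mathfrak{a}(B,E_\flat 2^{m+5},BE_\flat)C^2+\mathfrak{i}(B,E_\flat 2^{m+5},E_\flat 2^{m+5},E_\flat)C$ times.

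The core is the inductive claim: for every $i\le V$, $\fn{r}_{i,D,n,p}$ is $\fn{w}_{id,\fn{u},E_\flat}$-bounded by $C$ above $\max\{n,p,\ln D,n_\flat,p_\flat\}$, where $C=2^{29}B^6E_\flat^6$. The base case $i=0$ is immediate since $\fn{r}_{0,D,n,p}(m,q)=\max\{p,q\}$. For the step, unwind the definition of $\fn{r}_{i+1,D,n,p}$: one forms $\fn{r}^*(D^*,q^*)=\fn{r}_{i,D^*,m,q^*}\!\bigl(\fn{m}_\flat(\ldots,\fn{r}_{i,D^*,m,q^*}),\fn{q}_\flat(\ldots,\fn{r}_{i,D^*,m,q^*})\bigr)$ and applies metastable uniform continuity of $(\rho_m\lambda_r)_r$ to it. By the inductive hypothesis $\fn{r}_{i,D^*,m,q^*}$ is at level $id$; the hypotheses on $\fn{m}_\flat$ and $\fn{q}_\flat$ then push the arguments to level $(i+1)d$ at cost $2^{12}B^4E_\flat^4$ iterations; evaluating $\fn{r}_i$ on those arguments stays at level $(i+1)d$ and adds only $C+2^{12}B^4E_\flat^4$ further iterations (using the monotonicity $\fn{w}_{id}\le\fn{w}_{(i+1)d}$); and the metastable-uniform-continuity step, supplied as above, adds at most $\mathfrak{a}(B,E_\flat 2^{m+5},BE_\flat)C^2+\mathfrak{i}(B,E_\flat 2^{m+5},E_\flat 2^{m+5},E_\flat)C$ further iterations of $\fn{w}_{(i+1)d}$. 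Since $\mathfrak{a}$ and $\mathfrak{i}$ are respectively exponential and double-exponential in their arguments while the other contributions are merely polynomial in $B,E_\flat$ times $C$, the total is swallowed by one application of the $\fn{w}$-increment, so $\fn{r}_{i+1,D,n,p}$ is $\fn{w}_{(i+1)d}$-bounded by $C$. One also checks that the base $\max\{n,p,\ln D,n_\flat,p_\flat\}$ is respected despite the intermediate $q^*\approx\ln D^*$ produced inside the step being itself $\fn{w}_{(i+1)d}$-large: it is bounded by $\fn{w}_{(i+1)d}^C$ of the original base, hence may be re-absorbed.

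Carrying the induction up to $i=V$ puts $\fn{r}_\sharp:=\fn{r}_{V,D,n,p}$ at level $dV=d(2^9B^2E_\flat^2)$. The final paragraph of the proof of Lemma~\ref{thm:meta_bnd_1} fixes $V$ and applies metastable uniform continuity of $(\rho_1\lambda_p)_p$ once more to a function built from $\fn{r}_\sharp$; tracking this last application the same way (its cost again absorbed into the level-$dV$ budget) yields $m_\sharp,q_\sharp\le\fn{w}_{d(2^9B^2E_\flat^2)}^C(\max\{n_\flat,p_\flat\})$, with $D_\sharp=E_\flat 2^{q_\sharp+5}$ forced by the shape of Lemma~\ref{thm:q_vhs_bnd}, and $\fn{r}_\sharp$ $\fn{w}^C_{d(2^9B^2E_\flat^2)}$-bounded by $1$ above $\max\{m_\sharp,q_\sharp,n_\flat,p_\flat\}$. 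The remaining conclusions — the existence of $\sigma_0$ with $|(\rho_{m_\sharp}\lambda_{r_\sharp})(\sigma)-(\rho_{m_\flat}\lambda_{r_\sharp})(\sigma)|<4/E_\flat$ on $\mu(\sigma_0\bigtriangleup\sigma)<1/D_\sharp$, and $|(\rho_{m_\sharp}\lambda_{r_\sharp})(\sigma)|<1/16E_\flat$ on $\mu(\sigma)<2/D_\sharp$ — are exactly those of Lemma~\ref{thm:meta_bnd_1} applied with $4E_\flat$.

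The main obstacle is the level bookkeeping in the inductive step: one must pin down exactly how many levels of the $\fn{w}$-hierarchy each of the $\approx B^2E_\flat^2$ recursion passes consumes — it has to be exactly $d$, with no slack — and verify that the single constant $C=2^{29}B^6E_\flat^6$ is simultaneously large enough to absorb (i) the $2^{12}B^4E_\flat^4$-iteration cost of each $\fn{m}_\flat,\fn{q}_\flat$ application, (ii) the cost of one metastable-uniform-continuity application routed through Lemma~\ref{thm:control_interval} (the $\mathfrak{i}$-term), and (iii) the Vitali--Hahn--Saks fluctuation count $\approx B^2E_\flat^2$ coming from Lemma~\ref{thm:q_vhs_bnd}, while still being small relative to the $\fn{w}$-increment so that levels do not proliferate. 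A secondary subtlety is that the base point above which each function is $\fn{u}$-bounded keeps being replaced (by the $q^*$ emerging from each metastable-uniform-continuity application), so one must consistently re-express these moving base points in terms of the original $\max\{n_\flat,p_\flat\}$.
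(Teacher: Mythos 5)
Your inductive architecture matches the paper's: an induction on $i$ showing $\fn{r}_{i,D,n,p}$ is $\fn{w}_{di,\fn{u},E_\flat}$-bounded by $C$, with base case trivial, an inductive step that absorbs the $2^{12}B^4E_\flat^4$ iterations contributed by the hypotheses on $\fn{m}_\flat$ and $\fn{q}_\flat$, and termination at $i = V = 2^9B^2E_\flat^2$ supplied by $(\ast)^Q_1$ at scale $8E_\flat$. The final bounds you state, the form $D_\sharp=E_\flat 2^{q_\sharp+5}$, and the observation that the moving base points must be re-expressed in terms of $\max\{n_\flat,p_\flat\}$ are all correct and all appear in the paper's proof.

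There is, however, a genuine misconception at the heart of your inductive step. You assert that the quantitative content of the metastable uniform continuity of $(\rho_m\lambda_r)_r$ must be assembled by combining Lemma~\ref{thm:q_vhs_bnd} with a ``Lemma~\ref{thm:control_interval}-style argument'' through $(\ast)^Q_4$ and Lemma~\ref{thm:refinement}, and that this is ``exactly why'' the $\fn{w}$-increment has the form $\mathfrak{a}(\cdot)C^2 + \mathfrak{i}(\cdot)C$. Neither claim is right. In the paper's proof, each recursion pass invokes one direct application of Lemma~\ref{thm:q_vhs_bnd} at $16E_\flat$ (yielding the factor $2^{15}B^2E_\flat^2$); no detour through Lemma~\ref{thm:control_interval}, Lemma~\ref{thm:refinement}, or the $(\ast)^Q_4$ level-set machinery occurs — and trying to route through Lemma~\ref{thm:control_interval} would be a category error, since that lemma bounds $|(\rho_m\lambda_s)(\Omega)-(\rho_l\lambda_q)(\Omega)|$ for a double sequence and does not produce uniform continuity of a single sequence. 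The terms $\mathfrak{a}$ and $\mathfrak{i}$ in the definition of $\fn{w}_{j+1}$ are never invoked in this proof at all; they are present only so that $\fn{w}$ will later absorb the iteration costs incurred in the bounds on Lemmas~\ref{thm:control_interval_D1}--\ref{thm:control_interval_E1}. All this lemma actually uses is that the $\fn{w}$-increment is at least $C$, so that $C$ iterations of $\fn{w}_{di}$ collapse into a single $\fn{w}_{d(i+1)}$; the per-step iteration count, $(2^{12}B^4E_\flat^4+2)\cdot 2^{15}B^2E_\flat^2 < C = 2^{29}B^6E_\flat^6$, comes entirely from the hypotheses on $\fn{m}_\flat,\fn{q}_\flat$ and a single use of Lemma~\ref{thm:q_vhs_bnd}.
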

\begin{proof}
We need the following inductive hypothesis: for each $i, D, n, p$ there is a function $\fn{r}_{i,D,n,p}$ which is $\fn{w}_{di,\fn{u},E_\flat}$-bounded by $C$ above $\max\{n,p,\ln D,n_\flat,p_\flat\}$ so that for each $m,q$, either:
\begin{itemize}
\item There exist $D_\sharp, m_\sharp, q_\sharp, \fn{r}_\sharp$ satisfying the conclusion of Lemma 4.10 with:
  \begin{itemize}
  \item $m_\sharp\leq \fn{w}_{di,\fn{u},E_\flat}^{C+1}(\max\{m,q,n,p,n_\flat,p_\flat\})$
  \item $q_\sharp\leq  \fn{w}_{di,\fn{u},E_\flat}^{C+1}(\max\{m,q,n,p,n_\flat,p_\flat\})$,
  \item $D_\sharp=E_\flat 2^{q_\sharp+5}$,
  \item $\fn{r}_\sharp$ is $\fn{w}^C_{di,\fn{u},E_\flat}$-bounded by $1$ above $\max\{m_\sharp,q_\sharp,m,q,n,p, n_\flat,p_\flat\}$,
  \end{itemize}
or,
\item one of the other cases in the proof Lemma 4.10 holds.
\end{itemize}

For $\fn{r}_{0,D,n,p}(m,q)=q$ this is immediate since $q\leq \fn{u}(q)$ for all $q$.

Suppose we have shown that for every $D,n,p$, $\fn{r}_{i,D,n,p}$ is $\fn{w}_{di,\fn{u},E_\flat}$-bounded by $C$ above $\max\{n,p,\ln D,n_\flat,p_\flat\}$.  We now attempt to bound $\fn{r}_{i+1,D,n,p}$.  Let $m,q$ be given.  Observe that $\fn{m}_\flat(D^*,m,q^*,\fn{r}_{i,D^*,m,q^*})\leq\fn{w}^{2^{12}B^4E^4_\flat}_{d(i+1),\fn{u},E_\flat}(\max\{q^*,\ln D^*,m,q,n,p,n_\flat,p_\flat\})$ and similarly for $\fn{q}_\flat$, so 
\[\fn{r}^*(D^*,q^*)\leq\fn{w}_{d(i+1),\fn{u},E_\flat}^{{2^{12}B^4E^4_\flat}+1}(\max\{q^*,\ln D^*,m,q,n,p,\ln D+\ln 2,n_\flat,p_\flat\}).\]
 In particular,
\[\fn{r}^*(E_\flat 2^{q^*+5},q^*)\leq\fn{w}_{d(i+1),\fn{u},E_\flat}^{(2^{12}B^4E^4_\flat)+1}(\max\{q^*+\ln E_\flat+1,m,q,n,p,\ln D+\ln 2,n_\flat,p_\flat\}),\]
so by Lemma \ref{thm:q_vhs_bnd} applied to $16E_\flat$ we obtain $q^*\leq \fn{w}_{d(i+1),\fn{u},E_\flat}^{((2^{12}B^4E^4_\flat)+2)2^{15}B^2E^2_\flat}(\max\{m,q,n,p,\ln D,n_\flat,p_\flat\})$ and $D^*=E_\flat 2^{q^*+5}$.  Then
\[\fn{r}_{i+1,D,n,p}(m,q)\leq \fn{w}_{d(i+1),\fn{u},E_\flat}^{C}(\max\{m,q,n,p,\ln D,n_\flat,p_\flat\}).\]
as required.

We also need to bound the possible witnesses $D_\sharp, m_\sharp, q_\sharp, \fn{r}_\sharp$.  We take $m'=\fn{m}_\flat(D^*,m,q^*,\fn{r}_{i,D^*,m,q^*})$ and $q'=\fn{q}_\flat(D^*,m,q^*,\fn{r}_{i,D^*,m,q^*})$, and have
\[m',q'\leq \fn{w}_{d(i+1),\fn{u},E_\flat}^{C}(\max\{m,q,n,p,\ln D,n_\flat,p_\flat\}).\]

In particular, when we apply the inductive hypothesis to $\fn{r}_{i,D^*,m,q^*}(m',q')$, we potentially obtain $D_\sharp, m_\sharp, q_\sharp, \fn{r}_\sharp$ with
\[m_\sharp,q_\sharp\leq\fn{w}_{di,\fn{u},E_\flat}^{C+1}(\max\{m',q',m,q^*,\ln D^*,n_\flat,p_\flat\})\leq
\fn{w}_{d(i+1),\fn{u},E_\flat}^{C+1}(\max\{n,p,m,q,\ln D,n_\flat,p_\flat\})\]
which satisfies the promised bounds.

If we choose $m_\sharp=m, q_\sharp=q^*, D_\sharp=D^*, \fn{r}_\sharp=\fn{r}_{i,D^*,m,q^*}$ then again the promised bounds hold.

In the proof, we work with 
\[\fn{r}^*(D^*,q^*)=\fn{r}_{2^9B^2E^2_\flat,D^*,1,q^*}(\fn{m}_\flat(D^*,1,q^*,\fn{r}_{2^9B^2E^2_\flat,D^*,1,q^*}),\fn{q}_\flat(D^*,1,q^*,\fn{r}_{2^9B^2E^2_\flat,D^*,1,q^*})).\]
In particular, as above
\[\fn{r}^*(E_\flat 2^{q^*+5},q^*)\leq\fn{w}_{d(2^9B^2E^2_\flat+1),\fn{u},E_\flat}^{{2^{12}B^4E^4_\flat}+1}(\max\{q^*+\ln E_\flat+1, n_\flat, p_\flat\})\]
and therefore when we apply Lemma \ref{thm:q_vhs_bnd} to $16E_\flat$ we obtain $q^*\leq\fn{w}_{d(2^9B^2E^2_\flat+1),\fn{u},E_\flat}^{2^{28}B^6E^6_\flat}(\max\{n_\flat,p_\flat\})$.  In particular, whatever case we are in, we have the specified bounds.
\end{proof}

\begin{lemma}[Bounds on Lemma \ref{thm:meta_bnd_3}]\label{thm:meta_bnd_3_q}
  Suppose $(*)^Q$ holds.  Let $d, E_\flat, n_\flat, p_\flat, \fn{m}_\flat, \fn{q}_\flat, d$ be given so that:
  \begin{itemize}
  \item For any $D, m, q, \fn{r}$ such that $\fn{r}$ is $\fn{w}_{j,\fn{u},E_\flat}$-bounded by $C$ above $\max\{m,q,\ln D,n_\flat,p_\flat\}$, $\fn{m}_\flat(D,m,q,\fn{r})\leq\fn{w}^{2^{12}B^4E^4_\flat}_{j+d,\fn{u},E_\flat}(\max\{m,q,\ln D+,n_\flat,p_\flat\})$,
  \item For any $D, m, q, \fn{r}$ such that $\fn{r}$ is $\fn{w}_{j,\fn{u},E_\flat}$-bounded by $C$ above $\max\{m,q,\ln D,n_\flat,p_\flat\}$, $\fn{q}_\flat(D,m,q,\fn{r})\leq\fn{w}^{2^{12}B^4E^4_\flat}_{j+d,\fn{u},E_\flat}(\max\{m,q,\ln D,n_\flat,p_\flat\})$, 
  \end{itemize}

Then there are $D_\sharp, m_\sharp, p_\sharp,\fn{r}_\sharp$ so that:
\begin{itemize}
  \item $m_\sharp\leq \fn{w}_{d(2^9B^2E^2_\flat),\fn{u},E_\flat}^{C}(\max\{n_\flat,p_\flat\})$
  \item $q_\sharp\leq  \fn{w}_{d(2^9B^2E^2_\flat),\fn{u},E_\flat}^{C}(\max\{n_\flat,p_\flat\})$,
  \item $D_\sharp=E_\flat 2^{q_\sharp+5}$,
  \item $\fn{r}_\sharp$ is $\fn{w}^C_{d(2^9B^2E^2_\flat),\fn{u},E_\flat}$-bounded by $1$ above $\max\{m_\sharp,q_\sharp,n_\flat,p_\flat\}$,
\item If $\fn{m}_\flat(D_\sharp,m_\sharp,q_\sharp,\fn{r}_\sharp)\geq m_\sharp$ and $\fn{q}_\flat(D_\sharp,m_\sharp,p_\sharp,\fn{r}_\sharp,m)\geq p_\sharp$ then $\fn{r}_\sharp(\fn{m}_\flat(D_\sharp,m_\sharp,q_\sharp,\fn{r}_\sharp),\fn{q}_\flat(D_\sharp,m_\sharp,p_\sharp,\fn{r}_\sharp))\geq\fn{q}_\flat(D_\sharp,m_\sharp,p_\sharp,\fn{r}_\sharp)$ and for any $\sigma$ with $\mu(\sigma)<1/D_\sharp$, $|(\rho_{\fn{m}_\flat(D_\sharp,m_\sharp,q_\sharp,\fn{r}_\sharp)}\lambda_{\fn{r}_\sharp(\fn{m}_\flat(D_\sharp,m_\sharp,q_\sharp,\fn{r}_\sharp),\fn{q}_\flat(D_\sharp,m_\sharp,p_\sharp,\fn{r}_\sharp))})(\sigma)|<1/E_\flat$.
\end{itemize}
\end{lemma}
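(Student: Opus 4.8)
The proof is a bookkeeping step: Lemma \ref{thm:meta_bnd_3} is deduced from Lemma \ref{thm:meta_bnd_1} by a short triangle–inequality argument that does not alter any of the indices, so the quantitative content already recorded in Lemma \ref{thm:meta_bnd_1_q} carries everything we need. The plan is therefore to apply Lemma \ref{thm:meta_bnd_1_q} to the given data $d, E_\flat, \fn{m}_\flat, \fn{q}_\flat, n_\flat, p_\flat$. The hypotheses on $\fn{m}_\flat$ and $\fn{q}_\flat$ demanded there are word for word those assumed here, so we obtain $D_\sharp$, $m_\sharp\geq n_\flat$, $q_\sharp\geq p_\flat$ (which plays the role of $p_\sharp$ in the statement), $\fn{r}_\sharp$, together with the set $\sigma_0$ coming from the conclusion of Lemma \ref{thm:meta_bnd_1_q}. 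All four size bounds — on $m_\sharp$, on $q_\sharp$, on $D_\sharp=E_\flat 2^{q_\sharp+5}$, and the $\fn{w}^C_{d(2^9B^2E^2_\flat),\fn{u},E_\flat}$-bound on $\fn{r}_\sharp$ — are exactly those delivered by Lemma \ref{thm:meta_bnd_1_q}, so they hold verbatim.

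It then remains to convert the $\sigma_0$–relative estimates of Lemma \ref{thm:meta_bnd_1_q} into the absolute uniform-continuity estimate asserted here, following the computation in the proof of Lemma \ref{thm:meta_bnd_3}. Write $m_\flat=\fn{m}_\flat(D_\sharp,m_\sharp,q_\sharp,\fn{r}_\sharp)$, $q_\flat=\fn{q}_\flat(D_\sharp,m_\sharp,q_\sharp,\fn{r}_\sharp)$ and $r_\sharp=\fn{r}_\sharp(m_\flat,q_\flat)$, and suppose $m_\flat\geq m_\sharp$ and $q_\flat\geq q_\sharp$; then Lemma \ref{thm:meta_bnd_1_q} already gives $r_\sharp\geq q_\flat$. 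For $\sigma$ with $\mu(\sigma)<1/D_\sharp$ I would use additivity of $\rho_{m_\flat}\lambda_{r_\sharp}$ to write $(\rho_{m_\flat}\lambda_{r_\sharp})(\sigma)=(\rho_{m_\flat}\lambda_{r_\sharp})(\sigma_0\cup\sigma)-(\rho_{m_\flat}\lambda_{r_\sharp})(\sigma_0\setminus\sigma)$. Since $\sigma_0\cup\sigma$ and $\sigma_0\setminus\sigma$ each differ from $\sigma_0$ by a set of measure at most $\mu(\sigma)<1/D_\sharp$, the first clause of Lemma \ref{thm:meta_bnd_1_q} lets me replace $\rho_{m_\flat}$ by $\rho_{m_\sharp}$ on both pieces at a small cost, while $(\rho_{m_\sharp}\lambda_{r_\sharp})(\sigma_0\cup\sigma)-(\rho_{m_\sharp}\lambda_{r_\sharp})(\sigma_0\setminus\sigma)=(\rho_{m_\sharp}\lambda_{r_\sharp})(\sigma)$ is controlled by the second clause (as $\mu(\sigma)<2/D_\sharp$). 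Summing the three contributions yields $|(\rho_{m_\flat}\lambda_{r_\sharp})(\sigma)|<1/E_\flat$, which is precisely the claimed inequality with $m_\flat$, $q_\sharp$, $r_\sharp$ in the appropriate positions, and with the matching monotonicity relation $r_\sharp\geq q_\flat$.

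There is essentially no obstacle: the lemma records that the quantitative form of Lemma \ref{thm:meta_bnd_1} already contains the quantitative form of Lemma \ref{thm:meta_bnd_3}, with no change in the fast-growing bounds. The one point that wants care is the matching of numerical constants in the three-term estimate so that the sum stays below $1/E_\flat$ — the same care already exercised in the proof of Lemma \ref{thm:meta_bnd_3}. If the constants in Lemma \ref{thm:meta_bnd_1_q} turn out to be too generous for that step, one simply applies Lemma \ref{thm:meta_bnd_1_q} with $E_\flat$ replaced by a fixed constant multiple of itself; this affects only the constants hidden inside $C=2^{29}B^6E_\flat^6$ and inside the index $d(2^9B^2E^2_\flat)$, and so is invisible at the level of the poly-exp and fast-growing bounds being tracked.
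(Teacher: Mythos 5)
Your proposal is correct and matches the paper's approach: the paper's proof of this lemma simply cites Lemma \ref{thm:meta_bnd_1_q} (which, as noted there, is already stated for the $4E_\flat$ application needed by Lemma \ref{thm:meta_bnd_3}), and all four bounds carry over verbatim since the triangle-inequality step converting the $\sigma_0$-relative estimate into an absolute one does not touch any indices. Your worry about constants at the end is actually moot — the paper already builds the factor-of-$4$ rescaling into the conclusion of Lemma \ref{thm:meta_bnd_1_q} (the $1/16E_\flat$ bound), so no further rescaling is needed.
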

\begin{proof}
Theorem \ref{thm:meta_bnd_3} follows by applying Lemma \ref{thm:meta_bnd_1} to $4E_\flat, \fn{m}_\flat, \fn{p}_\flat,0,0$.  Lemma \ref{thm:meta_bnd_1_q} is already the $4E_\flat$ case, so we simply apply the previous lemma to obtain the desired bounds.
\end{proof}

\subsection{Refining the Bounds}

\begin{lemma}[Bounds on \ref{thm:control_interval_D1}]
Suppose $(*)^Q$ holds.  Let $j,E_\flat, D^0_\flat\leq D^1_\flat, n_\flat, p_\flat, \fn{L}_\flat, \fn{S}_\flat$ be given so that:
\begin{itemize}
\item For any $m,q,\fn{k},\fn{r}$, the function $k\mapsto\fn{L}_\flat(m,q,\fn{k},\fn{r})(k,r)$ does not depend on $r$ and is $\fn{w}_{j,\fn{u},E_\flat}$-bounded by $C$ above $\max\{n_\flat,p_\flat,m,q+1\}$,
\item For any $m,q,\fn{k},\fn{r}$, the function $r\mapsto \fn{S}_\flat(m,q,\fn{k},\fn{r})(k,r)$ does not depend on $k$ and is $\fn{w}_{j,\fn{u},E_\flat}$-bounded by $C$ above $\max\{n_\flat,p_\flat,m+1,q\}$.
\end{itemize}

Then there are $m_\sharp\geq n_\flat$, $q_\sharp\geq p_\flat$, $\fn{k}_\sharp$, and $\fn{r}_\sharp$ such that, setting $c_\sharp=\fn{w}^{2^7B^2E_\flat^2}_{j+1,\fn{u},E_\flat}(\max\{n_\flat,p_\flat,\ln D^1_\flat\})$:
\begin{itemize}
\item $m_\sharp\leq \fn{w}_{j+1,\fn{u},E_\flat}(c_\sharp)$,
\item $q_\sharp\leq  \fn{w}_{j+1,\fn{u},E_\flat}(c_\sharp)$,
\item $\fn{k}_\sharp$ is $\fn{w}_{j,\fn{u},E_\flat}$-bounded by $\mathfrak{a}(B,E_\flat 2^{c_\sharp+5},BE_\flat)Cx+\mathfrak{i}(B,D^0_\flat,E_\flat 2^{c_\sharp+5},E_\flat)C$ above $\max\{m_\sharp, q_\sharp\}$,
\item $\fn{r}_\sharp$ is $\fn{w}_{j,\fn{u},E_\flat}$-bounded by $\mathfrak{a}(B,E_\flat 2^{c_\sharp+5},BE_\flat)Cx+\mathfrak{i}(B,D^0_\flat,E_\flat 2^{c_\sharp+5},E_\flat)C$ above $\max\{m_\sharp, q_\sharp\}$,
\item Setting $\fn{l}_\flat=\fn{L}_\flat(m_\sharp,q_\sharp,\fn{k}_\sharp,\fn{r}_\sharp)$, $\fn{s}_\flat=\fn{S}_\flat(m_\sharp,q_\sharp,\fn{k}_\sharp,\fn{r}_\sharp)$, $k_\sharp=\fn{k}_\sharp(\fn{l}_\flat,\fn{s}_\flat)$ and $r_\sharp=\fn{r}_\sharp(\fn{l}_\flat,\fn{s}_\flat)$, we have $k_\sharp\geq m_\sharp$, $r_\sharp\geq q_\sharp$, and for any $s\in[r_\sharp,\fn{s}_\flat(k_\sharp,r_\sharp)]$ and $l\in[k_\sharp,\fn{l}_\flat(k_\sharp,r_\sharp)]$, there are sets $\mathcal{B}^-$, $\mathcal{B}^{0,-}$, and $\mathcal{B}^{1,-}$ with $\mu(\mathcal{B}^-)<4/D^0_\flat$, $\mu(\mathcal{B}^{0,-})<2/D^0_\flat$, and $\mu(\mathcal{B}^{1,-})<2/D^1_\flat$ so that
\begin{align*}
\left|(\rho_{m_\sharp}\lambda_s)(\Omega)-(\rho_l\lambda_{q_\sharp})(\Omega)\right|
&\leq|\rho_{m_\sharp}\lambda_s|(\mathcal{B}^-)+|\rho_l\lambda_{q_\sharp}|(\mathcal{B}^-)+|\rho_{m_\sharp}\lambda_s|(\mathcal{B}^{0,-})\\
&\ \ \ \ \ \ \ \ +BD_\flat^0|\lambda_{q_\sharp}|(\mathcal{B}^{1,-})+|\rho_l\lambda_{q_\sharp}|(\mathcal{B}^{1,-})+\frac{7}{E_\flat}.\\
\end{align*}
\end{itemize}
\end{lemma}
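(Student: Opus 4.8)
The plan is to run the proof of Lemma \ref{thm:control_interval_D1} essentially verbatim, replacing each qualitative appeal to Lemma \ref{thm:control_interval} and to Theorem \ref{thm:q_vhs} by the corresponding quantitative bound and then reading off how the iteration counts compound. Recall that the proof of Lemma \ref{thm:control_interval_D1} builds a function $\fn{m}_0(D_0,m_0)$ whose value $m_\dagger$ is produced by invoking Lemma \ref{thm:control_interval} with the data $E_\flat,D^0_\flat,\max\{D_0,D^1_\flat\},p_\flat,m_0,\fn{L}_\flat,\fn{S}_\flat$, and then applies Theorem \ref{thm:q_vhs} to $\fn{m}_0$. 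So there are exactly two counts to follow: the one produced by the inner application of Lemma \ref{thm:control_interval}, which should amount to a single step of $\fn{w}_{j+1,\fn{u},E_\flat}$, and the one produced by the outer Vitali--Hahn--Saks step, which should amount to the $2^7B^2E_\flat^2$-fold iterate of $\fn{w}_{j+1,\fn{u},E_\flat}$.

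For the inner step I would invoke the bounds on Lemma \ref{thm:control_interval} with base function $\fn{w}_{j,\fn{u},E_\flat}$ (which satisfies $\fn{w}_{j,\fn{u},E_\flat}(m)>m$ because $\fn{u}$ does). The given $\fn{L}_\flat,\fn{S}_\flat$ satisfy the structural ``does not depend on $r$ / on $k$'' hypotheses of that lemma, and they are $\fn{w}_{j,\fn{u},E_\flat}$-bounded by $C$ rather than by $1$; carrying this extra factor $C$ through that lemma's argument (it multiplies the slopes and intercepts into which $\fn{L}_\flat,\fn{S}_\flat$ feed) is exactly what accounts for the factors $C$ and $C^2$ in the definition of $\fn{w}_{j+1,\fn{u},E_\flat}$. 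Evaluating at $D_0=E_\flat 2^{m_0+5}$---the shape demanded by Lemma \ref{thm:q_vhs_bnd}---and using $m_0\ge\ln D^1_\flat$, the argument $\max\{D_0,D^1_\flat\}$ collapses to $E_\flat 2^{m_0+5}$, so the factors appearing become $\mathfrak{i}(B,D^0_\flat,E_\flat 2^{m_0+5},E_\flat)$ and $\mathfrak{a}(B,E_\flat 2^{m_0+5},BE_\flat)$. Comparing the resulting iteration count of $\fn{w}_{j,\fn{u},E_\flat}$ for $m_\dagger$ with the exponent $\mathfrak{a}(B,E_\flat 2^{m+5},BE_\flat)C^2+\mathfrak{i}(B,E_\flat 2^{m+5},E_\flat 2^{m+5},E_\flat)C$ built into $\fn{w}_{j+1,\fn{u},E_\flat}$ (over-estimating $D^0_\flat\le E_\flat 2^{m_0+5}$ in the second slot of $\mathfrak{i}$) gives $\fn{m}_0(E_\flat 2^{m_0+5},m_0)\le\fn{w}_{j+1,\fn{u},E_\flat}(\max\{n_\flat,p_\flat,m_0\})$, while the returned $\fn{k}_\dagger,\fn{r}_\dagger$ stay $\fn{w}_{j,\fn{u},E_\flat}$-bounded, by the linear functions with slope $\mathfrak{a}(B,E_\flat 2^{m_0+5},BE_\flat)C$ and intercept $\mathfrak{i}(B,D^0_\flat,E_\flat 2^{m_0+5},E_\flat)C$.

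Next I would apply Lemma \ref{thm:q_vhs_bnd} with $\fn{v}(m_0)=\fn{w}_{j+1,\fn{u},E_\flat}(\max\{n_\flat,p_\flat,m_0\})$ and with the Vitali--Hahn--Saks minimal-$D$ parameter taken to be $D^1_\flat$, so that the returned $D_0$ satisfies $D_0\ge D^1_\flat\ge D^0_\flat$ and the returned $m_0\ge\ln D^1_\flat$ as used above. Since $\fn{v}$ agrees with $\fn{w}_{j+1,\fn{u},E_\flat}$ above $\max\{n_\flat,p_\flat\}$, this returns $m_0\le\fn{w}_{j+1,\fn{u},E_\flat}^{2^7B^2E_\flat^2}(\max\{n_\flat,p_\flat,\ln D^1_\flat\})=c_\sharp$ together with $D_0=E_\flat 2^{m_0+5}$ (the constant $5$ absorbing the $O(1)$ factor that $(\ast)^Q_2$ puts between the $D_\sharp$ of Lemma \ref{thm:q_vhs_bnd} and $2^{m_\sharp}$). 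Setting $m_\sharp=q_\sharp=m_\dagger$ and $\fn{k}_\sharp=\fn{k}_\dagger$, $\fn{r}_\sharp=\fn{r}_\dagger$ as in the proof of Lemma \ref{thm:control_interval_D1}, the inner bound from the previous paragraph yields $m_\sharp,q_\sharp\le\fn{w}_{j+1,\fn{u},E_\flat}(c_\sharp)$, and $m_0\le c_\sharp$ together with the monotonicity of $\mathfrak{a}$ and $\mathfrak{i}$ upgrades the $\fn{k}_\sharp,\fn{r}_\sharp$ bounds to slope $\mathfrak{a}(B,E_\flat 2^{c_\sharp+5},BE_\flat)C$ and intercept $\mathfrak{i}(B,D^0_\flat,E_\flat 2^{c_\sharp+5},E_\flat)C$ above $\max\{m_\sharp,q_\sharp\}$, exactly as claimed. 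The final displayed inequality and the existence of $\mathcal{B}^-,\mathcal{B}^{0,-},\mathcal{B}^{1,-}$ with the stated measure bounds are unchanged from Lemma \ref{thm:control_interval_D1} and need no new argument.

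The step I expect to be most delicate is the first one: checking that feeding $C$-bounded (rather than $1$-bounded) $\fn{L}_\flat,\fn{S}_\flat$ into the bounds on Lemma \ref{thm:control_interval} costs exactly the advertised powers of $C$, and that after the collapse $\max\{D_0,D^1_\flat\}\mapsto E_\flat 2^{m_0+5}$ the number of iterations of $\fn{w}_{j,\fn{u},E_\flat}$ coming out really is at most the exponent defining $\fn{w}_{j+1,\fn{u},E_\flat}$, so that one application of $\fn{w}_{j+1,\fn{u},E_\flat}$ suffices. Everything downstream is substitution of the shape $E_\flat 2^{\,\cdot\,+5}$ into $\mathfrak{a}$ and $\mathfrak{i}$, monotonicity, and the already-established qualitative content of Lemma \ref{thm:control_interval_D1}.
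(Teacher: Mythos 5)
Your proposal matches the paper's proof: both combine the quantitative bounds on Lemma \ref{thm:control_interval} with Lemma \ref{thm:q_vhs_bnd}, take $D_0=E_\flat 2^{m_0+5}$, and read off the iteration counts. The paper handles the $C$ more cleanly by remarking that being $\fn{w}_{j,\fn{u},E_\flat}$-bounded by $C$ is the same as being $\fn{w}_{j,\fn{u},E_\flat}^{C}$-bounded by $1$, so the earlier bound lemmas apply verbatim with $\fn{w}_{j,\fn{u},E_\flat}^C$ as the base function rather than being re-run with a carried factor, but this is a matter of phrasing; your account of where the powers of $C$ land and how $m_0\leq c_\sharp$ upgrades the slopes and intercepts is correct (modulo the typo $m_\sharp=q_\sharp=m_\dagger$, which should read $m_\sharp=m_\dagger$, $q_\sharp=q_\dagger$).
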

\begin{proof}
  This lemma and the next amount to combining Lemma \ref{thm:control_interval} with Theorem \ref{thm:q_vhs}.  

Note that being $\fn{w}_{j,\fn{u},E_\flat}$-bounded by $C$ is the same as being $\fn{w}_{j,\fn{u},E_\flat}^C$-bounded by $1$, so the lemmas in the previous subsections apply.  Suppose we have fixed $D_0\geq D^1_\flat$ and $m_0\geq \max\{n_\flat,p_\flat\}$.  Let $c_\sharp=\fn{w}_{j,\fn{u},E_\flat}^{\mathfrak{i}(B,D^0_\flat,D_0,E_\flat)C}(m_0)$.  In particular, $\fn{m}_0(E_\flat 2^{m_0+5},m_0)\leq\fn{w}_{j,\fn{u},E_\flat}^{\mathfrak{i}(B,E_\flat 2^{m_0+5},E_\flat 2^{m_0+2},E_\flat)C}(m_0)\leq \fn{w}_{j+1,\fn{u},E_\flat}(m_0)$.  Applying the Lemma \ref{thm:q_vhs_bnd}, $m_0\leq \fn{w}^{32B^2E_\flat^2}_{j+1,\fn{u},E_\flat}(\max\{n_\flat,p_\flat,\ln D^1_\flat\})$.  Setting $c_\sharp=\fn{w}^{32B^2E_\flat^2}_{j+1,\fn{u},E_\flat}(\max\{n_\flat,p_\flat,\ln D^1_\flat\})$, the remaining bounds follow from the previous subsections.
\end{proof}

\begin{lemma}[Bounds on \ref{thm:control_interval_D2}]\label{thm:control_interval_D2_quant}
Suppose $(*)^Q$ holds.  Let $E_\flat, D_\flat,  n_\flat, p_\flat, \fn{L}_\flat, \fn{S}_\flat$, $\fn{u}$, be given so that:
\begin{itemize}
\item For any $m,q,\fn{k},\fn{r}$, the function $k\mapsto\fn{L}_\flat(m,q,\fn{k},\fn{r})(k,r)$ does not depend on $r$ and is $\fn{w}_{j,\fn{u},E_\flat}$-bounded by $C$ above $\max\{n_\flat,p_\flat,m,q+1\}$,
\item For any $m,q,\fn{k},\fn{r}$, the function $r\mapsto \fn{S}_\flat(m,q,\fn{k},\fn{r})(k,r)$ does not depend on $k$ and is $\fn{w}_{j,\fn{u},E_\flat}$-bounded by $C$ above $\max\{n_\flat,p_\flat,m+1,q\}$.
\end{itemize}

Then there are $m_\sharp\geq n_\flat$, $q_\sharp\geq p_\flat$, $\fn{k}_\sharp$, and $\fn{r}_\sharp$ such that, setting $c_\sharp=\fn{w}^{2^{11}B^4E_\flat^4}_{j+1,\fn{u},E_\flat}(\max\{n_\flat,p_\flat,\ln D_\flat\})$:
\begin{itemize}
\item $m_\sharp\leq \fn{w}^{32B^2E_\flat^2+1}_{j+1,\fn{u},E_\flat}(c_\sharp)$,
\item $q_\sharp\leq  \fn{w}^{32B^2E_\flat^2+1}_{j+1,\fn{u},E_\flat}(c_\sharp)$,
\item $\fn{k}_\sharp$ is $\fn{w}_{j,\fn{u},E_\flat}$-bounded by $\mathfrak{a}(B, E_\flat 2^{c_\sharp+5},BE_\flat)Cx+\mathfrak{i}(B,D_\flat, E_\flat 2^{c_\sharp+5},E_\flat)C$ above $\max\{m_\sharp, q_\sharp\}$,
\item $\fn{r}_\sharp$ is $\fn{w}_{j,\fn{u},E_\flat}$-bounded by $\mathfrak{a}(B, E_\flat 2^{c_\sharp+5},BE_\flat)Cy+\mathfrak{i}(B,D_\flat, E_\flat 2^{c_\sharp+5},E_\flat)C$ above $\max\{m_\sharp, q_\sharp\}$,
\item Setting $\fn{l}_\flat=\fn{L}_\flat(m_\sharp,q_\sharp,\fn{k}_\sharp,\fn{r}_\sharp)$, $\fn{s}_\flat=\fn{S}_\flat(m_\sharp,q_\sharp,\fn{k}_\sharp,\fn{r}_\sharp)$, $k_\sharp=\fn{k}_\sharp(\fn{l}_\flat,\fn{s}_\flat)$ and $r_\sharp=\fn{r}_\sharp(\fn{l}_\flat,\fn{s}_\flat)$, so that $k_\sharp\geq m_\sharp$, $r_\sharp\geq q_\sharp$, and for any $s\in[r_\sharp,\fn{s}_\flat(k_\sharp,r_\sharp)]$ and $l\in[k_\sharp,\fn{l}_\flat(k_\sharp,r_\sharp)]$, there are sets $\mathcal{B}^-$, $\mathcal{B}^{0,-}$, and $\mathcal{B}^{1,-}$ with $\mu(\mathcal{B}^-)<4/D^0_\flat$, $\mu(\mathcal{B}^{0,-})<2/D^0_\flat$, and $\mu(\mathcal{B}^{1,-})<2/D^1_\flat$ so that
\begin{align*}
\left|(\rho_{m_\sharp}\lambda_s)(\Omega)-(\rho_l\lambda_{q_\sharp})(\Omega)\right|
&\leq|\rho_{m_\sharp}\lambda_s|(\mathcal{B}^-)+|\rho_l\lambda_{q_\sharp}|(\mathcal{B}^-)+|\rho_{m_\sharp}\lambda_s|(\mathcal{B}^{0,-})+|\rho_l\lambda_{q_\sharp}|(\mathcal{B}^{1,-})+\frac{8}{E_\flat}.\\
\end{align*}
\end{itemize}
\end{lemma}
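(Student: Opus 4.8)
The plan is to track bounds through the proof of Lemma~\ref{thm:control_interval_D2}. That proof defines a function $\fn{q}_0(D_0,q_0)$ by one application of Lemma~\ref{thm:control_interval_D1} (with $D_\flat^1=\max\{D_\flat,D_0\}$), then applies Theorem~\ref{thm:q_vhs} to $4BD_\flat E_\flat$, $\fn{q}_0$, $p_\flat$; the witnesses $m_\sharp,q_\sharp,\fn{k}_\sharp,\fn{r}_\sharp$ are exactly those produced by the inner call. So the ingredients are the preceding bounds lemma (bounds on Lemma~\ref{thm:control_interval_D1}) and Lemma~\ref{thm:q_vhs_bnd}, and the work is to check that the hypotheses match and then compose the iteration counts.

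First I would bound $\fn{q}_0$. Fixing $D_0\ge D_\flat$ (as in the original proof) and $q_0$, observe that $\fn{L}_\flat,\fn{S}_\flat$ are passed to Lemma~\ref{thm:control_interval_D1} unchanged; the present hypotheses give that each does not depend on the other index and is $\fn{w}_{j,\fn{u},E_\flat}$-bounded by $C$ above thresholds that dominate what the inner call requires (with $q_0$ and $n_\flat$ playing the two base-index roles, using $q_0\ge p_\flat$). Hence the preceding lemma applies with the same $j$ and $C$, $D_\flat^0\mapsto D_\flat$, $D_\flat^1\mapsto\max\{D_\flat,D_0\}$, and gives $\fn{q}_0(D_0,q_0)$ and the accompanying $m_\dagger$ bounded by a fixed iterate of $\fn{w}_{j+1,\fn{u},E_\flat}$ evaluated at $\max\{n_\flat,q_0,\ln D_0\}$, together with the statement that $\fn{k}_\dagger,\fn{r}_\dagger$ are $\fn{w}_{j,\fn{u},E_\flat}$-bounded by $\mathfrak{a}(B,E_\flat 2^{c+5},BE_\flat)Cx+\mathfrak{i}(B,D_\flat,E_\flat 2^{c+5},E_\flat)C$ above $\max\{m_\dagger,q_\dagger\}$, where $c$ is the corresponding base quantity.

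Then I would invoke Lemma~\ref{thm:q_vhs_bnd} for $(\lambda_p)_p$ with parameters $4BD_\flat E_\flat$, $\fn{q}_0$, $p_\flat$ and threshold $D_\flat$. The crucial point is that substituting $D_0=4BD_\flat E_\flat 2^{q_0+5}$ collapses $\max\{n_\flat,q_0,\ln D_0\}$ to $q_0$ plus a constant of size $O(\ln(BD_\flat E_\flat)+n_\flat)$, turning the bound on $\fn{q}_0$ into a genuine function $\fn{v}$ of $q_0$ alone --- a fixed iterate of $\fn{w}_{j+1,\fn{u},E_\flat}$, the additive constant absorbed into boundedly many extra iterations via $\fn{w}_{j+1,\fn{u},E_\flat}(m)>m$ --- which is exactly the input ($\fn{q}_0(E2^{m_0+5},m_0)\le\fn{v}(m_0)$) that Lemma~\ref{thm:q_vhs_bnd} needs. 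That lemma then bounds $q_0$, hence $D_0$, by $\fn{v}$ iterated boundedly many times above $\max\{n_\flat,p_\flat,\ln D_\flat\}$; composing the iteration counts gives $q_0\le c_\sharp$ of the form stated. Substituting this $D_0$ (so $\ln D_0\le c_\sharp$ up to a constant) back into step one yields the claimed bounds on $m_\sharp=m_\dagger$, $q_\sharp=q_\dagger$, $\fn{k}_\sharp=\fn{k}_\dagger$, $\fn{r}_\sharp=\fn{r}_\dagger$; the qualitative conclusion (the three exceptional sets and the displayed estimate) is literally Lemma~\ref{thm:control_interval_D2}, with $|\lambda_{q_\sharp}|(\mathcal{B}^{1,-})<1/(BD_\flat E_\flat)$ supplied by the chosen application of Theorem~\ref{thm:q_vhs}. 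The only real difficulty is the delicate-but-routine $\fn{u}$-bound bookkeeping, in particular this collapse of the independent parameter $D_0$ to $2^{q_0}$ and the tracking of the ``above'' thresholds through the nested calls; there is no new mathematical content beyond the non-quantitative proof and the two quoted bounds lemmas.
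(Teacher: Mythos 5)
Your proposal follows the paper's proof exactly: combine the preceding bounds lemma (for Lemma~\ref{thm:control_interval_D1}) with Lemma~\ref{thm:q_vhs_bnd}, using the key observation that substituting $D_0$ in terms of $q_0$ collapses the two-variable bound on $\fn{q}_0(D_0,q_0)$ into a single-variable bound $\fn{v}(q_0)$ suitable for Lemma~\ref{thm:q_vhs_bnd}, and then compose the iteration counts. The only difference is that you spell out the bookkeeping (the $4BD_\flat E_\flat$ level of the Vitali--Hahn--Saks application, the absorption of the additive constant from $\ln D_0$ into extra $\fn{w}$-iterations) that the paper compresses into ``the remaining bounds follow from the previous subsection.''
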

\begin{proof}
  We again combine the previous lemma with Theorem \ref{thm:q_vhs}.  

The function $q_0\mapsto \fn{q}_0(E_\flat 2^{q_0+5},q_0)$ is bounded by $\fn{w}^{32B^2E_\flat^2+1}_{j+1,\fn{u},E_\flat}(\max\{q_0,\ln (E_\flat 2^{q_0+5}),n_\flat,p_\flat,\ln D_\flat\})\leq\fn{w}^{32B^2E_\flat^2+1}_{j+1,\fn{u},E_\flat}(\max\{n_\flat,q_0\})$, so by Lemma \ref{thm:q_vhs_bnd} we have $q_0\leq\fn{w}^{2^{11}B^4E_\flat^4}_{j+1,\fn{u},E_\flat}(\max\{n_\flat,p_\flat,\ln D_\flat\})$.  Again, the remaining bounds follow from the previous subsection.
\end{proof}

\begin{lemma}[Bounds on \ref{thm:control_interval_E1}]
Suppose $(*)^Q$ holds.  Let $E_\flat, D_\flat, p_\flat, n_\flat, \fn{L}_\flat, \fn{r}_\flat$ be given so that:
\begin{itemize}
\item For any $m,q,\fn{k},\fn{s}$, the function $k\mapsto\fn{L}_\flat(m,q,\fn{k},\fn{s})(k,r)$ does not depend on $r$ and is $\fn{u}$-bounded by $1$ above $\max\{n_\flat,p_\flat,m,q+1\}$,
\item For any $m,q,\fn{k},\fn{s}$, $\fn{r}_\flat(m,q,\fn{k},\fn{s})=m+1$.
\end{itemize}

Then there are $m_\sharp\geq n_\flat, q_\sharp\geq p_\flat, \fn{k}_\sharp, \fn{s}_\sharp$ such that:
\begin{itemize}
\item $m_\sharp\leq\fn{w}_{2^9B^2E^2_\flat+1,\fn{u},E_\flat}^{2^{12}B^4E^4_\flat+1}(\max\{\ln D_\flat,n_\flat,p_\flat\})$,
\item $q_\sharp\leq\fn{w}_{2^9B^2E^2_\flat+1,\fn{u},E_\flat}^{2^{12}B^4E^4_\flat+1}(\max\{\ln D_\flat, n_\flat,p_\flat\})$,
\item $\fn{k}_\sharp$ is $\fn{w}_{2^9B^2E^2_\flat+1,\fn{u},E_\flat}$-bounded by $\mathfrak{a}(B, E_\flat 2^{c_\dagger+5},BE_\flat)Cx+\mathfrak{i}(B,D_\flat, E_\flat 2^{c_\sharp+5},E_\flat)C$ above $\max\{m_\sharp, q_\sharp,\ln D_\flat, n_\flat,p_\flat\}$,
\item $\fn{s}_\sharp$ is $\fn{w}_{2^9B^2E^2_\flat+1,\fn{u},E_\flat}$-bounded by $\mathfrak{a}(B, E_\flat 2^{c_\dagger+5},BE_\flat)Cy+\mathfrak{i}(B,D_\flat, E_\flat 2^{c_\sharp+5},E_\flat)C$ above $\max\{m_\sharp, q_\sharp,\ln D_\flat, n_\flat,p_\flat\}$, and
\item setting $\fn{l}_\flat=\fn{L}_\flat(m_\sharp,q_\sharp,\fn{k}_\sharp,\fn{s}_\sharp)$, $r_\flat=\fn{r}_\flat(m_\sharp,q_\sharp,\fn{k}_\sharp,\fn{s}_\sharp)$, $k_\sharp=\fn{k}_\sharp(\fn{l}_\flat,r_\flat)$, and $s_\sharp=\fn{s}_\sharp(\fn{l}_\flat,r_\flat)$, if $r_\flat\geq q_\sharp$ then $s_\sharp\geq r_\flat$ and if also $l_\flat\geq k_\sharp$ then for any $l\in[k_\sharp,l_\flat]$ there are sets $\mathcal{B}^-$ and $\mathcal{B}^{1,-}$ with $\mu(\mathcal{B}^-)<4/D_\flat$, $\mu(\mathcal{B}^{1,-})<2/D_\flat$ so that
\[\left|(\rho_{m_\sharp}\lambda_{s_\sharp})(\Omega)-(\rho_l\lambda_{q_\sharp})(\Omega)\right|\leq|\rho_l\lambda_{q_\sharp}|(\mathcal{B}^-)+|\rho_l\lambda_{q_\sharp}|(\mathcal{B}^{1,-})+\frac{20}{E_\flat}.\]
\end{itemize}
\end{lemma}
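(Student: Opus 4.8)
The plan is to reread the proof of Lemma~\ref{thm:control_interval_E1} and attach a bound to every step. That proof builds a batch of helper functions $\fn{l}_{m_\dagger,\fn{l},r}$, $\fn{s}_{m_\dagger,r}$, $\fn{k}_{m_\dagger,\fn{k}_\dagger}$, $\fn{s}_{m_\dagger,\fn{r}_\dagger}$, $\fn{L}_\dagger$, $\fn{S}_\dagger$ for an application of Lemma~\ref{thm:control_interval_D2}, wrapped inside an application of $n_{/p}$-metastable uniform continuity, i.e.\ of Lemma~\ref{thm:meta_bnd_3}. Since we have already isolated the quantitative forms of both ingredients --- Lemma~\ref{thm:control_interval_D2_quant} for the inner call and Lemma~\ref{thm:meta_bnd_3_q} for the outer one --- it suffices to check that the helper functions meet the hypotheses of those lemmas at the right level of the $\fn{w}_{\cdot,\fn{u},E_\flat}$-hierarchy, and then chain the two conclusions.

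I would fix a level $j$ and an input $\fn{r}_0$ to the outer call which is $\fn{w}_{j,\fn{u},E_\flat}$-bounded by $C$ above the appropriate base point, and walk through the helper functions. The two standing hypotheses of the present lemma are exactly what makes this go through: because $\fn{L}_\flat(m,q,\fn{k},\fn{s})(k,r)$ does not depend on $r$, the helper $\fn{L}_\dagger$ also does not depend on its $r$-argument (the $\fn{r}_0$-wrapper is swallowed), so it stays $\fn{u}$-bounded by $1$ --- in particular $\fn{w}_{j,\fn{u},E_\flat}$-bounded by $C$ --- which is what Lemma~\ref{thm:control_interval_D2_quant} requires of its $\fn{L}_\flat$; and because $\fn{r}_\flat=\mathrm{suc}$, the helper $\fn{S}_\dagger$ is just $\fn{r}_0$ with a $+1$ shift inside, hence again $\fn{w}_{j,\fn{u},E_\flat}$-bounded by $C$ and independent of its $k$-argument. (Here one uses that composing with $\fn{u}$ or with $\mathrm{suc}$ raises an iteration count only by $1$, and such increments are harmless: $C$ is a large constant and a single step of $\fn{w}_{j+1}$ already dominates $C$ steps of $\fn{w}_j$.) Feeding $\fn{L}_\dagger,\fn{S}_\dagger$ into Lemma~\ref{thm:control_interval_D2_quant} at level $j$ produces $m_\dagger,q_\dagger$ that are $\fn{w}_{j+1,\fn{u},E_\flat}$-bounded by $O(B^4E_\flat^4)$ and $\fn{k}_\dagger,\fn{r}_\dagger$ that are $\fn{w}_{j,\fn{u},E_\flat}$-bounded by a linear function of the shape $\mathfrak{a}(\cdots)Cx+\mathfrak{i}(\cdots)C$; unwinding $\fn{m}_0=m_\dagger$ and $\fn{q}_0=\max\{m_\dagger+1,r_\dagger,p_0\}$ (with $r_\dagger=\fn{r}_\dagger(\fn{l}_\dagger,\fn{s}_\dagger)$ costing only a couple more $\fn{w}_{j+1}$-steps) then shows that $\fn{m}_0$ and $\fn{q}_0$, as functions of $(D_0,m_0,p_0,\fn{r}_0)$, are $\fn{w}_{j+1,\fn{u},E_\flat}$-bounded by $2^{12}B^4E_\flat^4$ above $\max\{m_0,p_0,\ln D_0,n_\flat,p_\flat\}$ --- i.e.\ they satisfy the hypotheses of Lemma~\ref{thm:meta_bnd_3_q} with $d=1$.

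Applying Lemma~\ref{thm:meta_bnd_3_q} with $d=1$ then gives $D_\sharp=E_\flat2^{q_\sharp+5}$, values $m_\sharp,q_\sharp\le\fn{w}^{C}_{2^9B^2E_\flat^2,\fn{u},E_\flat}(\max\{\ln D_\flat,n_\flat,p_\flat\})$ (the $\ln D_\flat$ entering via the assumption $D_0\ge D_\flat$, which forces $q_\sharp\gtrsim\ln D_\flat$), and an $\fn{r}_\sharp$ that is $\fn{w}^{C}_{2^9B^2E_\flat^2,\fn{u},E_\flat}$-bounded by $1$. Plugging these back into the bound for $\fn{m}_0$ --- now with $j=2^9B^2E_\flat^2$, the level of the produced $\fn{r}_\sharp$ --- yields $m_\sharp\le\fn{w}^{2^{12}B^4E_\flat^4}_{2^9B^2E_\flat^2+1,\fn{u},E_\flat}(\max\{m_0,p_0,\ln D_0,n_\flat,p_\flat\})$, and since each of $m_0,p_0,\ln D_0$ is at most one $\fn{w}_{2^9B^2E_\flat^2+1,\fn{u},E_\flat}$-step above $\max\{\ln D_\flat,n_\flat,p_\flat\}$, this collapses to the stated $\fn{w}^{2^{12}B^4E_\flat^4+1}_{2^9B^2E_\flat^2+1,\fn{u},E_\flat}(\max\{\ln D_\flat,n_\flat,p_\flat\})$, and symmetrically for $q_\sharp$. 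The bounds on $\fn{k}_\sharp=\fn{k}_{m_\dagger,\fn{k}_\dagger}$ and $\fn{s}_\sharp=\fn{s}_{m_\dagger,\fn{r}_\dagger}$ descend directly from those on $\fn{k}_\dagger,\fn{r}_\dagger$ together with the observation that the $\fn{k}_{m_\dagger,\cdot}$ and $\fn{s}_{m_\dagger,\cdot}$ constructions only insert $\fn{u}$- and $\mathrm{suc}$-compositions; and the inequality on $\left|(\rho_{m_\sharp}\lambda_{s_\sharp})(\Omega)-(\rho_l\lambda_{q_\sharp})(\Omega)\right|$, together with the measure bounds $\mu(\mathcal{B}^-)<4/D_\flat$ and $\mu(\mathcal{B}^{1,-})<2/D_\flat$, is inherited verbatim from Lemma~\ref{thm:control_interval_E1} (using $D_0\ge D_\flat$ to replace $1/D_0$ by $1/D_\flat$).

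The main obstacle is the bookkeeping of the middle step: one must track each helper function and verify simultaneously that (i)~it stays $\fn{w}_{j,\fn{u},E_\flat}$-bounded by $C$, rather than by a constant that creeps upward as the construction is nested --- this is precisely where the hypotheses that $\fn{L}_\flat$ is $r$-independent and $\fn{r}_\flat=\mathrm{suc}$ are indispensable, since they keep the $\fn{r}_0$-wrapping from inflating the iteration count --- and (ii)~the $\max\{\cdots\}$ base-point arguments line up so that the ``bounded above $n_-$'' clauses chain correctly across the two lemma applications. Everything beyond this is a routine appeal to the absorption rules for the $\fn{w}_{\cdot,\fn{u},E_\flat}$-hierarchy already exploited in the preceding lemmas.
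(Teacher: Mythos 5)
Your proposal is correct and follows essentially the same route as the paper's own proof: re-examine the helper-function construction in Lemma~\ref{thm:control_interval_E1}, show that $\fn{L}_\dagger$ remains $\fn{u}$-bounded by $1$ and $\fn{S}_\dagger$ remains $\fn{w}_{j,\fn{u},E_\flat}$-bounded by $C$ (here the hypotheses that $\fn{L}_\flat$ is $r$-independent and $\fn{r}_\flat=\mathrm{suc}$ do exactly the work you identify), feed these into Lemma~\ref{thm:control_interval_D2_quant} at level $j$ to get the $j\mapsto j+1$ bounds on $\fn{m}_0,\fn{q}_0$, and then close the loop with Lemma~\ref{thm:meta_bnd_3_q} at $d=1$, absorbing the final $\fn{w}^C_{2^9B^2E_\flat^2}$-step into a single $\fn{w}_{2^9B^2E_\flat^2+1}$-step. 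Your bookkeeping of the base points and of the $D_0\geq D_\flat$ replacement also matches what the paper does.
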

\begin{proof}
As always, we examine the proof.

Suppose $D_0, m_0, p_0, \fn{r}_0$ are fixed and that $\fn{r}_0$ is $\fn{w}_{j,\fn{u},E_\flat}$-bounded by $C$ above $\max\{m_0,p_0,\ln D_0,p_\flat,n_\flat, \ln D_\flat\}$.  Then:
\begin{itemize}
\item $\fn{s}_{m_\dagger,r}(k,r')\leq\fn{w}^C_{j,\fn{u},E_\flat}(\max\{m_\dagger,r,r',m_0,p_0,\ln D_0,p_\flat,n_\flat, \ln D_\flat\})$,
\item If $\fn{l}$ is $\fn{u}$-bounded by $1$ above $c$, $\fn{l}_{m_\dagger,\fn{l},r}(k,r')\leq\fn{u}(\max\{q_\dagger+1,k,c\})$,
\item $\fn{L}_\dagger(m_\dagger,q_\dagger,\fn{k}_\dagger,\fn{r}_\dagger)(k,r)\leq\fn{u}(\max\{q_\dagger+1,k,m_\dagger,n_\flat,p_\flat\})$,
\item $\fn{S}_\dagger(m_\dagger,q_\dagger,\fn{k}_\dagger,\fn{r}_\dagger)(k,r)=\fn{s}_{m_\dagger,m_\dagger+1}(k,r')\leq\fn{w}^C_{j,\fn{u},E_\flat}(\max\{r',m_\dagger+1,m_0,p_0,p_\flat,n_\flat\})$.
\end{itemize}

In particular, both $\fn{u}$ and $\fn{r}_0$ are bounded by $\fn{w}^C_{j,\fn{u},E_\flat}$.  We apply Lemma \ref{thm:control_interval_D2_quant} to $E_\flat, \max\{D_0,D_\flat\}, \max\{m_0,n_\flat\}, \max\{p_0,p_\flat\}, \fn{L}_\dagger, \fn{s}_\dagger, \fn{w}_{j,\fn{u},E_\flat}$, and therefore obtain the bounds:
\begin{itemize}
\item $c_\dagger=\fn{w}^{2^{11}B^4E^4_\flat}_{j+1,\fn{u},E_\flat}(\max\{m_0,p_0,\ln D_0,n_\flat,p_\flat,\ln D_\flat\})$,
\item $m_\dagger\leq\fn{w}^{32B^2E_\flat^2+1}_{j+1,\fn{u},E_\flat}(c_\dagger)$,
\item $q_\dagger\leq\fn{w}^{32B^2E_\flat^2+1}_{j+1,\fn{u},E_\flat}(c_\dagger)$,
\item $\fn{k}_\dagger$ is $\fn{w}_{j,\fn{u},E_\flat}$-bounded by $\mathfrak{a}(B, E_\flat 2^{c_\dagger+5},BE_\flat)Cx+\mathfrak{i}(B,\max\{D_0,D_\flat\}, E_\flat 2^{c_\sharp+5},E_\flat)C$ above $\max\{m_\dagger, q_\dagger,m_0,p_0,\ln D_0,n_\flat,p_\flat,\ln D_\flat\}$,
\item $\fn{r}_\dagger$ is $\fn{w}_{j,\fn{u},E_\flat}$-bounded by $\mathfrak{a}(B, E_\flat 2^{c_\dagger+5},BE_\flat)Cy+\mathfrak{i}(B,\max\{D_0,D_\flat\}, E_\flat 2^{c_\dagger+5},E_\flat)C$ above $\max\{m_\dagger, q_\dagger,m_0,p_0,\ln D_0,n_\flat,p_\flat,\ln D_\flat\}$.
\end{itemize}
Note that $\fn{l}_\dagger=\fn{L}_\dagger(m_\dagger,q_\dagger,\fn{k}_\dagger,\fn{r}_\dagger)(k,r)=\fn{u}(\max\{q_\dagger+1,k\})$ while $\fn{s}_\dagger=\fn{S}_\dagger(m_\dagger,q_\dagger,\fn{k}_\dagger,\fn{r}_\dagger)$ is $\fn{w}_{j,\fn{u},E_\flat}$-bounded by $C$ above $\max\{m_\dagger+1,m_0,p_0,\ln D_0,n_\flat,p_\flat,\ln D_\flat\}$, so $r_\dagger=\fn{r}_\dagger(\fn{l}_\dagger,\fn{s}_\dagger)$ is bounded by 
\[\fn{w}_{j,\fn{u},E_\flat}^{\mathfrak{a}(B, E_\flat 2^{c_\dagger+5},BE_\flat)C+\mathfrak{i}(B,\max\{D_0,D_\flat\}, E_\flat 2^{c_\dagger+5},E_\flat)C}(\max\{m_\dagger,q_\dagger,m_0,p_0,\ln D_0,n_\flat,p_\flat,\ln D_\flat\})\]
which is in turn bounded by
\[\fn{w}^{2^{12}B^4E^4_\flat}_{j+1,\fn{u},E_\flat}(\max\{m_0,p_0,\ln D_0,n_\flat,p_\flat,\ln D_\flat\}).\]

In particular,
\begin{itemize}
\item $\fn{m}_0(D_0,m_0,p_0,\fn{r}_0)\leq \fn{w}^{2^{12}B^4E^4_\flat}_{j+1,\fn{u},E_\flat}(\max\{m_0,p_0,\ln D_0,n_\flat,p_\flat,\ln D_\flat\})$,
\item $\fn{q}_0(D_0,m_0,p_0,\fn{r}_0)\leq\fn{w}^{2^{12}B^4E^4_\flat}_{j+1,\fn{u},E_\flat}(\max\{m_0,p_0,\ln D_0,n_\flat,p_\flat,\ln D_\flat\})$.
\end{itemize}

This puts us in the setting of Lemma \ref{thm:meta_bnd_3_q} with $d=1$, so we obtain 
\begin{itemize}
  \item $m_0\leq \fn{w}_{2^9B^2E^2_\flat,\fn{u},E_\flat}^{C}(\max\{n_\flat,p_\flat,\ln D_\flat\})$
  \item $q_0\leq  \fn{w}_{2^9B^2E^2_\flat,\fn{u},E_\flat}^{C}(\max\{n_\flat,p_\flat,\ln D_\flat\})$,
  \item $D_0=E_\flat 2^{q_0+5}$,
  \item $\fn{r}_0$ is $\fn{w}^C_{2^9B^2E^2_\flat,\fn{u},E_\flat}$-bounded by $1$ above $\max\{m_0,q_0,n_\flat,p_\flat,\ln D_\flat\}$.
\end{itemize}

Therefore:
\begin{itemize}
\item $m_\sharp\leq\fn{w}_{2^9B^2E^2_\flat+1,\fn{u},E_\flat}^{2^{12}B^4E^4_\flat+1}(\max\{n_\flat,p_\flat,\ln D_\flat\})$,
\item $q_\sharp\leq\fn{w}_{2^9B^2E^2_\flat+1,\fn{u},E_\flat}^{2^{12}B^4E^4_\flat+1}(\max\{n_\flat,p_\flat,\ln D_\flat\})$,
\item $\fn{k}_\sharp$ is $\fn{w}_{2^9B^2E^2_\flat+1,\fn{u},E_\flat}$-bounded by $\mathfrak{a}(B, E_\flat 2^{c_\dagger+5},BE_\flat)Cx+\mathfrak{i}(B,D_0, E_\flat 2^{c_\sharp+5},E_\flat)C$ above $\max\{m_\sharp, q_\sharp,n_\flat,p_\flat,\ln D_\flat\}$,
\item $\fn{s}_\sharp$ is $\fn{w}_{2^9B^2E^2_\flat+1,\fn{u},E_\flat}$-bounded by $\mathfrak{a}(B, E_\flat 2^{c_\dagger+5},BE_\flat)Cy+\mathfrak{i}(B,D_0, E_\flat 2^{c_\sharp+5},E_\flat)C$ above $\max\{m_\sharp, q_\sharp,n_\flat,p_\flat,\ln D_\flat\}$.
\end{itemize}
\end{proof}

At least we obtain actual (large) numeric bounds.

\begin{theorem}[Bounds on \ref{thm:control_interval_E2}]
  Suppose $(*)^Q$ holds.  Then for every $E_\flat$ there exist $m_\sharp<s_\sharp$ and $q_\sharp<l_\sharp$ such that:
  \begin{itemize}
  \item $s_\sharp\leq \fn{w}^{C+2}_{2^{20}B^4 E^4_\flat,\mathrm{suc},E_\flat}(1)$,
  \item $l_\sharp\leq \fn{w}^{C+2}_{2^{20}B^4E^4_\flat,\mathrm{suc},E_\flat}(1)$,
  \item $|(\rho_{m_\sharp}\lambda_{s_\sharp})(\Omega)-(\rho_{l_\sharp}\lambda_{q_\sharp})(\Omega)|<\frac{20}{E_\flat}$.
  \end{itemize}
\end{theorem}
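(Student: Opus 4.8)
The statement is the quantitative (bounds‑carrying) version of Theorem~\ref{thm:control_interval_E2}, read off from the proof of that theorem after specializing to the simplest admissible data: $n_\flat=p_\flat=0$, the base function $\fn{u}=\mathrm{suc}$, and for $\fn{k}_\flat,\fn{r}_\flat$ the successor‑type data of the ``swapped'' special case described at the start of the section (so that $k_\flat=q_\sharp+1$, $r_\flat=m_\sharp+1$, forcing $m_\sharp<s_\sharp$ and $q_\sharp<l_\sharp$ via $\fn{l}_\sharp(k,r)\ge k$, $\fn{s}_\sharp(k,r)\ge r$). The plan is to walk through the proof of Theorem~\ref{thm:control_interval_E2} and, at each nested application, insert the quantitative estimate already established for that step --- the bound on Lemma~\ref{thm:control_interval_E1} (which itself packages the bounds on Lemma~\ref{thm:control_interval_D2_quant} and on Theorem~\ref{thm:q_vhs} via Lemma~\ref{thm:q_vhs_bnd}) for the inner block, and Lemma~\ref{thm:meta_bnd_3_q} for the outer application of the $p_{/n}$‑metastable uniform continuity --- while maintaining a running account of the subscript $j$ of the fast‑growing function $\fn{w}_{j,\mathrm{suc},E_\flat}$ and of how many times it is iterated.

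For the inner block: in the proof of Theorem~\ref{thm:control_interval_E2} the auxiliary functions $\fn{l}_{\star,\cdot}$, $\fn{s}_{\star,\cdot}$, $\fn{L}_\dagger$, $\fn{r}_\dagger$ are built from $\fn{k}_0$ and the successor function alone, so if $\fn{k}_0$ is $\fn{w}_{j,\mathrm{suc},E_\flat}$‑bounded by $C$ then $\fn{L}_\dagger$ is $(\fn{w}_{j,\mathrm{suc},E_\flat})^{C}$‑bounded by $1$ and $\fn{r}_\dagger$ returns $m+1$; hence the bound on Lemma~\ref{thm:control_interval_E1} applies (with base function one level above $\fn{w}_{j,\mathrm{suc},E_\flat}$, absorbing the multiplier $C$) and returns $m_\dagger,q_\dagger,\fn{k}_\dagger,\fn{s}_\dagger$ at level $j+(2^{9}B^{2}E_\flat^{2}+O(1))$ with a poly‑exp iteration count. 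Consequently $\fn{m}_0$ and $\fn{q}_0$, the inputs to the outer step, cost $d=2^{9}B^{2}E_\flat^{2}+O(1)$ levels in the sense of the hypothesis of Lemma~\ref{thm:meta_bnd_3_q}. In checking this one repeatedly uses the absorption fact that iterating $\fn{w}_{l,\mathrm{suc},E_\flat}$ a number of times which is only (doubly) exponential in its argument is dominated by a single step to level $l+1$, since the iteration count defining $\fn{w}_{l+1,\mathrm{suc},E_\flat}$ is already doubly exponential in its argument and dominates all the $\mathfrak{a}$‑ and $\mathfrak{i}$‑factors that appear at the (already enormous) values of the arguments arising here.

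Applying Lemma~\ref{thm:meta_bnd_3_q} with this $d$ then yields $D_0,q_0,n_0,\fn{k}_0$ with $n_0,q_0\le \fn{w}^{C}_{d\cdot 2^{9}B^{2}E_\flat^{2},\mathrm{suc},E_\flat}(0)\le \fn{w}^{C}_{2^{19}B^{4}E_\flat^{4},\mathrm{suc},E_\flat}(1)$, $D_0=E_\flat 2^{q_0+5}$, and $\fn{k}_0$ being $\fn{w}^{C}_{\le 2^{19}B^{4}E_\flat^{4},\mathrm{suc},E_\flat}$‑bounded by $1$; feeding these into the inner block produces $m_\sharp=m_\dagger$, $q_\sharp=q_\dagger$, $\fn{l}_\sharp=\fn{l}_\star$, $\fn{s}_\sharp=\fn{s}_\star$ as in the proof of Theorem~\ref{thm:control_interval_E2}. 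Finally, evaluating $\fn{l}_\sharp,\fn{s}_\sharp$ at $(k_\flat,r_\flat)$ costs --- after the same absorption argument, since the arguments are themselves large $\fn{w}$‑values on which $\fn{w}_{l+1}$'s iteration count swamps any fixed multiplier --- at most two further iterations at two higher levels, which is where the exponent $2^{20}B^{4}E_\flat^{4}$ (with room over $2^{19}B^{4}E_\flat^{4}$) and the count $C+2$ come from, giving $s_\sharp,l_\sharp\le \fn{w}^{C+2}_{2^{20}B^{4}E_\flat^{4},\mathrm{suc},E_\flat}(1)$. The estimate $|(\rho_{m_\sharp}\lambda_{s_\sharp})(\Omega)-(\rho_{l_\sharp}\lambda_{q_\sharp})(\Omega)|<20/E_\flat$ recorded in the statement follows, just as in the proof of Theorem~\ref{thm:control_interval_E2}, by using the metastable‑uniform‑continuity output $|\rho_{l_\sharp}\lambda_{q_\sharp}|(\mathcal{B})<1/E_\flat$ for $\mu(\mathcal{B})<1/D_0$ to absorb the residual $|\rho_{l_\sharp}\lambda_{q_\sharp}|$‑terms in the estimate coming from the bound on Lemma~\ref{thm:control_interval_E1}. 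The main obstacle is exactly this bookkeeping: verifying that every auxiliary function meets the ``does not depend on $\cdots$'' and ``$\fn{u}$‑bounded by $1$'' hypotheses required to plug it into the earlier quantitative lemmas, and that the accumulated subscript of $\fn{w}$ never exceeds $2^{20}B^{4}E_\flat^{4}$ while the iteration count never exceeds $C+2$ --- everything else is the absorption identity for $\fn{w}$ applied over and over.
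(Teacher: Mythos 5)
Your proposal is correct and follows essentially the same line as the paper: instantiate $\fn{k}_\flat=q_\sharp+1$, $\fn{r}_\flat=m_\sharp+1$, $n_\flat=p_\flat=0$, $\fn{u}=\mathrm{suc}$; push the bounds on Lemma~\ref{thm:control_interval_E1} through the inner block to see that $\fn{m}_0,\fn{q}_0$ sit at level $2^9B^2E_\flat^2+O(1)$ (the paper takes $d=2^9B^2E_\flat^2+5$); apply Lemma~\ref{thm:meta_bnd_3_q} to land $n_0,q_0\le\fn{w}^{C}_{2^{19}B^4E_\flat^4,\mathrm{suc},E_\flat}(1)$; and then compose the last few evaluations, absorbing the $\mathfrak{a}$- and $\mathfrak{i}$-multipliers with the observation that $\fn{w}_{l+1}$ swallows any doubly-exponential iteration count of $\fn{w}_l$, to end at level $2^{20}B^4E_\flat^4$ with iteration count $C+2$. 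That is exactly the paper's bookkeeping, including the identification of the key lemmas, the role of the absorption identity for the $\fn{w}$ hierarchy, and the final constant.
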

\begin{proof}
  We apply Lemma \ref{thm:control_interval_E2} with $\fn{k}_\flat(m_\sharp,q_\sharp,\fn{l}_\sharp,\fn{s}_\sharp)=q_\sharp+1$ and $\fn{r}_\flat(m_\sharp,q_\sharp,\fn{l}_\sharp,\fn{s}_\sharp)=m_\sharp+1$.

Suppose we are given $D_0, q_0, n_0$, and $\fn{k}_0$ which is $\fn{w}^C_{j,\mathrm{suc},E_\flat}$-bounded by $C$ above $\max\{n_0,q_0,\ln D_0\}$.  Then
\begin{itemize}
\item $\fn{l}_{\star,q,k}(k',r)\leq\fn{w}^C_{j,\mathrm{suc},E_\flat}(\max\{q,k,k'\})$,
\item $\fn{r}_\dagger(m_\dagger,q_\dagger,\fn{s}_\dagger,\fn{k}_\dagger)=m_\dagger+1$,
\item $\fn{L}_\dagger(m_\dagger,q_\dagger,\fn{s}_\dagger,\fn{k}_\dagger)(k',r)=\fn{l}_{\star,q_\dagger,q_\dagger+1}\leq \fn{w}^C_{j,\mathrm{suc},E_\flat}(\max\{q_\dagger+1,k'\})$.
\end{itemize}

Therefore by the preceding lemma we obtain $m_\dagger,q_\dagger,\fn{s}_\dagger,\fn{k}_\dagger$ with:
\begin{itemize}
\item $m_\dagger\leq\fn{w}^{2^{12}B^4E^4_\flat+1}_{2^9B^2E^2_\flat+j+2,\mathrm{suc},E_\flat}(\max\{n_0,q_0, \ln D_0\})$,
\item $q_\dagger\leq\fn{w}^{2^{12}B^4E^4_\flat+1}_{2^9B^2E^2_\flat+j+2,\mathrm{suc},E_\flat}(\max\{n_0,q_0,\ln D_0\})$,
\item $\fn{k}_\dagger$ is $\fn{w}_{2^9B^2E^2_\flat+j+2,\mathrm{suc},E_\flat}$-bounded by $\mathfrak{a}(B,E_\flat 2^{q_\dagger+5},BE_\flat)Cx+\mathfrak{i}(B,D_0,E_\flat 2^{q_\dagger+5},E_\flat)C$ above $\max\{m_\dagger,q_\dagger,n_0,q_0,\ln D_0\}$,
\item $\fn{s}_\dagger$ is $\fn{w}_{2^9B^2E^2_\flat+j+2,\mathrm{suc},E_\flat}$-bounded by $\mathfrak{a}(B,E_\flat 2^{q_\dagger+5},BE_\flat)Cy+\mathfrak{i}(B,D_0,E_\flat 2^{q_\dagger+5},E_\flat)C$ above $\max\{m_\dagger,q_\dagger,n_0,q_0,\ln D_0\}$.
\end{itemize}

In particular, since $\fn{l}_\dagger=\fn{L}_\dagger(m_\dagger,q_\dagger,\fn{s}_\dagger,\fn{k}_\dagger)$ is bounded by $\fn{w}^C_{j,\mathrm{suc},E_\flat}(\max\{q_\dagger+1,k'\})$, $k_\dagger=\fn{k}_\dagger(r_\dagger,\fn{l}_\dagger)$ is bounded by 
\[\fn{w}_{2^9B^2E^2_\flat+j+2,\mathrm{suc},E_\flat}^{\mathfrak{a}(B,E_\flat 2^{q_\dagger+5},BE_\flat)C^2+\mathfrak{i}(B,D_0,E_\flat 2^{q_\dagger+5},E_\flat)C}(\max\{m_\dagger+1,q_\dagger+1,n_0,q_0,\ln D_0\})\leq\fn{w}_{2^9B^2E^2_\flat+j+3,\mathrm{suc},E_\flat}(\max\{m_\dagger+1,q_\dagger+1,n_0,q_0,\ln D_0\}).\]

This puts us in the setting of Lemma \ref{thm:meta_bnd_3_q} with $d=2^9B^2E^2_\flat+5$, so we ultimately obtain:
\begin{itemize}
\item $q_0\leq\fn{w}^C_{2^{19}B^4E^4_\flat,\mathrm{suc},E_\flat}(1)$,
\item $n_0\leq\fn{w}^C_{2^{19}B^4E^2_\flat,\mathrm{suc},E_\flat}(1)$,
\item $D_0=E_\flat 2^{q_0+5}$,
\item $\fn{k}_0$ is $\fn{w}^C_{2^{19}B^4E^2_\flat,\mathrm{suc},E_\flat}$-bounded by $1$ above $\max\{q_0,n_0\}$.
\end{itemize}

In particular, 
\begin{itemize}
\item $m_\sharp=m_\dagger\leq\fn{w}^{C+2^{12}B^4E^4_\flat+1}_{2^{19}B^4E^2_\flat,\mathrm{suc},E_\flat}(1)$,
\item $q_\sharp=q_\dagger\leq\fn{w}^{C+2^{12}B^4E^4_\flat+1}_{2^{19}B^4E^2_\flat,\mathrm{suc},E_\flat}(1)$,
\item $k_\dagger\leq\fn{w}^C_{2^{20}B^4E^4_\flat,\mathrm{suc},E_\flat}(\max\{m_\sharp,q_\sharp\})$,
\item $r_\dagger=m_\sharp+1$,
\item $l_\sharp\leq\fn{w}^{C+1}_{2^{20}B^4E^4_\flat,\mathrm{suc},E_\flat}(\max\{m_\sharp,q_\sharp\})$,
\item $s_\sharp\leq\fn{w}^{C+1}_{2^{20}B^4E^4_\flat,\mathrm{suc},E_\flat}(\max\{m_\sharp,q_\sharp\})$.
\end{itemize}

\end{proof}

In particular, bounds on $s_\sharp$ and $l_\sharp$ as a function of $B\cdot E_\flat$ are given by
\[B\cdot E_\flat\mapsto \fn{w}^{C+2}_{2^{20}(BE_\flat)^4,\mathrm{suc},E_\flat}(1)\leq f_{2^{22}(BE_\flat)^4}(B+E+c).\]
Recall the function $f_\omega(m)=f_m(m)$; then $s_\sharp$ and $l_\sharp$ are bounded by
\[f_\omega(2^{22}(BE_\flat)^4+c)\]
for some constant $c$.

\bibliographystyle{plain}
\bibliography{../../Bibliographies/main}
\end{document}